\documentclass[a4paper,12pt]{article}
\usepackage{color}
\usepackage{amsmath,amsfonts,amssymb,amsthm,amscd}
\usepackage{hyperref}
\newcommand\blfootnote[1]{%
  \begingroup
  \renewcommand\thefootnote{}\footnote{#1}%
  \addtocounter{footnote}{-1}%
  \endgroup
}

\font\cmssl=cmss10 at 12 pt

\usepackage{slashed}
\usepackage[normalem]{ulem}
\newcommand{\di}{\slashed{d}}
\newcommand{\g}{\gamma}

\newtheorem{thm}{Theorem}
\newtheorem{lem}[thm]{Lemma}
\newtheorem{prop}[thm]{Proposition}
\newtheorem{defn}[thm]{Definition}
\newtheorem{cor}[thm]{Corollary}
\newtheorem{rem}[thm]{Remark}
\newtheorem{notation}[thm]{Notation}
\newtheorem{exa}[thm]{Example}

\title{Generalized connections, spinors,  and integrability of generalized structures on Courant algebroids}

\date{\today}

\author{Vicente Cort\'es and Liana David}

\begin{document}

\maketitle

\begin{abstract}
\noindent
We present a characterization, in terms of torsion-free generalized connections,
for the integrability of
various generalized structures  (generalized almost complex structures,  generalized almost
hypercomplex structures, generalized almost Hermitian structures and generalized almost hyper-Hermitian structures)
defined on  Courant algebroids. 
We develop a new, self-contained,  approach for the theory of Dirac generating operators
on  regular Courant algebroids  with scalar product of neutral signature.    As an application we provide a criterion for the integrability of  generalized almost Hermitian structures 
$(G, \mathcal J)$ and generalized almost hyper-Hermitian structures 
$(G, \mathcal J_{1}, \mathcal J_{2}, \mathcal J_{3})$  
defined on a regular Courant algebroid $E$ with scalar product of neutral signature  in terms of canonically
defined differential operators on spinor bundles associated to $E_{\pm}$ (the subbundles of $E$ determined
by the generalized metric $G$).  

\blfootnote{{\it 2000 Mathematics Subject Classification:} MSC  53D18 (primary); 53C15 (secondary).\\
{\it Keywords:}  Courant algebroids, generalized complex structures, generalized K\"ahler structures, generalized hypercomplex structures, 
generalized hyper-K\"ahler structures, generating Dirac operators.}
\end{abstract}
\newpage

\tableofcontents
\section{Introduction}

Generalized complex geometry is a well established field in present mathematics. It unifies complex 
and symplectic geometry and represents an important direction of 
current research in differential geometry and theoretical physics. In generalized geometry, 
the role of the tangent bundle $TM$ of a manifold $M$ is played by the generalized tangent bundle $\mathbb{T}M := TM\oplus T^{*}M$,
or, more generally, by a Courant algebroid. Many classical objects from differential geometry (including almost complex, almost Hermitian,   almost hypercomplex and almost hyper-Hermitian structures) 
were defined and studied by 
several authors  in this more general setting, see e.g.\ \cite{gualtieri-annals} (also \cite{b-c-g,g-red,g-c}).  
\

While the  integrability of  such  generalized structures is defined and understood in terms of the Dorfmann  bracket of the Courant algebroid,
it seems that characterizations   in terms of torsion-free generalized connections (analogous to the standard 
characterizations of integrability from classical geometry) are missing.\

 In the first part of the present  paper  we  fill this gap by answering this natural question.
We shall do this by a careful analysis of the space of generalized connections which are `adapted', i.e.\ preserve, 
a given generalized structure $Q$ on a Courant algebroid $E$. In analogy with the classical case, we introduce the notion of intrinsic torsion $t_{Q}$ of $Q$,  see Definition \ref{intrinsic_torsionDef}. This  will play
an important role in our treatment, when $Q$ is a generalized almost complex structure or a generalized almost hypercomplex structure. 
We compute the intrinsic torsion for these structures and  we prove that their  integrability is equivalent to the existence of a torsion-free adapted generalized connection, see Theorem \ref{complex:thm} and Theorem \ref{hyper:thm}. In the same framework, we prove that a generalized almost Hermitian 
structure on a Courant algebroid  is integrable  if and only if it admits a torsion-free adapted generalized connection, see Theorem \ref{integr-kahler}.
A similar integrability criterion for generalized almost hyper-Hermitian structures is proved  in 
Theorem \ref{integr-hk}.
Our treatment shows  that  in generalized geometry  the torsion-free condition on a generalized connection $D$
adapted to a generalized structure $Q$  does in general no longer determine $D$ uniquely, even if the classical structure generalized by $Q$ has a unique torsion-free adapted connection. This was noticed  already in \cite{rubio,garcia}  for  generalized Riemannian metrics: 
for a given generalized  Riemannian metric $G$,  there is an entire family of generalized connections which are torsion-free and preserve $G$.
We will show that the same holds  for the structures considered in this paper, for instance,  for generalized hypercomplex structures:  the Obata connection   \cite{Obata,bonan} (see also \cite{gauduchon}) of a  hypercomplex structure 
is  replaced in generalized geometry by  an entire family  of generalized connections.\

In more general terms, the theory developed here allows to decide whether a given generalized  structure $Q$ on a 
Courant algebroid admits an adapted  generalized connection with prescribed torsion and to describe the space of all such generalized connections, see Proposition \ref{connProp}.
A torsion-free generalized connection adapted to $Q$, for instance, exists if and only if the 
intrinsic torsion of $Q$ vanishes and is unique if and only if the generalized first 
prolongation $\mathfrak{h}^{\langle 1\rangle}$ of the Lie algebra $\mathfrak{h}$ of the structure group of $Q$  is zero. 
As a further  application of the generalized first prolongation we present an alternative proof for the uniqueness of the canonical connection
of a Born structure  defined  in \cite{FRS}  (see Section \ref{section-born}).

In the second part of the paper 
we present a self-contained treatment of the canonical Dirac generating operator
on a regular Courant algebroid with scalar product of neutral signature.   A Courant algebroid is regular if its anchor
has constant rank. Regular Courant algebroids form an important 
class of Courant algebroids,  which was studied systematically in  \cite{chen}. 
A Dirac generating operator is a first order odd differential operator on a suitable irreducible  spinor bundle which encodes the anchor and the Dorfman bracket of the Courant algebroid.  We recover the crucial  result of  \cite{AX}  (see also 
\cite{GMX} and \cite{coimbra}, \cite{garcia})
namely that any regular Courant algebroid $E$  with scalar product of neutral signature admits a canonical Dirac generating
operator $\di$ and we express $\di$  in terms of a dissection of $E$. Such an expression
for the canonical Dirac generating operator  allows a better understanding of 
the relation between $\di$ and the structure of the regular Courant algebroid.\

The third part of the paper is devoted to applications.  We assume that $E$ is a regular 
Courant algebroid with scalar product of neutral signature. 
Owing to the failure of uniqueness of generalized connections adapted to various generalized structures $Q$
on  $E$ 
 it is natural to search for characterizations of the  integrability of  $Q$ using   the canonical Dirac generating operator
of $E$. This was done in \cite{AX} when $Q =\mathcal J$ is a generalized almost complex structure.  
Here we present   a criterion for a generalized almost Hermitian  structure $(G, \mathcal J )$  on  $E$  to be generalized K\"ahler 
in terms of two canonical   Dirac  operators
and the pure spinors associated to $\mathcal J\vert_{E_{\pm}}$
(where $E_{\pm}$ are the subbundles of $E$ determined by the generalized metric $G$), see Theorem \ref{spinors-gk}. 
Our arguments  use, besides the theory of Dirac generating operators, 
the results on generalized connections adapted to a generalized almost Hermitian structure, developed in the first part of the paper.
A similar spinorial characterization for generalized hyper-K\"ahler structures on regular Courant algebroids 
with scalar product of neutral signature 
 is obtained in  Corollary \ref{hyperKCor}.\

In the appendix, intended for completeness of our exposition, we briefly recall basic facts we need on the theory of 
$\mathbb{Z}_{2}$-graded algebras and  the integrability criterion in terms of pure spinors for generalized almost complex structures on regular Courant algebroids with scalar product of neutral signature, mentioned above.\   

In Parts II and III of the paper the assumption that the scalar product of the Courant algebroid  $E$  has neutral
signature plays a crucial role. Then  the spinor bundles $S$ on which the Dirac generating operators act 
are irreducible $\mathbb{Z}_{2}$-graded  $\mathrm{Cl}(E)$-bundles, with the essential property 
that any $\mathrm{Cl}(E)$-bundle morphism $f: S \rightarrow S$ is a multiple of the identity. (More generally, considering  
signatures $(t,s )$ with $t-s \equiv 0$ or $2 \pmod{8}$, would ensure the same property, where $t$ stands for the index and the Clifford relation is $v^2 = \langle v,v\rangle 1$.) It would be interesting to 
extend the theory of Dirac generating operators  to Courant algebroids of other  signatures.

\vspace{1cm}

\begin{center}
\part[First Part: Generalized connections and integrability]{}
\end{center}

\section{Preliminary material}\label{preliminary}

We start by reviewing the basic definitions we need on Courant algebroids, generalized connections,  their torsion
(see  \cite{garcia})   and the definition of 
the generalized structures we shall consider in this paper. 
We  prove  a  basic property of the  Nijenhuis tensor of a generalized
almost complex structure 
(see Lemma \ref{nj-3form}), 
we  introduce the notion of  intrinsic torsion of a generalized structure $Q$   on a Courant algebroid and we describe
the space of generalized connections adapted to $Q$,  with prescribed  torsion, as an affine space modeled on the space of sections of a 
vector bundle (see Proposition \ref{connProp}). The fibers of this  vector bundle are isomorphic to the generalized first 
prolongation $\mathfrak{h}^{\langle 1\rangle}$ of the structure group $H$ of $Q$, a notion which will  be  defined  for any   
Lie subgroup $H\subset \mathrm{O}(k,\ell )$.

\subsection{Courant algebroids}

\begin{defn}\label{def-courant}  A {\cmssl Courant algebroid} on a manifold $M$ is a vector bundle $E\rightarrow M$
equipped with a non-degenerate symmetric bilinear form  
$\langle \cdot , \cdot \rangle \in \Gamma(\mathrm{Sym}^2(E^*))$  (called the {\cmssl scalar product}), a bilinear  operation  
$[\cdot , \cdot ]$  (called the {\cmssl Dorfman bracket}) on the space of smooth  sections $\Gamma (E)$ of $E$ and a homomorphism of vector bundles $\pi : E \rightarrow TM$ (called the {\cmssl anchor}), such that the following conditions are satisfied:
for all sections $u, v, w\in \Gamma (E)$,\

C1) $ [u, [v, w]] = [ [u, v],w ]  + [v,[u, w]]$;\

C2) $\pi ( [u, v]) = [ \pi (u), \pi (v)]$;\

C3) $[u, f v ]= \pi (u) (f) v + f [u, v ]$;\

C4) $\pi ( u)\langle v, w \rangle = \langle [ u, v], w\rangle + \langle v, [u, w ]\rangle$;\

C5) $2[u, u ] = \pi^{*} d \langle u, u \rangle$.

A Courant algebroid is called {\cmssl regular} if the anchor $\pi$ has constant rank. 

\end{defn}

Here $\pi^* : T^*M \rightarrow E$ denotes the map obtained by dualizing $\pi : E\rightarrow TM$ 
and identifying $E^*$ with $E$ using the scalar product. Therefore
C5) can be written in the equivalent way 
\begin{equation}\label{explicit-pol}
2\langle [u, u], v\rangle = \pi (v) \langle u, u\rangle .
\end{equation}

\begin{rem}{\rm As already pointed out in \cite{u,b-h},  the axioms 
of a Courant algebroid can be reduced. One can show that axioms 
C2) and C3) from the definition of 
Courant algebroids follow from the other axioms. In fact, C2) can be checked by calculating  
$[\pi (u),  \pi (v) ]\langle w, e\rangle $, for any $w,e\in \Gamma (E)$,
 with the help of C4) and C1). Similarly, C3) can be checked
by taking the scalar product with a section $w\in \Gamma (E)$ and evaluating the result with 
help of C4).}
\end{rem}

\begin{exa}\label{exa-fundam} 
The fundamental example of a Courant algebroid is the generalized tangent bundle $\mathbb{T}M = TM \oplus T^*M$ of a smooth manifold $M$
with the canonical projection $\mathbb{T}M \rightarrow TM$ as anchor, the scalar product defined by 
$\langle X +\xi , Y+\eta \rangle = \frac12(\xi (Y) + \eta (X))$, and Dorfman  bracket by 
\begin{equation}\label{twist-0}
 [X +\xi , Y+\eta ] = [X,Y] + \mathcal{L}_X\eta - i_{Y}d\xi + H(X, Y, \cdot ),
\end{equation}
where $H\in \Omega^{3}(M)$ is a closed $3$-form and
$X, Y \in \Gamma (TM), \xi, \eta \in \Gamma (T^*M)$. On the right-hand side, $[X,Y]$ stands for the usual Lie bracket
of vector fields and ${\mathcal L}_{X}\eta$ for the Lie derivative of $\eta$ in the direction of $X$. 
It is known that every Courant algebroid for which the sequence $0\rightarrow T^*M\stackrel{\pi^*}{\rightarrow}E\stackrel{\pi}{\rightarrow}TM\rightarrow 0$ is exact is isomorphic to a Courant algebroid of this form. Such Courant algebroids are called {\cmssl exact}. 
\end{exa}

\subsection{$E$-connections and generalized connections}\label{gen-conn-torsion}

Unless otherwise stated, $E$ will denote a Courant algebroid, with anchor $\pi : E \rightarrow TM$,  scalar product 
$\langle\cdot, \cdot \rangle$ and Dorfman bracket $[\cdot , \cdot ] .$ Let  $V\rightarrow M$ be  a vector bundle.

\begin{defn} i)  An {\cmssl $E$-connection} on $V$ is an $\mathbb{R}$-linear map 
\[ D : \Gamma (V) \rightarrow \Gamma (E^*\otimes V),\quad v \mapsto Dv,\]
which satisfies the following Leibniz rule 
\[ D_e(fv) = \pi (e) (f)v+ fD_ev\]
for all $e\in \Gamma (E), v\in \Gamma (V), f\in C^{\infty}(M)$, where $D_ev = (Dv)(e)$.\

ii) An $E$-connection on $E$ is  called a {\cmssl generalized connection} if it is compatible with the 
scalar product: 
\begin{equation} \label{def-conn} \pi (u) \langle v,w\rangle = \langle D_uv,w\rangle + \langle v,D_uw\rangle \end{equation} 
for all $u,v,w\in \Gamma (E)$. 
\end{defn}

\begin{exa} \label{ext:exa} A connection  $\nabla$ on a vector bundle $V\rightarrow M$  
 induces an 
$E$-connection D on $V$ by the formula
\[ D_ev := \nabla_{\pi (e)}v,\]
for all $e\in \Gamma (E), v\in \Gamma (V)$. 
If $\nabla$ is a connection on $E$, then the induced $E$-connection is a generalized connection if and only if 
$\langle \cdot ,\cdot \rangle$ is $\nabla$-parallel along $\pi ( E)$, that is $\nabla_X \langle \cdot ,\cdot \rangle=0$ for all
$X\in \pi (E)$. 
Conversely,  if the Courant algebroid $E$ is regular, an $E$-connection $D$ on $V$ which satisfies $D_{e}=0$ for all $e\in \ker \pi$
is induced by a connection $\nabla$ on $V$ and a similar statement holds for generalized connections.  In fact, an $E$-connection $D$  on $V$ such that $D_e=0$  for all $e\in \ker \pi$ induces a partial connection\footnote{Recall that the notion of a partial connection is defined  by the same properties as that 
of a connection, namely tensoriality in the first argument and Leibniz rule in the second.}  
$\nabla^F : \Gamma(F) \times \Gamma (V) \rightarrow \Gamma (V)$ along the distribution 
$F=\pi (E)\subset TM$ such that $D_{e}=\nabla^F_{\pi (e)}$ for all $e\in E$. Choosing a 
partial connection $\nabla^{F'} : \Gamma(F') \times \Gamma (V) \rightarrow \Gamma (V)$ along a complementary distribution $F'\subset TM$, we can define 
a connection $\nabla$ such that  $D_{e}=\nabla_{\pi (e)}$ for all $e\in E$ by $\nabla_{X+Y}= \nabla_X^F + \nabla_Y^{F'}$ for all $X\in F$, $Y\in F'$. 
If $V =E$ is a  regular Courant algebroid and $D$  happens to be 
a generalized connection  on $E$  then the partial connection $\nabla^F$ is metric and we can choose 
$\nabla^{F'}$ (and hence $\nabla$) to be metric as well.
\end{exa}

\begin{defn} The {\cmssl torsion} $T^D\in \Gamma (\wedge^2E^* \otimes E)$ of a generalized connection $D$ is  defined by 
\begin{equation}\label{torsion-def}
T^{D} (u, v) = D_{u}v  - D_{v}u - [u, v] + (Du)^{*}v,\ \forall u, v\in \Gamma (E),
\end{equation}
where $(Du)^{*}$ is the metric adjoint  of $Du\in \Gamma (\mathrm{End}\, E)$ with respect to $\langle \cdot , \cdot \rangle$.  
\end{defn}
Note that the tensoriality of $T^{D}(u,v)$ in $v$ is obvious since the operators $D_u$ and $[u,\cdot ]$ satisfy the same Leibniz rule. 
The tensoriality in $u$ is a consequence of the skew-symmetry of $T^{D}$, which follows from 
\begin{equation}\label{c-d}
\langle [u,v] + [v,u],w\rangle = \pi (w) \langle u,v\rangle 
\end{equation}
(polarization of (\ref{explicit-pol})) and  the compatibility of $D$ with the scalar product.  
Moreover, one can check that $T^{D}(u,v,w) := \langle T^{D}(u,v),w \rangle$ is totally skew-symmetric. 
From (\ref{torsion-def}), 
\begin{equation}\label{courant-br}
\langle [u, v],w\rangle  = - T ^{D}(u, v, w) +  \langle D_{u}v - D_{v} u,w\rangle  + \langle D_{w}u, v\rangle .
\end{equation}

We often identify $E$ with $E^{*}$ using the scalar product $\langle \cdot , \cdot \rangle .$ 
By means of this identification, $\Lambda^{2} E \subset \mathrm{End}\, E$ is the subbundle of skew-symmetric 
endomorphisms of $E$, where
$$
(e_{1}\wedge e_{2})(e_{3}):= \langle e_{1}, e_{3}\rangle e_{2} - \langle e_{2}, e_{3} \rangle e_{1},\ e_{i}\in E.
$$
With this convention,  any two generalized connections are related by $\tilde{D} = D +\eta$, for $\eta \in \Gamma (E^{*}\otimes \Lambda^{2}E)$ 
and 
\begin{equation}\label{formula-torsion}
T^{\tilde{D}} (u, v, w) = T^{D}(u,v,w) + \eta (u, v, w) + \eta (w,u,v) +\eta (v,w,u).
\end{equation}
The space of torsion-free generalized connections is non-empty, see \cite{garcia}.   
If $D$ and $\tilde{D}= D +\eta$ are torsion-free  (or, more generally, have the same torsion), then $\eta$ is of the form
$$
\eta (u, v, w) = \sigma (u, v, w) -\sigma (u, w, v),
$$
for a section $\sigma\in \Gamma (S^{2} E\otimes E)$,  see \cite{garcia}. \\

\subsection{Generalized  metrics }\label{gen-riemann}

Let $E$ be a Courant algebroid with anchor $\pi : E \rightarrow TM$, scalar product $\langle \cdot , \cdot \rangle$ and
Dorfmann bracket $[\cdot , 
\cdot ].$

\begin{defn} A {\cmssl generalized  metric} on $E$ is a vector subbundle $E_{+}$ of $E$ on which
$\langle \cdot , \cdot \rangle$ is non-degenerate.
\end{defn}

Let
$E_{-}:=E_{+}^{\perp}$ be the orthogonal complement of $E_{+}$
with respect to $\langle \cdot , \cdot \rangle $. Then   $G:= \langle\cdot , \cdot\rangle\vert_{E_{+}\times E_{+} } -  
 \langle\cdot , \cdot\rangle\vert_{E_{-}\times E_{-}}$
is a non-degenerate metric on $E$. Alternatively, a generalized metric on  $E$ can be defined as a non-degenerate symmetric  bilinear form  
$G$ on the vector bundle $E$  such that the endomorphism $G^{\mathrm{end}}$, defined by
$G(u, v)  = \langle G^{\mathrm{end}}u, v\rangle$,  satisfies $(G^{\mathrm{end}} )^{2} =\mathrm{Id}_{E}.$
The bundles $E_{\pm}$ are the $\pm1$-eigenbundles of $G^{\mathrm{end}}.$  
Along the paper  we  denote by $e_{\pm}:=   \frac{1}{2} ( \mathrm{Id} \pm G^{\mathrm{end}}) e$  the $E_\pm$-components of a vector 
$e\in E$  in the decomposition 
$E = E_+ \oplus   E_-$ determined by a generalized metric $G$.

Given a generalized metric $G$ there is always a torsion-free generalized connection which preserves $G$  (see \cite{rubio,garcia}). 
Such a  connection  is not unique if $\mathrm{rank}\,  E > 1$. It  is called a {\cmssl Levi-Civita connection} of $G$.  The non-uniqueness is due to the fact that although
the first prolongation $\mathfrak{so}(k,\ell )^{(1)}$ is always trivial (which is responsible for the uniqueness of the Levi-Civita connection of a pseudo-Riemannian manifold), the generalized first prolongation $\mathfrak{so}(k,\ell )^{\langle 1\rangle}$ (see Definition \ref{prolonDef} below)  
is non-trivial if $k+\ell >1$.\

\subsection{Generalized complex and hyper-complex structures}

\begin{defn}
A {\cmssl generalized almost complex structure} on $E$ is an endomorphism $\mathcal J\in \Gamma (\mathrm{End}E)$ which satisfies $\mathcal J^{2}
=-\mathrm{Id}_{E}$ and is orthogonal with respect to the  scalar product  $\langle\cdot , \cdot\rangle$ of $E$.   
We say that $\mathcal J$ is  {\cmssl integrable} (or is a {\cmssl generalized complex structure}) 
if its  Nijenhuis tensor 
\begin{equation}\label{def-nj}
N_{\mathcal J }(u, v) : = [\mathcal J u , \mathcal Jv] - [ u, v] - \mathcal J ( [\mathcal J u, v] + [u, \mathcal J v])
\end{equation}
vanishes identically. 
\end{defn}

The following simple lemma will be useful for us.

\begin{lem}\label{nj-3form} Let $\mathcal J$ be a generalized almost complex structure on $E$. Then 
$N_{\mathcal J}(u, v, w) := \langle N_{\mathcal J}(u, v), w\rangle$ is a $3$-form on $E$.
\end{lem}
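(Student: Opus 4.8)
The plan is to show that the trilinear map $N_{\mathcal J}(u,v,w)=\langle N_{\mathcal J}(u,v),w\rangle$ is both totally skew-symmetric and $C^\infty(M)$-linear in each entry; together these two properties say precisely that it is a section of $\wedge^3 E^*$. I would first observe that $C^\infty(M)$-linearity in the third slot is automatic, since $N_{\mathcal J}(u,v)$ is a genuine section of $E$ and the scalar product is tensorial in its second argument. The strategy is then to extract all the skew-symmetry from the two available ``polarization'' identities, namely the polarization of axiom C5 and axiom C4, after rewriting $N_{\mathcal J}$ in a more symmetric form. Using $\mathcal J^*=-\mathcal J$ (which follows from $\mathcal J^2=-\mathrm{Id}$ together with orthogonality), I would rewrite
\[
N_{\mathcal J}(u,v,w)=c(\mathcal Ju,\mathcal Jv,w)-c(u,v,w)+c(\mathcal Ju,v,\mathcal Jw)+c(u,\mathcal Jv,\mathcal Jw),
\]
where $c(a,b,c):=\langle[a,b],c\rangle$.

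Next I would verify skew-symmetry in the first two arguments. Here I compute $N_{\mathcal J}(u,v)+N_{\mathcal J}(v,u)$ directly from (\ref{def-nj}): applying the polarization of C5, $[a,b]+[b,a]=\pi^{*}d\langle a,b\rangle$, to each symmetrized pair of brackets turns the sum into $\pi^{*}d\langle\mathcal Ju,\mathcal Jv\rangle-\pi^{*}d\langle u,v\rangle-\mathcal J\pi^{*}d(\langle\mathcal Ju,v\rangle+\langle u,\mathcal Jv\rangle)$, which vanishes because $\langle\mathcal Ju,\mathcal Jv\rangle=\langle u,v\rangle$ and $\langle\mathcal Ju,v\rangle+\langle u,\mathcal Jv\rangle=0$. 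Thus $N_{\mathcal J}(u,v)=-N_{\mathcal J}(v,u)$. For skew-symmetry in the last two arguments I would instead use the symmetric form above together with axiom C4 in the shape $c(a,b,c)+c(a,c,b)=\pi(a)\langle b,c\rangle$. Pairing the eight terms of $N_{\mathcal J}(u,v,w)+N_{\mathcal J}(u,w,v)$ appropriately, each pair collapses to a term $\pi(\cdot)\langle\cdot,\cdot\rangle$, and using $\langle\mathcal Jv,\mathcal Jw\rangle=\langle v,w\rangle$ and $\langle\mathcal Jv,w\rangle+\langle v,\mathcal Jw\rangle=0$ the total sum is zero; hence $N_{\mathcal J}(u,v,w)=-N_{\mathcal J}(u,w,v)$.

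Finally, the transpositions $(1\,2)$ and $(2\,3)$ generate $S_3$, so the two skew-symmetries just established give total antisymmetry. Combined with the already-observed tensoriality in the third slot, total antisymmetry automatically propagates $C^\infty(M)$-linearity to all three slots (moving any argument into the third position only changes the value by an overall sign), so $N_{\mathcal J}(u,v,w)$ is indeed a $3$-form. The one step requiring genuine care—and the main obstacle—is the skew-symmetry in the last two arguments: this is where axiom C4 and the skew-adjointness of $\mathcal J$ have to be combined just right, and it is precisely this identity that lets us avoid a direct and considerably messier verification of tensoriality through the Leibniz rule C3.
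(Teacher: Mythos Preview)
Your proof is correct and follows essentially the same route as the paper's: both establish skew-symmetry in the last two arguments via axiom C4 applied to the expansion of $N_{\mathcal J}(u,v,w)+N_{\mathcal J}(u,w,v)$ (the pairing of terms and use of $\langle\mathcal Jv,\mathcal Jw\rangle=\langle v,w\rangle$, $\langle\mathcal Jv,w\rangle+\langle v,\mathcal Jw\rangle=0$ is identical), and both then propagate $C^\infty(M)$-linearity from the third slot to the others via total antisymmetry. The only cosmetic difference is in the first step: you prove $N_{\mathcal J}(u,v)=-N_{\mathcal J}(v,u)$ at the level of sections directly from the polarization of C5, whereas the paper cites the equivalent scalar identity $\langle [u,v]+[v,u],w\rangle=\pi(w)\langle u,v\rangle$; these are the same argument.
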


\begin{proof} 
The skew-symmetry of $N_{\mathcal J}(u, v, w)$ in the first two arguments follows from
(\ref{c-d}).  In order to prove the skew-symmetry of $N_{\mathcal J }(u, v, w)$ in the last two arguments 
we compute 
\begin{align}
 \nonumber&N_{\mathcal J} (u, v, w) +  N_{\mathcal J}(u, w, v)\\ 
 \nonumber&= \langle [\mathcal J u, \mathcal J v]- [u, v] ,w\rangle
+\langle [\mathcal J u, v]  + [u, \mathcal J v] , \mathcal J w\rangle\\
\nonumber& +  \langle [\mathcal J u, \mathcal J w] - [u, w]  ,v\rangle   +\langle [\mathcal J u, w] + [u, \mathcal J w]  , \mathcal J v\rangle\\
\nonumber&=  \langle [\mathcal J u, \mathcal J v], w\rangle 
+\langle \mathcal J v,  [\mathcal J u, w]\rangle - ( \langle  [u, v] ,w\rangle
+\langle  [u, w], v\rangle )\\
\nonumber & +  \langle [\mathcal J u,  v],\mathcal J  w\rangle  +
\langle [\mathcal J u,  \mathcal J w], v\rangle 
+\langle [u, \mathcal J  v],\mathcal J  w\rangle  +
\langle [ u,  \mathcal J w],\mathcal J  v\rangle .
\end{align}
Using the axiom C4) from the definition of Courant algebroids we obtain 
\begin{align}
\nonumber & N_{\mathcal J} (u, v, w) +  N_{\mathcal J}(u, w, v) \\
\nonumber & = \pi (\mathcal J u) \langle \mathcal J v, w\rangle -  \pi (u) \langle  v, w\rangle
+ \pi (\mathcal J u) \langle v, \mathcal J w\rangle + \pi (u) \langle\mathcal J   v,\mathcal J w\rangle =0.
\end{align}
We have proved that $N_{\mathcal J}= N_{\mathcal J}(u, v,w)$ is completely skew-symmetric. 
As  it is  $C^{\infty}(M)$-linear in $w$, it is $C^{\infty}(M)$-linear
also in $u$,  $v$. 
\end{proof}

\begin{defn}
A {\cmssl generalized almost hypercomplex structure}  on $E$ is a triple
$(\mathcal J_{1}, \mathcal J_{2}, \mathcal J_{3})$ 
of anti-commuting generalized almost complex structures such that $\mathcal J_{3} =\mathcal J_{1}\mathcal J_{2}.$ 
We say that $(\mathcal J_{1}, \mathcal J_{2}, \mathcal J_{3})$ is {\cmssl integrable} (is a {\cmssl generalized hypercomplex structure})
if all $\mathcal J_{i}$ are generalized  (integrable) complex structures.
\end{defn}

\subsection{Generalized K\"{a}hler and hyper-K\"{a}hler structures}

\begin{defn}  A {\cmssl generalized almost Hermitian} structure on $E$ is a 
pair $(G, \mathcal J)$ where $G$ is a generalized  metric and $\mathcal J$ a generalized
almost complex structure on $E$, such that
$G(\mathcal J u, \mathcal J v) =  G(u, v)$ for all $u, v\in E.$ 
\end{defn}

The endomorphism
$G^{\mathrm{end}}$ of $E$ determined by $G$  (see Section \ref{gen-riemann}) commutes with
$\mathcal{J}_1 :=\mathcal{J}$ and $\mathcal J_{2} := G^{\mathrm{end}} \mathcal J$ is a generalized almost complex structure. The generalized almost complex structures $\mathcal J_{1}$ and $\mathcal J_{2}$ preserve $E_{\pm}$
(the $\pm1$-eigenbundles of $G^{\mathrm{end}}$).  
Moreover, $\mathcal J_{2} =\pm \mathcal J_{1}$ on $E_{\pm}.$ 

\begin{defn} A generalized almost Hermitian structure $(G, \mathcal J)$ is called a {\cmssl generalized K\"{a}hler structure}
if $\mathcal J_{1} = \mathcal J$ and $\mathcal J_{2} = G^{\mathrm{end}} \mathcal J_{1}$ are integrable.
\end{defn}

The argument from Proposition 2.17 of \cite{gualtieri-kahler}
works also in the setting of non-exact Courant algebroids and shows that
$(G, \mathcal J)$ is a generalized K\"{a}hler structure if and only if 
$L\cap (E_{\pm})_{\mathbb{C}}$  are closed under the
Dorfmann bracket $[\cdot ,\cdot ]$ of $E$, where $L$ denotes the $(1,0)$-bundle of $\mathcal J .$

\begin{defn}
i) A {\cmssl generalized  almost hyper-Hermitian structure}   is a generalized almost  
hypercomplex structure  $(\mathcal J_{1}, \mathcal J_{2}, \mathcal J_{3})$ 
together with a generalized metric $G$,  
such that $(G, \mathcal J_{i})$ is a generalized almost Hermitian structure, for all $i=1,2,3.$\

ii)   A generalized almost hyper-Hermitian structure  $(G, \mathcal J_{1},\mathcal J_{2}, \mathcal J_{3})$ 
is a {\cmssl generalized hyper-K\"{a}hler structure}  if   
$(G, \mathcal J_{i})$ is a generalized K\"{a}hler structure, for all $i=1,2,3.$\\
\end{defn}

\subsection{Intrinsic torsion of a generalized structure}

Given a linear Lie group $H\subset \mathrm{O}(k,\ell )$, a {\cmssl generalized $H$-structure} on a Courant algebroid $E$ 
with scalar product  $\langle \cdot , \cdot \rangle$ of signature $(k, \ell )$ is a reduction of the structure group of $E$ from $\mathrm{O}(k,\ell )$ to $H$. 

Let $(T_i^0)$ be a system of tensors on $\mathbb{R}^{k+\ell }$ and 
$Q= (T_{i})$ a system of tensor fields on $E$, such that $T_i$ is pointwise linearly equivalent to $T_i^0$. 
Then we associate to $Q$ the generalized $H$-structure defined as the  principal bundle of standard orthonormal frames 
of $E$ with respect to which each of the tensor fields $T_{i}$ takes the constant form $T_i^0$ and the scalar product is represented by 
the matrix $\mathrm{diag}(\mathbf{1}_k,-\mathbf{1}_\ell)$.
Examples  considered in this paper 
are: 
\begin{enumerate}
\item generalized almost complex structures $Q=\mathcal{J}$,  $H=\mathrm{U}(\frac{k}{2}, \frac{\ell}{2})$,

\item generalized almost hypercomplex structures $Q=(\mathcal{J}_1,\mathcal{J}_2,\mathcal{J}_3)$,
$H=\mathrm{Sp}(\frac{k}{4},\frac{\ell}{4} )$,
\item generalized Riemannian metrics $Q=G$,  $H= \mathrm{O}(k_{+}, l_{+}) \times \mathrm{O}(k_{-}, l_{-})$,
where $\langle \cdot , \cdot\rangle\vert_{E_{\pm}}$ has signature $(k_{\pm}, \ell_{\pm})$
(in particular, $k_{+} + k_{-} =k$ and $\ell_{+} + \ell_{-} = \ell$), 

\item generalized almost Hermitian structures $Q=(G,\mathcal{J})$,  $H= \mathrm{U}(\frac{k_{+}}{2},
\frac{\ell_{+}}{2}) \times \mathrm{U}(\frac{k_{-}}{2},
\frac{\ell_{-}}{2})$, where $\langle \cdot , \cdot\rangle\vert_{E_{\pm}}$ has signature $(k_{\pm}, \ell_{\pm})$,  and 
\item generalized almost hyper-Hermitian structures $Q=(G, \mathcal{J}_1,\mathcal{J}_2,\mathcal{J}_3)$, 
$H= 
\mathrm{Sp}(\frac{k_{+}}{4},\frac{\ell_{+}}{4}) \times \mathrm{Sp}(\frac{k_{-}}{4},\frac{\ell_{-}}{4}) $ .
\end{enumerate}
We will refer to these simply as {\cmssl generalized $H$-structures} $Q$.

In analogy with the classical case (see e.g.\ \cite{sternberg}), given a 
generalized $H$-structure  $Q$ on  $E$  we may consider the space of  generalized connections $D$ 
which preserve, or are adapted, to  $Q$ (i.e.  $DQ=0$). 
 A generalized connection adapted to a generalized  almost complex (respectively, hypercomplex) structure 
$Q=\mathcal J$ (respectively, $Q = (\mathcal J_{1}, \mathcal J_{2}, \mathcal J_{3})$)  will be called {\cmssl complex} 
(respectively, {\cmssl hypercomplex}).\

\begin{lem} Any generalized $H$-structure $Q$ on a Courant algebroid $E$ admits an adapted generalized connection.
\end{lem}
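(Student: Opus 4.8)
The plan is to follow the classical strategy for the existence of connections adapted to a $G$-structure: construct adapted generalized connections locally, where the problem is essentially trivial, and then patch them together by means of a partition of unity. The two facts that make the patching work are that the generalized connections adapted to $Q$ form an affine space and that an affine combination of generalized connections, with coefficients summing to $1$, is again a generalized connection.

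For the local step I would use the definition of the generalized $H$-structure directly. By definition it is the principal $H$-subbundle of the orthonormal frame bundle consisting of those standard orthonormal frames in which the scalar product is represented by $\mathrm{diag}(\mathbf{1}_k,-\mathbf{1}_\ell)$ and each tensor field $T_i$ takes the constant model form $T_i^0$. Over a sufficiently small open set $U\subset M$ this principal bundle admits a local section, i.e.\ a local frame field $(s_\alpha)$ of $E|_U$ which is orthonormal and in which $T_i\equiv T_i^0$. I would then define an $E$-connection $D^U$ on $E|_U$ by declaring this frame to be parallel, $D^U_e s_\alpha=0$ for all $e$, and extending by $\mathbb{R}$-linearity and the Leibniz rule; tensoriality in $e$ holds since $\pi(e)$ is tensorial in $e$. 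Because the frame is orthonormal the scalar products $\langle s_\alpha,s_\beta\rangle$ are constant, so $D^U$ is compatible with $\langle\cdot,\cdot\rangle$ and hence is a generalized connection; because each $T_i$ has constant components $T_i^0$ in a parallel frame, $D^U T_i=0$, i.e.\ $D^U$ is adapted to $Q$.

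For the global step I would choose a locally finite open cover $\{U_\alpha\}$ carrying local adapted generalized connections $D^\alpha$ as above, together with a subordinate partition of unity $\{\phi_\alpha\}$, and set $D:=\sum_\alpha\phi_\alpha D^\alpha$. Since $\sum_\alpha\phi_\alpha=1$, the operator $D$ satisfies the Leibniz rule and is therefore an $E$-connection, and the compatibility with the scalar product is preserved, so $D$ is a generalized connection. To see that $D$ is adapted, I would use that the connection induced by $D$ on any tensor bundle is the corresponding affine combination $\sum_\alpha\phi_\alpha D^\alpha$ of the induced connections; since each $D^\alpha_e T_i$ vanishes on the support of $\phi_\alpha$, this gives $D_e T_i=\sum_\alpha\phi_\alpha D^\alpha_e T_i=0$, hence $DQ=0$.

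The only point requiring care — the main (mild) obstacle — is precisely this last claim, that the partition-of-unity combination remains adapted, i.e.\ that the induced covariant derivative on the tensor bundle carrying $T_i$ is again the affine combination of the $D^\alpha$. For endomorphism-type tensors, which covers all the examples in the list (since $G$ may be encoded through $G^{\mathrm{end}}$), this is immediate and does not even use $\sum_\alpha\phi_\alpha=1$; for general tensors one verifies it first on $E^*$ and extends by Leibniz, and there the relation $\sum_\alpha\phi_\alpha=1$ is exactly what makes the inhomogeneous $\pi(e)$-term come out correctly. An alternative argument, available whenever $H$ is reductive (as it is for all the groups $\mathrm{U}$, $\mathrm{Sp}$ and their products occurring here), would start from any generalized connection and project its $\mathfrak{so}(k,\ell)$-valued connection form onto $\mathfrak{h}$ along an $\mathrm{Ad}(H)$-invariant complement $\mathfrak{m}$; the patching argument has, however, the advantage of not requiring reductivity of $H$.
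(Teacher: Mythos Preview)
Your proof is correct, but it takes a different and somewhat longer route than the paper's. The paper simply observes that any principal connection on the $H$-reduction induces an ordinary linear connection $\nabla$ on $E$ for which both the scalar product and the tensors $T_i$ are parallel (this is the standard associated-bundle fact), and then pulls $\nabla$ back along the anchor as in Example~\ref{ext:exa}: $D_e:=\nabla_{\pi(e)}$. The existence of a principal connection is of course itself a partition-of-unity statement, but quoting it lets the paper avoid doing the patching explicitly at the level of $E$-connections.

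Your approach carries out the same idea by hand: you build the local adapted connections from local sections of the $H$-bundle (your $D^U$ is exactly $\nabla^{\mathrm{flat}}_{\pi(\cdot)}$ for the flat connection of the chosen frame) and then average. What you gain is that the argument is completely self-contained and never invokes principal bundle theory; what the paper gains is brevity and the observation that the adapted generalized connection can always be taken to come from an ordinary connection on $E$, a point that is occasionally useful later. Your closing remark about the reductive alternative (projecting the connection form onto $\mathfrak{h}$) is a nice third option, and your care about how the induced connection on tensor bundles behaves under affine combinations is well placed.
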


\begin{proof} Any connection on the principal $H$-bundle of frames which defines the generalized $H$-structure induces a 
connection on $E$ for which $Q$ and the scalar product $\langle \cdot ,\cdot \rangle$ are parallel. This connection extends as in Example \ref{ext:exa} to a generalized connection adapted to $Q$.
\end{proof}

The space of generalized connections adapted to a generalized $H$-structure $Q$  is an affine space, modelled on the vector space  of sections of the bundle $E^{*} \otimes \mathrm{ad}_{Q}( E)$,
where $\mathrm{ad}_{Q} (E)$ denotes the bundle of $2$-forms $\alpha\in \Lambda^{2}E^{*}$, adapted to $Q$.
By the latter condition we mean that $\mathrm{ad}_{\alpha}\in \mathrm{End} ({\mathcal T}_{E})$, defined as the natural extension of the action of $\alpha\in \Lambda^{2}E^{*}\cong \mathfrak{so}(E)\subset \mathrm{End}(E)$ to the tensor bundle ${\mathcal T}_{E}
= \oplus_{p, q} E^{p}\otimes (E^{*})^{q}$ of $E$,  
annihilates the tensor fields from $Q$. 
When $Q$ belongs to the above list,    
we require that $[\alpha ,\mathcal J ]=0$ and $[\alpha , G^{\mathrm{end}} ]=0$ when
$\mathcal J , G\in Q.$    These conditions are equivalent to $\mathcal{J}^*\alpha = \alpha$ and $(G^{\mathrm{end}})^*\alpha = \alpha$.  
Note that the fiber $\mathrm{ad}_{Q} (E)_x$, $x\in M$, of the bundle $\mathrm{ad}_{Q}(E)\rightarrow M$ is a Lie 
algebra isomorphic to the Lie algebra $\mathfrak{h}\subset \mathfrak{so}(k,\ell )$ of the structure group $H$.
The map 
\begin{equation}\label{linear-torsion}
\partial_{Q} : E^{*} \otimes \mathrm{ad}_{Q} (E) \rightarrow \Lambda^{3} E^{*},\ 
(\partial_{Q} \eta ) (u, v, w) := \eta (u, v,w) + \eta (w, u, v) +\eta (v, w, u)
\end{equation}
is called the {\cmssl algebraic torsion map}. From (\ref{formula-torsion}), the image of $T^{D}\in \Gamma (\Lambda^{3}E^{*})$ in the quotient bundle 
$(\Lambda^{3} E^{*})/\mathrm{im}\,  \partial_{Q}$  
is independent of the choice of adapted generalized  connection $D$. 

\begin{defn} \label{intrinsic_torsionDef} Let $Q$ be a  generalized $H$-structure on $E$ and $D$ 
an adapted generalized connection.  The class of $T^{D}$ in  
$(\Lambda^{3}E^{*})/ \mathrm{im}\,  \partial_{Q}$  is called the {\cmssl  intrinsic torsion} of $Q$.
\end{defn}

We shall often consider the intrinsic torsion as a $3$-form on $E$, by choosing a suitable 
complement $C(E)$ of $\mathrm{im}\,  \partial_{Q}$ in $\Lambda^{3}E^{*}$ and identifying 
the quotient   $(\Lambda^{3}E^{*})/ \mathrm{im}\,  \partial_{Q}$  with $C(E).$

\begin{defn}\label{prolonDef}
The {\cmssl generalized first prolongation} 
of a Lie algebra $\mathfrak{h}\subset \mathfrak{so}(k,\ell )$ $\cong \Lambda^2 V^*$, $V :=\mathbb{R}^{k+\ell}$,  is the subspace 
\[ \mathfrak{h}^{\langle 1\rangle} := \{ \eta \in V^*\otimes \mathfrak{h} \mid \partial \eta =0\} \subset 
V^*\otimes  \Lambda^{2} V^{*},\]
where $\partial : V^*\otimes  \Lambda^2 V^* \rightarrow \Lambda^3V^*$ is defined
by: 
\[ (\partial \eta)(u,v,w) := \eta (u, v,w) + \eta (w, u, v) +\eta (v, w, u),\quad u,v,w\in V.\]
\end{defn}

\begin{prop} \label{connProp} Let $Q$ be a generalized $H$-structure on a Courant algebroid $E$ and $D_0$ 
an adapted generalized connection with torsion $T^{D_{0}}$. Given a section $T\in \Gamma (\Lambda^3E^*)$
there exists a generalized connection $D$ adapted to $Q$ with torsion $T^D=T$ if and only if 
$T-T^{D_0}\in \Gamma (\mathrm{im}\, \partial_Q)$. The generalized connection $D$ is unique 
up to addition of a section of $\mathrm{ker}\, \partial_Q \subset E^*\otimes \mathrm{ad}_{Q}(E)$. 
It is unique if and only if the generalized first prolongation $\mathfrak{h}^{\langle 1\rangle}$ of the 
Lie algebra $\mathfrak{h} \subset \mathfrak{so}(k,\ell )$ of the structure group $H$ vanishes. 
\end{prop}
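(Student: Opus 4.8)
The plan is to reduce everything to the algebraic torsion map $\partial_Q$ and its kernel and image. First I would recall the setup from the excerpt: the space of generalized connections adapted to $Q$ is an affine space modelled on $\Gamma(E^* \otimes \mathrm{ad}_Q(E))$, so any adapted $D$ can be written as $D = D_0 + \eta$ for a unique $\eta \in \Gamma(E^* \otimes \mathrm{ad}_Q(E))$. By formula (\ref{formula-torsion}) applied in this adapted setting, the torsion transforms as $T^D = T^{D_0} + \partial_Q \eta$, where $\partial_Q$ is exactly the algebraic torsion map of (\ref{linear-torsion}). This identity is the engine of the whole proof, since it turns the analytic question about connections into a pointwise linear-algebra question about the operator $\partial_Q$.

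With this in hand the existence claim is immediate. A generalized connection $D$ adapted to $Q$ with $T^D = T$ exists if and only if there is a section $\eta \in \Gamma(E^* \otimes \mathrm{ad}_Q(E))$ with $\partial_Q \eta = T - T^{D_0}$, i.e.\ if and only if $T - T^{D_0} \in \Gamma(\mathrm{im}\,\partial_Q)$. Here I would note that $\partial_Q$ is a pointwise (tensorial, $C^\infty(M)$-linear) bundle map, so solvability of the equation section-wise is equivalent to fibrewise solvability, and the image bundle $\mathrm{im}\,\partial_Q$ is well defined because $\partial_Q$ has constant rank (its fibres are all isomorphic to the fixed model $\partial : V^* \otimes \mathfrak{h} \to \Lambda^3 V^*$ via any adapted frame). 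For the uniqueness statement, if $D = D_0 + \eta$ and $\tilde D = D_0 + \tilde\eta$ are both adapted with the same torsion $T$, then $\partial_Q(\eta - \tilde\eta) = 0$, so $D$ is determined only up to addition of a section of $\ker\partial_Q \subset E^* \otimes \mathrm{ad}_Q(E)$, and conversely any such section preserves both adaptedness and torsion.

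Finally I would identify $\ker\partial_Q$ with the generalized first prolongation. The fibre $\mathrm{ad}_Q(E)_x$ is isomorphic, via a standard orthonormal frame adapted to $Q$, to the fixed Lie algebra $\mathfrak{h} \subset \mathfrak{so}(k,\ell) \cong \Lambda^2 V^*$, and under this isomorphism the fibrewise operator $\partial_Q$ corresponds to the map $\partial : V^* \otimes \mathfrak{h} \to \Lambda^3 V^*$ of Definition \ref{prolonDef} (the defining formulas (\ref{linear-torsion}) and the displayed formula in that definition coincide). Hence $(\ker\partial_Q)_x \cong \mathfrak{h}^{\langle 1\rangle}$, so $\ker\partial_Q$ is the zero bundle precisely when $\mathfrak{h}^{\langle 1\rangle} = 0$, which is exactly the stated uniqueness criterion.

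The only genuinely nontrivial point, and the step I would be most careful about, is the transformation law $T^D = T^{D_0} + \partial_Q \eta$: one must verify that when $\eta$ takes values in $\mathrm{ad}_Q(E)$ (so that $D_0 + \eta$ is still \emph{adapted} to $Q$), the general torsion-change formula (\ref{formula-torsion}) restricts to precisely the algebraic torsion map $\partial_Q$ rather than some larger operator. This is really just matching the symmetrization in (\ref{formula-torsion}) with the definition (\ref{linear-torsion}), but it is where adaptedness is used, and it is the place where a sign or an index slip would invalidate the argument; everything else is formal linear algebra over the identification of $\mathrm{ad}_Q(E)$ with $\mathfrak{h}$.
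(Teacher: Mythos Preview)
Your proposal is correct and follows essentially the same approach as the paper: write $D = D_0 + \eta$ with $\eta \in \Gamma(E^* \otimes \mathrm{ad}_Q(E))$, invoke the torsion-change formula $T^D = T^{D_0} + \partial_Q\eta$, and identify the fibres of $\ker\partial_Q$ with $\mathfrak{h}^{\langle 1\rangle}$. Your added remarks on constant rank and fibrewise solvability are helpful elaborations; the only quibble is that your final paragraph slightly overstates the subtlety---since $\partial_Q$ is \emph{defined} as the restriction of the cyclic sum (\ref{formula-torsion}) to $E^*\otimes \mathrm{ad}_Q(E)$, the matching of (\ref{formula-torsion}) with (\ref{linear-torsion}) is tautological rather than a place where adaptedness is genuinely ``used''.
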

\begin{proof} This is obtained by writing an arbitrary adapted generalized connection as 
$D=D_0+\eta$, where $\eta \in \Gamma (E^*\otimes \mathrm{ad}_{Q}(E))$ and observing 
that $T^D = T^{D_0} + \partial_Q\eta$, see (\ref{formula-torsion}). The last statement follows from the fact that 
the fibres of the bundle $\mathrm{ker}\, \partial_Q$ are isomorphic to $\mathfrak{h}^{\langle 1\rangle}$. 
\end{proof}

The next corollary  follows easily from Proposition \ref{connProp}.  

\begin{cor}\label{cor-torsion}
In the setting of Proposition \ref{connProp},  $Q$ admits an adapted torsion-free  generalized connection $D$  if and only if the intrinsic torsion of $Q$ vanishes.  
The generalized connection $D$ is unique if and only if $\mathfrak{h}^{\langle 1\rangle}=0.$
\end{cor}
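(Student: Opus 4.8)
The plan is to obtain the corollary by specializing Proposition \ref{connProp} to the case of vanishing torsion, i.e.\ by taking $T=0$. First I would invoke the preceding lemma to fix \emph{some} adapted generalized connection $D_0$ on $E$, with torsion $T^{D_0}\in\Gamma(\Lambda^3 E^*)$, so that Proposition \ref{connProp} is applicable. Applying that proposition with $T=0$, a torsion-free adapted generalized connection $D$ (i.e.\ one with $T^D=0$) exists if and only if $0-T^{D_0}=-T^{D_0}\in\Gamma(\mathrm{im}\,\partial_Q)$, equivalently $T^{D_0}\in\Gamma(\mathrm{im}\,\partial_Q)$.

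The only point to spell out is that this last condition is exactly the vanishing of the intrinsic torsion. By Definition \ref{intrinsic_torsionDef}, the intrinsic torsion of $Q$ is the class of $T^{D_0}$ in the quotient $(\Lambda^3 E^*)/\mathrm{im}\,\partial_Q$, and this class is zero precisely when $T^{D_0}\in\Gamma(\mathrm{im}\,\partial_Q)$. Since, as noted before Definition \ref{intrinsic_torsionDef}, the image of $T^{D_0}$ in the quotient is independent of the chosen adapted connection $D_0$, the equivalence does not depend on our initial choice of $D_0$, and the first assertion follows.

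For the uniqueness statement I would assume the intrinsic torsion vanishes, so that a torsion-free adapted connection $D$ exists. By the second part of Proposition \ref{connProp}, any torsion-free adapted connection is of the form $D+\xi$ with $\xi\in\Gamma(\ker\partial_Q)\subset\Gamma(E^*\otimes\mathrm{ad}_Q(E))$, and the fibres of $\ker\partial_Q$ are isomorphic to $\mathfrak{h}^{\langle 1\rangle}$. Hence $D$ is unique if and only if $\ker\partial_Q=0$, i.e.\ if and only if $\mathfrak{h}^{\langle 1\rangle}=0$. I do not expect any genuine obstacle here: the entire content is already packaged in Proposition \ref{connProp}, and the sole step requiring care is the identification of ``$T^{D_0}\in\Gamma(\mathrm{im}\,\partial_Q)$'' with ``the intrinsic torsion of $Q$ vanishes'', which is immediate from the definition of the intrinsic torsion as a class in $(\Lambda^3 E^*)/\mathrm{im}\,\partial_Q$.
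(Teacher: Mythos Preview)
Your proposal is correct and matches the paper's approach: the paper states only that the corollary ``follows easily from Proposition \ref{connProp}'', and your argument is precisely the intended specialization of that proposition to $T=0$, together with the observation that $T^{D_0}\in\Gamma(\mathrm{im}\,\partial_Q)$ is by definition the vanishing of the intrinsic torsion.
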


\subsection{Application to Born geometry}\label{section-born}

The vanishing of the generalized first prolongation for certain Lie subalgebras of $\mathfrak{so}(k,\ell )$ can be used to prove the 
uniqueness of certain connections (rather than generalized connections). As an example we mention the canonical connection in Born geometry defined in \cite{FRS}, 
which is related to string compactifications and more specifically to double field theory. 
Its uniqueness, proven in  \cite{FRS}, can be alternatively deduced from the vanishing of the generalized first prolongation of the diagonal $\mathfrak{so}(n)$-subalgebra 
\[ \Delta_{\mathfrak{so}(n)} = \{ (A,A) \in \mathfrak{so}(n) \oplus \mathfrak{so}(n)\mid A\in \mathfrak{so}(n) \}  \subset \mathfrak{so}(n,n).\] 
The corresponding diagonally embedded $\mathrm{O}(n)$-subgroup $\Delta_{\mathrm{O}(n)} \subset \mathrm{O}(n,n)$ is precisely 
the automorphism group of the following data on $V=\mathbb{R}^{2n}$:
\begin{enumerate}
\item a scalar product $\eta = \langle \cdot , \cdot \rangle$ of neutral signature,
\item a positive definite scalar product $g$ and 
\item a linear involution $K$,
\end{enumerate}
which satisfy the following compatibility conditions: 
\begin{enumerate}
\item  $K$ is skew-symmetric with respect to $\eta$, 
\item $J=\eta^{-1}g$ is an involution and  
\item $J$ anti-commutes with $K$. 
\end{enumerate}
These properties imply that the triple $(I:=KJ,J,K)$ is a {\cmssl para-hypercomplex structure} on $V$, i.e.\ 
$I, J, K$ pairwise anti-commute, $K=IJ$ and $-I^2= J^2=K^2 = \mathrm{Id}$. However, the structure is 
not para-hyper-Hermitian with respect to $\eta$ as only $K$ is skew-symmetric, whereas $I$ and $J$ are 
symmetric with respect to $\eta$. A quadruple $(\eta , I, J,K)$ with these properties is called a {\cmssl Born structure} on $V$ 
if the symmetric bilinear form $g= \eta J = \eta (J \cdot , \cdot )$ is positive definite. 
So the data $(\eta , g, K)$ with the above compatibility relations 1.-3.\ is equivalent to a Born structure $(\eta , I, J,K)$ on $V$. 

Given smooth tensor fields $(\eta, I, J,K)$ on a manifold $M$ such that  $(\eta_p, I_p, J_p$, $K_p)$ is a Born structure on $T_pM$ for all $p\in M$, the data
$(\eta, I, J,K)$ is called a {\cmssl Born structure} on $M$.  
It is proven in \cite{FRS} that every Born structure $(\eta, I, J,K)$ on a manifold $M$ admits a canonical compatible connection $\nabla$,  
called the {\cmssl Born connection},   
with vanishing {\cmssl generalized torsion} $\mathcal{T}_\nabla \in \Omega^3(M)$, where 
\[ \mathcal{T}_\nabla (X,Y,Z) := \eta (\nabla_XY-\nabla_YX-[X,Y]^c+(\nabla X)^*Y,Z)\]
is defined in terms of the so-called {\cmssl canonical D-bracket}
\[ [X,Y]^c = \nabla^c_XY-\nabla^c_YX +(\nabla^cX)^*Y.\]
Here $B^*$ denotes the $\eta$-adjoint of an endomorphism field $B$ and $\nabla^c := \nabla^\eta +\frac12K\nabla^\eta K$ is the  
canonical connection compatible with the almost para-Hermitian structure $(\eta ,K)$, where $\nabla^\eta$ denotes the Levi-Civita connection of $\eta .$
The Born connection is denoted $\nabla^B$ and is defined by 
\[ \nabla^B_XY := [X_-,Y_+]_+^c + [X_+,Y_-]^c_- + (K[X_+,KY_+]^c)_+ + (K[X_-,KY_-]^c)_-,\]
where $\pm$ stands for the projections onto the eigendistributions of $J$. 
It is clear that any two connections $\nabla'$ and $\nabla$ compatible with the same Born structure and 
having the same generalized torsion differ by a section $A\in \Gamma (T^*M \otimes \mathfrak{so}(TM))$ in the kernel of the 
total skew-symmetrization map $\partial$, 
such that $A_p$ belongs to the generalized first prolongation of the Lie algebra $\mathrm{aut}(T_pM,\eta_p,I_p.J_p,K_p) \cong 
\Delta_{\mathfrak{so}(n)}$. This shows, in particular, that the uniqueness of $\nabla^B$ follows from the next lemma. 
\begin{lem} $\Delta_{\mathfrak{so}(n)}^{\langle 1\rangle }=0$. 
\end{lem}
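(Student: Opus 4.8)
The plan is to work with an adapted basis of $V=\mathbb{R}^{2n}$ and exploit the block structure of $\Delta_{\mathfrak{so}(n)}$. First I would fix the orthogonal decomposition $V=V_{+}\oplus V_{-}$ into the two $n$-dimensional subspaces on which the neutral scalar product $\langle\cdot,\cdot\rangle$ is positive, respectively negative, definite, together with an adapted basis $e_{1},\dots,e_{n}\in V_{+}$, $f_{1},\dots,f_{n}\in V_{-}$ satisfying $\langle e_{i},e_{j}\rangle=\delta_{ij}$, $\langle f_{i},f_{j}\rangle=-\delta_{ij}$ and $\langle e_{i},f_{j}\rangle=0$, chosen so that the embedding $\mathfrak{so}(n)\hookrightarrow\mathfrak{so}(n)\oplus\mathfrak{so}(n)\subset\mathfrak{so}(n,n)$ sends a skew matrix $A=(A_{ij})$ to the endomorphism $B$ with $Be_{i}=\sum_{j}A_{ji}e_{j}$ and $Bf_{i}=\sum_{j}A_{ji}f_{j}$. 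In this way every $B\in\Delta_{\mathfrak{so}(n)}$ preserves $V_{\pm}$ and acts by the \emph{same} skew-symmetric matrix on both summands; this equal-blocks property is the only feature of the diagonal embedding I will use.

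Next I would take an arbitrary element $\eta\in\Delta_{\mathfrak{so}(n)}^{\langle 1\rangle}$, that is, a linear map $\eta\colon V\to\Delta_{\mathfrak{so}(n)}$ with $\partial\eta=0$, and pass to the associated $3$-tensor $\eta(u,v,w):=\langle\eta(u)v,w\rangle$, which is skew-symmetric in its last two arguments because $\eta(u)\in\mathfrak{so}(n,n)$. The structural observation driving the proof is that, since each $\eta(u)$ is block-diagonal, $\eta(u,v,w)=0$ whenever $v$ and $w$ lie in different summands $V_{+},V_{-}$.

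The key step is then to evaluate $\partial\eta(u,v,w)=\eta(u,v,w)+\eta(w,u,v)+\eta(v,w,u)=0$ on vectors chosen so that at most one cyclic term survives. Taking $u\in V_{-}$ and $v,w\in V_{+}$, the two terms $\eta(w,u,v)$ and $\eta(v,w,u)$ vanish by the block observation, since each has one $V_{+}$- and one $V_{-}$-argument in its last two slots; this leaves $\langle\eta(u)v,w\rangle=0$ for all $v,w\in V_{+}$. Non-degeneracy of $\langle\cdot,\cdot\rangle|_{V_{+}}$ forces $\eta(u)|_{V_{+}}=0$, and the equal-blocks property then upgrades this to $\eta(u)=0$ for every $u\in V_{-}$. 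The symmetric choice $u\in V_{+}$, $v,w\in V_{-}$ yields $\eta(u)=0$ for every $u\in V_{+}$, whence $\eta=0$, as desired.

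I expect the only genuine subtlety to be the necessity of the diagonal identification, and this is the point I would handle with care. If one dropped it and worked with $\mathfrak{so}(n)\oplus\mathfrak{so}(n)$ itself, the mixed evaluations above would still force $\eta(V_{-})|_{V_{+}}=0$ and $\eta(V_{+})|_{V_{-}}=0$, but the purely same-block evaluations $u,v,w\in V_{+}$ (respectively $V_{-}$) reduce merely to membership in the kernel of the skew-symmetrization $V_{+}^{*}\otimes\Lambda^{2}V_{+}^{*}\to\Lambda^{3}V_{+}^{*}$, which is nonzero. It is precisely the coupling of the two blocks by the diagonal condition that promotes ``$\eta(u)$ vanishes on the opposite summand'' to ``$\eta(u)$ vanishes'', so the equal-blocks step is where the argument really rests.
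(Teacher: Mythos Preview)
Your proof is correct and follows essentially the same route as the paper's. Both arguments fix the orthogonal splitting $V=V_{+}\oplus V_{-}$, use that every $\eta(u)\in\Delta_{\mathfrak{so}(n)}$ is block-diagonal with identical blocks, and evaluate $\partial\eta=0$ on a triple with one vector in one summand and two in the other so that two of the three cyclic terms vanish; the equal-blocks condition then promotes vanishing on one summand to $\eta(u)=0$. The paper merely phrases this in index notation tied to the Born tensors $J,K$ (the commutation with $J$ giving block-diagonality, the commutation with $K$ giving equal blocks), while you state the same two facts abstractly.
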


\begin{proof} Let $(e_1,\ldots ,e_n,e_1',\ldots ,e_n')$ be a basis of $V$,  orthonormal with respect to both $\langle\cdot , \cdot \rangle$ and $g$,
where 
$$
\langle e_{i} , e_{i}\rangle = g(e_{i}, e_{i}) =1,\ \langle e_{i}^{\prime}, e_{i}^{\prime} \rangle =  - g (e_{i}^{\prime}, e_{i}^{\prime} ) = -1, 
$$
such that $K(e_{i}) = e_{i}^{\prime}$, $K(e_{i}^\prime ) = e_{i}.$ Then $J(e_{i}) =e_{i}$ and $J(e_{i}^{\prime})= - e_{i}^{\prime}$, 
from where  we deduce that $A_{v}$ preserves $\mathrm{span} \{ e_1,\ldots ,e_n\}$ and  $\mathrm{span} \{ e^{\prime}_1,\ldots ,e^{\prime}_n\}$,
for any $A\in V^{*}\otimes \Delta_{\mathfrak{so}(n)}$ and $v\in V$
(since $A_{v}$  commutes with $J$). We deduce that  $A$  is completely determined by two $(1,2)$-tensors 
$(A_{ij}^k)$ and $({A'}_{ij}^k)$ on $\mathbb{R}^n$, where $A_{e_i}e_j=\sum_{k} A_{ij}^ke_k$ and $A_{e_i'}e_j=\sum_{k} {A'}_{ij}^ke_k$, since 
(from $A_{v} K = K A_{v}$ for any $v$) 
$A_{e_{i}} e_{j}^{\prime} = \sum_{k}A_{ij}^{k} e_{k}^{\prime}$ and $A_{e_{i}^{\prime}} e_{j}^{\prime} = \sum_{k}{A'}_{ij}^{k} e_{k}^{\prime}$.
From $\partial A=0$ we obtain 
\[ 0=\langle A_{e_i}e_j',e_k'\rangle + \langle A_{e_j'}e_k',e_i\rangle + \langle A_{e_k'}e_i,e_j'\rangle = -A_{ij}^k\]
and similarly ${A'}_{ij}^k=0$. This proves that $A=0$. 
\end{proof}

\section{Generalized almost complex structures}\label{sect-complex}

\subsection{Intrinsic torsion of a generalized almost complex  structure}

In this section we compute the intrinsic torsion $t_\mathcal{J}$ of a generalized almost complex structure $\mathcal{J}$ on a Courant algebroid $E$ and relate it to the Nijenhuis tensor $N_\mathcal{J}$.   
Before we need to recall basic facts on projectors. 
Recall that an endomorphism $P\in \Gamma (\mathrm{End}V)$ of a vector bundle $V$  is a projector onto a subbundle
$V_{0}\subset V$ if
$P^{2} = P$ and $\mathrm{im}\, P  = V_{0}.$ Then $V$ decomposes as $V =  V_{0}\oplus \ker  P $ and
$P$ (respectively $\mathrm{Id}- P$)  are the projections onto $V_ {0}$ (respectively $\ker P$) along this decomposition. 
In particular, there is a canonical choice of a complement of $\mathrm{im}\,  P$ in $V$, namely, $\ker P.$

Consider now  the algebraic torsion map
$\partial_{\mathcal J} : E^{*} \otimes 
 \Lambda^{1,1}_{\mathcal J} E^{*} \rightarrow \Lambda^{3} E^{*}$   of $\mathcal J$, defined by (\ref{linear-torsion}),   where $\mathrm{ad}_{\mathcal J} (E)= \Lambda^{1,1}_{\mathcal J}E^{*}$ is the bundle  of $\mathcal J$-invariant $2$-forms on $E$.

\begin{lem}\label{alg-complex} i) The  map $\Pi_{\mathcal J} \in\Gamma ( \mathrm{End}( \Lambda^{3} E^{*}))$ defined by
\begin{eqnarray}\label{PiJ:eq}
&(\Pi_{\mathcal J} \alpha)(u, v, w) :=\nonumber\\
&\frac{1}{4} \left( \alpha (u, v, w) - \alpha (u,\mathcal J v,
\mathcal J w) 
- \alpha (\mathcal J u, v, \mathcal J w)  - \alpha (\mathcal J u, \mathcal J v, w) \right)
\end{eqnarray}
is a projector onto the subbundle 
$$
\Lambda^{3}_{\mathcal J} E^{*} := \{ \alpha \in \Lambda^{3}E^{*},\ 
\alpha (\mathcal J u, v,  w) = \alpha (u, \mathcal J v,  w)= \alpha (u, v, \mathcal J w)\} .
$$
ii) The equality $\mathrm{im}\,   \partial_{\mathcal J} = \ker  \Pi_{\mathcal J} $ holds. 
\end{lem}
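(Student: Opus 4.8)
The plan is to pass to the complexification and decompose all forms into $\mathcal{J}$-types. Writing $E_{\mathbb{C}} = L\oplus\bar{L}$ for the splitting into the $(\pm i)$-eigenbundles of $\mathcal{J}$ (both isotropic, since $\mathcal{J}$ is orthogonal, and dually paired by $\langle\cdot,\cdot\rangle$), I obtain the usual type decomposition $\Lambda^k E^*_{\mathbb{C}} = \bigoplus_{p+q=k}\Lambda^{p,q}$, where $\Lambda^{1,0} = L^*$ and $\Lambda^{0,1} = \bar{L}^*$. A one-line eigenvalue computation, inserting $\mathcal{J}=\pm i$ on $L,\bar{L}$ into (\ref{PiJ:eq}), shows that $\Pi_{\mathcal{J}}$ acts as the identity on $\Lambda^{3,0}\oplus\Lambda^{0,3}$ and as zero on $\Lambda^{2,1}\oplus\Lambda^{1,2}$; this is consistent with part i), which gives $\Lambda^3_{\mathcal{J}}E^*\otimes\mathbb{C} = \Lambda^{3,0}\oplus\Lambda^{0,3}$, and it identifies $\ker\Pi_{\mathcal{J}}\otimes\mathbb{C} = \Lambda^{2,1}\oplus\Lambda^{1,2}$. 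So it suffices to prove $\mathrm{im}\,\partial_{\mathcal{J}}\otimes\mathbb{C} = \Lambda^{2,1}\oplus\Lambda^{1,2}$.

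The key observation is that $\partial_{\mathcal{J}}$ is nothing but the wedge product restricted to adapted forms: for $\eta = \xi\otimes\beta$ with $\xi\in E^*$ and $\beta\in\Lambda^2 E^*$ one checks directly from (\ref{linear-torsion}) that $\partial_{\mathcal{J}}(\xi\otimes\beta) = \xi\wedge\beta$, and the $\mathcal{J}$-invariant $2$-forms $\mathrm{ad}_{\mathcal{J}}(E) = \Lambda^{1,1}_{\mathcal{J}}E^*$ complexify exactly to $\Lambda^{1,1}$ (the $(2,0)$- and $(0,2)$-parts being $\mathcal{J}$-anti-invariant). Since $E^*_{\mathbb{C}} = \Lambda^{1,0}\oplus\Lambda^{0,1}$, the source splits as $(\Lambda^{1,0}\oplus\Lambda^{0,1})\otimes\Lambda^{1,1}$.

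For the inclusion $\mathrm{im}\,\partial_{\mathcal{J}}\subseteq\ker\Pi_{\mathcal{J}}$ I would argue purely by type: wedging a $(1,0)$-form with a $(1,1)$-form lands in $\Lambda^{2,1}$ and wedging a $(0,1)$-form with a $(1,1)$-form lands in $\Lambda^{1,2}$, so $\mathrm{im}\,\partial_{\mathcal{J}}\otimes\mathbb{C}\subseteq\Lambda^{2,1}\oplus\Lambda^{1,2}$. For the reverse inclusion I would prove surjectivity: using the isomorphisms $\Lambda^{1,1}\cong L^*\otimes\bar{L}^*$ and $\Lambda^{2,1}\cong\Lambda^2 L^*\otimes\bar{L}^*$, the restricted wedge map $L^*\otimes(L^*\otimes\bar{L}^*)\to\Lambda^2 L^*\otimes\bar{L}^*$ is just $(\wedge)\otimes\mathrm{id}_{\bar{L}^*}$, which is onto because $L^*\otimes L^*\to\Lambda^2 L^*$ is; the conjugate argument covers $\Lambda^{1,2}$. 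Combining the two inclusions gives the claim.

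I expect the reverse inclusion (surjectivity) to be the only real content; the forward inclusion is formal once $\partial_{\mathcal{J}}=\wedge$ is noted. The points needing care are the bookkeeping of conventions (which eigenbundle is $\Lambda^{1,0}$, and that $L,\bar{L}$ are isotropic so that $\Lambda^{p,q}$ is genuinely $\Lambda^p L^*\otimes\Lambda^q\bar{L}^*$) and the degenerate low-rank situations, where $\Lambda^{2,1}$ and $\Lambda^{1,2}$ simply vanish and the statement is trivial. If one prefers to avoid complexification, the inclusion $\mathrm{im}\,\partial_{\mathcal{J}}\subseteq\ker\Pi_{\mathcal{J}}$ can instead be obtained by a direct verification that $\Pi_{\mathcal{J}}\circ\partial_{\mathcal{J}}=0$, using the $\mathcal{J}$-invariance of $\eta$ in its last two slots, but the surjectivity is cleanest via the type decomposition.
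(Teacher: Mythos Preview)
Your argument is correct. The identification $\partial_{\mathcal J}(\xi\otimes\beta)=\xi\wedge\beta$ is valid (this is immediate from (\ref{linear-torsion})), and once that is in hand the type decomposition does the rest: the image of the wedge map $(\Lambda^{1,0}\oplus\Lambda^{0,1})\otimes\Lambda^{1,1}\to\Lambda^3$ is exactly $\Lambda^{2,1}\oplus\Lambda^{1,2}$, which you correctly identify with $\ker\Pi_{\mathcal J}\otimes\mathbb C$.

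The paper proceeds differently. After checking $\Pi_{\mathcal J}\circ\partial_{\mathcal J}=0$ directly, it produces an explicit right splitting
\[
\tilde\pi_{\mathcal J}:\Lambda^3E^*\to E^*\otimes\Lambda^{1,1}_{\mathcal J}E^*,\qquad (\tilde\pi_{\mathcal J}\alpha)(u,v,w)=\alpha(u,v,w)+\alpha(u,\mathcal Jv,\mathcal Jw),
\]
and verifies the identity $\Pi_{\mathcal J}=\mathrm{Id}-\tfrac{1}{4}\partial_{\mathcal J}\circ\tilde\pi_{\mathcal J}$, from which $\ker\Pi_{\mathcal J}\subset\mathrm{im}\,\partial_{\mathcal J}$ is immediate. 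Your approach is cleaner conceptually and explains \emph{why} the kernel is what it is; the paper's approach is more constructive, yielding an explicit preimage formula rather than mere surjectivity. That explicit pattern matters later: the hypercomplex analogue $\tilde\pi$ (equation (\ref{p-i})) and the corresponding identity $\partial_{\mathbb H}\circ\tilde\pi=6(\mathrm{Id}-P)$ are used verbatim in the construction of the torsion-free hypercomplex connection in Proposition \ref{gen-hypercomplex}, whereas a pure type-decomposition argument would have to be redone there.
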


\begin{proof}
It is straightforward to check  that $\mathrm{im}\, \Pi_{\mathcal J} \subset  \Lambda^{3}_{\mathcal J}E^{*}$.
Also, for all $\alpha \in \Lambda^{3}_{\mathcal J}E^{*}$, $\Pi_{\mathcal J}(\alpha ) =\alpha $. We obtain that 
$\mathrm{im}\, \Pi_{\mathcal J} =  \Lambda^{3}_{\mathcal J}E^{*}$ and 
$\Pi_{\mathcal J}^{2} = \Pi_{\mathcal J}$. Claim i) is proved. To prove claim ii), 
we notice that $\Pi_{\mathcal J}\circ \partial_{\mathcal J}=0$, i.e.   $\mathrm{im}\,  \partial_{\mathcal J} \subset  \ker   \Pi_{\mathcal J}$. Let
$$
\tilde{\pi}_{\mathcal J} : \Lambda^{3}E^{*}\rightarrow E^{*}\otimes \Lambda^{1,1}_{\mathcal J}E^{*},\  (\tilde{\pi}_{\mathcal J} \alpha )(u, v, w):=
\alpha (u, v, w) +\alpha (u, \mathcal J v, \mathcal J w).
$$
It is straightforward to check that $\Pi_{\mathcal J}=\mathrm{Id}_{\Lambda^{3}E^{*}}-\frac{1}{4} \partial_{\mathcal J}\circ 
\tilde{\pi}_{\mathcal J}$, which implies $\ker  \Pi_{\mathcal J} \subset \mathrm{im}\, \partial_{\mathcal J}.$  We proved that 
$\mathrm{im}\, \partial_{\mathcal J}  =  \ker  \Pi_{\mathcal J}$. 
\end{proof}

The next corollary follows from Lemma \ref{alg-complex} and our comments before this lemma.

\begin{cor}\label{complement}
With the notation from Lemma \ref{alg-complex},  
$\mathrm{im}\, \Pi_{\mathcal J}$ is a complement of $\mathrm{im}\, \partial_{\mathcal J}$ in $\Lambda^{3}E^{*}.$  
\end{cor}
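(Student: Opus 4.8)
The plan is to deduce this directly from the two assertions of Lemma \ref{alg-complex} together with the general fact about projectors recalled in the paragraph preceding that lemma. Recall that statement: if $P\in \Gamma(\mathrm{End}\, V)$ is a projector onto a subbundle $V_0\subset V$, then $V = V_0 \oplus \ker P = \mathrm{im}\, P \oplus \ker P$, so that $\ker P$ is a canonical complement of $\mathrm{im}\, P$ (and vice versa). The whole argument is to specialize this to $V = \Lambda^3 E^*$ and $P = \Pi_{\mathcal J}$.

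Concretely, I would proceed in three short steps. First, by Lemma \ref{alg-complex} i), $\Pi_{\mathcal J}$ is a projector onto $\Lambda^3_{\mathcal J}E^*$, hence the projector decomposition applies and yields
\[
\Lambda^3 E^* = \mathrm{im}\,\Pi_{\mathcal J} \oplus \ker \Pi_{\mathcal J}.
\]
Second, by Lemma \ref{alg-complex} ii), $\ker \Pi_{\mathcal J} = \mathrm{im}\,\partial_{\mathcal J}$. Substituting this into the displayed decomposition gives $\Lambda^3 E^* = \mathrm{im}\,\Pi_{\mathcal J} \oplus \mathrm{im}\,\partial_{\mathcal J}$, which is exactly the assertion that $\mathrm{im}\,\Pi_{\mathcal J}$ is a complement of $\mathrm{im}\,\partial_{\mathcal J}$ in $\Lambda^3 E^*$.

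I do not expect any genuine obstacle here: the corollary is purely formal once Lemma \ref{alg-complex} is in hand, since all the analytic content (that $\Pi_{\mathcal J}$ squares to itself and that its kernel coincides with the image of the algebraic torsion map) has already been established. The only thing to be careful about is that the decomposition is a fiberwise direct sum of vector bundles, so I would note that $\Pi_{\mathcal J}$, being a bundle endomorphism of constant rank (a projector), produces subbundles rather than merely subsets, but this is automatic from the projector property and needs no separate verification.
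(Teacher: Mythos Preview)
Your proof is correct and follows exactly the same approach as the paper: the corollary is deduced immediately from Lemma~\ref{alg-complex} together with the projector decomposition recalled just before it.
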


From Corollary \ref{complement}, we can (and will)
identify the quotient $(\Lambda^{3} E^{*})/( \mathrm{im}\, \partial_{\mathcal J} )$ with $\mathrm{im}\, \Pi_{\mathcal J} = \Lambda^{3}_{\mathcal J} E^{*}$ 
and  consider the intrinsic torsion $t_{\mathcal J}$ of $\mathcal J$ as a section of $ \Lambda^{3}_{\mathcal J }E^{*}$.
On the other hand, 
$N_{\mathcal J}(\mathcal J u, v) = - \mathcal J N_{\mathcal J}(u, v)$ (easy check),  which implies that 
$N_{\mathcal J}$,   
considered as a $3$-form 
(see  Lemma \ref{nj-3form}),  is also a section of $\Lambda^{3}_{\mathcal J}E^{*}$. Up to a constant, 
$N_{\mathcal J}$ and $t_{\mathcal J}$ coincide. More 
precisely,  we have the following result. 

\begin{cor}\label{pi-d} 
The torsion $T^{D}$ of a  generalized connection 
$D$ with $D\mathcal J =0$ satisfies
\begin{equation}\label{torsion-complex}
T^{D} (u, v, w) - T^{D} (\mathcal J u, v,\mathcal J w) - T^{D}( u, \mathcal J v, \mathcal J w) - T^{D}( \mathcal J u,
\mathcal J v, w) =N_{\mathcal J}(u, v, w),
\end{equation}
for all $u, v, w\in \Gamma (E).$  In particular,   $t_{\mathcal J }= \frac{1}{4} N_{\mathcal J}$ (viewed as $3$-forms on $E$).
\end{cor}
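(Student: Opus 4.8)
The plan is to establish the pointwise identity (\ref{torsion-complex}) by a direct computation, and then to read off the equality $t_{\mathcal J}=\tfrac14 N_{\mathcal J}$ by comparing it with the explicit projector $\Pi_{\mathcal J}$ of Lemma \ref{alg-complex}. The starting point is to rewrite the Nijenhuis $3$-form purely in terms of scalar products of Dorfman brackets. Since $\mathcal J$ is orthogonal with $\mathcal J^2=-\mathrm{Id}$, it is skew-adjoint, i.e.\ $\langle \mathcal J a, b\rangle = -\langle a, \mathcal J b\rangle$ and $\langle \mathcal J a, \mathcal J b\rangle = \langle a,b\rangle$. Pairing the definition (\ref{def-nj}) of $N_{\mathcal J}$ with $w$ and using $-\langle \mathcal J(\,\cdot\,),w\rangle = \langle \cdot,\mathcal J w\rangle$, I would obtain
\[
N_{\mathcal J}(u,v,w) = \langle [\mathcal J u,\mathcal J v],w\rangle - \langle [u,v],w\rangle + \langle [\mathcal J u, v],\mathcal J w\rangle + \langle [u,\mathcal J v],\mathcal J w\rangle .
\]

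Next I would substitute each of the four brackets using (\ref{courant-br}), which expresses $\langle [a,b],c\rangle$ in terms of $T^{D}(a,b,c)$ and the covariant derivatives $D_a b$, $D_b a$, $D_c a$. Here the hypothesis $D\mathcal J = 0$ enters decisively: it lets me pull $\mathcal J$ through every covariant derivative (for instance $D_{\mathcal J u}\mathcal J v = \mathcal J D_{\mathcal J u} v$), after which skew-adjointness moves the resulting factors of $\mathcal J$ across the scalar products. Once this is done, the four torsion contributions assemble exactly into the left-hand side $T^{D}(u,v,w) - T^{D}(\mathcal J u, v,\mathcal J w) - T^{D}(u,\mathcal J v,\mathcal J w) - T^{D}(\mathcal J u,\mathcal J v, w)$ of (\ref{torsion-complex}). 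The main bookkeeping obstacle is the cancellation of the remaining derivative terms: after the rewriting there are six distinct types, such as $\langle D_w u, v\rangle$, $\langle D_u v, w\rangle$ and $\langle D_{\mathcal J u} v, \mathcal J w\rangle$, and I would check that each occurs exactly twice with opposite signs, so that they all cancel and only the torsion terms survive. This pairwise cancellation is the heart of the argument and is precisely what makes the assumption $D\mathcal J=0$ indispensable; without it the derivative terms would not match up.

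Finally, for the ``in particular'' statement I would invoke Corollary \ref{complement}: under the identification of $(\Lambda^3 E^*)/\mathrm{im}\,\partial_{\mathcal J}$ with $\mathrm{im}\,\Pi_{\mathcal J}=\Lambda^3_{\mathcal J}E^*$, the intrinsic torsion $t_{\mathcal J}$ is represented by $\Pi_{\mathcal J}(T^{D})$ for any adapted $D$, because $\Pi_{\mathcal J}$ is exactly the projection along $\ker\Pi_{\mathcal J}=\mathrm{im}\,\partial_{\mathcal J}$. Comparing the defining formula (\ref{PiJ:eq}) for $\Pi_{\mathcal J}$ with the identity (\ref{torsion-complex}) just proved, one sees that $(\Pi_{\mathcal J} T^{D})(u,v,w) = \tfrac14 N_{\mathcal J}(u,v,w)$, whence $t_{\mathcal J} = \tfrac14 N_{\mathcal J}$ as $3$-forms, consistently with the fact, noted above, that $N_{\mathcal J}\in\Gamma(\Lambda^3_{\mathcal J}E^*)$.
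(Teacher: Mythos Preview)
Your proposal is correct and follows essentially the same approach as the paper: substitute the expression for the Dorfman bracket in terms of $D$ and $T^D$ (your use of (\ref{courant-br}) is just the scalar form of the paper's (\ref{courant-torsion})) into the definition of $N_{\mathcal J}$, use $D\mathcal J=0$ to move $\mathcal J$ through covariant derivatives so that the derivative terms cancel pairwise, and then recognize the resulting identity as $4\,\Pi_{\mathcal J}(T^D)=N_{\mathcal J}$. The paper's proof merely compresses this computation into one sentence, but the content is the same.
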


\begin{proof} 
Relation (\ref{torsion-complex}) follows from  (\ref{def-nj}), 
together  with
\begin{equation}\label{courant-torsion}
[u, v] = D_{u}v - D_{v} u +  (Du)^{*} v  - T^{D}(u, v)
\end{equation}
and $D\mathcal J =0$. 
Relation (\ref{torsion-complex})  can be written as 
\begin{equation}\label{courant-torsion-new}
\Pi_{\mathcal J} (T^{D}) =  \frac{1}{4} N_{\mathcal J}
\end{equation}
 which implies the second statement.
\end{proof}

\subsection{Integrability using torsion-free generalized connections}

In this section we prove the following theorem.

\begin{thm}\label{complex:thm} A generalized almost complex structure $\mathcal J$ on $E$  is integrable if and only if there is a torsion-free generalized connection $D$ such that $D\mathcal J =0.$  
\end{thm}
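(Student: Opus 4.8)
The plan is to derive the theorem as an immediate consequence of the intrinsic torsion machinery, namely Corollary \ref{cor-torsion} together with the identification of the intrinsic torsion with the Nijenhuis tensor in Corollary \ref{pi-d}. The entire statement then reduces to a short chain of equivalences.

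First I would recall that $\mathcal J$ defines a generalized $\mathrm{U}(\frac{k}{2},\frac{\ell}{2})$-structure on $E$, so by the lemma guaranteeing that every generalized $H$-structure admits an adapted generalized connection, a complex generalized connection exists and the intrinsic torsion $t_{\mathcal J}$ is well-defined. By Corollary \ref{pi-d} we have $t_{\mathcal J} = \frac14 N_{\mathcal J}$, viewed as $3$-forms on $E$; in particular $t_{\mathcal J} = 0$ if and only if $N_{\mathcal J} = 0$, that is, if and only if $\mathcal J$ is integrable.

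The theorem now follows from Corollary \ref{cor-torsion}, which asserts that $\mathcal J$ admits a torsion-free adapted (complex) generalized connection precisely when its intrinsic torsion vanishes. Combining this with the previous equivalence gives the desired chain: a torsion-free generalized connection $D$ with $D\mathcal J = 0$ exists if and only if $t_{\mathcal J} = 0$, if and only if $N_{\mathcal J} = 0$, if and only if $\mathcal J$ is integrable. Alternatively, for the direction from the existence of a torsion-free complex $D$ to integrability, one may argue directly from (\ref{courant-torsion-new}): setting $T^{D} = 0$ yields $\frac14 N_{\mathcal J} = \Pi_{\mathcal J}(T^{D}) = 0$, hence $N_{\mathcal J} = 0$.

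I do not anticipate a genuine obstacle here, since the substantive computations have already been completed: identifying $\mathrm{im}\,\partial_{\mathcal J}$ with $\ker\Pi_{\mathcal J}$ (Lemma \ref{alg-complex}), hence pinning down a canonical complement and the intrinsic torsion, and then showing $t_{\mathcal J} = \frac14 N_{\mathcal J}$ (Corollary \ref{pi-d}). The only subtlety worth flagging is the logical one of ensuring that $t_{\mathcal J}$ is well-defined before equating it with $\frac14 N_{\mathcal J}$, which is handled by the existence of an adapted generalized connection.
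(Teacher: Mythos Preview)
Your argument is correct: combining Corollary~\ref{cor-torsion} with Corollary~\ref{pi-d} yields the theorem in one line, and the logical dependencies (existence of an adapted connection, identification of $t_{\mathcal J}$ with $\tfrac14 N_{\mathcal J}$) are all in place before the statement.

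The paper, however, takes a different and more constructive route for the converse direction. Rather than invoking the abstract existence result of Corollary~\ref{cor-torsion}, it exhibits an explicit generalized complex connection $\tilde{D}$ (Proposition~\ref{essential}), built from an arbitrary torsion-free $D$ by the formula
\[
\tilde{D}_u = D_u - \tfrac14\{A_u^{\mathrm{sym}},\mathcal J\} - \tfrac12\mathcal J(D_u\mathcal J),
\]
and computes its torsion to be exactly $\tfrac14 N_{\mathcal J}$. Your approach is shorter and entirely adequate for the theorem as stated. The paper's approach buys an explicit candidate connection, and this pays off later: in the proof of Theorem~\ref{integr-kahler} (the generalized K\"ahler case) one needs to verify that \emph{this particular} $\tilde{D}$ also preserves $G$, something the abstract existence argument does not supply. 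So while your proof closes the theorem cleanly, the paper's construction is doing double duty as preparation for the Hermitian and hyper-Hermitian cases.
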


Part of the statement of Theorem \ref{complex:thm} follows from 
Corollary  \ref{pi-d}: if there is a torsion-free generalized connection $D$ such that $D\mathcal J =0$ then 
from relation
(\ref{torsion-complex})  $\mathcal J$ is
integrable.  For the converse statement, let  $\mathcal J\in \Gamma ( \mathrm{End} E)$ be a generalized almost complex structure. 
We will  construct  a generalized complex connection  whose torsion equals the intrinsic torsion of
$\mathcal J$.  
Such a generalized connection will be torsion-free (and complex) when  $\mathcal J$ is integrable. 
This will conclude the proof of Theorem \ref{complex:thm}. The next remark represents
our motivation for the choice of
generalized connection $\tilde{D}$ in  Proposition \ref{essential}.

\begin{rem}\label{usual-case}{\rm  Given an almost  complex structure $J$ and a torsion-free connection 
$\nabla$ on a manifold $M$, the connection
\begin{equation}\label{conn-KN}
\tilde{\nabla}_{X}Y= \nabla_{X}Y -\frac{1}{4}\{ (\nabla J)X, J\} Y  -\frac{1}{2} J (\nabla_{X} J)Y,
\end{equation} 
where  $\{ A, B\} := AB + BA$  denotes the anti-commutator
of $A$ and $B$ and $(\nabla  J)X\in \Gamma (\mathrm{End}\, TM)$ is 
defined by $Y\rightarrow (\nabla_{Y}J)X$,   is complex ($\tilde{\nabla} J =0$) 
and 
its torsion satisfies $T^{\tilde{\nabla}}(X, Y) =\frac{1}{8} N_{J}(X, Y)$ 
(see Theorem 3.4 of \cite{KN};
remark the difference by a multiplicative factor between our definition for  the 
Nijenhuis tensor and that of \cite{KN}).   
In particular, if $J$ is integrable, then $\tilde{\nabla}$ is torsion-free (and complex). 
Now, for  a generalized almost complex structure $\mathcal J$ and a generalized
torsion-free connection $D$ on the Courant algebroid $E$, we may  define the analogous expression
\begin{equation}\label{conn-KN-gen}
\tilde{D}^{\prime}_{u}v= D_{u}v -\frac{1}{4}\{ (D \mathcal J)u, \mathcal J\} v  -\frac{1}{2} \mathcal J (D_{u} \mathcal J)v.
\end{equation}
However, $\tilde{D}^{\prime}$ defined by (\ref{conn-KN-gen})   is not a generalized connection
(while $\mathcal J D_{u}\mathcal J$ is  skew-symmetric with respect to the scalar product
$\langle\cdot , \cdot\rangle$ of $E$,  the anticommutator   
$\{ (D \mathcal J)u, \mathcal J\} $ is not
and $\tilde{D}^{\prime}$ does not preserve  $\langle\cdot , \cdot \rangle$,  in general; we shall give more details on this argument in 
Lemma \ref{easy}).   In the next lemma we modify $\tilde{D}^{\prime}$ in order to obtain a generalized
connection. It will turn out that it has the required properties.}
\end{rem}

\begin{prop}\label{essential}Let $\mathcal J$ be a generalized almost complex structure and $D$ a torsion-free generalized connection 
on $E$. Define
\begin{equation}\label{tilde-d}
\tilde{D}_{u}v = D_{u}v -\frac{1}{4} \{  A_{u}^{\mathrm{sym}}, \mathcal J \}v -\frac{1}{2} \mathcal J (D_{u} \mathcal J )v,
\end{equation}
where $A_{u}:= (D\mathcal J )u$ and $A_{u}^{\mathrm{sym}}$ is its $\langle\cdot , \cdot \rangle $-symmetric part. 
Then $\tilde{D}$ is a generalized connection, which preserves $\mathcal J$. Its torsion is given by 
\begin{equation}\label{torsion-tilded}
T^{\tilde{D}}(u, v, w) =\frac{1}{4}  N_{\mathcal J}(u, v, w). 
\end{equation}
In particular, if $\mathcal J$ is integrable, then $\tilde{D}$ is torsion-free (and complex). 

\end{prop}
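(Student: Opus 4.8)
The plan is to verify, in turn, that $\tilde{D}$ is a generalized connection, that it preserves $\mathcal J$, and finally that its torsion equals $\frac{1}{4}N_{\mathcal J}$; the last sentence of the proposition is then immediate, since $N_{\mathcal J}$ vanishes exactly when $\mathcal J$ is integrable. First I would observe that $\tilde{D}=D+\chi$, where $\chi_u:=-\frac{1}{4}\{A_u^{\mathrm{sym}},\mathcal J\}-\frac{1}{2}\mathcal J(D_u\mathcal J)$ is $C^\infty(M)$-linear in $u$ and acts as an endomorphism on the remaining argument. Hence $\tilde{D}$ inherits $\mathbb{R}$-linearity and the Leibniz rule from $D$, and for the first assertion the only point to check is that each $\chi_u$ is skew-symmetric with respect to $\langle\cdot,\cdot\rangle$, i.e.\ that $\chi_u\in\mathfrak{so}(E)\cong\Lambda^2E$.

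For skew-symmetry I would use two elementary facts. Since $\mathcal J$ is orthogonal with $\mathcal J^2=-\mathrm{Id}$, it is skew-symmetric, and since $D$ is metric it maps skew endomorphisms to skew endomorphisms, so $D_u\mathcal J$ is skew; differentiating $\mathcal J^2=-\mathrm{Id}$ gives the anticommutation relation $\{D_u\mathcal J,\mathcal J\}=0$, which makes $\mathcal J(D_u\mathcal J)$ skew. The anticommutator of the symmetric endomorphism $A_u^{\mathrm{sym}}$ with the skew endomorphism $\mathcal J$ is again skew. Thus $\chi_u\in\mathfrak{so}(E)$ and $\tilde{D}$ is a generalized connection. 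To obtain $\tilde{D}\mathcal J=0$ I would compute $(\tilde{D}_u\mathcal J)=D_u\mathcal J+[\chi_u,\mathcal J]$ and evaluate the commutator termwise: using that $\mathcal J^2$ is central one finds $[\{A_u^{\mathrm{sym}},\mathcal J\},\mathcal J]=0$, so the anticommutator term drops out, while $\{D_u\mathcal J,\mathcal J\}=0$ yields $[\mathcal J(D_u\mathcal J),\mathcal J]=2\,D_u\mathcal J$. Hence $[\chi_u,\mathcal J]=-D_u\mathcal J$ and $\tilde{D}_u\mathcal J=0$. It is worth noting that the term $-\frac{1}{2}\mathcal J(D_u\mathcal J)$ alone is responsible for preserving $\mathcal J$; the anticommutator term is $\mathcal J$-neutral and, as in the classical situation of Remark \ref{usual-case}, is present only to correct the torsion while keeping $\tilde{D}$ metric.

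For the torsion I would write $\tilde{D}=D+\eta$ with $\eta(u,v,w)=\langle\chi_uv,w\rangle$ and invoke (\ref{formula-torsion}) together with $T^D=0$, so that $T^{\tilde{D}}$ is the total skew-symmetrization (cyclic sum) of $\eta$. Since $\tilde{D}$ is a generalized connection preserving $\mathcal J$, Corollary \ref{pi-d} already gives $\Pi_{\mathcal J}(T^{\tilde{D}})=\frac{1}{4}N_{\mathcal J}$; as $N_{\mathcal J}\in\Lambda^3_{\mathcal J}E^*=\mathrm{im}\,\Pi_{\mathcal J}$, it remains only to show that the complementary component $(\mathrm{Id}-\Pi_{\mathcal J})T^{\tilde{D}}\in\mathrm{im}\,\partial_{\mathcal J}$ vanishes, for then $T^{\tilde{D}}=\Pi_{\mathcal J}(T^{\tilde{D}})=\frac{1}{4}N_{\mathcal J}$.

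To handle this last point I would split $\chi_u$ into its $\mathcal J$-commuting part $-\frac{1}{4}\{A_u^{\mathrm{sym}},\mathcal J\}$ (a $\mathcal J$-invariant $2$-form, lying in $\Lambda^{1,1}_{\mathcal J}E^*$) and its $\mathcal J$-anticommuting part $-\frac{1}{2}\mathcal J(D_u\mathcal J)$. The cyclic sum of the former is by construction of the shape $\partial_{\mathcal J}(\text{something})$, hence lies in $\mathrm{im}\,\partial_{\mathcal J}=\ker\Pi_{\mathcal J}$, while the cyclic sum of the latter carries the class $\frac{1}{4}N_{\mathcal J}$. The decisive step, which I expect to be the main obstacle, is to show that these two cyclic sums cancel precisely in $\mathrm{im}\,\partial_{\mathcal J}$, so that the $\ker\Pi_{\mathcal J}$-component of $T^{\tilde{D}}$ is zero. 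This is a direct but delicate computation: rewriting everything through the tensor $C(a,b,c):=\langle(D_a\mathcal J)b,c\rangle$ and exploiting its symmetries $C(a,b,c)=-C(a,c,b)$ and $C(a,\mathcal Jb,c)=C(a,b,\mathcal Jc)$, one must verify that the contributions organize themselves so that, with the specific coefficients $\frac{1}{4}$ and $\frac{1}{2}$, the off-diagonal parts annihilate each other. Everything else in the argument is formal; once this cancellation is established the final claim follows at once.
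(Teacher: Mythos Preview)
Your treatment of the first two claims—that $\tilde D$ is a generalized connection and that $\tilde D\mathcal J=0$—is correct and matches the paper's Lemma~\ref{easy} almost verbatim: the paper also introduces the intermediate connection $D^{(1)}:=D-\tfrac12\mathcal J(D\mathcal J)$ and observes that the anticommutator term commutes with $\mathcal J$, exactly as you do.

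The torsion computation, however, is left unfinished. You correctly reduce the problem, via Corollary~\ref{pi-d}, to showing that $(\mathrm{Id}-\Pi_{\mathcal J})T^{\tilde D}=0$, you identify the relevant tensor $C$ (the paper's $\eta$) and its two symmetries, but you then explicitly stop short of the ``decisive step''. That step \emph{is} the content of the proposition beyond what Corollary~\ref{pi-d} already gives for any complex generalized connection, so the proof is incomplete as written. The paper carries out precisely this computation, and it is short once organized as three separate identities in $\eta(u,v,w):=\langle(D_u\mathcal J)v,w\rangle$: first, $T^{D^{(1)}}(u,v,w)=\tfrac12\sum_{\mathrm{cyc}}\eta(u,v,\mathcal Jw)$ directly from (\ref{formula-torsion}); second, the key Lemma~\ref{nij}, namely $N_{\mathcal J}(u,v,w)=\sum_{\mathrm{cyc}}\bigl(\eta(u,v,\mathcal Jw)+\eta(\mathcal Ju,v,w)\bigr)$, obtained by expanding $N_{\mathcal J}$ through (\ref{courant-torsion}) and cyclically symmetrizing; third, $T^{\tilde D}-T^{D^{(1)}}=-\tfrac14\sum_{\mathrm{cyc}}\bigl(\eta(\mathcal Ju,w,v)+\eta(u,v,\mathcal Jw)\bigr)$ from (\ref{tilde-d-d1}) and (\ref{formula-torsion}). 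Adding these gives $T^{\tilde D}=\tfrac14 N_{\mathcal J}$ in one line. Your projector framing is a legitimate reorganization, but the same cyclic identities in $\eta$ are unavoidable, and you have not supplied them; in particular, the identity of Lemma~\ref{nij} is the one nontrivial ingredient and does not follow formally from the symmetries of $\eta$ alone.
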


We divide the proof of the above proposition into several lemmas.

\begin{lem}\label{easy} Equation (\ref{tilde-d}) defines a generalized complex connection. 
\end{lem}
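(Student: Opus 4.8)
The plan is to write $\tilde{D} = D + S$, where $S\in\Gamma(E^*\otimes\mathrm{End}\,E)$ is the endomorphism-valued $1$-form
\[ S_u := -\tfrac14\{A_u^{\mathrm{sym}},\mathcal J\}-\tfrac12\,\mathcal J\,(D_u\mathcal J), \]
recalling (as in Remark \ref{usual-case}) that $A_u$ is the endomorphism $v\mapsto(D_v\mathcal J)u$ and $A_u^{\mathrm{sym}}$ its $\langle\cdot,\cdot\rangle$-symmetric part. Since $S_u$ is $C^\infty(M)$-linear in $v$ and tensorial in $u$, the difference $\tilde{D}-D$ lies in $\Gamma(E^*\otimes\mathrm{End}\,E)$, so $\tilde{D}$ is automatically an $E$-connection. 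It then remains to verify two things: that $\tilde{D}$ is compatible with $\langle\cdot,\cdot\rangle$ (which, since $D$ already is, amounts to $S_u\in\mathfrak{so}(E)$ for every $u$), and that $\tilde{D}\mathcal J=0$.

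Both reduce to elementary algebra once I record the relevant adjointness and commutation relations. First, orthogonality of $\mathcal J$ together with $\mathcal J^2=-\mathrm{Id}$ gives $\mathcal J^*=-\mathcal J$. Writing $B_u:=D_u\mathcal J$ for the \emph{genuine} covariant derivative of $\mathcal J$ in the direction $u$, compatibility of $D$ with the scalar product shows (by the same manipulation as in the proof of Lemma \ref{nj-3form}) that $\langle B_u\,\cdot\,,\cdot\rangle$ is skew, i.e.\ $B_u^*=-B_u$; and differentiating $\mathcal J^2=-\mathrm{Id}$ yields $\{B_u,\mathcal J\}=0$, so $B_u$ anticommutes with $\mathcal J$. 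Finally, $A_u^{\mathrm{sym}}$ is self-adjoint by construction.

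For metric compatibility I would treat the two summands of $S_u$ separately. The anticommutator of the self-adjoint $A_u^{\mathrm{sym}}$ with the skew-adjoint $\mathcal J$ is skew-adjoint, since $(\{A_u^{\mathrm{sym}},\mathcal J\})^*=\mathcal J^*A_u^{\mathrm{sym}}+A_u^{\mathrm{sym}}\mathcal J^*=-\{A_u^{\mathrm{sym}},\mathcal J\}$. Likewise $(\mathcal J B_u)^*=B_u^*\mathcal J^*=(-B_u)(-\mathcal J)=B_u\mathcal J=-\mathcal J B_u$ by the anticommutation relation, so $\mathcal J B_u$ is skew-adjoint as well. Hence $S_u\in\mathfrak{so}(E)$ and $\tilde{D}$ is a generalized connection. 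For the equation $\tilde{D}\mathcal J=0$ I would compute $\tilde{D}_u\mathcal J=B_u+[S_u,\mathcal J]$ and show the bracket equals $-B_u$. The first summand contributes nothing, because $[\{P,\mathcal J\},\mathcal J]=P\mathcal J^2-\mathcal J^2P=0$ for any $P$ (using $\mathcal J^2=-\mathrm{Id}$); the second gives $-\tfrac12[\mathcal J B_u,\mathcal J]=-\tfrac12(\mathcal J B_u\mathcal J-\mathcal J^2B_u)$, and the anticommutation $B_u\mathcal J=-\mathcal J B_u$ yields $\mathcal J B_u\mathcal J=B_u$ and $\mathcal J^2B_u=-B_u$, so this equals $-B_u$. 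Therefore $\tilde{D}_u\mathcal J=B_u-B_u=0$.

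I expect the only real subtlety to be bookkeeping: keeping the two derivative-type objects distinct — the genuine derivative $B_u=D_u\mathcal J$, which enters the last term and controls the preservation of $\mathcal J$, versus the transposed-slot tensor $A_u\colon v\mapsto(D_v\mathcal J)u$, whose symmetric part enters the middle term — and tracking adjoints of products correctly. It is worth noting that the middle term is \emph{inert} for the $\mathcal J$-preservation computation and is present only to adjust the torsion, which is handled in the subsequent lemmas leading to Proposition \ref{essential}.
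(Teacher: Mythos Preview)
Your proof is correct and follows essentially the same approach as the paper. The only cosmetic difference is that the paper introduces the intermediate connection $D^{(1)}_u := D_u - \tfrac12\mathcal J D_u\mathcal J$ and first checks that $D^{(1)}$ preserves $\mathcal J$, then observes that $\{A_u^{\mathrm{sym}},\mathcal J\}$ commutes with $\mathcal J$; you instead compute the commutator $[S_u,\mathcal J]$ directly, which amounts to the same two observations.
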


\begin{proof}
Note that  $\mathcal J D_{u}\mathcal J$ is skew-symmetric with respect to 
the  scalar product $\langle\cdot , \cdot\rangle $ of $E$  ($D_{u}\mathcal J$ is skew-symmetric and  also
$\mathcal J D_{u}\mathcal J$ is  skew-symmetric, being the composition of two anti-commuting skew-symmetric endomorphisms). 
Similarly,  $\{  A_{u}^{\mathrm{sym}}, \mathcal J \}$ is skew-symmetric, 
because $A_{u}^{\mathrm{sym}}$ is symmetric and $\mathcal J$ is skew-symmetric.  
We obtain that $\tilde{D}_{u}$ and $D_{u}$ differ by a skew-symmetric endomorphism, i.e.\ $\tilde{D}$ is a generalized connection.  
The generalized connection 
\begin{equation} \label{D1:eq} D^{(1)}_{u}:=D_{u} -\frac{1}{2} \mathcal J D_{u}\mathcal J\end{equation} 
preserves $\mathcal J .$ As $\{ A_{u}^{\mathrm{sym}},
\mathcal J \}$ commutes with $\mathcal J$,  we obtain that 
\begin{equation}\label{d-t-1}
\tilde{D}_{u} = D^{(1)}_{u} -\frac{1}{4} \{ A_{u}^{\mathrm{sym}} , \mathcal J \}
\end{equation} 
preserves $\mathcal J$ as well. 
\end{proof}

In the next lemmas we prove  relation (\ref{torsion-tilded}). 
From (\ref{d-t-1}), 
\begin{align}
\nonumber \langle \tilde{D}_{u}v, w\rangle = \langle D^{(1)}_{u}v , w\rangle & -\frac{1}{8} \left( \eta (\mathcal J v, u, w)
+ \eta ( w, u, \mathcal J v) \right)\\
\label{tilde-d-d1} & +\frac{1}{8} \left( \eta (v, u, \mathcal J w) +\eta (\mathcal J w, u, v)\right) ,
\end{align}
where $\eta = \eta^{D, \mathcal J}$ is defined by  
\begin{equation}\label{eta}
\eta  (u, v, w) : = \langle (D_{u} \mathcal J )v, w\rangle .
\end{equation}
Remark that $\eta$  has the symmetries
\begin{equation}\label{symmetries}
\eta (u, v, w) = - \eta (u, w, v),\ \eta (u, \mathcal J v , w) = \eta (u, v, \mathcal J w).
\end{equation}

\begin{lem} The torsion of $D^{(1)}$ is given by 
\begin{equation}\label{t-d1}
T^{D^{(1)}} (u, v, w) = \frac{1}{2} \sum_{(u,v,w)\; \mathrm{cyclic}} \eta (u, v, \mathcal J w)
\end{equation}
where the sum is over  cyclic permutations on $(u,v, w).$ 
\end{lem}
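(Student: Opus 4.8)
The plan is to treat $D^{(1)}$ as a perturbation of the torsion-free connection $D$ and invoke the transformation rule (\ref{formula-torsion}) for the torsion under a change of generalized connection. Write $D^{(1)} = D + \beta$, where by (\ref{D1:eq}) the difference tensor is $\beta_{u} = -\frac{1}{2}\mathcal J (D_{u}\mathcal J)$. By Lemma \ref{easy} this endomorphism is skew-symmetric with respect to $\langle\cdot,\cdot\rangle$, so $\beta \in \Gamma(E^{*}\otimes \Lambda^{2}E)$ and (\ref{formula-torsion}) applies verbatim with $T^{D}=0$.

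First I would re-express $\beta$ as a $3$-tensor. Using that $\mathcal J$ is skew-symmetric (so $\langle \mathcal J a, b\rangle = -\langle a, \mathcal J b\rangle$) together with the definition (\ref{eta}) of $\eta = \eta^{D,\mathcal J}$, one computes
\[
\beta(u,v,w) := \langle \beta_{u}v, w\rangle = -\tfrac{1}{2}\langle \mathcal J (D_{u}\mathcal J)v, w\rangle = \tfrac{1}{2}\langle (D_{u}\mathcal J)v, \mathcal J w\rangle = \tfrac{1}{2}\,\eta(u,v,\mathcal J w).
\]
Since $D$ is torsion-free, (\ref{formula-torsion}) then reduces to the total cyclic symmetrization of $\beta$, namely $T^{D^{(1)}}(u,v,w) = \beta(u,v,w) + \beta(w,u,v) + \beta(v,w,u)$. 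Substituting the expression just obtained gives $\tfrac{1}{2}\bigl(\eta(u,v,\mathcal J w) + \eta(w,u,\mathcal J v) + \eta(v,w,\mathcal J u)\bigr)$, which is precisely $\tfrac{1}{2}\sum_{(u,v,w)\ \mathrm{cyclic}}\eta(u,v,\mathcal J w)$, as claimed.

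The argument is essentially bookkeeping, so there is no deep obstacle. The only points requiring care are the correct sign when moving $\mathcal J$ across the scalar product (which converts the $-\frac{1}{2}\mathcal J(D_{u}\mathcal J)$ factor into $+\frac{1}{2}\eta(u,v,\mathcal J\,\cdot)$), and the verification that the three terms produced by the symmetrization in (\ref{formula-torsion}) reassemble into the stated cyclic sum rather than some other permutation sum. The skew-symmetry of $\beta$, guaranteed by Lemma \ref{easy}, is what makes (\ref{formula-torsion}) applicable in the first place, and I would flag this explicitly to justify the reduction to the cyclic symmetrization.
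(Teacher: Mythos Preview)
Your argument is correct and coincides with the paper's own proof, which simply cites the torsion-free property of $D$, formula (\ref{formula-torsion}), and the definition (\ref{D1:eq}) of $D^{(1)}$. You have merely made explicit the computation of $\beta(u,v,w)=\tfrac12\eta(u,v,\mathcal Jw)$ and the cyclic symmetrization, which the paper leaves to the reader.
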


\begin{proof}   The claim follows from the torsion-free property of $D$ together with relations (\ref{formula-torsion}) and 
(\ref{D1:eq}).
\end{proof}

\begin{lem}\label{nij} The following relation holds:
\begin{equation}\label{cyclic-n}
N_{\mathcal J} (u, v , w) = \sum_{(u,v,w)\; \mathrm{cyclic}} \left( \eta (u, v, \mathcal J w) 
+\eta (\mathcal J u , v, w)\right) .
\end{equation}
\end{lem}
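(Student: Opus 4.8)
The plan is to compute $N_{\mathcal J}(u,v,w)$ directly, substituting the torsion-free expression for the Dorfman bracket and isolating the contributions of $D\mathcal{J}$. First I would pair the defining formula (\ref{def-nj}) against $w$ and use the skew-symmetry of $\mathcal J$ with respect to $\langle\cdot,\cdot\rangle$ to rewrite $\langle \mathcal J([\mathcal J u, v] + [u, \mathcal J v]), w\rangle = -\langle [\mathcal J u, v] + [u, \mathcal J v], \mathcal J w\rangle$, obtaining
\begin{equation*}
N_{\mathcal J}(u,v,w) = \langle [\mathcal J u, \mathcal J v], w\rangle - \langle [u,v], w\rangle + \langle [\mathcal J u, v], \mathcal J w\rangle + \langle [u, \mathcal J v], \mathcal J w\rangle .
\end{equation*}
Since $D$ is torsion-free, (\ref{courant-br}) reduces to $\langle [a,b], c\rangle = \langle D_a b - D_b a, c\rangle + \langle D_c a, b\rangle$, which I would apply to each of the four brackets.

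Next I would expand every term in which $\mathcal J$ sits inside a covariant derivative via $D_a(\mathcal J b) = \mathcal J(D_a b) + (D_a\mathcal J)b$, so that each such term splits into a \emph{pure $D$} part $\langle \mathcal J D_a b, \cdot\rangle$ and an \emph{intrinsic} part $\langle (D_a\mathcal J)b, \cdot\rangle$; the latter is a value of $\eta$ by (\ref{eta}). Using the orthogonality $\langle \mathcal J a, \mathcal J b\rangle = \langle a, b\rangle$ to simplify the pure $D$ parts, the entire expression becomes a sum of six $\eta$-terms together with a collection of terms of the form $\langle D_\bullet\,\cdot,\cdot\rangle$.

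The crucial and most delicate step is to verify that these pure $D$-terms cancel in pairs. This reflects the fact that $N_{\mathcal J}$ is a tensor and hence cannot depend on $D$ except through its action on $\mathcal J$; I expect the sign bookkeeping across the four bracket expansions to be the main obstacle, as one must check that each of the six distinct monomials $\langle D_\bullet\,\cdot,\cdot\rangle$ occurs twice with opposite signs and thus drops out.

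Finally, with only the six $\eta$-terms remaining, I would reorganize them into the claimed cyclic sum using the symmetries (\ref{symmetries}): the skew-symmetry $\eta(a,b,c) = -\eta(a,c,b)$ fixes the terms whose last two arguments are in the wrong order, while $\eta(a,\mathcal J b, c) = \eta(a,b,\mathcal J c)$ moves an interior $\mathcal J$ into the final slot. After these substitutions each term lands in one of the two families $\eta(\cdot,\cdot,\mathcal J\cdot)$ or $\eta(\mathcal J\cdot,\cdot,\cdot)$, summed cyclically over $(u,v,w)$, yielding exactly (\ref{cyclic-n}).
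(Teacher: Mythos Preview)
Your proposal is correct and follows essentially the same approach as the paper: expand the four Dorfman brackets via the torsion-free formula (\ref{courant-br}), split off the $(D\mathcal J)$-contributions as $\eta$-terms, and check that the remaining ``pure $D$'' terms cancel in pairs, leaving exactly the six $\eta$-terms of (\ref{cyclic-rel}). The only minor tactical difference is the final step: you rearrange these six terms directly into the cyclic form using the symmetries (\ref{symmetries}), whereas the paper instead sums (\ref{cyclic-rel}) over cyclic permutations of $(u,v,w)$ and invokes the total skew-symmetry of $N_{\mathcal J}$ (Lemma~\ref{nj-3form}) to recover (\ref{cyclic-n}); both routes are straightforward and equivalent.
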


\begin{proof} 
Using relations (\ref{def-nj}), (\ref{courant-torsion}) and $T^{D}=0$, we obtain
\begin{align*}
N_{\mathcal J}(u, v) & = (D_{\mathcal J u}\mathcal J )v - (D_{\mathcal J v}\mathcal J )u
 -\mathcal J (D_{u} \mathcal J )v\\
& +\mathcal J (D_{v} \mathcal J )u + (D(\mathcal J u))^{*} \mathcal J v-  (Du)^{*}v\\
& -  \mathcal J (D (\mathcal J u))^{*}v -  \mathcal J (Du)^{*} \mathcal J v.
\end{align*}
Taking the inner product of the above equality with $w$ and using the symmetries (\ref{symmetries}) of $\eta$ we obtain
\begin{align}
\nonumber N_{\mathcal J}(u, v,w)= &
\langle N_{\mathcal J }(u, v), w\rangle  =\eta ( \mathcal J u, v, w) -\eta ( \mathcal J v, u, w) +\eta (w,u, \mathcal J v)\\
\label{cyclic-rel}&- \eta (v, u, \mathcal J w) +\eta ( \mathcal J w, u, v) +\eta (u, v, \mathcal J w) .
\end{align} 
Taking in (\ref{cyclic-rel})  cyclic permutations over $u$, $v$, $w$, using again the symmetries (\ref{symmetries}) of $\eta$ 
and that $N_{\mathcal J} (u, v, w)$ is completely skew 
we obtain (\ref{cyclic-n}).
 \end{proof}

The next lemma concludes the proof of Proposition \ref{essential} and
Theorem \ref{complex:thm}.

\begin{lem} The torsion of $\tilde{D}$ satisfies relation (\ref{torsion-tilded}).\end{lem}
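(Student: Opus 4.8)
The plan is to compute $T^{\tilde{D}}$ by comparing $\tilde{D}$ with the auxiliary connection $D^{(1)}$ of (\ref{D1:eq}), whose torsion (\ref{t-d1}) is already known, and to express everything through the single tensor $\eta$ of (\ref{eta}). First I would record, from (\ref{d-t-1}), that $\tilde{D}=D^{(1)}+\mu$, where $\mu_u:=-\tfrac14\{A_u^{\mathrm{sym}},\mathcal J\}$ is the skew-symmetric endomorphism field of Lemma \ref{easy}. Since $\mu\in\Gamma(E^*\otimes\Lambda^2E)$, formula (\ref{formula-torsion}), applied with base connection $D^{(1)}$, gives
\[
T^{\tilde{D}}(u,v,w)=T^{D^{(1)}}(u,v,w)+\partial\mu(u,v,w),
\]
where $\partial\mu(u,v,w):=\mu(u,v,w)+\mu(v,w,u)+\mu(w,u,v)$ is the cyclic sum of $\mu(u,v,w):=\langle\mu_uv,w\rangle$. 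From (\ref{tilde-d-d1}) this tensor reads
\[
\mu(u,v,w)=-\tfrac18\bigl(\eta(\mathcal J v,u,w)+\eta(w,u,\mathcal J v)-\eta(v,u,\mathcal J w)-\eta(\mathcal J w,u,v)\bigr).
\]

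Next I would take the cyclic sum and sort the twelve resulting terms according to whether $\mathcal J$ occupies the first or the third argument. Setting $B:=\sum_{(u,v,w)\,\mathrm{cyclic}}\eta(\mathcal J u,v,w)$ and $C:=\sum_{(u,v,w)\,\mathrm{cyclic}}\eta(u,v,\mathcal J w)$, the skew-symmetry $\eta(a,b,c)=-\eta(a,c,b)$ of (\ref{symmetries}) collapses the six ``$\mathcal J$-in-first-slot'' terms to $-2B$, while the six ``$\mathcal J$-in-third-slot'' terms sum to $C-C'$, where $C':=\sum_{(u,v,w)\,\mathrm{cyclic}}\eta(v,u,\mathcal J w)$. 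Thus $\partial\mu=-\tfrac18(-2B+C-C')$.

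The crux of the argument is the identity $C'=-C$, which I expect to be the one non-routine step. It follows by applying the two symmetries of (\ref{symmetries}) in succession, namely $\eta(v,u,\mathcal J w)=\eta(v,\mathcal J u,w)=-\eta(v,w,\mathcal J u)$, which sends each summand of $C'$ to the negative of a summand of $C$. Granting this, $\partial\mu=-\tfrac18(-2B+2C)=\tfrac14(B-C)$, and combining with $T^{D^{(1)}}=\tfrac12C$ from (\ref{t-d1}) yields $T^{\tilde{D}}=\tfrac14(B+C)$, which is precisely $\tfrac14N_{\mathcal J}$ by (\ref{cyclic-n}). This establishes (\ref{torsion-tilded}) and, together with the preceding lemmas, completes the proofs of Proposition \ref{essential} and Theorem \ref{complex:thm}.

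The only genuine difficulty is the bookkeeping of the cyclic expansion; the conceptual content is the collapse $C'=-C$, which makes the two a priori independent cyclic sums of $\eta$ conspire so that the ``error'' introduced by symmetrizing $A_u$ in Lemma \ref{easy} contributes exactly the term needed to turn $T^{D^{(1)}}=\tfrac12C$ into $\tfrac14(B+C)$.
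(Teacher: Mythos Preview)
Your argument is correct and follows essentially the same route as the paper: both compute $T^{\tilde{D}}-T^{D^{(1)}}=\partial\mu$ via (\ref{formula-torsion}) and (\ref{tilde-d-d1}), simplify the resulting cyclic sum using the symmetries (\ref{symmetries}) of $\eta$, and then add back $T^{D^{(1)}}=\tfrac12 C$ to reach $\tfrac14(B+C)=\tfrac14 N_{\mathcal J}$ by (\ref{cyclic-n}). Your introduction of the auxiliary sums $B$, $C$, $C'$ and the explicit identification $C'=-C$ makes the bookkeeping slightly more transparent than the paper's direct manipulation, but the content is the same.
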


\begin{proof} From relations  (\ref{tilde-d-d1}) and (\ref{formula-torsion}), we obtain 
\begin{align*}
&  T^{\tilde{D}}(u, v, w) - T^{D^{(1)}} (u, v, w)   = \\
& - \frac{1}{8}\sum_{(u,v,w)\, \mathrm{cyclic}} \left( \eta (\mathcal J  v, u,  w)   + \eta ( w ,u,\mathcal J v) -\eta (v,u,\mathcal Jw) -\eta (\mathcal Jw,u,v)\right) \\
 & = - \frac{1}{4}\sum_{(u,v,w)\, \mathrm{cyclic}} \left( \eta (\mathcal J  u, w,  v)   + \eta ( u ,v,\mathcal J w) \right)  , 
\end{align*}
where in the last equality  we have used
the symmetries (\ref{symmetries}) of $\eta .$ 
From  (\ref{t-d1}) we then obtain  
\begin{equation}
T^{\tilde{D}}(u, v, w)  =  \frac{1}{4}\sum_{(u,v,w)\; \mathrm{cyclic}} \left( 
\eta (u, v, \mathcal J w)+\eta (\mathcal  J u,  v,w ) \right) ,
\end{equation}
which implies (\ref{torsion-tilded}), from Lemma \ref{nij}.
\end{proof}

\section{Generalized almost hypercomplex structures}\label{sect-hypercomplex}

\subsection{Intrinsic torsion of a generalized almost hypercomplex structure}

Let $(\mathcal J_{1}, \mathcal J_{2}, \mathcal J_{3})$ be a generalized almost hypercomplex structure on a Courant algebroid
$E$ and  $\partial_{\mathbb{H}}: E^{*} \otimes \Lambda^{1,1}_{\mathbb{H}} E^{*} \rightarrow \Lambda^{3} E^{*}$
the algebraic torsion map defined by (\ref{linear-torsion}), where 
$$
\mathrm{ad}_{(\mathcal J_{i})} (E) = \Lambda^{1,1}_{\mathbb{H}} E^{*}:= \{\alpha \in \Lambda^{2} E^{*},\ \alpha (\mathcal J_{i}u, \mathcal J_{i}v)
=\alpha (u, v) ,\ u, v\in E,\ i=1,2,3\} .
$$

\begin{lem}  The endomorphism $P\in \Gamma (\mathrm{End}( \Lambda^{3}E^{*}))$ defined by
\begin{equation}\label{proj-P}
P:= \frac{2}{3} \sum_{i=1}^{3} \Pi_{\mathcal J_{i}}
\end{equation}  
is a projector with  $ \ker P= \mathrm{im}\, \partial_{\mathbb{H}}.$ 
In particular, $\mathrm{im}\, P$ is a complement of $\mathrm{im}\, \partial_{\mathbb{H}}$ in $\Lambda^{3}E^{*}.$ 
\end{lem}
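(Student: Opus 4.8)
The plan is to reproduce, at the level of the hypercomplex structure, the mechanism of the proof of Lemma~\ref{alg-complex}: I will exhibit a bundle map $\tilde\pi_{\mathbb H}\colon \Lambda^3E^*\to E^*\otimes\Lambda^{1,1}_{\mathbb H}E^*$ for which
\[
\mathrm{Id}_{\Lambda^3E^*}-P=\tfrac16\,\partial_{\mathbb H}\circ\tilde\pi_{\mathbb H},
\]
and then deduce the whole statement formally from this identity together with the vanishing $P\circ\partial_{\mathbb H}=0$. First I would rewrite the single-structure projectors in the convenient form $\Pi_{\mathcal J_i}=\tfrac14(\mathrm{Id}-R_i)$, read off directly from \eqref{PiJ:eq}, where $R_i\in\Gamma(\mathrm{End}(\Lambda^3E^*))$ is given by $(R_i\alpha)(u,v,w)=\alpha(u,\mathcal J_iv,\mathcal J_iw)+\alpha(\mathcal J_iu,v,\mathcal J_iw)+\alpha(\mathcal J_iu,\mathcal J_iv,w)$. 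Summing gives $P=\tfrac23\sum_i\Pi_{\mathcal J_i}=\tfrac12\mathrm{Id}-\tfrac16\sum_iR_i$. In parallel with the map $\tilde\pi_{\mathcal J}$ appearing in Lemma~\ref{alg-complex}, I would set $(\tilde\pi_{\mathbb H}\alpha)(u,v,w):=\alpha(u,v,w)+\sum_i\alpha(u,\mathcal J_iv,\mathcal J_iw)$.

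The main obstacle is the algebraic verification that $\tilde\pi_{\mathbb H}$ actually takes values in $E^*\otimes\Lambda^{1,1}_{\mathbb H}E^*$, i.e.\ that $(v,w)\mapsto(\tilde\pi_{\mathbb H}\alpha)(u,v,w)$ is $\mathcal J_j$-invariant for each $j$. This is where the quaternionic relations enter: substituting $(\mathcal J_jv,\mathcal J_jw)$ replaces the generic summand by $\alpha(u,(\mathcal J_i\mathcal J_j)v,(\mathcal J_i\mathcal J_j)w)$, and since the two sign sources ($\mathcal J_j^2=-\mathrm{Id}$ for $i=j$, and $\mathcal J_i\mathcal J_j=-\mathcal J_j\mathcal J_i=\pm\mathcal J_k$ for $i\neq j$) always appear squared, the four summands $\alpha(u,v,w)$, $\alpha(u,\mathcal J_1v,\mathcal J_1w)$, $\alpha(u,\mathcal J_2v,\mathcal J_2w)$, $\alpha(u,\mathcal J_3v,\mathcal J_3w)$ are merely permuted with $+$ signs and the total sum is unchanged.

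Once this bookkeeping is done, a short computation of the cyclic sum $\partial_{\mathbb H}\tilde\pi_{\mathbb H}\alpha$ — using only the cyclic invariance and skew-symmetry of the $3$-form $\alpha$ to rewrite each of the three cyclic terms — yields $\partial_{\mathbb H}\tilde\pi_{\mathbb H}\alpha=3\alpha+\sum_iR_i\alpha$, which is precisely $6(\mathrm{Id}-P)\alpha$ by the formula $P=\tfrac12\mathrm{Id}-\tfrac16\sum_iR_i$ obtained above. This establishes the displayed identity. It remains to record the orthogonality $P\circ\partial_{\mathbb H}=0$: for $\eta\in E^*\otimes\Lambda^{1,1}_{\mathbb H}E^*\subseteq E^*\otimes\Lambda^{1,1}_{\mathcal J_i}E^*$ one has $\partial_{\mathbb H}\eta=\partial_{\mathcal J_i}\eta\in\mathrm{im}\,\partial_{\mathcal J_i}=\ker\Pi_{\mathcal J_i}$ by Lemma~\ref{alg-complex}(ii), so $\Pi_{\mathcal J_i}\partial_{\mathbb H}=0$ for each $i$ and hence $P\partial_{\mathbb H}=0$.

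The conclusion is then formal. The projector property follows from $P^2=P(\mathrm{Id}-\tfrac16\partial_{\mathbb H}\tilde\pi_{\mathbb H})=P-\tfrac16(P\partial_{\mathbb H})\tilde\pi_{\mathbb H}=P$. The kernel is identified by two inclusions: $\mathrm{im}\,\partial_{\mathbb H}\subseteq\ker P$ from $P\partial_{\mathbb H}=0$, and conversely any $\beta\in\ker P$ satisfies $\beta=(\mathrm{Id}-P)\beta=\tfrac16\,\partial_{\mathbb H}(\tilde\pi_{\mathbb H}\beta)\in\mathrm{im}\,\partial_{\mathbb H}$; hence $\ker P=\mathrm{im}\,\partial_{\mathbb H}$. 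Finally, since $P$ is a projector, $\mathrm{im}\,P$ and $\ker P=\mathrm{im}\,\partial_{\mathbb H}$ are complementary subbundles of $\Lambda^3E^*$, which is the last assertion. (Conceptually, $P$ is the projection onto the ``spin-$3/2$'' part of $\Lambda^3E^*$ for the $\mathfrak{sp}(1)$-action generated by the $\mathcal J_i$, with $\sum_iR_i$ an affine transform of the Casimir; this is a convenient sanity check but is not needed for the argument.)
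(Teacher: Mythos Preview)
Your proof is correct and follows essentially the same route as the paper: you use the same map $\tilde\pi_{\mathbb H}$ (denoted $\tilde\pi$ in the paper) and establish the same key identity $\partial_{\mathbb H}\circ\tilde\pi_{\mathbb H}=6(\mathrm{Id}-P)$, together with $P\circ\partial_{\mathbb H}=0$ via Lemma~\ref{alg-complex}(ii), to pin down $\ker P=\mathrm{im}\,\partial_{\mathbb H}$.

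The one genuine difference is in how you obtain $P^2=P$. The paper outsources this step, identifying $P$ with the restriction to $\Lambda^3E^*$ of an operator $p$ on $\Lambda^2E^*\otimes E$ and invoking Lemma~1 of \cite{gauduchon} to conclude that $p$ (hence $P$) is a projector. You instead derive $P^2=P$ internally from the two identities you have already established: writing $P=\mathrm{Id}-\tfrac16\partial_{\mathbb H}\tilde\pi_{\mathbb H}$ and multiplying by $P$ on the left kills the second term because $P\partial_{\mathbb H}=0$. This makes your argument fully self-contained and slightly more elegant, at the cost of the small extra verification that $\tilde\pi_{\mathbb H}$ lands in $E^*\otimes\Lambda^{1,1}_{\mathbb H}E^*$ (which the paper states without proof). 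Your closing remark on the $\mathfrak{sp}(1)$-Casimir interpretation is a nice conceptual aside but, as you note, inessential.
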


\begin{proof}  For any generalized almost complex structure $\mathcal J$, 
define the endomorphism $\Pi^{0,2}_{\mathcal J}$ of $\Lambda^{2}E^{*}\otimes E$ by 
$ \langle
(\Pi^{0,2}_{\mathcal{J}}\alpha )(u, v), w\rangle  = (\Pi_{\mathcal J }\alpha )(u, v, w)$,  where 
$\alpha\in \Lambda^{2}E^{*}\otimes E^*$,
$u, v, w\in E$
and 
$(\Pi_{\mathcal J }\alpha )(u, v, w)$ 
is given by  (\ref{PiJ:eq}).
Then  $\Pi^{0,2}_{\mathcal J}$ coincides with the operator  defined  by relation (5) of \cite{gauduchon}.  We obtain
that $(P\alpha  )(u, v, w) = \langle p(\alpha) (u, v), w\rangle$ where $p =\frac{2}{3} \sum_{i=1}^{3} \Pi^{0,2}_{\mathcal J_{i}}
\in \Gamma (\mathrm{End} (\Lambda^{2} E^{*}\otimes E))$ 
is the map from Lemma 1 of \cite{gauduchon}. As $p$ is a projector, $P$ is also a projector. 
From Lemma \ref{alg-complex} ii), $\mathrm{im}\,  \partial_{\mathbb{H}}  \subset \ker P. $ Let
\begin{equation}\label{p-i}
\tilde{\pi} : \Lambda^{3} E^{*} \rightarrow E^{*}\otimes \Lambda^{1,1}_{\mathbb{H}} E^{*},\ 
(\tilde{\pi}\alpha ) (u, v, w):= \alpha ( u, v, w) +\sum_{i=1}^{3} \alpha (u,\mathcal J_{i}v, \mathcal J_{i}w).
\end{equation}
 It is straightforward to check that 
\begin{equation}\label{pi-h}
\partial_{\mathbb{H}}\circ\tilde{ \pi } = 6 ( \mathrm{Id}_{\Lambda^{3} E^{*}} - P),
\end{equation}
which implies  $\ker P \subset \mathrm{im}\,  \partial_{\mathbb{H}}$ and thus 
$\mathrm{im}\,  \partial_{\mathbb{H}} =  \ker P. $ As $P$ is a projector, 
$\mathrm{im}\,  \partial_{\mathbb{H}} =  \ker P$ is a complement of $\mathrm{im}\, P$ in $\Lambda^{3}E^{*}.$ 
\end{proof}

As in the previous section, we will identify $(\Lambda^{3} E^{*})/  (\mathrm{im}\, \partial_{\mathbb{H}} )$ with $\mathrm{im}\, P$ and consider
the intrinsic torsion $t_{(\mathcal J_{i})}$ of $(\mathcal J_{1}, \mathcal J_{2}, \mathcal J_{3} )$ as a section of
$\mathrm{im}\, P. $

\begin{cor}
 The intrinsic torsion  $t_{(\mathcal J_{i})}$ of $(\mathcal J_{1}, \mathcal J_{2}, \mathcal J_{3})$ is given by
$\frac{1}{6} \sum_{i=1}^{3} N_{\mathcal J_{i}}.$  
\end{cor}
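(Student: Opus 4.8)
The plan is to compute the intrinsic torsion directly from its definition, using the projector $P$ to represent it and reducing to the already-established complex case. Under the identification of $(\Lambda^{3}E^{*})/(\mathrm{im}\,\partial_{\mathbb{H}})$ with $\mathrm{im}\,P$, the intrinsic torsion $t_{(\mathcal J_i)}$ is represented by $P(T^{D})$ for any generalized connection $D$ adapted to the hypercomplex structure. Indeed, since $P$ is a projector with $\ker P=\mathrm{im}\,\partial_{\mathbb{H}}$ (the previous lemma), the composite of the quotient map with the isomorphism $(\Lambda^{3}E^{*})/\mathrm{im}\,\partial_{\mathbb{H}}\cong\mathrm{im}\,P$ is exactly $P$, so the class of $T^{D}$ corresponds to $P(T^{D})$.

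First I would fix an adapted generalized connection $D$, which exists by the existence lemma for generalized $H$-structures; being adapted to $(\mathcal J_{1},\mathcal J_{2},\mathcal J_{3})$ means $D\mathcal J_{i}=0$ for each $i=1,2,3$. Since $D$ preserves each individual $\mathcal J_{i}$, I can apply Corollary \ref{pi-d}, specifically relation (\ref{courant-torsion-new}), to each of them separately, obtaining
\[
\Pi_{\mathcal J_{i}}(T^{D})=\tfrac{1}{4}N_{\mathcal J_{i}},\qquad i=1,2,3.
\]
Then, using the definition (\ref{proj-P}) of $P$ and summing, I compute
\[
t_{(\mathcal J_{i})}=P(T^{D})=\frac{2}{3}\sum_{i=1}^{3}\Pi_{\mathcal J_{i}}(T^{D})
=\frac{2}{3}\sum_{i=1}^{3}\frac{1}{4}N_{\mathcal J_{i}}
=\frac{1}{6}\sum_{i=1}^{3}N_{\mathcal J_{i}},
\]
which is the claimed formula.

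This argument is essentially a direct calculation once the preliminary identifications are in place, so there is no substantial obstacle. The only point requiring a moment of care is that the same connection $D$ must preserve all three structures simultaneously, so that Corollary \ref{pi-d} can legitimately be invoked for each $\mathcal J_{i}$ with the \emph{same} $T^{D}$; this is guaranteed by choosing $D$ adapted to the full generalized $H$-structure rather than to the individual complex structures. I would also note in passing that each $\frac{1}{4}N_{\mathcal J_{i}}$ lies in $\mathrm{im}\,\Pi_{\mathcal J_{i}}$ by the same relation, which is consistent with $P(T^{D})$ landing in $\mathrm{im}\,P$.
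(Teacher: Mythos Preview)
Your proof is correct and follows essentially the same approach as the paper: pick a hypercomplex generalized connection $D$, apply Corollary~\ref{pi-d} to each $\mathcal J_i$ to get $\Pi_{\mathcal J_i}(T^D)=\tfrac14 N_{\mathcal J_i}$, and sum using $P=\tfrac23\sum_i\Pi_{\mathcal J_i}$. Your write-up is in fact more detailed than the paper's, which compresses the argument into two lines.
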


\begin{proof}
Let $D$ be a hypercomplex connection. 
By Corollary  \ref{pi-d} we have that $\Pi_{\mathcal J_{i}} (T^{D}) 
= \frac{1}{4} N_{\mathcal J_{i}}$. So $t_{(\mathcal J_{i})}=P(T^D)=\frac16\sum_{i=1}^{3} N_{\mathcal J_{i}}$.
\end{proof}

\subsection{Integrability using torsion-free generalized connections}

Our aim in this section is to prove the next theorem.

\begin{thm}\label{hyper:thm}  A generalized almost hypercomplex structure  $(\mathcal J_{1},\mathcal J_{2}, \mathcal J_{3})$ 
on a Courant algebroid $E$ 
is
integrable if and only if there is a torsion-free generalized  connection $D$ such that $D\mathcal J_{i}=0$
for all $i=1,2,3.$
\end{thm}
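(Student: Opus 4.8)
The plan is to follow the same strategy that worked in the complex case (Theorem \ref{complex:thm}), namely to reduce the equivalence to a statement about the intrinsic torsion and then exhibit an explicit adapted connection realizing it. The easy direction is immediate: if there exists a torsion-free generalized connection $D$ with $D\mathcal{J}_i=0$ for all $i$, then $D$ is in particular complex for each $\mathcal{J}_i$, so by Corollary \ref{pi-d} we get $N_{\mathcal{J}_i}=4\,\Pi_{\mathcal{J}_i}(T^D)=0$ for each $i$, whence every $\mathcal{J}_i$ is integrable and the structure is integrable by definition. For the converse I must produce, from an arbitrary torsion-free generalized connection $D$ on $E$, a new generalized connection $\tilde D$ that preserves all three $\mathcal{J}_i$ \emph{simultaneously} and whose torsion equals the intrinsic torsion $t_{(\mathcal{J}_i)}=\frac16\sum_i N_{\mathcal{J}_i}$; when the structure is integrable this torsion vanishes and $\tilde D$ is the desired connection.

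The main step is the construction of the simultaneously-adapted connection. My first attempt would be to mimic formula (\ref{tilde-d}) from Proposition \ref{essential}, but applying the complex-case correction for one $\mathcal{J}_i$ destroys compatibility with the others, so I cannot simply iterate. Instead I would symmetrize over the three structures. Concretely, starting from $D$ I would define a hypercomplex connection by averaging the Obata-type correction terms: the natural candidate is
\begin{equation}
\tilde D_u v = D_u v - \frac14\sum_{i=1}^{3}\mathcal{J}_i (D_u\mathcal{J}_i)v + (\text{symmetric correction terms}),
\end{equation}
where the first correction is the generalized analogue of the Obata connection (which preserves each $\mathcal{J}_i$ because $\mathcal{J}_i(D_u\mathcal{J}_i)$ anticommutes with $\mathcal{J}_j$ in the right way when one uses $\mathcal{J}_3=\mathcal{J}_1\mathcal{J}_2$ and the anticommutation relations), and the remaining terms are chosen, as in Lemma \ref{easy}, to restore compatibility with the scalar product $\langle\cdot,\cdot\rangle$ using $\langle\cdot,\cdot\rangle$-symmetric parts. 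I would verify two things in turn, exactly paralleling Lemmas \ref{easy} through the final lemma of Section \ref{sect-complex}: first that $\tilde D$ is a genuine generalized connection preserving each $\mathcal{J}_i$ (a skew-symmetry check on each correction term, using that a composition of anticommuting skew-symmetric endomorphisms is skew-symmetric), and second that its torsion is $\frac16\sum_i N_{\mathcal{J}_i}$.

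The hard part will be the algebra confirming that a single correction term is compatible with all three $\mathcal{J}_i$ at once, since preserving $\mathcal{J}_1$ and $\mathcal{J}_2$ automatically forces the behavior on $\mathcal{J}_3=\mathcal{J}_1\mathcal{J}_2$ but the cross-terms between different $\mathcal{J}_i$ must be controlled carefully; this is where the quaternionic relations $\mathcal{J}_i\mathcal{J}_j=-\mathcal{J}_j\mathcal{J}_i=\mathcal{J}_k$ (cyclic) enter decisively. An alternative, cleaner route that avoids guessing the exact formula would be to invoke Corollary \ref{cor-torsion} directly: since the image of $P$ is a complement of $\mathrm{im}\,\partial_{\mathbb{H}}$ and the intrinsic torsion is $\frac16\sum_i N_{\mathcal{J}_i}$, integrability of all $\mathcal{J}_i$ forces every $N_{\mathcal{J}_i}=0$ (as $N_{\mathcal{J}_i}$ lies in $\Lambda^3_{\mathcal{J}_i}E^*$, hence in $\mathrm{im}\,P$, so no cancellation occurs in the sum), hence the intrinsic torsion vanishes, and Corollary \ref{cor-torsion} then guarantees the existence of a torsion-free adapted connection without any explicit construction. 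I would present this second argument as the main proof and relegate the explicit Obata-type formula to a remark, since the abstract argument is shorter and more robust while the explicit one illuminates the geometry.
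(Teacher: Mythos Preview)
Your abstract route via Corollary \ref{cor-torsion} is correct and gives a valid proof: since integrability of each $\mathcal J_i$ means $N_{\mathcal J_i}=0$ by definition, the intrinsic torsion $t_{(\mathcal J_i)}=\tfrac16\sum_i N_{\mathcal J_i}$ vanishes, and Corollary \ref{cor-torsion} then supplies the torsion-free adapted connection. (The parenthetical about ``no cancellation in the sum'' is unnecessary here and slightly misleading---you already have each $N_{\mathcal J_i}=0$ individually, so there is nothing to cancel.)

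The paper, however, takes a more explicit route in Proposition \ref{gen-hypercomplex}, and it is worth noting the differences. Rather than symmetrizing over the three structures starting from an arbitrary torsion-free $D$, the paper proceeds in two asymmetric steps: first start from a generalized connection with $D\mathcal J_1=0$ (obtained from the complex case), then set $D^{(1)}:=D-\tfrac12\mathcal J_2 D\mathcal J_2$, which---thanks to the anticommutation relations---turns out to preserve $\mathcal J_1$ as well as $\mathcal J_2$, hence all three; finally one corrects the torsion by subtracting $\tfrac16\tilde\pi(T^{D^{(1)}})$, using the map $\tilde\pi$ of (\ref{p-i}) and the identity (\ref{pi-h}). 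Your Obata-type averaged correction $-\tfrac14\sum_i\mathcal J_i(D_u\mathcal J_i)$ is a reasonable guess but you have not verified that it actually makes the connection hypercomplex (the cross-commutators $[\mathcal J_i(D_u\mathcal J_i),\mathcal J_j]$ for $i\neq j$ do not obviously organize themselves correctly), and the unspecified ``symmetric correction terms'' leave the explicit construction incomplete. The paper's two-step construction sidesteps this difficulty and, more importantly, yields a concrete formula that is reused verbatim in the proof of Theorem \ref{integr-hk}; your abstract argument, while shorter for the present statement, would not carry that later application.
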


Part of the statement of Theorem \ref{hyper:thm} is obvious: if there is a torsion-free 
generalized connection which preserves 
all $\mathcal J_{i}$, then $\mathcal J_{i}$ are integrable from Theorem \ref{complex:thm}. 
The converse statement is proved in the next proposition.

\begin{prop}\label{gen-hypercomplex} Let $(\mathcal J_{1}, \mathcal J_{2}, \mathcal J_{3})$ be a generalized almost hypercomplex structure 
and $D$ a generalized connection on $E$, such that $D\mathcal J_{1} =0.$ 
Define 
\begin{equation}
D^{(1)}:= D -\frac{1}{2} \mathcal J_{2} D  \mathcal J_{2} 
\end{equation}
and 
\begin{equation}
\tilde{D}:= D^{(1)} - \frac{1}{6}\tilde{ \pi } ( T^{D^{(1)}}),
\end{equation}
where  $\tilde{\pi}$ is the map (\ref{p-i}). 
Then ${D}^{(1)}$ and $\tilde{D}$ are  generalized hypercomplex connections. Moreover, 
$$
T^{\tilde{D}} (u, v, w)= \frac{1}{6} \sum_{i=1}^{3} N_{\mathcal J_{i}} (u, v, w).
$$ 
In particular, if $(\mathcal J_{1}, \mathcal J_{2}, \mathcal J_{3})$  is a generalized hypercomplex structure, then $\tilde{D}$ is torsion-free
(and hypercomplex). 
\end{prop}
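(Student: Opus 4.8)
The plan is to verify the claimed properties of $D^{(1)}$ and $\tilde{D}$ in stages, closely mirroring the strategy of Proposition \ref{essential}. First I would show that $D^{(1)} = D - \frac{1}{2}\mathcal J_2 D \mathcal J_2$ is a generalized hypercomplex connection. That $D^{(1)}$ is a generalized connection follows as in Lemma \ref{easy}: since $D_u\mathcal J_2$ is skew-symmetric and anticommutes with $\mathcal J_2$, the correction term $\mathcal J_2 (D_u\mathcal J_2)\mathcal J_2 = -\mathcal J_2^2 (D_u\mathcal J_2)$ is skew-symmetric, so $D^{(1)}_u$ and $D_u$ differ by a skew endomorphism. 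The conjugation construction $D^{(1)} = D - \frac{1}{2}\mathcal J_2 D\mathcal J_2$ is precisely the device that forces $D^{(1)}\mathcal J_2 = 0$ (the standard "averaging" that preserves the conjugating structure), exactly as in \eqref{D1:eq}. The genuinely hypercomplex feature requiring care is that $D^{(1)}$ \emph{also} preserves $\mathcal J_1$: here I would use the hypothesis $D\mathcal J_1 = 0$ together with the anticommutation relations $\mathcal J_2\mathcal J_1 = -\mathcal J_1\mathcal J_2$ to check that the correction $-\frac{1}{2}\mathcal J_2 (D_u\mathcal J_2)\mathcal J_2$ commutes with $\mathcal J_1$. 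Since $\mathcal J_3 = \mathcal J_1\mathcal J_2$, preservation of $\mathcal J_1$ and $\mathcal J_2$ automatically gives $D^{(1)}\mathcal J_3 = 0$, so $D^{(1)}$ is hypercomplex.

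Next I would treat $\tilde{D} = D^{(1)} - \frac{1}{6}\tilde\pi(T^{D^{(1)}})$. The point of the correction tensor $\frac{1}{6}\tilde\pi(T^{D^{(1)}})$, where $\tilde\pi$ maps into $E^*\otimes \Lambda^{1,1}_{\mathbb H}E^*$ by \eqref{p-i}, is that it lands in the adjoint bundle $\mathrm{ad}_{(\mathcal J_i)}(E) = \Lambda^{1,1}_{\mathbb H}E^*$: by construction $\tilde\pi(T^{D^{(1)}})_u$ is an $\mathbb H$-invariant $2$-form, hence preserves all three $\mathcal J_i$. Therefore $\tilde{D} = D^{(1)} + \eta$ with $\eta \in \Gamma(E^*\otimes \mathrm{ad}_{(\mathcal J_i)}(E))$, and by Proposition \ref{connProp} (or directly from the discussion following \eqref{linear-torsion}) $\tilde{D}$ is again a generalized hypercomplex connection. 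Its torsion is then computed from \eqref{formula-torsion}: $T^{\tilde D} = T^{D^{(1)}} + \partial_{\mathbb H}\eta = T^{D^{(1)}} - \frac{1}{6}\partial_{\mathbb H}\tilde\pi(T^{D^{(1)}})$. Invoking the key algebraic identity \eqref{pi-h}, namely $\partial_{\mathbb H}\circ\tilde\pi = 6(\mathrm{Id} - P)$, this collapses to $T^{\tilde D} = T^{D^{(1)}} - (\mathrm{Id}-P)T^{D^{(1)}} = P(T^{D^{(1)}})$.

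Finally, I would identify $P(T^{D^{(1)}})$ with the intrinsic torsion. Since $P = \frac{2}{3}\sum_i \Pi_{\mathcal J_i}$ by \eqref{proj-P} and $T^{\tilde D}$ differs from $T^{D^{(1)}}$ only by an element of $\mathrm{im}\,\partial_{\mathbb H} = \ker P$, the class of $T^{\tilde D}$ equals the intrinsic torsion, which the preceding corollary computed to be $\frac{1}{6}\sum_{i=1}^3 N_{\mathcal J_i}$; and because $P$ is a projector onto the chosen complement $\mathrm{im}\,P$, the equality holds on the nose as $3$-forms, giving $T^{\tilde D}(u,v,w) = \frac{1}{6}\sum_{i=1}^3 N_{\mathcal J_i}(u,v,w)$. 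The integrability statement is then immediate: if $(\mathcal J_1,\mathcal J_2,\mathcal J_3)$ is a generalized hypercomplex structure then every $N_{\mathcal J_i}$ vanishes, so $T^{\tilde D} = 0$, yielding a torsion-free hypercomplex connection.

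I expect the main obstacle to be the verification that $D^{(1)}$ preserves \emph{all three} complex structures simultaneously — unlike the single-structure setting of Proposition \ref{essential}, here one must juggle the anticommutation relations among $\mathcal J_1, \mathcal J_2, \mathcal J_3$ to confirm that conjugating by $\mathcal J_2$ does not destroy compatibility with $\mathcal J_1$. The torsion computation itself, by contrast, should be essentially formal once the identity \eqref{pi-h} is in hand.
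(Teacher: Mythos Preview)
Your proposal is correct and follows essentially the same route as the paper: establish that $D^{(1)}$ is hypercomplex via the averaging construction together with the anticommutation relations, then use the identity $\partial_{\mathbb H}\circ\tilde\pi = 6(\mathrm{Id}-P)$ to obtain $T^{\tilde D} = P(T^{D^{(1)}}) = \tfrac{1}{6}\sum_i N_{\mathcal J_i}$ from Corollary~\ref{pi-d}. One small slip to fix when you write it up: the correction endomorphism in $D^{(1)}_u - D_u$ is $-\tfrac{1}{2}\mathcal J_2(D_u\mathcal J_2)$, not $-\tfrac{1}{2}\mathcal J_2(D_u\mathcal J_2)\mathcal J_2$; the paper checks $[\mathcal J_2 D_u\mathcal J_2,\mathcal J_1]=0$ directly, using that $D_u\mathcal J_2$ anticommutes with $\mathcal J_1$ (a consequence of $D\mathcal J_1=0$ and $\{\mathcal J_1,\mathcal J_2\}=0$).
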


\begin{proof} 
The existence of a generalized connection $D$ with $D\mathcal J_{1} =0$ follows from the proof of
Lemma \ref{easy} (take any generalized connection, say $D^{(0)}$,  and define
$D:= D^{(0)}-\frac{1}{2}\mathcal J_{1} D^{(0)}\mathcal J_{1}$). 
The same argument shows that 
$D^{(1)}$ is a generalized connection which preserves  $\mathcal J_{2}$.  As
$D\mathcal J_{1} =0$
and $\mathcal J_{2}$ anti-commutes with $\mathcal J_{1}$, we obtain  that 
$D_{u} \mathcal J_{2}$  anti-commutes with $\mathcal J_{1}$ as well, for any $u\in E.$   
Then 
$$
D^{(1)}_{u} \mathcal J_{1} = D_{u} \mathcal J_{1}-\frac{1}{2} [ \mathcal J_{2}D_{u}\mathcal J_{2}, \mathcal J_{1}]=
 -  \frac{1}{2} [ \mathcal J_{2}D_{u}\mathcal J_{2}, \mathcal J_{1}]=0. 
$$
As $D^{(1)}\mathcal J_{1} = D^{(1)}\mathcal J_{2} =0$ also 
$D^{(1)}\mathcal J_{3} =0$  and $D^{(1)}$ is hypercomplex. This proves the statement on $D^{(1)}.$\

It remains to prove the statements on $\tilde{D}.$  Let $\eta := -\frac{1}{6}
\tilde{ \pi}  (  T^{D^{(1)}})$. With this notation, 
$\tilde{D} = D^{(1)} + \eta $. 
Since $\mathrm{im}\, \tilde{ \pi } \subset E^{*} \otimes \Lambda^{1,1}_{\mathbb{H}} E^{*}$
and $D^{(1)}\mathcal J_{i}=0$, also   $\tilde{D}\mathcal J_{i}=0$ for all $i=1,2,3$, i.e. $\tilde{D}$ is hypercomplex.  
The torsion of $\tilde{D}$  is given by 
\begin{equation}\label{torsion-nj}
T^{\tilde{D}} = T^{D^{(1)}} - \frac{1}{6} (\partial_{\mathbb{H}}\circ  \tilde{\pi})  (T^{D^{(1)}})
= P(T^{D^{(1)}}) = \frac{1}{6} \sum_{i=1}^{3} N_{\mathcal J_{i}},
\end{equation}
where in the second equality we used (\ref{pi-h}) and in the third 
equality we used 
(\ref{courant-torsion-new}) (which holds since  $D^{(1)}$ is hypercomplex). 
\end{proof}

\section{Generalized almost Hermitian structures: integrability and  torsion-free generalized connections}\label{sect-hermitian}

In this section we characterize the integrability of generalized almost Hermitian structures  using Levi-Civita connections
(see Theorem \ref{integr-kahler} below).  
We begin with the following simple lemma.

\begin{lem}\label{add} Let $(G, \mathcal J )$ be a generalized K\"{a}hler structure on $E$.  Then 
\begin{equation}\label{ajutatoare-0}
[ u, \mathcal J v]_{+} -\mathcal J [u, v]_{+} =0,\ \forall u\in \Gamma( E_-),\ v\in\Gamma ( E_+).
\end{equation}
Above we denoted by $e_{\pm}$  the $E_\pm$-components of a vector 
$e\in E$  in the decomposition 
$E = E_+ \oplus  E_-$ determined by   $G$. Similarly, 
\begin{equation}\label{ajutatoare-1}
[ u, \mathcal J v]_{-} -\mathcal J [u, v]_{-} =0,\ \forall u\in \Gamma( E_+),\ v\in\Gamma ( E_-).
\end{equation}

\end{lem}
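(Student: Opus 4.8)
The plan is to use both integrability conditions built into the hypothesis that $(G,\mathcal J)$ is generalized K\"ahler, namely that the Nijenhuis tensors of $\mathcal J_1:=\mathcal J$ and of $\mathcal J_2:=G^{\mathrm{end}}\mathcal J$ both vanish, and to combine them by exploiting that $\mathcal J_2=\pm\mathcal J_1$ on $E_\pm$. Throughout I write $e_\pm=\frac12(\mathrm{Id}\pm G^{\mathrm{end}})e$ for the $E_\pm$-projections; recall that $\mathcal J$ commutes with $G^{\mathrm{end}}$ (hence with these projections) and preserves $E_\pm$, so that $\mathcal J v$ lies in $E_+$ whenever $v$ does.

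First I would fix $u\in\Gamma(E_-)$ and $v\in\Gamma(E_+)$ and write out $N_{\mathcal J}(u,v)=0$ from (\ref{def-nj}), giving
\[
[\mathcal J u,\mathcal J v]-[u,v]=\mathcal J[\mathcal J u,v]+\mathcal J[u,\mathcal J v].
\]
Next I would write out $N_{\mathcal J_2}(u,v)=0$ and substitute $\mathcal J_2 u=-\mathcal J u$ (since $u\in E_-$) and $\mathcal J_2 v=\mathcal J v$ (since $v\in E_+$), using the $\mathbb R$-bilinearity of the bracket in its scalar arguments, to obtain
\[
-[\mathcal J u,\mathcal J v]-[u,v]=-\mathcal J_2[\mathcal J u,v]+\mathcal J_2[u,\mathcal J v].
\]
Adding these two identities cancels the term $[\mathcal J u,\mathcal J v]$ and produces
\[
-2[u,v]=(\mathcal J-\mathcal J_2)[\mathcal J u,v]+(\mathcal J+\mathcal J_2)[u,\mathcal J v].
\]

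The key algebraic observation is that $\mathcal J\mp\mathcal J_2=(\mathrm{Id}\mp G^{\mathrm{end}})\mathcal J$ equals twice the projection onto $E_\mp$ followed by $\mathcal J$; thus the first term on the right is $E_-$-valued and the second is $E_+$-valued. Projecting the displayed identity onto $E_+$ annihilates the first term, and since $\mathcal J$ commutes with the projections I obtain $-[u,v]_+=\mathcal J[u,\mathcal J v]_+$. Applying $\mathcal J$ once more and using $\mathcal J^2=-\mathrm{Id}$ then rearranges to exactly (\ref{ajutatoare-0}). The companion identity (\ref{ajutatoare-1}) follows from the identical computation with the roles of $E_+$ and $E_-$ interchanged: one takes $u\in\Gamma(E_+)$, $v\in\Gamma(E_-)$, so that now $\mathcal J_2 u=\mathcal J u$ and $\mathcal J_2 v=-\mathcal J v$, and projects onto $E_-$ at the final step. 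I do not expect a genuine obstacle, since the argument is purely algebraic; the only point requiring care is tracking the signs arising from $\mathcal J_2=\pm\mathcal J$ on $E_\pm$ and recognizing the combinations $\mathcal J\pm\mathcal J_2$ as twice the projections composed with $\mathcal J$.
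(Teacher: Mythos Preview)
Your proof is correct and takes a genuinely different route from the paper's. The paper complexifies: it rewrites (\ref{ajutatoare-0}) as the statement that $[u,v]_+\in\Gamma((E_+)_{\mathbb C}\cap L_1)$ whenever $u\in\Gamma((E_-)_{\mathbb C})$ and $v\in\Gamma((E_+)_{\mathbb C}\cap L_1)$, then splits $u$ into its $L_1$- and $\bar L_1$-components and uses closure of $L_1$ (integrability of $\mathcal J_1$) for the first and closure of $L_2$ (integrability of $\mathcal J_2$, noting $\bar L_1\cap(E_-)_{\mathbb C}=L_2\cap(E_-)_{\mathbb C}$ and $L_1\cap(E_+)_{\mathbb C}=L_2\cap(E_+)_{\mathbb C}$) for the second. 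Your argument stays entirely real: you add the two vanishing Nijenhuis equations with the signs forced by $\mathcal J_2=\pm\mathcal J_1$ on $E_\pm$, recognize $\mathcal J\pm\mathcal J_2$ as $2\mathcal J\circ(\cdot)_\pm$, and project. This is more elementary and slightly shorter; the paper's version, on the other hand, makes the link to the closure of $L\cap(E_\pm)_{\mathbb C}$ under the Dorfman bracket explicit, which is the form in which the generalized K\"ahler condition is most often used elsewhere.
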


\begin{proof} 
Relation (\ref{ajutatoare-0}) is equivalent to
\begin{equation}\label{ajutatoare-p}
[u, v]_{+} \in \Gamma ( (E_+)_{\mathbb{C}}\cap L_{1}),\ \forall u\in\Gamma (( E_-)_{\mathbb{C}}),\ v\in\Gamma ((E_+)_{\mathbb{C}}
 \cap L_{1}), 
\end{equation}
where we have denoted by $L_{1}$ the $(1,0)$-bundle 
of $\mathcal J_{1} =\mathcal J.$  
Remark that $(E_+ )_{\mathbb{C}}\cap L_{1} = (E_+)_{\mathbb{C}} \cap L_{2}$, 
where $L_{2}$ is the $(1,0)$-bundle of $\mathcal J_{2}=  G^{\mathrm{end}}\mathcal J$  
(since  $\mathcal J_{1} = \mathcal J_{2}$ on $E_+$).   
In (\ref{ajutatoare-p}) we distinguish two cases: a)  $u\in \Gamma (L_{1}\cap (E_-)_{\mathbb{C}})$; b)  $u\in \Gamma (\bar{L}_{1}\cap (E_-)_{\mathbb{C}})$.  
In case a), relation (\ref{ajutatoare-p}) follows from the integrability of $\mathcal J$. In case b), relation (\ref{ajutatoare-p}) follows from the  integrability of $\mathcal J_{2}$.
\end{proof}

\begin{rem}{\rm When the Courant algebroid is exact the above lemma can  be proved  using 
the Bismut connection for generalized K\"{a}hler
 structures, constructed in \cite{gualtieri-bismut}. More precisely, for  a generalized K\"{a}hler
structure $(G, \mathcal J)$ on an exact Courant algebroid $E$, there is a unique generalized connection $D$ 
(called in \cite{gualtieri-bismut}  the  {\cmssl Bismut connection}),  such
that $DG=0$, $D\mathcal J =0$,  and whose torsion is of type $(2,1)+(1,2)$ with respect to 
$\mathcal J .$ The expression of $D$ is given in Theorem 3.1 of \cite{gualtieri-bismut}. Its mixed components
$D_{u}v$ and $D_{v}u$, for $u\in \Gamma (E_+)$ and $v\in \Gamma (E_-)$,  are 
$D_{u}v:= [u, v]_{-}$ and $D_{v}u = [v,u]_{+}.$ 
Relation (\ref{ajutatoare}) follows from $D\mathcal J =0$. }
\end{rem}

 \begin{thm}\label{integr-kahler}  A generalized almost Hermitian structure $(G,\mathcal J )$ on a Courant algebroid  $E$ is generalized
K\"{a}hler if and only if there is a Levi-Civita connection of $G$ which preserves $\mathcal J .$\end{thm}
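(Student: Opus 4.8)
The plan is to prove both directions, with the forward direction (generalized K\"ahler $\Rightarrow$ existence of an adapted Levi-Civita connection) being the substantive one. Recall from Theorem \ref{integr-hk} and the surrounding discussion that a generalized almost Hermitian structure $(G,\mathcal J)$ corresponds to a generalized $H$-structure with $H = \mathrm{U}(\tfrac{k_+}{2},\tfrac{\ell_+}{2})\times \mathrm{U}(\tfrac{k_-}{2},\tfrac{\ell_-}{2})$, and that an adapted generalized connection is precisely one satisfying $D G = 0$ and $D\mathcal J = 0$. So a Levi-Civita connection of $G$ preserving $\mathcal J$ is exactly a torsion-free connection adapted to this $H$-structure. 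By Corollary \ref{cor-torsion}, such a connection exists if and only if the intrinsic torsion of $(G,\mathcal J)$ vanishes.

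For the \emph{easy} direction, suppose such a torsion-free adapted $D$ exists. Then $D\mathcal J_1 = D\mathcal J = 0$, and since $DG = 0$ we also have $D G^{\mathrm{end}} = 0$, hence $D\mathcal J_2 = D(G^{\mathrm{end}}\mathcal J) = 0$. Applying Theorem \ref{complex:thm} to each of $\mathcal J_1$ and $\mathcal J_2$ separately (each is preserved by the torsion-free generalized connection $D$), we conclude that both $\mathcal J_1$ and $\mathcal J_2$ are integrable, which is exactly the definition of $(G,\mathcal J)$ being generalized K\"ahler.

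For the \emph{hard} direction I would assume $(G,\mathcal J)$ is generalized K\"ahler and construct a torsion-free connection with $DG = DJ = 0$ directly, in blocks adapted to the splitting $E = E_+\oplus E_-$. The natural construction is to start from the two known generalized complex connections (one adapted to $\mathcal J_1$, one to $\mathcal J_2$) produced by Proposition \ref{essential}: since $\mathcal J_1$ and $\mathcal J_2$ are integrable, each admits a \emph{torsion-free} adapted generalized connection. The key structural input is Lemma \ref{add}: the mixed Dorfman-bracket components $[u,v]_-$ and $[v,u]_+$ (for $u\in\Gamma(E_+)$, $v\in\Gamma(E_-)$) are $\mathcal J$-compatible, which suggests defining the off-diagonal components of $D$ by $D_u v := [u,v]_-$ and $D_v u := [v,u]_+$ (mimicking the Bismut connection of the Remark), and then choosing the $E_+$-to-$E_+$ and $E_-$-to-$E_-$ components to be torsion-free metric connections compatible with $\mathcal J|_{E_\pm}$. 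One must verify that the resulting $D$ preserves $G$, preserves $\mathcal J$, and is torsion-free; the compatibility with $G$ and $\mathcal J$ follows because each block respects the relevant structures, while the torsion-free condition reduces on the mixed terms to exactly the content of Lemma \ref{add} and its consequences, and on the diagonal blocks to the classical K\"ahler identities for $\mathcal J|_{E_\pm}$ together with integrability.

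The main obstacle will be the torsion computation for the mixed (off-diagonal) components of $D$, i.e. verifying that the skew-symmetrization $T^D(u,v,w)$ vanishes when the arguments are distributed across $E_+$ and $E_-$. This is where Lemma \ref{add} does the essential work: the integrability of both $\mathcal J_1$ and $\mathcal J_2$ (equivalently, closure of $L\cap (E_\pm)_{\mathbb C}$ under the Dorfman bracket, as noted after the definition of generalized K\"ahler structures) is needed to kill the mixed torsion terms. A subtlety is that, unlike the classical case, the torsion-free adapted connection is \emph{not unique} (the generalized first prolongation $\mathfrak h^{\langle 1\rangle}$ is nonzero by the discussion in Section \ref{gen-riemann}), so I only need to exhibit \emph{one} such connection rather than characterize all of them; this freedom makes the block-diagonal choices harmless. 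I would organize the argument so that the existence of the torsion-free adapted $D$ is equated, via Corollary \ref{cor-torsion}, with the vanishing of the intrinsic torsion, and then show that generalized K\"ahler-ness forces this vanishing through the explicit block construction.
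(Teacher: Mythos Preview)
Your easy direction is fine and matches the paper. For the hard direction the paper takes a shorter and more direct route than your block construction: starting from \emph{any} Levi-Civita connection $D$ of $G$, it applies the formula $\tilde D = D - \tfrac12\,\mathcal J D\mathcal J - \tfrac14\{A^{\mathrm{sym}},\mathcal J\}$ of Proposition~\ref{essential} on the full $E$. Since $\mathcal J$ is integrable, Proposition~\ref{essential} already gives that $\tilde D$ is torsion-free and complex; the only new work is to check $\tilde D G=0$, which reduces (because $\mathcal J D_u\mathcal J$ is $G$-skew) to showing that $\{A_u^{\mathrm{sym}},\mathcal J\}$ preserves the splitting $E=E_+\oplus E_-$, i.e.\ $\langle A_u E_\pm,E_\mp\rangle=0$. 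That last identity is exactly what Lemma~\ref{add} delivers.

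Your block approach is a plausible alternative but has a genuine gap, and you have slightly misplaced where Lemma~\ref{add} enters. The mixed components $D_{e_+}v_-:=[e_+,v_-]_-$, $D_{e_-}v_+:=[e_-,v_+]_+$ are indeed forced on every Levi-Civita connection of $G$ (this is the computation in the proof of Lemma~\ref{indep-cond}), and one checks directly from the polarization of axiom C5 that their contribution to $T^D(u,v,w)$ vanishes whenever the arguments are split between $E_+$ and $E_-$---no appeal to Lemma~\ref{add} is needed for the torsion. Where Lemma~\ref{add} is actually required in your scheme is for the $\mathcal J$-compatibility of those mixed components, namely $[e_+,\mathcal J v_-]_- = \mathcal J[e_+,v_-]_-$.

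The real gap is the diagonal blocks. You assert that one can choose $D_{E_\pm}$ on $E_\pm$ to be simultaneously metric, $\mathcal J|_{E_\pm}$-compatible, and torsion-free (with torsion computed using the projected bracket $[\cdot,\cdot]_\pm$), invoking ``classical K\"ahler identities.'' But $E_\pm$ is not a tangent bundle and $[\cdot,\cdot]_\pm$ is not the bracket of a Courant algebroid, so neither classical K\"ahler geometry nor Proposition~\ref{essential} applies off the shelf. To close this you would have to rerun the purely algebraic part of Proposition~\ref{essential} on each $E_\pm$ separately, after first verifying that the projected Nijenhuis tensor of $\mathcal J|_{E_\pm}$ vanishes (it does: closure of $L\cap (E_\pm)_{\mathbb C}$ under the Dorfman bracket together with $\mathcal J(E_\pm)\subset E_\pm$ gives closure under $[\cdot,\cdot]_\pm$). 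This can be made to work, but it is more labor than the paper's global argument, which sidesteps any analysis on the blocks.
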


\begin{proof}
In one direction the statement is obvious: if there is a Levi-Civita connection $D$ of $G$ which preserves $\mathcal J$, 
then it preserves also $\mathcal J_{2} = G^{\mathrm{end}} \mathcal J$ and we deduce that  both $\mathcal J$ and $\mathcal J_{2}$ are integrable 
(from Theorem \ref{complex:thm}, because $D$ is torsion free). We obtain that $(G, \mathcal J)$ is generalized K\"{a}hler.
The converse statement follows from the next theorem. 
\end{proof}
\begin{thm} Let $(G,\mathcal J )$ be a generalized
K\"{a}hler structure on a Courant algebroid $E$ and $D$ a 
Levi-Civita connection. Then the generalized connection  
\[\tilde{D} =D -\frac12 \mathcal{J}D\mathcal{J}-\frac14 \{ A^{\mathrm{sym}},\mathcal{J}\},\] 
is a torsion-free generalized connection compatible 
with $(G,\mathcal J )$. 
\end{thm}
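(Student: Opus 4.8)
The plan is to recognize that $\tilde{D}$ is exactly the connection produced in Proposition \ref{essential} from the torsion-free generalized connection $D$ and the generalized almost complex structure $\mathcal J$. That proposition already guarantees, for \emph{any} torsion-free generalized connection $D$, that $\tilde D$ is a generalized connection with $\tilde D\mathcal J=0$ and torsion $T^{\tilde D}=\frac14 N_{\mathcal J}$. Since $(G,\mathcal J)$ is generalized K\"ahler, $\mathcal J$ is integrable, so $N_{\mathcal J}=0$ and $\tilde D$ is torsion-free. Hence the only genuinely new point is that the Levi-Civita hypothesis $DG=0$ forces $\tilde D$ to preserve $G$ as well, i.e.\ $\tilde D G^{\mathrm{end}}=0$.

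First I would reduce this to an algebraic identity. Writing $\tilde D_u=D_u+B_u$ with $B_u=-\frac12\mathcal J(D_u\mathcal J)-\frac14\{A_u^{\mathrm{sym}},\mathcal J\}$ and using $D_uG^{\mathrm{end}}=0$, one gets $\tilde D_uG^{\mathrm{end}}=[B_u,G^{\mathrm{end}}]$, so the goal becomes $[B_u,G^{\mathrm{end}}]=0$. The first summand of $B_u$ drops out: covariantly differentiating the identity $[\mathcal J,G^{\mathrm{end}}]=0$ and using $D_uG^{\mathrm{end}}=0$ gives $[D_u\mathcal J,G^{\mathrm{end}}]=0$, so $G^{\mathrm{end}}$ commutes with both $\mathcal J$ and $D_u\mathcal J$, hence with $\mathcal J(D_u\mathcal J)$. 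Since $\mathcal J$ commutes with $G^{\mathrm{end}}$, the second summand obeys $[\{A_u^{\mathrm{sym}},\mathcal J\},G^{\mathrm{end}}]=\{[A_u^{\mathrm{sym}},G^{\mathrm{end}}],\mathcal J\}$. Thus everything reduces to the single identity $\{[A_u^{\mathrm{sym}},G^{\mathrm{end}}],\mathcal J\}=0$, which says that the off-diagonal ($E_+\leftrightarrow E_-$) blocks $X=(A_u^{\mathrm{sym}})_{\pm\mp}$ anti-commute with $\mathcal J$ in the sense $\mathcal J X+X\mathcal J=0$.

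Since $D$ is Levi-Civita it preserves $E_\pm$, which makes the $3$-tensor $\eta(a,b,c)=\langle(D_a\mathcal J)b,c\rangle$ vanish whenever $b,c$ lie in different eigenbundles of $G^{\mathrm{end}}$; this already constrains the off-diagonal blocks of $A_u^{\mathrm{sym}}$. The remaining step, and the \emph{main obstacle}, is to verify that these blocks actually satisfy the anti-commutation relation with $\mathcal J$. This is where the full generalized K\"ahler hypothesis enters: using the torsion-free identity $[a,b]=D_ab-D_ba+(Da)^*b$ to re-express the mixed entries of $\eta$ through the Dorfman bracket, the required relation becomes the vanishing of the mixed brackets $[u,\mathcal J v]_\pm-\mathcal J[u,v]_\pm$ for $u,v$ in opposite eigenbundles, which is exactly the content of Lemma \ref{add}. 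I expect this translation between the $A_u^{\mathrm{sym}}$-identity and Lemma \ref{add} to be the crux of the argument. Note that it genuinely requires the generalized K\"ahler condition (in particular integrability of $\mathcal J_2=G^{\mathrm{end}}\mathcal J$) and not merely integrability of $\mathcal J$: were $\tilde D$ to preserve $G$ under $\mathcal J$-integrability alone, it would be a torsion-free connection preserving both $G$ and $\mathcal J$, hence also $\mathcal J_2$, forcing $\mathcal J_2$ integrable by Theorem \ref{complex:thm}.
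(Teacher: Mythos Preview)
Your proposal is correct and follows essentially the same approach as the paper: both invoke Proposition \ref{essential} to obtain that $\tilde D$ is a torsion-free complex generalized connection, then reduce the remaining condition $\tilde D G=0$ to a statement about the off-diagonal blocks of $A_u$ (or $A_u^{\mathrm{sym}}$) with respect to $E=E_+\oplus E_-$, which is finally established via Lemma \ref{add} after rewriting mixed covariant derivatives as Dorfman brackets using the torsion-free property. The only cosmetic difference is that the paper phrases the goal as ``$\{A_u^{\mathrm{sym}},\mathcal J\}$ is $G$-skew'' and directly shows the stronger fact $\langle A_uE_\pm,E_\mp\rangle=0$, whereas you pass through $[B_u,G^{\mathrm{end}}]=0$ and the anti-commutation condition on the off-diagonal blocks; Lemma \ref{add} then gives the vanishing of those blocks in either formulation.
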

\begin{proof}From Proposition 
\ref{essential} we know that $\tilde{D}$ is torsion-free and complex ($\tilde{D}\mathcal J =0$). 
So it suffices to show that $\tilde{D}G=0$. Since $DG=0$ and $\mathcal{J}$ is $G$-skew-symmetric,  
the anticommuting endomorphisms $\mathcal{J}$ and $D_u\mathcal{J}$ ($u\in E$) are both $G$-skew and, hence, 
$\mathcal{J}D_u\mathcal{J}$ as well. 
We conclude that the generalized connection $D -\frac12 \mathcal{J}D\mathcal{J}$ preserves $G$. 

It remains to check that 
$\{ A^{\mathrm{sym}}_u,\mathcal{J}\}$ is $G$-skew-symmetric for all $u\in E$. 
We know that it is skew-symmetric with respect to $\langle \cdot ,\cdot \rangle$, since both 
$D -\frac12 \mathcal{J}D\mathcal{J}$ and $\tilde{D}$ are generalized connections. 
Therefore it suffices to check that $\{ A^{\mathrm{sym}}_u,\mathcal{J}\}$ preserves the 
decomposition $E=E_+\oplus E_-$. Since $\mathcal{J}$ does, we only need to 
check that $\langle A^{\mathrm{sym}}_uE_+,E_-\rangle =0$.  We check that $\langle A_uE_\pm,E_\mp\rangle =0$, 
which implies the latter. Let $v\in \Gamma (E_\pm)$, $w\in \Gamma (E_\mp)$. If $u\in E_\pm$, then 
$\langle A_uv,w\rangle = \langle (D_v\mathcal{J})u,w\rangle=0$,
because  $D_v\mathcal{J}$ preserves the decomposition $E=E_+\oplus E_-$ (for all $v\in E$).
For $u\in E_\mp$, we use  a) that  $D$ has zero torsion to express derivatives by brackets with the help of  the previous equation $\langle (D_{E}\mathcal{J})E_\pm , E_\mp\rangle=0$ and the property that $D$  
preserves the subbundles $E_\pm$    
and b) Lemma \ref{add}:  
\[ \langle A_uv,w\rangle = \langle (D_v\mathcal{J})u,w\rangle= \langle D_v(\mathcal{J}u)-\mathcal{J}D_vu,w\rangle \stackrel{a)}{=}\langle [v,\mathcal{J}u]-\mathcal{J}[v,u],w\rangle\stackrel{b)}{=}0.\qedhere\]
\end{proof}
Note that the equations $\langle (D_{E_\pm}\mathcal{J})E ,E_\mp\rangle=\langle (D_{E_\pm}\mathcal{J})E_\mp,E\rangle= 0$,  
established in the proof, can be also written as: 
\begin{equation}\label{help:eq} (D_{E_\pm}\mathcal{J})E_\mp =0, \quad (D_{E_\pm}\mathcal{J})E_\pm \subset E_\pm .
\end{equation}

\section{Generalized almost hyper-Hermitian structures: integrability and torsion-free generalized connections}\label{sect-hyperhermitian}

\begin{thm}\label{integr-hk} A generalized almost hyper-Hermitian structure $(G, \mathcal J_{1}, \mathcal J_{2}, \mathcal J_{3})$ on a Courant algebroid
$E$ is generalized hyper-K\"{a}hler if and only if there is a Levi-Civita connection $D$ of $G$ which satisfies $D\mathcal J_{i}=0$, for all 
$i=1,2,3.$   
\end{thm}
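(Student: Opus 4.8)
The plan is to mirror the structure of Theorem \ref{integr-kahler}, with the easy direction following immediately from Theorem \ref{hyper:thm} (if a torsion-free generalized connection preserves all $\mathcal J_i$, then each $\mathcal J_i$ is integrable, so the structure is generalized hyper-K\"ahler). The content lies in the converse: starting from a generalized hyper-K\"ahler structure $(G,\mathcal J_1,\mathcal J_2,\mathcal J_3)$, I must produce a single Levi-Civita connection of $G$ (i.e.\ torsion-free and $G$-preserving) that simultaneously preserves $\mathcal J_1$, $\mathcal J_2$ and $\mathcal J_3$.

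The natural approach is to imitate the two-step construction from Proposition \ref{gen-hypercomplex}, but performed inside the class of Levi-Civita connections rather than arbitrary generalized connections. First I would fix a Levi-Civita connection $D$ of $G$, which exists by the discussion in Section \ref{gen-riemann}. Applying the construction of Theorem \ref{integr-kahler} to the generalized K\"ahler pair $(G,\mathcal J_1)$, I obtain a torsion-free $G$-preserving connection $D'$ with $D'\mathcal J_1=0$; explicitly $D'=D-\frac12\mathcal J_1 D\mathcal J_1-\frac14\{A^{\mathrm{sym}},\mathcal J_1\}$. The next step is to symmetrize with respect to $\mathcal J_2$: set $D^{(1)}:=D'-\frac12\mathcal J_2 D'\mathcal J_2$, which preserves $\mathcal J_2$ by the argument in Lemma \ref{easy}, and, crucially, still preserves $\mathcal J_1$ because $D'_u\mathcal J_2$ anti-commutes with $\mathcal J_1$ (as in the proof of Proposition \ref{gen-hypercomplex}, using $D'\mathcal J_1=0$ and the anti-commutation of the $\mathcal J_i$). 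Since $\mathcal J_3=\mathcal J_1\mathcal J_2$, we then get $D^{(1)}\mathcal J_3=0$ automatically, so $D^{(1)}$ is hypercomplex and $G$-preserving.

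The remaining obstacle, and the step I expect to require genuine care, is the torsion: the $\mathcal J_2$-symmetrization $D^{(1)}=D'-\frac12\mathcal J_2 D'\mathcal J_2$ may reintroduce torsion even though $D'$ is torsion-free. Here I would invoke the hyper-K\"ahler hypothesis through the vanishing of the intrinsic torsion $t_{(\mathcal J_i)}=\frac16\sum_i N_{\mathcal J_i}=0$. The plan is to show that the torsion $T^{D^{(1)}}$ lies in $\mathrm{im}\,\partial_{\mathbb H}$ and then correct it by subtracting $\frac16\tilde\pi(T^{D^{(1)}})$, exactly as in Proposition \ref{gen-hypercomplex}; by (\ref{torsion-nj}) this yields $T^{\tilde D}=P(T^{D^{(1)}})=\frac16\sum_i N_{\mathcal J_i}=0$. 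The delicate point is that the correction term $-\frac16\tilde\pi(T^{D^{(1)}})\in E^*\otimes\Lambda^{1,1}_{\mathbb H}E^*$ must additionally preserve $G$, so that $\tilde D$ remains a \emph{Levi-Civita} connection and not merely a hypercomplex one.

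To settle this $G$-compatibility I would argue that the correction lands in $E^*\otimes\mathrm{ad}_Q(E)$ for the generalized almost hyper-Hermitian structure $Q=(G,\mathcal J_1,\mathcal J_2,\mathcal J_3)$, i.e.\ that $\tilde\pi(T^{D^{(1)}})$ not only is $\mathcal J_i$-invariant but also commutes with $G^{\mathrm{end}}$ and hence is $G$-skew. The key input will be the analogue of (\ref{help:eq}), namely that the relevant derivatives $D^{(1)}_{E_\pm}\mathcal J_i$ respect the decomposition $E=E_+\oplus E_-$ off-diagonally, which forces the mixed $E_+$-$E_-$ components of $T^{D^{(1)}}$ to vanish. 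Combined with the fact that each $\mathcal J_i$ preserves $E_\pm$, this shows $\tilde\pi(T^{D^{(1)}})$ preserves $E_\pm$, whence $\{\,\cdot\,,G^{\mathrm{end}}\}$-type commutators vanish and the correction is $G$-skew. Thus $\tilde D$ is torsion-free, preserves $G$ and all $\mathcal J_i$, completing the converse and the proof.
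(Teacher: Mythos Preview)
Your approach is essentially that of the paper: start from a Levi-Civita connection $D'$ with $D'\mathcal J_1=0$ (whose existence is exactly Theorem \ref{integr-kahler}), apply the two-step modification of Proposition \ref{gen-hypercomplex} to obtain a hypercomplex torsion-free $\tilde D$, and then verify $\tilde DG=0$ by showing the correction $\eta=-\tfrac16\tilde\pi(T^{D^{(1)}})$ has no mixed $E_+/E_-$ component, using (\ref{help:eq}). Two small clarifications are needed. First, in the easy direction, invoking Theorem \ref{hyper:thm} only yields integrability of the $\mathcal J_i$, not of the $G^{\mathrm{end}}\mathcal J_i$; you should instead (or additionally) cite Theorem \ref{integr-kahler} applied to each pair $(G,\mathcal J_i)$, which is precisely what the paper does. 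Second, in your last paragraph the phrase ``$D^{(1)}_{E_\pm}\mathcal J_i$'' is vacuous since $D^{(1)}\mathcal J_i=0$; the relevant derivatives are those of $D'$ (the torsion-free $\mathcal J_1$-preserving Levi-Civita connection), specifically $D'_{E_\pm}\mathcal J_2$, to which (\ref{help:eq}) applies because $(G,\mathcal J_2)$ is generalized K\"ahler and $D'$ is Levi-Civita. With this understood, your claim that the mixed components of $T^{D^{(1)}}$ vanish is correct, and the rest goes through exactly as in the paper.
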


\begin{proof}
If there is a Levi-Civita  connection $D$ of $G$ which is hypercomplex,  then 
$(G, \mathcal J_{i})$ is generalized K\"{a}hler (see Theorem   
\ref{integr-kahler}).   

For the converse statement, let $(G,\mathcal J_{1}, \mathcal J_{2}, \mathcal J_{3})$ be a generalized hyper-K\"{a}hler structure and $D$ a generalized Levi-Civita connection of $G$ with $D\mathcal J_{1}=0$ (which exists, by Theorem \ref{integr-kahler}). We will show that the generalized connection $\tilde{D}$ constructed in Proposition \ref{gen-hypercomplex},  starting from $D$,   is a Levi-Civita connection of $G$ which preserves  the 
$\mathcal{J}_{i}$. 
Define the generalized connections 
\begin{equation}
D_{u}^{(1)}:= D_{u} -\frac{1}{2}  \mathcal J_{2} D_{u}\mathcal J_{2}
\end{equation}
and 
\begin{equation}
\tilde{D}_{u} := D^{(1)}_{u} + \eta_{u} = D_{u}  -\frac{1}{2} \mathcal J_{2} 
D_{u}\mathcal J_{2} + \eta_{u} ,
\end{equation}
where $\eta :=-  \frac{1}{6}\tilde{ \pi} ( T^{D^{(1)}})$. 
From Proposition \ref{gen-hypercomplex},  $D^{(1)}$ and $\tilde{D}$ are  hypercomplex 
and  $\tilde{D}$ is  torsion-free.   We claim that $\tilde{D}G=0$, which proves the theorem.

Note first that $D^{(1)}G=0$, since $DG=0$ and  $\mathcal{J}_2D_u\mathcal{J}_2$ is skew-symmetric  with respect to $G$. So it suffices to show that $\eta(u,v,w)=0$ for all $u\in E$, $v\in E_\pm$, $w\in E_\mp$.
Note  that
$\eta  = -\frac{1}{6} \tilde{\pi} ( T^{D^{(1)}}) $  has the following expression:
\begin{align*}
\nonumber &\eta (u, v, w) =\\
\nonumber& \frac{1}{12}  \langle (D_{\mathcal J_{1}v}\mathcal J_{2})(\mathcal J_{3} w) 
+ ( D_{\mathcal J_{2}v}\mathcal J_{2})(w) -  (D_{\mathcal J_{3}v}\mathcal J_{2})(\mathcal J_{1}w) 
+ \mathcal J_{2}  (D_{v}\mathcal J_{2})(w),u\rangle  \\
\nonumber& - \frac{1}{12} \langle  \mathcal J_{3} (D_{\mathcal J_{1}w}\mathcal J_{2})(u) 
-   (D_{\mathcal J_{2}w}\mathcal J_{2})(u) -  \mathcal J_{1}( D_{\mathcal J_{3}w}\mathcal J_{2})(u) 
-  \mathcal J_{2}  (D_{w}\mathcal J_{2})(u),v\rangle ,
\end{align*}
for any $u, v, w\in \Gamma (E)$. For all $u\in E$, $v\in E_\pm$, $w\in E_\mp$, each summand belongs either 
to the set $\langle (D_{E_\pm}\mathcal{J}_\alpha ) E_\mp ,E\rangle$ or to $\langle (D_{E_\mp}\mathcal{J}_\alpha )E,E_\pm\rangle$, which  both reduce to zero by (\ref{help:eq}).
\end{proof}

\vspace{1cm}

\begin{center}
\part[Second Part: Dirac generating operators]{}
\end{center}

\section{The space of  local Dirac generating operators}\label{sect-dirac-add}

Let $E$ be an oriented Courant algebroid with anchor  $\pi : E \rightarrow TM$,
 scalar product  $\langle\cdot , \cdot \rangle$ and Dorfman bracket  $[\cdot , \cdot ]$. 
We assume that $\langle\cdot , \cdot \rangle$ is of neutral signature $(n, n)$. 
We denote by $\mathrm{Cl}(E)$ the bundle of  Clifford algebras over $(E,\langle \cdot , \cdot \rangle)$ with the Clifford relation 
$e^2 = \langle e ,e\rangle$, $e\in E$. Let $S\rightarrow M$ be a real vector bundle of irreducible $\mathrm{Cl}(E)$-modules. 
We will call $S$ a {\cmssl spinor bundle} over $\mathrm{Cl}(E)$.
The representation of $\mathrm{Cl}(E)$ on $S$, denoted by 
\begin{equation} \label{iso:eq} \gamma : \mathrm{Cl}(E) \rightarrow \mathrm{End} (S),\quad a\mapsto \gamma (a) :=\gamma_a,\end{equation}
is an isomorphism of algebra  bundles. To simplify notation, we shall sometimes write $as$ for the Clifford action $\gamma_{a}s$ 
of $a\in \mathrm{Cl}(E)$ on $s\in S$.

Recall that the Clifford algebra bundle $\mathrm{Cl}(E)$ is $\mathbb{Z}_2$-graded.  We denote the subbundle of $\mathrm{Cl}(E)$  of degree $i\in \mathbb{Z}_2$ by
$\mathrm{Cl}^i(E)$. 
Since $\langle\cdot, \cdot\rangle$ has neutral signature, 
the bundle $S$ has a compatible $\mathbb{Z}_2$-grading denoted by $S=S^{0} \oplus S^{1}$,  where 
$S^{0} = \frac{1}{2} \gamma_{(1+\omega )} S$ and $S^{1} =\frac{1}{2} \gamma_{(1-\omega )} S$, with $\omega
=e_{1}\wedge \cdots \wedge e_{2n}$ the volume  form of $E$,  
determined by a positive oriented orthonormal basis of $E$, and considered as an element of $\mathrm{Cl}(E)$
(see e.g.\  Proposition 3.6 of \cite{LM}).   An argument analogous to the proof of Proposition 5.10 of \cite{LM} shows that 
the  $\mathrm{Cl}^0(E)$-submodules $S^{0}$ and $S^{1}$ are pointwise inequivalent and irreducible. There is an induced $\mathbb{Z}_2$-grading on $\mathrm{End} (\Gamma (S))$ and, in particular, on the algebra of differential operators on $S$, which includes
$\Gamma (\mathrm{End}\, S)\subset \mathrm{End} (\Gamma (S))$ as the subalgebra of operators of $0$-th order. We will denote by 
\[ [ A,B] = AB -(-1)^{\deg A \deg B} BA\] 
the {\cmssl super commutator} of two homogeneous elements $A, B \in \mathrm{End} (\Gamma (S))$, where $\deg A$ stands for the degree of $A$.

\begin{defn} \label{Dgo:def} A first order odd differential operator 
$\slashed{d}$ on  a spinor bundle $S$ over $\mathrm{Cl}(E)$  is called a 
{\cmssl Dirac generating operator} for $E$ if for all $f\in C^\infty (M)$ and $e, e_1,e_2 \in \Gamma (E)$, 
\begin{enumerate}
\item[i)] $[[\di , f],\g_e]] = \pi (e)(f)$,
\item[ii)] $[[\di ,\g_{e_1}],\g_{e_2}] = \g_{[e_1,e_2]}$ and 
\item[iii)] $\di\,^2\in C^\infty(M)$.
\end{enumerate}
\end{defn}
Note that given $(E,\langle \cdot , \cdot \rangle )$ and $\di$ one can reconstruct 
the full Courant algebroid structure from i) and ii). This is why the operator $\slashed{d}$ is called
{\cmssl generating}.

\begin{prop}\label{prop-space} Suppose that there is a Dirac generating operator $\di$ for $E$ on $S$.
Then the set of Dirac generating operators for $E$ on $S$ has the structure of an affine space modelled on the vector space
\begin{equation} \label{Vd:eq}  V_\di := \{ e\in \Gamma (E) \mid [\di , \g_e] \in C^\infty (M)\}.\end{equation}
In particular,  $V_\di$ is independent of the choice of Dirac generating operator $\di$.
\end{prop}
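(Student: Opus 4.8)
The plan is to keep the given operator $\di$ fixed and prove two complementary facts: every other Dirac generating operator has the form $\di+\g_{v}$ for a unique $v\in V_{\di}$, and conversely $\di+\g_{v}$ is again a Dirac generating operator for every $v\in V_{\di}$. Together these exhibit $v\mapsto\di+\g_{v}$ as a bijection from $V_{\di}$ onto the set of Dirac generating operators, which is exactly the asserted affine structure modelled on $V_{\di}$.

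Let $\di'$ be a second Dirac generating operator and put $\delta:=\di'-\di$, an odd operator of order at most one. Subtracting the two instances of axiom i) gives $[[\delta,f],\g_{e}]=0$ for all $f\in C^{\infty}(M)$ and $e\in\Gamma(E)$. Since $\delta$ has order $\le 1$, the bracket $[\delta,f]$ is an odd section of $\mathrm{End}\,S$, and under the algebra isomorphism $\g\colon\mathrm{Cl}(E)\xrightarrow{\sim}\mathrm{End}\,S$ the relation says that it super-commutes with every $\g_{e}$, hence with all of $\mathrm{Cl}(E)$. As the supercenter of $\mathrm{Cl}(E)$ reduces to the scalars $C^{\infty}(M)\cdot 1$ and its odd part vanishes, $[\delta,f]=0$ for all $f$, so $\delta\in\Gamma(\mathrm{End}\,S)$ and I may write $\delta=\g_{a}$ with $a$ odd. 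Subtracting the two instances of axiom ii) next gives $[[\delta,\g_{e_{1}}],\g_{e_{2}}]=0$, whence the even element $[\delta,\g_{e_{1}}]$ is supercentral, i.e.\ a scalar; equivalently the Clifford anticommutator satisfies $\{a,e\}\in C^{\infty}(M)\cdot 1$ for every $e\in\Gamma(E)$. Using the exact identity $\{a,e\}=2\,\iota_{e}a$ valid for odd $a$ (under the symbol identification $\mathrm{Cl}(E)\cong\Lambda E$, with $\iota_{e}$ the contraction defined by the metric), the homogeneous components of $a$ of degree $\ge 3$ are annihilated by all $\iota_{e}$ and must therefore vanish; hence $a=v\in\Gamma(E)$ and $\delta=\g_{v}$.

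It remains to single out which $v$ occur and to verify the converse. The key is that adding $\g_{v}$ interacts transparently with the three axioms: $\g_{v}$ is of order zero with $[\g_{v},f]=0$, so axiom i) is unaffected; $[\g_{v},\g_{e}]=2\langle v,e\rangle$ is a scalar and drops out of the double super-bracket, so axiom ii) is unaffected; and
\[ (\di+\g_{v})^{2}=\di^{2}+[\di,\g_{v}]+\langle v,v\rangle ,\]
so that axiom iii) holds for $\di+\g_{v}$ precisely when $[\di,\g_{v}]\in C^{\infty}(M)$, that is, precisely when $v\in V_{\di}$. Applying this to $\di'=\di+\g_{v}$ forces the $v$ found above into $V_{\di}$, and conversely each $v\in V_{\di}$ yields a Dirac generating operator $\di+\g_{v}$. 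Injectivity of $\g$ gives uniqueness of $v$, so $v\mapsto\di+\g_{v}$ is the desired affine bijection; and since $[\g_{v_{0}},\g_{e}]=2\langle v_{0},e\rangle\in C^{\infty}(M)$ for any $v_{0}\in V_{\di}$, one reads off $V_{\di'}=V_{\di}$ whenever $\di'=\di+\g_{v_{0}}$, proving the independence statement.

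The main obstacle is the structural input that makes the two vanishing double super-brackets bite: that the supercenter of $\mathrm{Cl}(E)$ is just the scalars, so that $\delta$ is forced to be zeroth order and given by Clifford multiplication, together with the contraction identity $\{a,e\}=2\,\iota_{e}a$ that collapses the odd element $a$ to degree one. This is where the neutral-signature hypothesis enters, through the isomorphism $\g\colon\mathrm{Cl}(E)\xrightarrow{\sim}\mathrm{End}\,S$ and the irreducibility of $S$ recalled in the introduction; once these are in place, the three axioms reduce to the purely algebraic conditions handled above and the remainder is routine bookkeeping.
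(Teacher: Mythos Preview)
Your proof is correct and follows essentially the same route as the paper: show that the difference $\delta=\di'-\di$ has order zero via axiom~i), that it equals $\g_v$ for some $v\in\Gamma(E)$ via axiom~ii), and that axiom~iii) then singles out $V_{\di}$ through the formula $(\di+\g_v)^2=\di^2+[\di,\g_v]+\langle v,v\rangle$. The only cosmetic differences are that, to kill $[\delta,f]$ and to force $a\in\Gamma(E)$, you invoke the triviality of the supercenter of $\mathrm{Cl}(E)$ together with the contraction identity $\{a,e\}=2\iota_e a$, whereas the paper phrases the first step via the inequivalence of the irreducible $\mathrm{Cl}^0(E)$-modules $S^0$ and $S^1$ and leaves the second to ``a straightforward computation in an orthonormal frame''; these are equivalent justifications of the same two facts.
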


\begin{proof} We first check that $\di\,' := \di + \g_{e_0}$ is a Dirac generating operator for all
$e_0\in V_\di$. Since $[\g_{e_0},f]=0$ and $[\g_{e_0},\g_{e_1}] = 2 \langle e_0,e_1\rangle \in C^\infty(M)$ the properties
i) and ii) in Definition \ref{Dgo:def}  for $\di$ immediately imply the same properties for $\di\,'$. Finally, the equation $(\di\,')^2=\di\,^2 + [\di , \g_{e_{0}}] + \langle e_0,e_0\rangle$
shows that property iii) holds for $\di\,'$ if it holds for $\di$ and $e_0\in V_\di$.

Conversely, we show that given Dirac generating operators $\di$ and $\di\,'$, there exists $e_0\in V_\di$ such that 
$L:= \di\,' - \di = \g_{e_0}$. We first observe that $[L,f]$ is a $0$-th order operator of odd degree for all $f\in C^\infty (M)$. 
By property i) it satisfies $[[L,f],\g_e]=0$ for all $e\in \Gamma (E)$. This implies that $[L,f]$ commutes with 
$\mathrm{Cl}^0(E)$. Being of odd degree, it interchanges $S^{0}$ and $S^{1}$  and we deduce that    $[L,f]=0$ since 
the irreducible $\mathrm{Cl^0(E)}$-modules $S^0$ and $S^1$ are inequivalent. 
This shows that the odd differential operator $L$ is of $0$-th order, that is $L=\g_a$ for some $a\in \Gamma(\mathrm{Cl}^1(E))$. 

Next we consider the even $0$-the order operator 
\[L':= [L,\g_{e}]=\gamma (ae+e a),\] 
$e\in \Gamma (E)$. It commutes with  $\mathrm{Cl}^1(E)$, 
in virtue of property ii), and is hence a scalar operator
(since $\langle \cdot , \cdot \rangle$ has  neutral signature). 
We conclude that 
$ae+ea$ is a scalar in $\mathrm{Cl}(E)$,  for any $e\in E$. This easily implies 
that $a =: e_0\in \Gamma (E)$, by a straightforward computation in which $e$ runs through the elements of an orthonormal frame. 
Now property iii) implies that $e_0\in V_\di$.

The last claim is now obvious: if $\di$ and  $\di\,' $ are two Dirac generating operators then $\di\,'  =  \di + \gamma_{e}$ for $e\in V_{\di}\subset \Gamma (E)$
which implies 
$V_{\di\,'  } = V_{\di }.$ 
\end{proof}

The next theorem is our main result in this section.

\begin{thm}[Alekseev-Xu] \label{ex:thm}
Let $E$ be a regular Courant algebroid with scalar product of neutral signature.  
Every spinor bundle $S$ over $\mathrm{Cl}(E)$ 
admits  locally  a Dirac generating operator. 
\end{thm}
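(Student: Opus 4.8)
The plan is to construct a Dirac generating operator locally by an explicit ansatz, guided by the known model case of the generalized tangent bundle $\mathbb{T}M$ and its twisted versions. Since the statement is local, I would work over a coordinate chart where the regular Courant algebroid admits a \emph{dissection}, i.e.\ a trivialization of $E$ in which the anchor, scalar product and Dorfman bracket take a standard normal form. Regularity is essential here: it guarantees that $\mathrm{im}\,\pi \subset TM$ is a subbundle and allows one to split $E$ into a piece mapping isomorphically onto the image of the anchor, its dual, and the kernel bundle $\ker\pi / \mathrm{im}\,\pi^*$ carrying a fiberwise quadratic Lie algebra structure. In such a dissection the Dorfman bracket is encoded by a connection, a curvature-type $3$-form $H$, and the structure constants of the fiberwise Lie algebra.

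First I would fix a spinor bundle $S$ over $\mathrm{Cl}(E)$ locally and choose a local orthonormal frame $(e_i)$ of $E$ adapted to the dissection, so that Clifford multiplication $\gamma_{e_i}$ is available. The natural candidate for $\slashed d$ is a sum of three kinds of terms: a ``Dirac-type'' term $\sum \gamma_{e_i}\nabla^S_{\pi(e_i)}$ built from a spin connection $\nabla^S$ lifting a chosen metric connection on $E$ (recall from Example \ref{ext:exa} that on a regular Courant algebroid any generalized connection with $D_e=0$ on $\ker\pi$ is induced by an honest connection); a zeroth-order term of Clifford degree $3$ coming from the twisting form $H$, acting by $\gamma$ of the corresponding element of $\mathrm{Cl}(E)$; and a further zeroth-order term encoding the fiberwise Lie bracket on $\ker\pi$. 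I would then verify the three defining properties of Definition \ref{Dgo:def} directly. Property i) reduces to the computation $[[\sum\gamma_{e_i}\nabla^S_{\pi(e_i)},f],\gamma_e]=\pi(e)(f)$, which follows from the Leibniz rule for $\nabla^S$ and the Clifford relation $\gamma_{e_i}\gamma_{e}+\gamma_e\gamma_{e_i}=2\langle e_i,e\rangle$, since the purely zeroth-order pieces of $\slashed d$ drop out of the double commutator with a function. Property ii) is the crucial compatibility: the double super-commutator $[[\slashed d,\gamma_{e_1}],\gamma_{e_2}]$ must reproduce $\gamma_{[e_1,e_2]}$, and this is exactly where the connection term contributes the derivative and anchor parts of the bracket while the degree-$3$ and Lie-algebra terms contribute $H(e_1,e_2,\cdot)$ and the fiberwise bracket — i.e.\ the full Dorfman bracket in the dissection reassembles on the nose.

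The main obstacle will be property iii), the requirement that $\slashed d\,^2$ be a scalar (zeroth-order, indeed a function). A priori $\slashed d\,^2$ is a second-order operator; its symbol vanishes by the Clifford relation, but the first-order and zeroth-order parts involve the curvature of $\nabla^S$, the covariant derivative $dH$ of the twist, and the Jacobi-type identity for the fiberwise Lie algebra. The strategy is to show that the defining axioms C1)--C5) of the Courant algebroid — in particular the Jacobi identity C1) and the closedness $dH=0$ that it forces in the dissection — make precisely these obstruction terms cancel, leaving a function. I would organize this by first using Proposition \ref{prop-space}: it suffices to produce \emph{one} candidate $\slashed d$ satisfying i)--iii), since any genuine Dirac generating operator is then pinned down up to $V_{\slashed d}\subset\Gamma(E)$. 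Concretely I would compute $\slashed d\,^2$ grade by grade in Clifford degree and match each homogeneous component against the corresponding Courant axiom, the hardest bookkeeping being the interaction between the spin curvature and the $3$-form term, which is governed by the neutral-signature irreducibility that, as emphasized in the Introduction, forces any $\mathrm{Cl}(E)$-endomorphism of $S$ to be scalar.
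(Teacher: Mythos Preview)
Your overall architecture matches the paper's: build $\slashed d$ as a Dirac-type operator plus a degree-$3$ Clifford correction, then verify i)--iii). Indeed the paper packages your three terms into the single ansatz $\slashed d = \slashed D^S + \tfrac14\gamma_{T}$, where $T$ is the torsion of a generalized connection $D$; your ``$H$-term'' and ``Lie-algebra term'' are exactly the components of $T$ in a dissection (cf.\ Lemma~\ref{lem-reg}). Properties i) and ii) then hold for \emph{any} metric $D$ and compatible $D^S$ (the paper's Lemma~\ref{c1-2}), just as you outline.

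Where your proposal goes soft is iii). You anticipate wrestling with the curvature of $\nabla^S$ and invoke the Schur property of $S$ to force $\slashed d\,^2$ to be scalar. Neither is what the paper does, and the Schur idea cannot close the argument: $\slashed d\,^2$ is a priori a differential operator, not a $\mathrm{Cl}(E)$-module endomorphism, so irreducibility of $S$ gives no leverage on it. The paper's move is instead to choose $D$ so that the computation collapses. From the regular structure (Lemma~\ref{lem-chen}) one extracts a local frame $(p_a,q_a)$ with $(p_a)$ spanning a maximally isotropic subbundle of $\ker\pi$, $(q_a)$ isotropic, $\langle p_a,q_b\rangle=\delta_{ab}$, and crucially $[\pi(q_a),\pi(q_b)]=0$. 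Taking $\nabla$ to be the \emph{flat} connection in which this frame is parallel kills the spin curvature outright and gives $(\slashed D^S)^2=0$ in one line. What remains in $\slashed d\,^2$ is $\tfrac14[\slashed D^S,\gamma_T]+\tfrac1{16}\gamma_T^2$, a zeroth-order Clifford expression; its degree-$2$ piece vanishes by a divergence identity special to this frame (Corollary~\ref{sum}), and its degree-$4$ piece vanishes by a direct appeal to axiom C1), leaving the explicit scalar $-\tfrac1{16}\sum\epsilon_i\epsilon_j\epsilon_k(T^{ijk})^2$. The missing ingredient in your plan is thus not a general cancellation principle but this specific flat-frame choice; without it the curvature terms are genuine and the grade-by-grade matching you propose would be substantially harder to close.
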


We divide the proof of Theorem \ref{ex:thm} into several steps.  Let $D$ be a generalized connection on $E$. 
The existence of $D$  is ensured 
by Example \ref{ext:exa}. 
The generalized connection $D$ induces an $E$-connection in $\mathrm{Cl}(E)$, which we denote again by $D$. 
Next we choose an $E$-connection $D^S$ on $S$  compatible with $D$ in the sense that
\[ D^S_e(as) = (D_ea)s + a D^S_es,\]
for all $e\in \Gamma (E)$, $a\in \Gamma (\mathrm{Cl}(E))$, $s\in \Gamma (S)$. 

The existence of such a connection $D^S$ can be shown as follows. The bundle $(E,\langle \cdot ,\cdot \rangle)$ 
admits locally a spin structure. To this structure we can associate a spinor bundle $\Sigma$ over some domain $U\subset M$.   
The $E$-connection $D$ on $E$ induces an $E$-connection $D^\Sigma$ on $\Sigma$. The connection form of 
$D^\Sigma$ with respect to a local trivialization of the spin structure is one half of the connection form
of $D$ with respect to the corresponding local orthonormal frame of $E$. (Both forms can be considered as local sections of
$E^*\otimes \mathfrak{so}(E)$, after identifying $\mathfrak{spin}(E)\cong \mathfrak{so}(E)$
via the adjoint representation $\mathrm{ad} : \mathfrak{spin} (E) \rightarrow \mathfrak{so}(E)$,
$\mathrm{ad}_{u}v := uv - vu$.)
In more concrete terms,  let $(e_{i})$ be an  orthonormal frame of $E\vert_{U}$ and 
$(\sigma_{\alpha })$ a frame of $\Sigma$ such that 
$$
e_{i}  \sigma_{\alpha } = \sum_{\beta} C_{i\alpha}^{\beta} \sigma_{\beta}, 
$$
where $C_{i\alpha}^{\beta}$ are constants. Let  $(\omega_{ij})$  be the  (skew-symmetric)  matrix of $1$-forms defined by 
\begin{align*}
\nonumber D_{v} (e_{k}) & = -\epsilon_{k} \sum_{j} \omega_{jk} (v) e_{j}= 2\sum_{j<p} \omega_{pj} (v) (\epsilon_{p} e_{p}^{*} \otimes e_{j} 
-\epsilon_{j} e_{j}^{*}\otimes e_{p} ) (e_{k})\\
\nonumber& = 2\sum_{j<p} \omega_{pj} (v) (e_{p}\wedge e_{j}) (e_{k}), 
\end{align*}
where $\epsilon_{k} = \langle e_{k}, e_{k}\rangle \in \{ \pm 1\}$.    Then
$D^{\Sigma}_{e} (\sigma_{\alpha} ) :=\frac{1}{2} \sum_{i<j} \omega_{ji} (v) e_{i} e_{j}  \sigma_{\alpha}$ is compatible with $D\vert_{U}$.
(Note that the element $\frac12 e_{i} e_{j}\in \mathfrak{spin}(E) \subset \mathrm{Cl(E)}$ ($i\neq j$) acts under the adjoint representation 
as $\epsilon_j e_j^* \otimes e_i -\epsilon_ie_i^*\otimes e_j\in  \mathfrak{so}(E)$ and the latter corresponds to the bivector $e_j\wedge e_i \in \Lambda^2E$.)  
Since $(E,\langle \cdot ,\cdot \rangle)$ has neutral signature, $S|_U$ and $\Sigma$ differ only by 
a real line bundle $L$ over $U$: $S|_U \cong \Sigma \otimes L$. Choosing an $E$-connection in $L$,
we obtain an $E$-connection $D^{S,U}$ in $S_U$ by taking the tensor product with the connection $D^\Sigma$. 
By considering an open covering $(U_i)$ of $M$ and a corresponding partition of unity, we can glue
the $E$-connections $D^{S,U_i}$ to an $E$-connection $D^S$.

The $E$-connection $D^S$ gives rise to a first order differential operator on $S$, 
which we call the {\cmssl Dirac operator}: 
\begin{equation}\label{dirac-def} 
\slashed{D}^S = \frac12 \sum_{i=1}^{2n} \tilde{e}_iD_{e_i}^S,
\end{equation}
where $(e_i)$ is any local frame of $E$ and $(\tilde{e}_i)$ is the metrically dual frame, that is 
$\langle e_i,\tilde{e}_j\rangle = \delta_{ij}$.

\begin{lem}\label{c1-2} For any generalized connection $D$ and compatible $E$-connection $D^{S}$, the operator 
\begin{equation} \label{ansatz:eq}
\slashed{d} = \slashed{D}^S + \frac14 \g_T
\end{equation} 
satisfies conditions i) and ii) from Definition  \ref{Dgo:def}.
Above  $T \in \Gamma(\wedge^3E^*)\cong \Gamma(\wedge^3E)\subset \Gamma (\mathrm{Cl}(E))$ denotes the torsion of $D$. 
\end{lem}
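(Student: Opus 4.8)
The plan is to check the two identities i) and ii) of Definition \ref{Dgo:def} by reducing every super-commutator to an algebraic identity in $\mathrm{Cl}(E)$, the whole point being that the normalization $\frac14$ in front of $\gamma_T$ is forced by the cancellation of a torsion term. Throughout I will use the Clifford relation $ab+ba=2\langle a,b\rangle$ and the fact that, since $D$ is a generalized connection on an oriented bundle, the volume form is $D$-parallel; hence the compatible operator $D^S_e$ commutes with Clifford multiplication by the volume form and is therefore even (grading-preserving). I will also repeatedly invoke the super-commutator contraction identity $[\gamma_v,\gamma_\omega]=2\gamma_{\iota_v\omega}$ for $v\in E$ and a homogeneous form $\omega$, where $\iota_v$ denotes metric contraction; this single identity encodes both the anticommutator with odd forms and the commutator with even forms.

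For i), I first note that $\gamma_T\in\Gamma(\mathrm{End}\,S)$ is $C^\infty(M)$-linear, so $[\gamma_T,f]=0$ and $[\di,f]=[\slashed{D}^S,f]$. Applying (\ref{dirac-def}) to $fs$ and using the Leibniz rule of the $E$-connection $D^S$, the first order part cancels and one finds $[\slashed{D}^S,f]=\frac12\gamma_{\pi^*(df)}$, using the frame identity $\sum_i\pi(e_i)(f)\tilde e_i=\pi^*(df)$. A further super-commutator with $\gamma_e$ is an anticommutator of vectors, so by the Clifford relation $[[\di,f],\gamma_e]=\langle\pi^*(df),e\rangle=\pi(e)(f)$, which is i).

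For ii) the heart is the computation of $[[\slashed{D}^S,\gamma_{e_1}],\gamma_{e_2}]$. Writing $\slashed{D}^S=\frac12\sum_i\gamma_{\tilde e_i}D^S_{e_i}$ and using compatibility together with the Clifford relation, I would first obtain
\[ [\slashed{D}^S,\gamma_{e_1}]=\tfrac12\sum_i\gamma_{\tilde e_i}\gamma_{D_{e_i}e_1}+D^S_{e_1}. \]
This operator is even, so its super-commutator with $\gamma_{e_2}$ is an ordinary commutator; the differential part contributes $\gamma_{D_{e_1}e_2}$, while the graded Leibniz rule applied to the zeroth order part yields $\gamma_{(De_1)^*e_2}-\gamma_{D_{e_2}e_1}$, where I use $\sum_i\langle D_{e_i}e_1,e_2\rangle\tilde e_i=(De_1)^*e_2$. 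By the definition (\ref{torsion-def}) of the torsion these combine to
\[ [[\slashed{D}^S,\gamma_{e_1}],\gamma_{e_2}]=\gamma_{[e_1,e_2]}+\gamma_{T^D(e_1,e_2)}. \]
It remains to compute the $\gamma_T$-contribution: two applications of the contraction identity give $[\gamma_T,\gamma_{e_1}]=2\gamma_{\iota_{e_1}T}$ and then $[[\gamma_T,\gamma_{e_1}],\gamma_{e_2}]=-4\gamma_{\iota_{e_2}\iota_{e_1}T}=-4\gamma_{T^D(e_1,e_2)}$, the last step because $\iota_{e_2}\iota_{e_1}T=T(e_1,e_2,\cdot)$ under $E^*\cong E$. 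Hence $\frac14[[\gamma_T,\gamma_{e_1}],\gamma_{e_2}]=-\gamma_{T^D(e_1,e_2)}$ exactly cancels the torsion term and leaves $[[\di,\gamma_{e_1}],\gamma_{e_2}]=\gamma_{[e_1,e_2]}$, which is ii).

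I expect the main obstacle to be the disciplined bookkeeping in the two nested Clifford computations: tracking the $\mathbb{Z}_2$-degrees to decide at each stage whether a super-commutator is a commutator or an anticommutator, and pinning down the numerical factors and signs produced by the contraction identity $[\gamma_v,\gamma_\omega]=2\gamma_{\iota_v\omega}$ and by the graded Leibniz rule $[AB,C]=A[B,C]+(-1)^{|B||C|}[A,C]B$. It is precisely this coefficient chase that matches the torsion term coming from $\slashed{D}^S$ against the one coming from $\frac14\gamma_T$, thereby justifying the factor $\frac14$ in the ansatz (\ref{ansatz:eq}).
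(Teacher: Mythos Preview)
Your proposal is correct and follows essentially the same route as the paper's proof: both reduce conditions i) and ii) to Clifford-algebraic identities after computing $[\slashed{D}^S,\gamma_v]=\tfrac12\sum_i\gamma_{\tilde e_i}\gamma_{D_{e_i}v}+D^S_v$, obtain $[[\slashed{D}^S,\gamma_{e_1}],\gamma_{e_2}]=\gamma_{[e_1,e_2]}+\gamma_{T(e_1,e_2)}$ via the torsion formula (\ref{torsion-def}), and cancel the torsion term against $\tfrac14[[\gamma_T,\gamma_{e_1}],\gamma_{e_2}]=-\gamma_{T(e_1,e_2)}$. The only cosmetic difference is that you encode the Clifford computations through the single contraction identity $[\gamma_v,\gamma_\omega]=2\gamma_{\iota_v\omega}$, whereas the paper writes out the explicit relation $uvw-wuv=-2\langle u,w\rangle v+2\langle v,w\rangle u$; the content is the same.
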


\begin{proof} We compute  $[[\di , f],\g_v]] = [[\slashed{D}^S,f],\g_v]$ for $f\in C^\infty (M)$ and $v\in \Gamma (E)$. We find 
\[ [\slashed{D}^S,f] = \frac12 \sum_{i} \pi (e_i)(f)\g_{\tilde{e}_i},\quad  [[\slashed{D}^S,f],\g_v] = \sum_{i} \pi (e_i)(f)\langle\tilde{e}_i, v\rangle = 
 \pi (v) (f).\]
 This shows that i) in Definition \ref{Dgo:def} is satisfied.

Next we compute 
\[ [\slashed{D}^S,\g_v]= \frac12 \sum_{i} \g_{\tilde{e}_i}\g_{D_{e_i}v} + D_v^S,\quad 
[[\slashed{D}^S,\g_v],\g_w] = \frac12 \sum_{i} [\g_{{\tilde{e}_i}}\g_{D_{e_i}v},\g_w] + \g_{D_vw},\] 
where $w\in \Gamma (E)$. A simple calculation in the Clifford algebra shows that 
for all $u, v, w\in E$:
\[ uvw -wuv = -2\langle u,w\rangle v + 2 \langle v,w\rangle u.\]
So 
\[ \frac12 \sum_{i} [\g_{\tilde{e}_i}\g_{D_{e_i}v},\g_w] = -\sum_{i} \langle \tilde{e}_i,w\rangle \g_{D_{e_i}v} +
\sum_{i} \langle D_{e_i}v,w\rangle \g_{\tilde{e}_i} = -\g_{D_wv}+
\sum_{i}\langle D_{e_i}v,w\rangle \g_{\tilde{e}_i}\]
and using that $D$ has torsion $T$ we obtain 
\begin{eqnarray*} [[\slashed{D}^S,\g_v],\g_w] &=& \g_{D_vw-D_wv} +
\sum_{i}\langle D_{e_i}v,w\rangle \g_{\tilde{e}_i}\\ 
&=&  \g_{T(v,w)}+\g_{[v,w]} -\gamma_{(Dv)^{*}w} +
\sum_{i} \langle D_{e_i}v,w\rangle \g_{\tilde{e}_i}\\ 
&=& \g_{T(v,w)}+\g_{[v,w]}.\end{eqnarray*} 
A simple calculation in the Clifford algebra shows that (for any $3$-form $T$)
\[ [[\g_T,\g_v],\g_w] = -4\g_{T(v,w)}.\] 
Thus we can conclude that 
\[ [[\di , \g_v],\g_w] = [[ \slashed{D}^S +\frac{1}{4}\gamma_{T}, \g_v],\g_w] = \g_{[v,w]}.\]
So ii) in Definition \ref{Dgo:def} is also satisfied. 
\end{proof}

In order to conclude the proof of Theorem \ref{ex:thm} 
we therefore need to find  a locally defined generalized connection $D$ on $E$ and a compatible $E$-connection 
$D^{S}$ on $S$  such that condition iii) from Definition  \ref{Dgo:def} holds as well.  
To analyze  condition iii) in Definition \ref{Dgo:def} we will use the following lemma, see  \cite{chen}. (When $\langle \cdot ,\cdot \rangle$ has arbitrary signature, the next lemma still holds with the only difference
that the restriction of $\langle \cdot ,\cdot \rangle$ to $\mathcal G$ does
not have neutral signature).

\begin{lem}\label{lem-chen}(\cite{chen}) Let $E$ be a regular Courant algebroid with scalar product $\langle\cdot , \cdot \rangle$ 
of neutral signature  and anchor $\pi  : E \rightarrow TM.$\

i)  The bundle  $\ker \pi\subset E$ is a coisotropic subbundle of $E$, that is $(\ker  \pi )^\perp \subset \ker \pi$.\ 

ii) The bundle $E$ decomposes  as $E =\ker \pi  \oplus \mathcal F$ where $\mathcal F$ is isotropic.\

iii) The bundle $\ker \pi$ decomposes  as $\ker \pi = (\ker \pi )^{\perp}
\oplus   \mathcal G$ where $\mathcal G$ is orthogonal to $\mathcal F$.\

iv) The decomposition $E = \left( (\ker \pi )^{\perp} \oplus \mathcal F\right) \oplus \mathcal G$ is orthogonal with respect to
$\langle\cdot , \cdot \rangle$. The restrictions of   $\langle\cdot , \cdot \rangle$  to the two factors
 $ (\ker \pi )^{\perp} \oplus \mathcal F$ and $\mathcal G$ have neutral signature.
\end{lem}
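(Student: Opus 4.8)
The plan is to reduce everything to two ingredients: the single structural fact that $(\ker\pi)^\perp=\operatorname{im}\pi^*\subseteq\ker\pi$, which is where the Courant axioms enter, and a smooth Witt (hyperbolic) decomposition, which is pure linear algebra carried out fibrewise. Write $K:=\ker\pi$ and $r:=\operatorname{rank}\pi$; by regularity $r$ is constant, so $K$ is a subbundle of rank $2n-r$ and $K^\perp$ a subbundle of rank $r$. First I would establish the identity $(\operatorname{im}\pi^*)^\perp=\ker\pi$, which is immediate from the definition of $\pi^*$: for $v\in E$ one has $\langle v,\pi^*\xi\rangle=\xi(\pi v)$, so $v\perp\operatorname{im}\pi^*$ for all $\xi$ exactly when $\pi v=0$. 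Taking perpendiculars gives $\operatorname{im}\pi^*=K^\perp$. To prove i) it then remains to show $\pi\circ\pi^*=0$. For this I use \eqref{c-d}, which says $[u,v]+[v,u]=\pi^*\big(d\langle u,v\rangle\big)$; applying $\pi$ and using axiom C2 yields $\pi\pi^*\big(d\langle u,v\rangle\big)=[\pi u,\pi v]+[\pi v,\pi u]=0$. Since in neutral signature one can locally pick a section $u_0$ with $\langle u_0,u_0\rangle\equiv 1$ and then realize an arbitrary $dh$ as $d\langle u_0,hu_0\rangle$, the covectors $d\langle u,v\rangle$ span $T^*M$ pointwise, so $\pi\circ\pi^*=0$, i.e.\ $K^\perp=\operatorname{im}\pi^*\subseteq K$. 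This is exactly the coisotropy in i), and it also shows $K^\perp$ is isotropic (for $a,b\in K^\perp$ one has $a\perp K\ni b$).

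Next I would construct the isotropic complement $\mathcal F$ of ii). Choose any complement $C$ of $K$ in $E$ (a smooth subbundle of rank $r$, e.g.\ via an auxiliary Riemannian metric on $E$). The pairing $C\times K^\perp\to\mathbb R$ is non-degenerate: if $c\in C$ satisfies $\langle c,K^\perp\rangle=0$ then $c\in(K^\perp)^\perp=K$, so $c\in C\cap K=0$, and the dimensions match. Hence every bilinear form on $C$ is $c\mapsto\langle\beta c,\cdot\rangle$ for a unique smooth $\beta\colon C\to K^\perp$. Taking $\beta$ to correspond to $-\tfrac12\langle\cdot,\cdot\rangle|_C$ and setting $\mathcal F:=\{c+\beta c:c\in C\}$, a direct expansion (using that $K^\perp$ is isotropic) gives $\langle c_1+\beta c_1,c_2+\beta c_2\rangle=0$, so $\mathcal F$ is isotropic; and since $\beta c\in K^\perp\subseteq K$, the projection $\mathcal F\to E/K$ is an isomorphism, so $E=K\oplus\mathcal F$. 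This is a tensorial, hence smooth, construction, giving ii) as a bundle statement.

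Finally I would set $\mathcal G:=(K^\perp\oplus\mathcal F)^\perp$ and read off iii) and iv). The subbundle $K^\perp\oplus\mathcal F$ is hyperbolic of rank $2r$: both summands are isotropic and dually paired, so its radical is zero; in particular it carries a non-degenerate form of neutral signature $(r,r)$, and $E=(K^\perp\oplus\mathcal F)\oplus\mathcal G$ is an orthogonal decomposition, which is the orthogonality in iv). Since $\mathcal G=(K^\perp)^\perp\cap\mathcal F^\perp=K\cap\mathcal F^\perp$ we get $\mathcal G\subseteq K$ and $\mathcal G\perp\mathcal F$; moreover $\mathcal G\cap K^\perp=0$ because $K^\perp$ lies in the non-degenerate space $K^\perp\oplus\mathcal F$, and a dimension count ($\operatorname{rank}\mathcal G=2n-2r$) gives $K=K^\perp\oplus\mathcal G$, proving iii). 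For the last signature claim in iv), $\mathcal G$ is the orthogonal complement of a neutral $(r,r)$-subspace inside the neutral $(n,n)$-space $E$, hence has signature $(n-r,n-r)$, i.e.\ neutral.

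I expect the main substantive step to be part i), since that is the only place the bracket and anchor axioms are used; everything in ii)--iv) is the standard splitting of a non-degenerate inner-product space along an isotropic subspace. The point requiring genuine care is globalization/smoothness: one must ensure all the pointwise constructions assemble into honest subbundles, which is guaranteed by regularity (constant rank of $\pi$, hence of $K$, $K^\perp$, $\operatorname{im}\pi^*$) and by the fact that the correction $\beta$ defining $\mathcal F$ and the perpendicular defining $\mathcal G$ are tensorial. Since the ambient theorem is only a local existence statement for $\di$, even a local version of the decomposition would suffice, but the argument above in fact produces it globally.
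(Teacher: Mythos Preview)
The paper does not prove this lemma; it is quoted from \cite{chen} as a structural input, with only a parenthetical remark that the neutral-signature hypothesis is needed solely for the last signature claim. Your argument is a correct self-contained proof and matches the standard approach: the identity $K^\perp=\operatorname{im}\pi^*$ together with $\pi\circ\pi^*=0$ (obtained from C2 and the polarized C5) gives coisotropy, and the rest is the fibrewise Witt decomposition along the isotropic subbundle $K^\perp$. One small remark: in your treatment of i), the neutral signature is not actually needed to produce a local section $u_0$ with $\langle u_0,u_0\rangle\equiv 1$; any indefinite (or positive-definite) signature would do, and indeed parts i)--iii) and the orthogonality in iv) hold in arbitrary signature, as the paper notes.
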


The above  lemma implies that for any $U\subset M$ sufficiently small,  the bundle $E\vert_{U}$ admits a  frame $(p_{a}, q_{a})$, $a=1,\ldots , n$, 
such that $(p_{a})$, $a=1,\ldots , n$,  span a maximally  isotropic subbundle $\mathcal{P}$ of $\ker \pi$, 
$(p_{a})$, $a=1,\ldots , s$,  span $ (\ker \pi )^{\perp}$,   $(q_{a})$, $a=1,\ldots , n$,  span  a maximally isotropic subbundle  $\mathcal{Q}$ of $E$, 
$q_{a}\in \ker \pi$ for any $a\geq s+1$,
$\langle p_{a}, q_{b} \rangle =\delta_{ab}$  and  $[\pi (q_{a}), \pi (q_{b}) ]=0$ for any $a$, $b$.
For the latter condition we are using that the image of 
$\pi$ is an integrable  distribution on $M$  (by the axiom C2) in Definition \ref{def-courant}).
More precisely,
using Lemma \ref{lem-chen} iv),   this basis can be constructed in the following way: start with any basis $q_{a}$, 
$a=1,\cdots , s$, of $\mathcal F$ such that $[\pi (q_{a}), \pi (q_{b}) ]=0$ for any $a$, $b$. Consider the basis
$p_{a}$, $a=1,\cdots , s$, 
of $(\ker \pi )^{\perp}$   such that $\langle p_{a}, q_{b} \rangle =\delta_{ab}$ for any $a$, $b$. Finally, choose a basis
$p_{a}, q_{a}$, $a=s+1,\cdots , n$,  of $\mathcal G$ such that $\langle p_{a}, p_{b}\rangle =  \langle q_{a}, q_{b}\rangle =0$
and $\langle p_{a}, q_{b}\rangle = \delta_{ab}$ for any $a, b=s+1, \cdots , n.$\
The following inclusions summarize the properties of the two complementary 
maximally isotropic subbundles  $\mathcal{P}$ and $\mathcal{Q}$ of $E$: 
\[  (\ker \pi)^\perp \subset \mathcal{P}\subset \ker \pi,\quad  \mathcal{F} \subset \mathcal{Q} \subset \mathcal{F}^\perp = \mathcal{F} \oplus \mathcal{G}.\]

The next corollary will be useful in the proof of Lemma \ref{concl} below.

\begin{cor}\label{sum} For any $\sigma\in \Gamma ( \ker \pi )$,  $\sum_{a=1}^{n} \pi (q_{a}) \langle \sigma, p_{a}\rangle =0.$
\end{cor}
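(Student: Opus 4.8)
The plan is to prove the stronger fact that each individual summand $\pi(q_a)\langle\sigma,p_a\rangle$ already vanishes, by splitting the index range at $a=s$ according to the two ways a frame vector can be forced to lie inside $\ker\pi$. Once the defining properties of the frame $(p_a,q_a)$ recalled above are invoked, the corollary becomes immediate, so there is essentially only bookkeeping to carry out.

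First I would treat the range $a\geq s+1$. By the construction of the frame, $q_a\in\ker\pi$ for every such index, hence $\pi(q_a)=0$ as a vector field on $M$. Consequently $\pi(q_a)\langle\sigma,p_a\rangle=0$ irrespective of the coefficient function, and all terms with $a\geq s+1$ drop out of the sum. Next I would treat the complementary range $1\leq a\leq s$. Here the $p_a$ span $(\ker\pi)^\perp$, so in particular $p_a\in(\ker\pi)^\perp$; since by hypothesis $\sigma\in\Gamma(\ker\pi)$, the very definition of the orthogonal complement gives $\langle\sigma,p_a\rangle=0$ as a function on $M$. Therefore $\pi(q_a)\langle\sigma,p_a\rangle=\pi(q_a)(0)=0$ for these indices as well.

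Combining the two cases, every summand vanishes and hence $\sum_{a=1}^{n}\pi(q_a)\langle\sigma,p_a\rangle=0$. The argument presents no genuine obstacle; the only point requiring care is the observation that the two alternatives---$q_a\in\ker\pi$ for $a\geq s+1$, and $p_a\perp\ker\pi$ for $a\leq s$---together exhaust all indices, so that in each term either the differential operator $\pi(q_a)$ or the function $\langle\sigma,p_a\rangle$ it acts on is identically zero.
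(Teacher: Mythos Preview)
Your proof is correct and follows exactly the same approach as the paper: each summand vanishes individually, since for $a\le s$ one has $p_a\in(\ker\pi)^\perp$ and hence $\langle\sigma,p_a\rangle=0$, while for $a\ge s+1$ one has $q_a\in\ker\pi$ and hence $\pi(q_a)=0$.
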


\begin{proof} Each  term in the above sum vanishes: if $a\le s$ then $p_{a} \in \Gamma ( (\ker \pi )^{\perp})$
and $\langle \sigma, p_{a}\rangle = 0.$ If $a\geq s+1$, then  $q_{a} \in \Gamma ( \ker \pi )$ and $\pi (q_{a})=0. $ 
\end{proof}

Let $\nabla$ be the  connection on $E\vert_{U}$ such that the frame 
$(p_a,q_a)$ is parallel. Then $\nabla$ is flat, preserves the scalar product $\langle\cdot , \cdot\rangle$ of  $E$ and $S\vert_{U}$ admits a flat connection $\nabla^S$ compatible
with $\nabla$. Then $\nabla$ induces a generalized connection $D$ on $E\vert_{U}$ and $\nabla^S$ induces an $E$-connection $D^S$ on $S\vert_{U}$ which is compatible with $D$.

The next lemma concludes the proof of Theorem \ref{ex:thm}.

\begin{lem}\label{concl} The operator (\ref{ansatz:eq}) constructed using $D$ and $D^{S}$ 
satisfies $\di^{2}\in C^{\infty}(U) $ and is a Dirac generating operator.\end{lem}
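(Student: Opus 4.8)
By Lemma \ref{c1-2}, the operator $\di = \slashed{D}^S + \tfrac14\gamma_T$ built from the flat connection $D$ and the compatible $E$-connection $D^S$ already satisfies conditions i) and ii) of Definition \ref{Dgo:def}, so the only thing left is condition iii), $\di^2\in C^\infty(U)$. Since $\di$ is odd, $\di^2$ is even, and I would expand
\[ \di^2 = (\slashed{D}^S)^2 + \tfrac14\{\slashed{D}^S,\gamma_T\} + \tfrac1{16}\gamma_T^2 .\]
The plan is to establish iii) in two stages: first that $\di^2$ is a zeroth-order operator, i.e.\ a section of $\mathrm{Cl}(E)$, and then that this section is in fact a scalar function.

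For the first stage I would exploit the adapted frame $(p_a,q_a)$. Because each $p_a$ lies in $\ker\pi$ one has $D^S_{p_a}=\nabla^S_{\pi(p_a)}=0$, and since the metrically dual frame is $\tilde p_a=q_a$, $\tilde q_a=p_a$, formula (\ref{dirac-def}) collapses to $\slashed{D}^S=\tfrac12\sum_a\gamma_{p_a}D^S_{q_a}$. Now flatness of $\nabla^S$ together with $[\pi(q_a),\pi(q_b)]=0$ gives $[D^S_{q_a},D^S_{q_b}]=0$, the isotropy of $\mathcal{P}$ gives $\gamma_{p_a}\gamma_{p_b}+\gamma_{p_b}\gamma_{p_a}=0$, and $D_{q_a}p_b=0$ gives $[D^S_{q_a},\gamma_{p_b}]=0$; hence $(\slashed{D}^S)^2=\tfrac14\sum_{a,b}\gamma_{p_a}\gamma_{p_b}D^S_{q_a}D^S_{q_b}=0$, the summand being skew in the Clifford factor and symmetric in the differential factor. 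A short Clifford computation then gives
\[ \{\slashed{D}^S,\gamma_T\}=\tfrac12\sum_a\gamma_{p_a}\gamma_{D_{q_a}T}+\sum_a\gamma_{\iota_{p_a}T}\,D^S_{q_a},\]
so the only potentially first-order contribution to $\di^2$ is $\tfrac14\sum_a\gamma_{\iota_{p_a}T}D^S_{q_a}$. This vanishes: for $a>s$ one has $q_a\in\ker\pi$, so $D^S_{q_a}=0$; while for $a\le s$ one has $p_a\in(\ker\pi)^\perp$, and since $T(x,y,z)=-\langle[x,y],z\rangle$ on frame elements and $[x,y]\in\ker\pi$ for every pair of frame elements (by C2 and $\pi(p_a)=0$), the coisotropy $(\ker\pi)^\perp\subset\ker\pi$ of Lemma \ref{lem-chen} forces $\iota_{p_a}T=0$. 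Thus $\di^2=\tfrac18\sum_a\gamma_{p_a}\gamma_{D_{q_a}T}+\tfrac1{16}\gamma_T^2$ is zeroth order.

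For the second stage, since $\langle\cdot,\cdot\rangle$ has neutral signature the bundle $S$ is $\mathrm{Cl}(E)$-irreducible, so any section of $\mathrm{Cl}(E)$ acting as a $\mathrm{Cl}(E)$-module endomorphism of $S$ is a scalar; hence it suffices to show $[\di^2,\gamma_e]=0$ for all $e$, equivalently that the even Clifford element $\di^2$ has no component of degree $\ge 2$. Here I would feed in the Jacobi identity C1 for the Dorfman bracket: expressed through $T(x,y,z)=-\langle[x,y],z\rangle$ and the flatness of $\nabla$, C1 becomes a Bianchi-type identity relating the derivatives $D_{q_a}T$ to the Clifford square $\gamma_T^2$, and it is exactly this identity, together with the total skew-symmetry of $T$ (which encodes C5) and Corollary \ref{sum}, that cancels the bivector and degree-four parts of $\tfrac18\sum_a\gamma_{p_a}\gamma_{D_{q_a}T}+\tfrac1{16}\gamma_T^2$. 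This would leave only the scalar part, completing the verification of iii).

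The main obstacle is the second stage: organizing the Clifford-algebra bookkeeping so that C1 and C5 precisely annihilate the higher-degree components of $\di^2$. The first stage is largely formal once the adapted frame of Lemma \ref{lem-chen} is in place, whereas the vanishing of the non-scalar part is genuinely where the Jacobi identity is used in full strength; note in particular that a purely abstract argument from i) and ii) only yields centrality modulo scalars, so the explicit Bianchi identity for $T$ is indispensable.
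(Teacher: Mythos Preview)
Your proposal is correct and follows essentially the same strategy as the paper: both establish $(\slashed D^S)^2=0$ from the adapted frame, eliminate the first-order piece of $[\slashed D^S,\gamma_T]$, and then invoke the Jacobi identity C1 together with Corollary \ref{sum} to kill the non-scalar components of the remaining zeroth-order Clifford element. The only tactical difference is that you stay in the isotropic frame $(p_a,q_a)$ throughout and dispose of the first-order term via $\iota_{p_a}T=0$ for $a\le s$, whereas the paper passes to the associated orthonormal frame $e_i=\tfrac{1}{\sqrt2}(p_a\pm q_a)$ and instead uses $D^S_{[e_j,e_k]}=0$; one small correction is that your vanishing $\iota_{p_a}T=0$ follows simply from $p_a\in(\ker\pi)^\perp$ being orthogonal to $[y,z]\in\ker\pi$, not from coisotropy as you wrote.
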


\begin{proof} 
The Dirac operator  $\slashed{D}^{S}$  has the  expression 
\[ \slashed{D}^S = \frac12 \sum_{a} \left( p_aD_{q_a}^S + q_aD_{p_a}^S\right) = \frac12 \sum_{a} p_aD_{q_a}^S ,\]
since $\pi (p_a) =0$. To see this it is sufficient to remark that the frame $(\tilde{q}_a,\tilde{p}_b)$ dual to the frame
$(q_a,p_b)$ is precisely $(p_a,q_b)$. Its square is given by
\begin{equation}\label{square-1}
 (\slashed{D}^S)^2 = \frac14 \sum_{a} p_ap_b D^S_{q_a}D^S_{q_b} =  \frac{1}{4} \sum_{a} \langle p_a,p_b\rangle \nabla^S_{\pi (q_a)}\nabla^S_{\pi (q_b)}=0,
 \end{equation}
 where we  used
 $\nabla^{S}p_{a}=0$,   the flatness of $\nabla^S$ and $[ \pi (q_{a}), \pi (q_{b})]=0$ for any $a$, $b$.

Next,  we compute 
$\slashed{D}^S\g_T + \g_T\slashed{D}^S=[\slashed{D}^S,\g_T]$. We  write 
the torsion $T$ of $D$ as 
$T=\frac16 \sum T^{ijk}e_{ijk}\in \mathrm{Cl}(E)$, where 
$(e_i)$ is a $D$-parallel orthonormal frame and $e_{ijk} := e_ie_je_k$.
The coefficients $T^{ijk}$ are given by
\begin{equation}\label{t-ijk}
T^{ijk}=T(\tilde{e}_i,\tilde{e}_j,\tilde{e}_k)=\epsilon_i\epsilon_j\epsilon_kT(e_i,e_j,e_k) = 
-\epsilon_i\epsilon_j\epsilon_k\langle [e_i,e_j],e_k\rangle,
\end{equation}
where 
$(\tilde{e}_{i})$ is the frame of $E\vert_{U}$ metrically dual to $(e_{i})$, i.e. 
$\tilde{e}_{i} = \epsilon_{i} e_{i}$ with 
$\epsilon_i = \langle e_i , e_i \rangle$. 
Using the abbreviation $\g_{e_ie_je_k} = \g_{ijk}$, we write  
\begin{eqnarray*} &&12[\slashed{D}^S,\g_T] = \sum_{i,j,k,\ell} [\g_{\tilde{e}_\ell}D^S_{e_\ell},T^{ijk}\g_{ijk}]\\ 
&&=\sum _{i,j,k,\ell} \pi(e_\ell)(T^{ijk})\g_{\tilde{e}_{\ell}}\g_{ijk} + 
\sum_{i,j,k,\ell } T^{ijk}\g_{\tilde{e}_\ell}\g (\underbrace{D_{e_\ell}e_{ijk}}_{=0}) + 
\sum_{i,j,k,\ell } T^{ijk}[\g_{\tilde{e}_\ell},\g_{ijk}] D^S_{e_\ell}.
\end{eqnarray*}
Note that, for any fixed $\ell$, 
$$
\sum_{i,j,k} T^{ijk}[\g_{\tilde{e}_\ell},\g_{ijk}] = 6 \sum_{j,k} T^{\ell jk}\g_{jk}  . 
$$
Hence 
\[ \sum_{i,j,k,\ell} T^{ijk}[\g_{\tilde{e}_\ell},\g_{ijk}] D^S_{e_\ell} = -6 \sum_{j,k} \epsilon_j\epsilon_k\g_{jk} D^S_{[e_j,e_k]}.\]
To compute the last term we choose the orthonormal frame  $(e_{i})$ to be 
\[ (e_i)_{i=1,\ldots ,2n}=\left(\frac{1}{\sqrt{2}} (p_a+q_a), \frac{1}{\sqrt{2}} (p_a-q_a)\right)_{a=1,\ldots ,n},\] 
where $(p_{a}, q_{a})$ is the frame constructed above. 
Then $\pi [e_j,e_k] = [ \pi e_j, \pi e_k] = 0$, because 
$[ \pi (q_{a}), \pi (q_{b} ) ]=0$.   This implies that $D^S_{[e_j,e_k]}=0$,
showing that 
\begin{equation}\label{square-2}
 [\slashed{D}^S,\g_T] = \frac{1}{12} \sum_{i,j,k,\ell } \pi(e_\ell)(T^{ijk})\g_{\tilde{e}_{\ell}}\g_{ijk}.
 \end{equation}
 From (\ref{square-1}) and (\ref{square-2}),  we obtain 
\begin{equation}
{\di}\,^2=  \frac{1}{4} [ \slashed{D}^{S}, \gamma_{T} ] +\frac{1}{16} \gamma_{T}^{2}=
\frac{1}{16} \left( \frac{1}{3}  \sum_{i,j,k,\ell } \pi(e_\ell)(T^{ijk})\g_{\tilde{e}_{\ell}}\g_{ijk} +\gamma_{T}^{2} \right) .
\end{equation}
We compute
$$
 \g_T^2  = \frac{1}{4} {\sum}_{i,j,\ell  ,m,n} \epsilon_\ell T^{\ell ij}T^{\ell mn}\g_{ijmn} = \frac{1}{4} {\sum}_{i,j, \ell ,m,n}^{\prime} 
 \epsilon_\ell T^{\ell ij}T^{\ell mn}\g_{ijmn} -\sum_{i,j,r} \epsilon_{i}\epsilon_{j}\epsilon_{r} (T^{ijr})^{2},
$$ 
where the primed sum is only over pairwise distinct indices.  Similarly,
$$
 \sum_{i,j,k,\ell } \pi(e_\ell)(T^{ijk})\g_{\tilde{e}_{\ell}}\g_{ijk} =  {\sum}_{i,j,k,\ell }^{\prime} \pi(e_\ell)(T^{ijk})\g_{\tilde{e}_{\ell}}\g_{ijk} 
+ 3\sum_{j,k,\ell } \pi (e_{\ell}) (T^{\ell jk})\gamma_{jk}. 
$$
On the other hand, for any $j$ and $k$ fixed,
\begin{eqnarray*}
\sum_{\ell}\pi (e_{\ell}) (T^{\ell jk}) &=&
-\epsilon_{j}\epsilon_{k} \sum_{\ell} \pi (e_{\ell}) \langle [e_{j}, e_{k}], e_{\ell}\rangle \epsilon_{\ell}\\ 
&=& 
 -\epsilon_{j}\epsilon_{k} \sum_{a}\pi (q_{a})\langle [e_{j}, e_{k} ], p_{a}\rangle =0,
\end{eqnarray*}
where we used (\ref{t-ijk})   
and Corollary \ref{sum} (with ${\sigma} =[e_{j}, e_{k}]$, which is a section of $\ker \pi $).  
Combining the above relations we obtain
\begin{align*}
\nonumber{\di}\,^2 &=  \frac{1}{16} \left(  \frac{1}{3} { \sum}_{i,j,k,\ell }^{\prime} \pi(e_\ell)(T^{ijk})\g_{\tilde{e}_{\ell}}\g_{ijk}
+ \frac{1}{4} {\sum}^{\prime}_{i,j,\ell , m,n}  \epsilon_\ell T^{\ell ij}T^{\ell mn}\g_{ijmn}\right)\\
\nonumber&  -\frac{1}{16}
 \sum_{i,j,r} \epsilon_{i}\epsilon_{j}\epsilon_{r} (T^{ijr})^{2} .
\end{align*}
We aim to prove that 
\begin{equation}\label{square}
\di ^{2} 
= -\frac{1}{16} \sum_{i,j,k}\epsilon_{i}\epsilon_{j}\epsilon_{k} (T^{ijk})^{2}.
\end{equation}
For this, we need to show that 
\begin{equation} \label{final:eq} 
\frac{1}{3}  {\sum}_{i,j,k,\ell }^{\prime} \epsilon_\ell \pi (e_\ell) (T^{ijk}) \g_{\ell ijk}+ 
\frac{1}{4}{\sum}_{i,j,\ell  , m,n}^{\prime} \epsilon_\ell T^{\ell ij}T^{\ell mn}\g_{ijmn} =0.
\end{equation} 
To prove (\ref{final:eq})  
we use  axiom C1)
of Definition \ref{def-courant},  where indices of tensor components are metrically raised and lowered 
according to standard conventions: for any $i$, $j$, $k$ fixed, 
\begin{eqnarray*} 0&=&[e_i,[e_j,e_k]] - [[e_i,e_j],e_k]-[e_j,[e_i,e_k]]\\ 
&=& \sum_\ell \left(-[e_i,T_{jk}^{\;\;\; \ell }e_\ell] + [T_{ij}^{\;\;\; \ell }e_\ell,e_k] + [e_j,T_{ik}^{\;\;\; \ell }e_\ell]\right)\\
&=& \sum_{\ell } \left(-\pi (e_i) (T_{jk}^{\;\;\; \ell }) +\pi (e_j) (T_{ik}^{\;\;\; \ell }) \right)e_\ell
+\sum_{\ell , m} \left( T_{jk}^{\;\;\; \ell } T_{i\ell}^{\;\;\; m} - T_{ik}^{\;\;\; \ell } T_{j\ell}^{\;\;\; m}\right)e_m \\
&&+\underbrace{\sum_\ell \left(- [e_k,T_{ij}^{\;\;\; \ell }e_\ell]) + \pi^*d\langle T_{ij}^{\;\;\; \ell }e_\ell,e_k\rangle \right)}_{\pi^*d T_{ijk}-\sum_\ell\pi (e_k) (T_{ij}^{\;\;\; \ell })e_\ell +\sum_{\ell ,m} T_{ij}^{\;\;\; \ell }T_{k\ell}^{\;\;\; m}e_m}\\
&=& \sum_\ell \pi (e_\ell)(T_{ijk})\tilde{e}_\ell-\sum_{(i,j,k)\; \mathrm{cyclic}}\sum_\ell \left( \pi (e_i) (T_{jk}^{\;\;\; \ell })e_\ell -\sum_m T_{ij}^{\;\;\; \ell }T_{k\ell}^{\;\;\; m}e_m\right),
\end{eqnarray*} 
where we have used that $\pi^*df=\sum_\ell \pi (e_\ell)(f)\tilde{e}_\ell$ for all $f\in C^\infty(M)$. 
Therefore, for any $i$, $j$, $k$,  $\ell$ fixed, 
$$
\pi (e_\ell)(T_{ijk})\tilde{e}_\ell-\sum_{(i,j,k)\; \mathrm{cyclic}} \left( \pi (e_i) (T_{jk}^{\;\;\; \ell })e_\ell -\sum_s 
T_{ij}^{\;\;\; s }T_{k s}^{\;\;\; \ell}e_\ell\right) =0.
$$
Taking now $i$, $j$, $k$,  $\ell$ pairwise distinct, multiplying the above equality 
 with $\g^{ijk}$  and summing over (pairwise distinct) $i$, $j$, $k$, $\ell$, we obtain
\[
4{\sum}^{\prime} _{i,j,k,\ell }\pi (e_\ell)(T_{ijk}) \g^{\ell ijk}+3{\sum}' _{i,j,k,\ell,m}T_{ij}^{\;\;\; \ell }T_{k\ell m}\g^{m ijk}=0,
\]
which is precisely (\ref{final:eq}) after re-organising the indices. We proved relation 
(\ref{square}) which implies in particular that $\di ^{2}\in C^{\infty}(U).$ 
From Lemma \ref{c1-2}, $\di$ is a Dirac generating operator for $E$. 
\end{proof}

Combining Proposition \ref{prop-space} with Theorem \ref{ex:thm} we obtain:

\begin{cor} Let $E$ be a regular Courant algebroid on a manifold $M$  and $S$ a spinor bundle over $\mathrm{Cl}(E)$. 
For any  sufficiently small open subset  $U\subset M$,  the set of Dirac generating operators  for $E\vert_{U}$  on 
$S\vert_{U}$ has the structure of an affine space modelled on the vector space 
$$
{V_\di}\vert_{U} := \{ e\in \Gamma (E\vert_{U}),\ [\di , \gamma_{e}] \in C^{\infty} (U)\}  ,
$$
where $\di$ is an arbitrarily chosen   Dirac generating operator on $S\vert_{U}.$ 
\end{cor}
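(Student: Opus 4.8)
The plan is to derive this directly by combining Theorem \ref{ex:thm} with Proposition \ref{prop-space}, so that the proof reduces to essentially a two-line argument. The first thing I would check is that the two cited results genuinely apply to the restricted data. For any open $U\subset M$, the restriction $E\vert_U$ is again a regular Courant algebroid over $U$, with scalar product of the same neutral signature $(n,n)$ that is in force throughout this part, and $S\vert_U$ is again a bundle of irreducible modules over $\mathrm{Cl}(E\vert_U) = \mathrm{Cl}(E)\vert_U$. Hence all the hypotheses of both statements are met by the pair $(E\vert_U, S\vert_U)$.

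Next I would invoke Theorem \ref{ex:thm}: for $U$ sufficiently small it produces at least one Dirac generating operator $\di$ for $E\vert_U$ on $S\vert_U$. This nonemptiness is precisely the standing assumption of Proposition \ref{prop-space}. Applying that proposition now to the Courant algebroid $E\vert_U$ over the base $U$ (in place of $E$ over $M$) then immediately yields that the set of Dirac generating operators for $E\vert_U$ on $S\vert_U$ is an affine space modelled on $V_\di\vert_U = \{ e\in \Gamma(E\vert_U) \mid [\di,\g_e]\in C^\infty(U)\}$, and that this modelling vector space is independent of the chosen basepoint $\di$.

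I do not expect a genuine obstacle here, since the result is a formal combination of two already-established facts. The only point worth verifying is that the internal argument of Proposition \ref{prop-space} transfers to the restricted setting: in particular the step forcing an odd zeroth-order operator that commutes with $\mathrm{Cl}^0(E)$ to vanish, which relies on the irreducibility and inequivalence of the $\mathrm{Cl}^0(E)$-modules $S^0$ and $S^1$ and hence on the neutral signature. As these are pointwise statements about the Clifford representation at each point, they are unaffected by passing to an open subset $U$, so the proposition applies verbatim after restriction and the corollary follows.
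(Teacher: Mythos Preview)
Your proposal is correct and follows exactly the paper's own approach, which is simply to combine Theorem \ref{ex:thm} (local existence of a Dirac generating operator) with Proposition \ref{prop-space} (affine structure once one such operator exists). The additional checks you make about restriction to $U$ preserving regularity, neutral signature, and the pointwise irreducibility/inequivalence of $S^0$ and $S^1$ are sound and merely spell out what the paper leaves implicit.
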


\section{The canonical  Dirac generating operator}\label{sect-d}

Let $E$ be a regular Courant algebroid with scalar product of neutral signature
$(n, n)$.  In this section we construct a canonical Dirac generating operator $\slashed{d}_{\mathbb{S}}$ on a suitable spinor bundle $\mathbb{S}$ of $E$, of the form (\ref{ansatz:eq}).  Its definition will involve an arbitrary  generalized connection $D$ on $E$.  By canonical we mean that $\slashed{d}_{\mathbb{S}}$  is independent of the choice
of $D$.

We start with an arbitrary  spinor bundle  $S$   over $\mathrm{Cl}(E)$. Let  
$D$ a generalized connection on $E$ and $D^{S}$ 
a compatible $E$-connection on $S$.  We begin by analyzing the dependence of
$\slashed{D}^{S}+\frac{1}{4}\gamma_{T^{D}}$
on the data $(D,D^S)$, 
where  $\slashed{D}^S$ is the Dirac operator defined by (\ref{dirac-def}) 
Let $D'=D+A$ be another generalized connection on $E$, where $A\in \Gamma (E^*\otimes \mathfrak{so}(E))$.

\begin{prop} \label{trafo:prop} The following holds. 
\begin{enumerate}
\item[(i)] The torsions $T'$ and $T$ of $D'$ and $D$ are related by:
\begin{equation}\label{torsion-c} 
T'= T + \alpha,
\end{equation}
where $\alpha \in \Gamma (\wedge^3E^*)$ is given by 
\begin{equation}\label{alpha-torsion} 
\alpha (u,v,w) = \sum_{(u,v,w)\; \mathrm{cyclic}} \langle A_uv,w\rangle.
\end{equation}
\item[(ii)] The $E$-connection 
\[ (D^S)' := D^S -\frac12 A \]
is compatible with the generalized connection $D'$. Here $A$ is considered as a map 
$E\rightarrow \wedge^2E^*\cong  \wedge ^2E\subset \mathrm{Cl}(E)$, so that $A_e$ acts by Clifford multiplication 
on $S$ for all $e\in E$. 
\item[(iii)] The Dirac operators $\slashed{D}^{S}$ and $(\slashed{D}^{S})'$ associated with 
$(D,D^S)$ and $(D'$, $(D^S)')$ are related by 
\begin{equation}\label{change-dirac} 
(\slashed{D}^{S})^{'} = \slashed{D}^{S} - \frac14 \g_\alpha -\frac14 \g_{v_A},
\end{equation}
where $v_A = \mathrm{tr}_{\langle \cdot ,\cdot \rangle}A= \sum A_{e_i}\tilde{e}_i \in \Gamma (E)$. 
\item[(iv)] The operators  $\slashed{d}_{S}= \slashed{D}^{S}+\frac{1}{4} \gamma_{T}$ and
 $\slashed{d}^{\prime }_{S}= (\slashed{D}^{S})^{\prime} +\frac{1}{4} \gamma_{T^{\prime }}$ are related by 
\begin{equation}\label{slashed_d:eq}
\slashed{d}^{\prime}_{S}= \slashed{d}_{S} -\frac{1}{4} \gamma_{v_{A}}.
\end{equation}
\end{enumerate}
\end{prop}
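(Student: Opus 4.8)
The plan is to derive (iv) as a purely formal consequence of parts (i) and (iii), which together already contain all the analytic and algebraic input; no new computation in the Clifford algebra or with the compatible $E$-connections is required. First I would start from the definitions $\slashed{d}_{S} = \slashed{D}^{S} + \frac14 \gamma_{T}$ and $\slashed{d}^{\prime}_{S} = (\slashed{D}^{S})^{\prime} + \frac14 \gamma_{T^{\prime}}$ and substitute the two transformation laws established above: the torsion relation $T^{\prime} = T + \alpha$ from (i), and the Dirac-operator relation $(\slashed{D}^{S})^{\prime} = \slashed{D}^{S} - \frac14 \gamma_{\alpha} - \frac14 \gamma_{v_{A}}$ from (iii). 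The only structural fact I would invoke is that the Clifford representation $\gamma$ is fibrewise linear, so that $\gamma_{T^{\prime}} = \gamma_{T+\alpha} = \gamma_{T} + \gamma_{\alpha}$.

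With these substitutions the computation collapses in one line:
\begin{align*}
\slashed{d}^{\prime}_{S} &= (\slashed{D}^{S})^{\prime} + \tfrac14 \gamma_{T^{\prime}}
= \slashed{D}^{S} - \tfrac14 \gamma_{\alpha} - \tfrac14 \gamma_{v_{A}} + \tfrac14 \gamma_{T} + \tfrac14 \gamma_{\alpha} \\
&= \slashed{D}^{S} + \tfrac14 \gamma_{T} - \tfrac14 \gamma_{v_{A}}
= \slashed{d}_{S} - \tfrac14 \gamma_{v_{A}}.
\end{align*}
The essential mechanism is that the $+\frac14 \gamma_{\alpha}$ arising from the torsion shift in (i) exactly cancels the $-\frac14 \gamma_{\alpha}$ produced by the change of Dirac operator in (iii), leaving only the trace contribution $-\frac14 \gamma_{v_{A}}$.

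There is no genuine obstacle at this stage: all the real work has already been absorbed into parts (i)–(iii), in particular the identification of $-\frac12 A$ as the correction making $(D^{S})^{\prime}$ compatible with $D^{\prime}$ in (ii), and the resulting $-\frac14(\gamma_{\alpha} + \gamma_{v_{A}})$ in (iii). The one point deserving care is the bookkeeping of the three $E$-valued or $\wedge^{3}E^{*}$-valued quantities in play — the torsion shift $\alpha$, the Clifford correction $\gamma_{\alpha}$, and the trace $v_{A} = \mathrm{tr}_{\langle\cdot,\cdot\rangle} A$ — and the verification that exactly the first two cancel. The conceptual payoff I would emphasise is that (iv) isolates $v_{A}$ as the \emph{sole} dependence of $\slashed{d}_{S}$ on the chosen generalized connection $D$, which is precisely the identity needed in the following section to pin down a canonical, $D$-independent Dirac generating operator.
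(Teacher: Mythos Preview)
Your derivation of (iv) is correct and matches the paper's own proof exactly: the paper simply states that (iv) follows from (\ref{torsion-c}) and (\ref{change-dirac}), and you have written out precisely that substitution, with the $\pm\tfrac14\gamma_\alpha$ terms cancelling. Note, however, that your proposal addresses only part (iv) and explicitly takes (i)--(iii) as given; the paper does prove (i)--(iii) separately (with (iii) requiring a genuine Clifford-algebra computation in an orthonormal frame), so if the assignment was to prove the full proposition you would still need to supply those parts.
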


\begin{proof} (i) is relation (\ref{formula-torsion}).\\
(ii)  To check the compatibility let $e, v\in \Gamma (E)$ and $s\in \Gamma (S)$:
\begin{eqnarray*}
( D^{S})'_e(vs) &=& D^S_e(vs) -\frac12 A_e (vs) = 
D_e(v)s + vD_e^Ss - \frac{1}{2} (A_e v)s\\
& =& D_{e}(v) s + v D_{e}^{S}s -\frac{1}{2} [A_{e}, v] s -\frac{1}{2} v A_{e}s\\
&=&  D'_e(v)s +v(D^S)'_es.
\end{eqnarray*}
In the fourth equality we used that the commutator 
$[A_e ,v]\in \Gamma (E)$ in the Clifford algebra is related to the evaluation $A_e(v)$ of $A_e\in\Gamma( \mathfrak{so}(E))$ 
on $v$  by the 
formula 
\begin{equation} \label{so_spin:Eq} A_e(v) = -\frac12 [A_e,v].\end{equation}

\noindent 
(iii) With respect to an orthonormal frame $(e_{i})$ 
we write
$$
A =\frac{1}{2} \sum_{i,j,k} A^{ijk} e_{i}\otimes (e_{j}\wedge e_{k}),\ \alpha = \frac{1}{6} \sum_{i,j,k} \alpha^{ijk}e_{i}\wedge e_{j}\wedge e_{k},
$$
where $A^{ijk}:= A(\tilde{e}_{i}, \tilde{e}_{j}, \tilde{e}_{k})$  and $\alpha^{ijk} := \alpha (\tilde{e}_{i}, \tilde{e}_{j},
\tilde{e}_{k})$, where $E$ and $E^*$ are identified with the help of the scalar product. In particular, $A_{e_{i}} = \frac{1}{2} \epsilon_{i} \sum_{j,k} A^{ijk}e_{j}\wedge e_{k}$
is identified with $\frac{1}{2}\epsilon_{i} \sum A^{ijk} e_{j}e_{k}$ in $\mathrm{Cl}(E).$ Similarly, 
$\alpha$ is identified with $\frac{1}{6} \sum \alpha^{ijk} e_{i}e_{j}e_{k}$ in $\mathrm{Cl}(E).$ 
With this notation, 
\begin{eqnarray*} (\slashed{D}^{S})' -\slashed{D}^{S} &=&\frac12 \sum_{i} \tilde{e}_i \left(-\frac12 A_{e_i}\right) =-\frac14 \sum_i \tilde{e}_iA_{e_i} = -\frac18 \sum_{j\neq k} A^{ijk}\g_{ijk}\\
& = & -\frac{1}{8} {\sum}^{\prime}_{i,j,k} A^{ijk} \gamma_{ijk} -\frac{1}{4} \sum_{j,k} A^{jjk} \epsilon_{j} \gamma_{k} . 
\end{eqnarray*}
Remark that
$$
{\sum}^{\prime}_{i,j,k} A^{ijk} \gamma_{ijk}  =\frac{1}{3} {\sum}^{\prime}_{i,j,k} (\sum_{(i,j,k)\; \mathrm{cyclic}} A^{ijk} \gamma_{ijk})
 =\frac{1}{3} {\sum}^{\prime}_{i,j,k}
\alpha^{ijk} \gamma_{ijk} = 2\gamma_{\alpha}
$$
(where in the second  equality we used (\ref{alpha-torsion}))  and similarly 
$$
\sum_{j,k} A^{jjk} \epsilon_{j} \gamma_{k} = \sum_{j,k} \langle A (e_{j}, \tilde{e}_{j}), \tilde{e}_{k} \rangle \gamma_{k}
=\gamma_{v_{A}}.
$$
 Combining the above relations we obtain  (\ref{change-dirac}). (A formula equivalent to (\ref{change-dirac}) is stated as equation (2.22)  in \cite{garcia}.)

\noindent
(iv) follows from (\ref{torsion-c}) and (\ref{change-dirac}). 
\end{proof}

The operator $\slashed{d}_{S} = \slashed{D}^{S}+\frac{1}{4} \gamma_{T^{D}}$, defined using a generalized
connection $D$,  depends on the choice of $E$-connection $D^{S}$ compatible with $D$.
In order to remove this freedom  we define  
\begin{equation} \label{canspinor:eq} \mathcal{S} := S \otimes  | \mathrm{det}\,   S^*|^{\frac{1}{r_S}},\end{equation}
where $r_S:= \mathrm{rank}\, S$ 
and  $| \mathrm{det}\,  S^*|^{\frac{1}{r_S}}$ denotes the bundle of $\frac{1}{r_{S}}$-densities of
$S$.  We call the bundle $\mathcal S$ the {\cmssl canonical spinor bundle} of $S$.

\begin{rem}\label{densities-general}{\rm i) For a vector bundle $V$  of  rank $n$ 
and $s\in \mathbb{R}$, we denote by $| \mathrm{det}\,  V^{*} |^{s}$ 
the line bundle  of 
$s$-densities on  $V$, 
whose fiber over $p\in M$  consists of all  maps
$\omega_{p}: \Lambda^{n}V_{p}\setminus \{ 0\} \rightarrow \mathbb{R}$  (called $s$-densities) which satisfy   
$\omega_{p} ( \lambda \vec{v}) = | \lambda |^{s} \omega_{p} (\vec{v})$, for any
$\vec{v}\in \Lambda^{n}V_{p}\setminus \{ 0\}$ and $\lambda \in \mathbb{R}\setminus \{ 0 \}.$ 
Note that, when $s$ is an integer, $| \mathrm{det}\, V^{*} |^{s}$ is canonically isomorphic to  
$|\mathrm{det}\, V^{*} |^{\otimes s}$ and $| \mathrm{det}\,  V^{*} |^{2s}$ to $( \mathrm{det}\, V^{*})^{2s}$.
Also, 
\begin{equation}\label{det-densities}
|\mathrm{det}\, \Lambda V^{*} | \cong | \mathrm{det}\, V^{*} |^{N /2},\ 
| \mathrm{det}\, (V_{1}\otimes V_{2})^{*}) |^{s} \cong  | \mathrm{det}\, V_{1}^{*} |^{s n_{2}} \otimes
| \mathrm{det}\,  V_{2}^{*} |^{s n_{1}} ,
\end{equation}
where $N:= \mathrm{rank}\, \Lambda V$,   $V_{i}$ are vector bundles of rank $n_{i}$ and $s\in \mathbb{R}.$

ii) An $E$-connection $D$ on $V$ induces an $E$-connection on 
$\mathrm{det}\, V^{*}$ and on 
$| \mathrm{det}\,  V^{*} |^{s}$, for any $s\in \mathbb{R}.$  
The latter is  defined as follows: if $D_{u}\omega = \alpha (u) \omega$, where $u\in\Gamma ( E)$ and $\omega \in \Gamma (\mathrm{det}\, V^{*})$,
then $D_{u}(| \omega |^{s} )=s \alpha (u) |\omega |^{s}$, where 
$| \omega |^{s}$ is the $s$-density defined by $| \omega |^{s} (v_{1}, \cdots , v_{n}) :=
| \omega (v_{1}, \cdots , v_{n} ) |^{s}$. Remark that if $D^{\prime} = D + A$ are two $E$-connections on $V$
then the induced $E$-connections on  $| \mathrm{det}\,  V^{*} |^{s}$  are related by
$D^{\prime}_{u} = D_{u} - s\, \mathrm{trace}\, A_{u}.$\

iii)  Let 
$S$ and $S'$ be two  irreducible  spinor bundles of $E$. Since  
$\langle \cdot , \cdot \rangle$
has neutral signature,  we can write $S' = S \otimes L$, where $L$ is a  line bundle.
From the second relation  (\ref{det-densities}) we obtain  that 
$\mathcal{S}^{\prime}=\mathcal S\otimes L \otimes  | \mathrm{det}\, L^{*}|$. 
If $S$ and $S^{\prime}$ are isomorphic, then $L$ is trivializable, i.e.\ orientable. 
If $L$ is moreover oriented, then $L \otimes  | \mathrm{det}\, L^{*}|\cong L\otimes L^*$ is canonically trivial. 
We deduce that  
$\mathcal S$ and $\mathcal S^{\prime}$ are then canonically isomorphic. This is another reason why
$\mathcal S$ is called the canonical spinor bundle.}
\end{rem}

\begin{lem} \label{indep0:Prop} 
Let $D$ be a generalized connection on $E$ with torsion $T$ and $D^{S}$ an $E$-connection on $S$ compatible with $D$.
Then $D^{S}$ induces a connection $D^{\mathcal S}$  on $\mathcal S$, which is compatible with $D$ and depends only on $D$.
In particular, $\slashed{d}_{\mathcal S}:\Gamma (\mathcal S )\rightarrow \Gamma (\mathcal S )$  depends only on $D$.
\end{lem}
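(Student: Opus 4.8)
The plan is to exploit the fact that the compatibility condition $D^S_e(as)=(D_ea)s+aD^S_es$ pins down $D^S$ only up to a scalar-valued $E$-one-form, and that the density twist defining $\mathcal S$ is designed precisely to cancel this ambiguity. First I would record the freedom in the choice of $D^S$. If $D^S$ and $(D^S)'$ are two $E$-connections on $S$ both compatible with the fixed generalized connection $D$, then their difference $\Phi:=(D^S)'-D^S$ is $C^\infty(M)$-linear in the spinor argument, so $\Phi_e\in\Gamma(\mathrm{End}\,S)$ for every $e\in\Gamma(E)$. Subtracting the two compatibility identities shows that each $\Phi_e$ commutes with Clifford multiplication by every $a\in\Gamma(\mathrm{Cl}(E))$. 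Since $\langle\cdot,\cdot\rangle$ has neutral signature, $S$ is irreducible and every $\mathrm{Cl}(E)$-endomorphism of $S$ is a multiple of the identity (as in the proof of Proposition \ref{prop-space}); hence $\Phi_e=\beta(e)\,\mathrm{Id}$ for a uniquely determined $\beta\in\Gamma(E^*)$, i.e.\ $(D^S)'=D^S+\beta$.

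Next I would construct $D^{\mathcal S}$ and track its dependence on $D^S$. By Remark \ref{densities-general} ii), the $E$-connection $D^S$ induces an $E$-connection on $\det S^*$ acting by the scalar $-\mathrm{tr}\,D^S_u$, and hence an $E$-connection on $|\det S^*|^{1/r_S}$ acting by $-\tfrac{1}{r_S}\mathrm{tr}\,D^S_u$. Tensoring this with $D^S$ defines $D^{\mathcal S}$ on $\mathcal S=S\otimes|\det S^*|^{1/r_S}$; concretely, on the $S$-factor $D^{\mathcal S}_u$ acts as $D^S_u-\tfrac{1}{r_S}(\mathrm{tr}\,D^S_u)\,\mathrm{Id}$. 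To prove that $D^{\mathcal S}$ depends only on $D$, I would replace $D^S$ by $(D^S)'=D^S+\beta$ and apply Remark \ref{densities-general} ii) with $A=\beta\,\mathrm{Id}$: the induced connection on the density factor then changes by $-\tfrac{1}{r_S}\mathrm{tr}(\beta\,\mathrm{Id})=-\beta$, whereas the $S$-factor changes by $+\beta$. The two shifts cancel in the tensor product, so $(D^{\mathcal S})'=D^{\mathcal S}$.

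It remains to check compatibility and draw the conclusion. Since the density bundle is Clifford-central, the Clifford action on $\mathcal S$ is carried entirely by the $S$-factor, and the term subtracted from $D^S_u$ is scalar; therefore the Clifford Leibniz rule satisfied by $D^S$ survives the modification, so $D^{\mathcal S}$ is compatible with $D$. Note also that $\mathcal S$ is again an irreducible spinor bundle over $\mathrm{Cl}(E)$, being $S$ tensored with a real line bundle, so the operator $\slashed{d}_{\mathcal S}=\slashed{D}^{\mathcal S}+\tfrac14\g_T$ of the form (\ref{ansatz:eq}) is well defined. Its two ingredients, the torsion $T=T^D$ and the Dirac operator $\slashed{D}^{\mathcal S}$ of $D^{\mathcal S}$, both depend only on $D$ by the previous paragraph; hence $\slashed{d}_{\mathcal S}$ depends only on $D$, as claimed.

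The hard part is not conceptual but lies in the sign-and-normalization bookkeeping: one must verify that the shift $+\beta$ on the spinor factor is cancelled \emph{exactly} by the shift $-\beta$ produced by the $\tfrac{1}{r_S}$-density twist, which is where the precise exponent $1/r_S$ in the definition (\ref{canspinor:eq}) is used. The genuine structural input, that two compatible $E$-connections differ merely by a scalar, rests entirely on the neutral-signature hypothesis, which guarantees that the commutant of the irreducible $\mathrm{Cl}(E)$-action is $\mathbb{R}$ rather than $\mathbb{C}$ or $\mathbb{H}$.
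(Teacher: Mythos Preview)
Your proof is correct and follows essentially the same approach as the paper: both identify the freedom in $D^S$ as a scalar $E$-one-form $\beta\otimes\mathrm{Id}_S$ (via the Schur property in neutral signature) and then observe that the $1/r_S$-density twist in the definition of $\mathcal S$ cancels this $\beta$ exactly, using Remark~\ref{densities-general}~ii). The paper's proof is simply a terser version of yours, asserting the form of the ambiguity and citing the remark without spelling out the cancellation.
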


\begin{proof} It is clear that the $E$-connection $D^\mathcal{S}$ on $\mathcal{S}$ induced by $D^S$ is again compatible with $D$. 
Any other $E$-connection on $S$  compatible with $D$ is of the 
form $(D^{S})^{\prime} = D^S +\varphi \otimes \mathrm{Id}_{S}$ for some section $\varphi \in \Gamma (E^*)$. 
From Remark \ref{densities-general} ii) 
$D^{S}$ and $(D^{S})^{\prime}$ induce the same connection  on $\mathcal S .$
\end{proof}

In order to define a Dirac generating  operator independent of $D$ we consider as in \cite{garcia} the following 
{\cmssl canonical weighted spinor bundle} 
\begin{equation}\label{def-ss} 
\mathbb{S} := \mathcal{S}\otimes L,\quad L:=|\mathrm{det}\,  T^*M|^\frac12.
\end{equation}
The line bundle $L$ carries an induced $E$-connection 
$D^L$ defined by 
\begin{equation}\label{d-l} 
D^L_v \mu := \mathcal{L}_{\pi (v)}\mu  - \frac12 \mathrm{div}_D(v) \mu , 
\end{equation}
where $v\in \Gamma (E)$, 
$ \mathcal{L}_{\pi (v)}\mu$ denotes the Lie derivative of the densitity 
$\mu \in \Gamma (L)$ with respect to the vector field $\pi (v)$ 
and $\mathrm{div}_D(v) := \mathrm{tr}\, Dv$. 

\begin{rem}{\rm 
The Lie derivative $\mathcal L_{X}\mu $ of a density $\mu \in \Gamma ( | \mathrm{det}\, T^{*}M |^{1/r})$ 
in the direction of $X\in {\mathfrak X}(M)$
is defined by $\mathcal L_{X}\mu =\frac{1}{2r} \alpha (X)\mu$,  when ${\mathcal L}_{X} \mu^{2r}=
\alpha (X)\mu^{2r}$. In the latter expression  ${\mathcal L}_{X}$ denotes the Lie derivative 
on 
$(\mathrm{det}\, T^{*}M )^{2}$ (defined by ${\mathcal L}_{X} (\omega^{2} ) = 2 ({\mathcal L}_{X}\omega )
\omega$ where ${\mathcal L}_{X}\omega $ is the Lie derivative of the volume form 
$\omega $ of $M$).  
If $\nabla$ is a torsion-free connection on $M$, then it induces a connection (also denoted by $\nabla$)
on $ | \mathrm{det}\, T^{*}M |^{1/r}$ and one can show that
\begin{equation}\label{torsion-free-formula}
\nabla_{X} \mu = {\mathcal L}_{X}\mu -\frac{1}{r} \mathrm{trace}\, (\nabla X) \mu,\ 
\forall \mu \in \Gamma  ( | \mathrm{det}\, T^{*}M |^{1/r}),\ X\in {\mathfrak X}(M).
\end{equation}}
\end{rem}

\begin{lem}\label{lem-added}
If $D$ is the generalized connection from Lemma \ref{concl}, then the $E$-connection $D^{L}$  defined by (\ref{d-l})  is induced by  a usual  
connection on $L$. 
\end{lem}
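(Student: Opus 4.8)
The plan is to reduce the statement to the criterion of Example \ref{ext:exa}: since $E$ is regular, any $E$-connection on the line bundle $L$ which satisfies $D^{L}_{e}=0$ for all $e\in\ker\pi$ is induced by an ordinary connection on $L$. Thus it suffices to prove that $D^{L}_{v}=0$ for every $v\in\Gamma(\ker\pi)$.

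So let $v\in\Gamma(\ker\pi)$. Then $\pi(v)=0$, hence the Lie-derivative term in (\ref{d-l}) drops out and
\[ D^{L}_{v}\mu=-\tfrac12\,\mathrm{div}_{D}(v)\,\mu .\]
In this way the whole claim reduces to the vanishing of $\mathrm{div}_{D}(v)=\mathrm{tr}\,Dv$ for $v$ in the kernel of the anchor, where $Dv\in\Gamma(\mathrm{End}\,E)$ is the endomorphism $e\mapsto D_{e}v$.

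To compute this trace I would use the frame $(p_{a},q_{a})$ from the proof of Lemma \ref{concl}, whose metric dual is $(q_{a},p_{a})$ because $\langle p_{a},q_{b}\rangle=\delta_{ab}$ and the $p_{a}$, $q_{a}$ are isotropic. Therefore
\[ \mathrm{div}_{D}(v)=\sum_{a}\bigl(\langle D_{p_{a}}v,q_{a}\rangle+\langle D_{q_{a}}v,p_{a}\rangle\bigr).\]
The terms involving $D_{p_{a}}v$ vanish, since $p_{a}\in\ker\pi$ and $D_{e}=\nabla_{\pi(e)}$ give $D_{p_{a}}=0$. For the remaining terms I would use that the frame $(p_{a},q_{a})$ is $\nabla$-parallel and that $\nabla$ preserves the scalar product, so that $\langle D_{q_{a}}v,p_{a}\rangle=\langle\nabla_{\pi(q_{a})}v,p_{a}\rangle=\pi(q_{a})\langle v,p_{a}\rangle$. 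Hence
\[ \mathrm{div}_{D}(v)=\sum_{a}\pi(q_{a})\langle v,p_{a}\rangle ,\]
which vanishes by Corollary \ref{sum} applied to $\sigma=v\in\Gamma(\ker\pi)$. This yields $D^{L}_{v}=0$ on $\ker\pi$ and, by Example \ref{ext:exa}, shows that $D^{L}$ is induced by a usual connection on $L$.

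I do not expect a genuine obstacle here: the argument is short and the identity $\mathrm{div}_{D}(v)=\sum_{a}\pi(q_{a})\langle v,p_{a}\rangle$ is exactly tailored to feed into Corollary \ref{sum}. The only points requiring care are the reduction to the kernel condition via Example \ref{ext:exa} and the correct bookkeeping of the dual frame $(q_{a},p_{a})$ together with the factor $\tfrac12$ in (\ref{d-l}).
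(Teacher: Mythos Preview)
Your proof is correct and follows essentially the same approach as the paper's own proof: both reduce via Example \ref{ext:exa} to showing $\mathrm{div}_{D}(v)=0$ for $v\in\Gamma(\ker\pi)$, compute this divergence in the $\nabla$-parallel frame $(p_{a},q_{a})$ with dual frame $(q_{a},p_{a})$, and conclude by Corollary \ref{sum}. Your write-up is in fact slightly more explicit than the paper's in spelling out why the Lie-derivative term in (\ref{d-l}) drops out when $v\in\Gamma(\ker\pi)$.
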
 

\begin{proof} We need to show that $\mathrm{div}_{D} (v) =0$ for any $v\in \Gamma (\ker\  \pi )$ (see Example \ref{ext:exa}). 
Consider the frame $(p_{a}, q_{a})$ 
constructed after Lemma 
\ref{lem-chen} and recall that it is  parallel with respect to the connection $\nabla_{\pi (e)} = D_{e}$ on $E.$ 
Its dual frame is $(\tilde{p}_{a}, \tilde{q}_{b}) = (q_{a}, p_{b})$ and
$$
\mathrm{div}_{D } (v) = \sum_{a}( \langle D_{p_{a}} (v), q_{a}\rangle + \langle D_{q_{a}} (v), p_{a} \rangle )= 
\sum_{a} \langle \nabla_{\pi (q_{a})} (v), p_{a} \rangle =\sum_{a} \pi (q_{a} )  \langle v, p_{a} \rangle
$$
where we used $\pi (p_{a} ) =0$ and $\nabla p_{a} =0$.  From Corollary \ref{sum}  we obtain 
$ \mathrm{div}_{D } (v) =0$ as needed.
\end{proof}

\begin{thm}\cite{AX} \label{indep:prop}Let $D$ be a generalized connection on $E$ with torsion $T$, 
$D^{\mathbb{S}}=D^\mathcal{S}\otimes D^L$ the induced compatible $E$-connection on the canonical 
weighted spinor bundle $\mathbb{S}$, and $\slashed{D}^{\mathbb{S}}$ the corresponding Dirac operator on $\mathbb{S}$. Then 
\begin{equation} \label{inv:eq}
\slashed{d}_{\mathbb{S}} = \slashed{D}^{\mathbb{S}} +\frac14 \g_T : \Gamma (\mathbb{S}) \rightarrow \Gamma (\mathbb{S}) 
\end{equation}
is a Dirac generating operator, independent of $D$.
\end{thm}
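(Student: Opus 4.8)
The plan is to establish four things for $\slashed{d}_{\mathbb{S}}=\slashed{D}^{\mathbb{S}}+\frac14\g_T$: that $\mathbb{S}$ is a legitimate spinor bundle with a compatible connection, that conditions i) and ii) of Definition \ref{Dgo:def} hold, that $\slashed{d}_{\mathbb{S}}$ does not depend on $D$, and that condition iii) holds. The first two are immediate. Since $\mathbb{S}=\mathcal{S}\otimes L$ is obtained from the spinor bundle $S$ by tensoring with a real line bundle, it is again an irreducible $\mathrm{Cl}(E)$-bundle, i.e.\ a spinor bundle, and $D^{\mathbb{S}}=D^{\mathcal{S}}\otimes D^L$ is an $E$-connection compatible with $D$ (the line-bundle factors carry no Clifford action). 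Hence Lemma \ref{c1-2} applies verbatim and yields i) and ii). I would prove independence of $D$ next, because condition iii) will then follow from it together with the computation already carried out in Lemma \ref{concl}.

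For the independence I would compare the operators built from $D$ and from $D'=D+A$, $A\in\Gamma(E^*\otimes\mathfrak{so}(E))$, by tracking the three tensor factors of $\mathbb{S}$ separately. Writing the Dirac operator of $S$ twisted by a line bundle with connection one-form $\phi\in\Gamma(E^*)$ as $\slashed{D}^S+\tfrac12\g_{\phi^\sharp}$, where $\phi^\sharp=\sum_i\phi(e_i)\tilde{e}_i$, the operator splits as $\slashed{d}_{\mathbb{S}}=\slashed{d}_S+\tfrac12\g_{\mu^\sharp}+\tfrac12\g_{\nu^\sharp}$, where $\slashed{d}_S=\slashed{D}^S+\tfrac14\g_T$ and $\mu,\nu$ are the connection one-forms of the factors $|\det S^*|^{1/r_S}$ and $L=|\det T^*M|^{1/2}$. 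On the $S$-factor, Proposition \ref{trafo:prop}(iv) gives that $\slashed{d}_S$ changes by $-\tfrac14\g_{v_A}$, where $v_A=\mathrm{tr}_{\langle\cdot,\cdot\rangle}A=\sum A_{e_i}\tilde{e}_i$. The factor $|\det S^*|^{1/r_S}$ contributes nothing: by Proposition \ref{trafo:prop}(ii) the compatible connection changes by $-\tfrac12\g_{A_e}$, and $A_e\in\Lambda^2E\subset\mathrm{Cl}^0(E)$ acts on $S$ with vanishing trace, so by Remark \ref{densities-general}(ii) the induced connection on the density factor is unmoved, i.e.\ $\mu$ is unchanged. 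On the factor $L$ the connection (\ref{d-l}) does move, since $\mathrm{div}_{D'}(v)=\mathrm{div}_D(v)-\langle v_A,v\rangle$; the key identity is $\mathrm{tr}\,(e\mapsto A_ev)=\sum_i\langle\tilde{e}_i,A_{e_i}v\rangle=-\langle v_A,v\rangle$, which uses that each $A_{e_i}$ is skew. Thus $\nu$ gains $+\tfrac12\langle v_A,\cdot\rangle$, so $\nu^\sharp$ gains $+\tfrac12 v_A$ and the term $\tfrac12\g_{\nu^\sharp}$ changes by $+\tfrac14\g_{v_A}$. Adding the three contributions, the net change of $\slashed{d}_{\mathbb{S}}$ is $-\tfrac14\g_{v_A}+\tfrac14\g_{v_A}=0$, proving independence.

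Finally I would verify iii). Since $\slashed{d}_{\mathbb{S}}$ is now independent of $D$ and iii) is local, it suffices to compute $\slashed{d}_{\mathbb{S}}^{\,2}$ on a neighbourhood $U$ of any point using the special generalized connection $D$ of Lemma \ref{concl}, built from the adapted frame $(p_a,q_a)$. For that $D$, Lemma \ref{lem-added} shows that $D^L$ is induced by an honest connection on $L$, and in fact $Dq_a=0$ gives $\mathrm{div}_D(q_a)=0$, so $D^L_{q_a}=\mathcal{L}_{\pi(q_a)}$; together with the flatness of $\nabla^S$ this makes $D^{\mathbb{S}}$ induced by an honest connection on $\mathbb{S}|_U$ with $D^{\mathbb{S}}_{p_a}=0$ and $D^{\mathbb{S}}_{[e_j,e_k]}=0$, while the curvature-type commutators vanish: the $S$- and $|\det S^*|^{1/r_S}$-factors contribute zero by flatness of $\nabla^S$ and $[\pi(q_a),\pi(q_b)]=0$, and the $L$-factor contributes $[\mathcal{L}_{\pi(q_a)},\mathcal{L}_{\pi(q_b)}]=\mathcal{L}_{[\pi(q_a),\pi(q_b)]}=0$. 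These are exactly the three properties of $\nabla^S$ used in the proof of Lemma \ref{concl}, so that computation goes through word for word with $S$ replaced by $\mathbb{S}$, giving
\begin{equation*}
\slashed{d}_{\mathbb{S}}^{\,2}=-\frac{1}{16}\sum_{i,j,k}\epsilon_i\epsilon_j\epsilon_k(T^{ijk})^2\in C^\infty(U),
\end{equation*}
which is condition iii).

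The main obstacle is the bookkeeping in the independence step: one must see that the two density weights — the $\tfrac{1}{r_S}$-density of $S$ and the $\tfrac12$-density of $TM$ — are precisely the weights needed to absorb the anomalous term $\g_{v_A}$, which hinges on the vanishing trace of a bivector acting on $S$ and on the exact factor $\tfrac12$ in the divergence term of (\ref{d-l}). Once this cancellation is checked, the reduction of iii) to the already-established Lemma \ref{concl} is routine.
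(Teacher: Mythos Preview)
Your proposal is correct and follows essentially the same route as the paper. The only organizational difference is that you track the three factors $S$, $|\det S^*|^{1/r_S}$, and $L$ separately, whereas the paper first combines the first two into $\mathcal{S}$ (invoking Lemma~\ref{indep0:Prop} for the trace-free cancellation) and then tensors with $L$; the resulting cancellation $-\tfrac14\g_{v_A}+\tfrac14\g_{v_A}=0$ is identical. For condition~iii) you spell out explicitly why $[D^{\mathbb{S}}_{q_a},D^{\mathbb{S}}_{q_b}]=0$ and $D^{\mathbb{S}}_{[e_j,e_k]}=0$, which the paper compresses into the single remark that $D^{\mathcal{S}}\otimes D^L$ is induced by a usual connection compatible with $\nabla$, allowing Lemma~\ref{concl} to be quoted verbatim for $\mathbb{S}$.
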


\begin{proof} 
 Replacing $D$ by another generalized connection $D'=D+A$ 
changes $D^{S}$ to  $(D^{\prime})^{S} = D^{S} -\frac{1}{2} A$  (see Proposition \ref{trafo:prop}). 
This implies that  $(D^{\prime})^{\mathcal S} = D^{\mathcal S} -\frac{1}{2} A$.
Here we use that the Clifford action of $A_{u}$ on $S$ is trace-free 
(see the next remark) and Remark \ref{densities-general} ii). 
From Proposition \ref{trafo:prop} again
(applied to the spinor bundle $\mathcal S$)  we obtain that 
${\slashed{D}}^{\mathcal S}+\frac{1}{4} \gamma_{T}$ changes to 
$({\slashed{D}}^{\mathcal S})'+\frac{1}{4} \gamma_{T^{\prime}} = 
(\slashed{D}^{\mathcal S} +\frac{1}{4}\gamma_{T})-\frac{1}{4} \gamma_{v_{A}}$.
(See Proposition \ref{trafo:prop} for the definition of $v_A$.)

On  the other hand, on $\mathbb{S}=\mathcal S \otimes L$,
$$
\slashed{D} (s\otimes l) = ( \slashed{D}^{\mathcal{S}} s) \otimes l + \frac{1}{2}\tilde{e}_{i} s \otimes D^{L}_{e_{i}} l,\ 
s\in \Gamma (\mathcal S ),\ l\in \Gamma (L)
$$
and a similar expression holds for the Dirac operator $\slashed{D}'$ on $\mathbb{S}$ computed with the generalized
connection $D' $. We deduce that 
\begin{align*}
& (\slashed{D} +\frac{1}{4} \gamma_{T})(s\otimes l)
 =  (\slashed{D}^{\mathcal S} s+\frac{1}{4}  Ts) \otimes l + \frac{1}{2} \tilde{e}_{i}s \otimes 
D^{L}_{e_{i}}l,\\
& (\slashed{D}' +\frac{1}{4} \gamma_{T^{\prime}})(s\otimes l)
 = (\slashed{D}^{\mathcal S} s+\frac{1}{4}  T s -\frac{1}{4}v_{A} s) \otimes l + \frac{1}{2} \tilde{e}_{i}s \otimes 
(D^{\prime})^{L} _{e_{i}}l.
\end{align*}  
But  $\mathrm{div}_{D'}=\mathrm{div}_D-\langle v_A,\cdot \rangle$  implies  that
$(D^{\prime})_{v}^{L}l  = D^{L}_{v} l +\frac{1}{2} \langle v_{A}, v \rangle l$. We deduce that 
(\ref{inv:eq}) is independent on $D$.

It remains to show that $\slashed{d}_{\mathbb{S}}$ is a Dirac generating operator.
As this is a local property, we need to show that for any  sufficiently small open subset $U\subset M$, the restriction 
$\slashed{d}_{\mathbb{S}} \vert_{U}: \Gamma (\mathbb{S}\vert_{U}) \rightarrow \Gamma (\mathbb{S}\vert_{U})$  is a Dirac generating operator.  Choose $U$  like in Lemma \ref{concl}, and let $D$ be the generalized connection on $E\vert_{U}$ defined by
the flat metric connection $\nabla$ used in that lemma. 
From Lemma \ref{lem-added}, $D^{\mathcal S}\otimes D^{L}$  on $\mathbb{S} \vert_{U}= (\mathcal S \otimes L)\vert_{U}$ is induced by a usual connection 
compatible with $\nabla$. Since $\slashed{d}_{\mathbb{S}}$ is independent on $D$, we deduce that on $U$ it
coincides with the  operator constructed in Lemma \ref{concl}.
In particular, it is a Dirac generating operator.  
\end{proof}

\begin{rem}\label{traceRemark} 
In the proof we have used the fact that any representation of $\mathfrak{spin}(k,\ell )\subset \mathrm{Cl}(\mathbb{R}^{k,\ell })$ induced by a representation of the Clifford algebra  $\mathrm{Cl}(\mathbb{R}^{k,\ell })$ is trace-free. This follows from the fact
that the Lie algebra $\mathfrak{spin}(k,\ell )$ is spanned by commutators $uv-vu$, where $u,v\in \mathbb{R}^{k,\ell }$.
\end{rem}

\begin{defn} The operator $\di_{\mathbb{S}} :\Gamma (\mathbb{S}) \rightarrow \Gamma (\mathbb{S})$ is called the
{\cmssl canonical Dirac generating operator} associated to $E$.
\end{defn}

\begin{rem}{\rm Our canonical Dirac generating operator coincides with the Dirac generating operator constructed
in Theorem 4.1 of \cite{AX}. This follows from formula (53) of \cite{AX}, by noticing a difference of a minus sign between our definition for  the torsion
of a generalized connection and that from \cite{AX} (see Section 3.2 of this reference). Our Dirac generating operator also coincides
(up to a multiplicative constant factor)  with the Dirac generating operator from Proposition 5.12 of \cite{GMX} (in formula (5.14) of \cite{GMX} 
the term $\frac{1}{2} \gamma (C_{\nabla})$ should have a minus sign).}
\end{rem}

\section{Standard form of the canonical Dirac generating operator}

In this section  we provide an  expression for the canonical Dirac generating operator  $\slashed{d}_{\mathbb{S}}$ 
which uses the structure of regular Courant algebroids (see \cite{chen}).
We consider the setting from the previous section. For simplicity, from  now on  $\slashed{d}_{\mathbb{S}}$ will be denoted by $\slashed{d}$. 
From  Lemma \ref{lem-chen}, there is a vector bundle  isomorphism 
\begin{equation}\label{iso-i}
I:  E \rightarrow F^{*}\oplus \mathcal G \oplus F, 
\end{equation}
where we recall that $F= \pi (E)\subset TM$ is an integrable distribution and $\mathcal{G}\subset \ker \pi \subset E$  
is a subbundle. 
The isomorphism $I$ maps the 
anchor  $\pi : E \rightarrow TM$ of $E$ to the map $\rho ( \xi + r + X)=X$ 
and the  scalar product of $E$ to  a  scalar product 
 \begin{equation}\label{metric-diss}
\langle \xi + r + X, \eta + s + y\rangle = \frac{1}{2} ( \xi (Y) + \eta (X)) + (r, s)^{\mathcal G},
\end{equation}
where $\xi + r+ X$, $\eta + s + Y\in  F^{*} \oplus \mathcal G \oplus F$,
and  the scalar product $(\cdot , \cdot )^{\mathcal G}$  on $\mathcal G$ is of neutral signature. 
The bundle $\mathcal G$, together with $(\cdot , \cdot )^{\mathcal G}$, is  canonically
associated to $E$.  More precisely, $(\mathcal G , (\cdot , \cdot )^{\mathcal G})$ is 
isomorphic to   $(\ker \pi )/  (\ker  \pi )^{\perp}$ with scalar product  induced by the scalar product  of $E$.
Moreover, $\mathcal G$ is a bundle of Lie algebras, with Lie bracket  $[\cdot , \cdot ]^{\mathcal G}$  
induced from the Dorfman bracket of $E$. The scalar product $(\cdot , \cdot )^{\mathcal G}$ is invariant
with respect to $[\cdot , \cdot ]^{\mathcal G}$, that is the adjoint representation of the Lie algebra is by skew-symmetric endomorphisms.
In fact, these properties follow immediately from $\mathcal{G}\subset \ker \pi$.  An isomorphism (\ref{iso-i})  as above is called a {\cmssl dissection}
of $E$  \cite{chen}.

The Dorfman bracket $[\cdot , \cdot ]$
of $F^{*} \oplus \mathcal G \oplus F$ induced from $E$ via a dissection satisfies
\begin{equation}\label{ind-dorf}
 \mathrm{Pr}_{\mathcal G} [r_{1}, r_{2} ] = [r_{1}, r_{2} ]^{\mathcal G},\ r_{i}\in \Gamma (\mathcal G ),
\end{equation}
where $\mathrm{Pr}_{\mathcal G}$ is the natural projection from $F^{*}\oplus \mathcal G \oplus F$ on
$\mathcal G$ (we shall use a similar notation $\mathrm{Pr}_{F^{*}}$ for the natural projection on $F^{*}$).

Therefore, we may (and will)  assume that the given regular Courant algebroid 
is of the form $E= F^{*} \oplus \mathcal G \oplus F$, with anchor 
$\rho (\xi + r + X)= X$,  metric given by (\ref{metric-diss}) and Dorfman bracket $[\cdot , \cdot ]$
satisfying (\ref{ind-dorf}).
As proved in  Lemma 2.1 of \cite{chen}, the Dorfman bracket of $E$  is  determined by its components 
\begin{align*}
&\nabla : \Gamma (F) \times \Gamma ( \mathcal G ) \rightarrow \Gamma (\mathcal G),\ \nabla_{X} (r):= \mathrm{Pr}_{\mathcal G} [X, r],\\
& R:\Gamma (F)\times \Gamma (F) \rightarrow \Gamma (\mathcal G),\ R(X, Y) := \mathrm{Pr}_{\mathcal G} [X, Y]\\
& {\mathcal H} : \Gamma (F)\times \Gamma (F) \times \Gamma (F) \rightarrow C^{\infty}(M),\ {\mathcal H}(X, Y, Z):= (\mathrm{Pr}_{F^{*}} [X, Y] )(Z).
\end{align*}
Note that here $[X,Y]$ stands for the Dorfman bracket 
\[ [X,Y]={\mathcal H}(X, Y, \cdot ) + R(X,Y) + \mathcal{L}_XY\]
of $X,Y$ as sections of $F\subset E$, whereas, for the rest of this section,  
the Lie bracket  of vector fields will be always denoted by $\mathcal{L}_XY$.
The map $\nabla$ is a partial connection  on $\mathcal G$, 
the  map $R$ is a  $2$-form on $F$ with values in $\mathcal G$ and $\mathcal H$ is a $3$-form on $F$. The properties of the triple $(\nabla , R, \mathcal H )$ are described
in Theorem 2.3 of \cite{chen}.

The next lemma was proved in \cite{chen}  and can be checked directly (we remark a difference of sign between our definition  for the torsion of a generalized
connection  and that  from \cite{chen}).  By a torsion-free connection on $F$ we mean a 
partial connection  $\nabla^{F} : \Gamma (F)\times \Gamma (F)
\rightarrow \Gamma (F)$ which satisfies  $\nabla_{X}Y - \nabla_{Y}X = \mathcal{L}_XY$ for any $X, Y\in \Gamma (F)$. 

\begin{lem}(\cite{chen}) \label{lem-reg} Let $\nabla^{F}$ be a  torsion-free connection on $F$. Then
\begin{equation}\label{nabla-e}
\nabla^{E}_{\xi + r + X} (\eta + s +Y) := ( \nabla^{F}_{X}\eta -\frac{1}{3} {\mathcal H}(X, Y,\cdot ), \nabla_{X}s +\frac{2}{3} [r, s]^{\mathcal G}, \nabla_{X}^{F}Y )
\end{equation}
is a generalized connection on $E$ with torsion given by
\begin{align}
\nonumber T^{\nabla^{E}}(u, v, w)   = &  - {\mathcal H }(X, Y, Z) - (R(X, Y), t)^{\mathcal G} - (R(Y ,Z), r)^{\mathcal G} - (R(Z, X), s)^{\mathcal G} \\
\label{torsion-ne} & + ( [r,s]^{\mathcal G}, t)^{\mathcal G},
\end{align}
for any $u = \xi + r + X$, $v= \eta + s +Y$, $w= \zeta + t + Z$ (where $\xi , \eta , \zeta \in F^{*}$, $r,s,t\in \mathcal G$, 
$X, Y, Z\in F$). 
\end{lem}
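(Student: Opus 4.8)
The plan is to verify directly that $\nabla^E$ is a generalized connection and then to compute its torsion from the definition, substituting the explicit dissected Dorfman bracket recorded in \cite{chen}. Throughout I write $u=\xi+r+X$, $v=\eta+s+Y$, $w=\zeta+t+Z$.

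First I would check that $\nabla^E$ is an $E$-connection. Since the anchor is $\rho(\xi+r+X)=X$, the Leibniz rule splits componentwise: the $F$- and $F^*$-parts use that $\nabla^F$ is a partial connection on $F$ together with its dual connection on $F^*$ and the tensoriality of $\mathcal H$ in its arguments, while the $\mathcal G$-part uses that $\nabla$ is a partial connection and that the fibrewise bracket $[\cdot,\cdot]^{\mathcal G}$ is $C^\infty(M)$-linear. Next I would verify compatibility with the scalar product (\ref{metric-diss}), i.e.\ $X\langle v,w\rangle=\langle\nabla^E_uv,w\rangle+\langle v,\nabla^E_uw\rangle$. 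Four facts do the work here: the $\mathcal H$-contributions cancel because $\mathcal H$ is a $3$-form; the $F^*$--$F$ pairings differentiate correctly because the induced connection on $F^*$ is dual to $\nabla^F$; one has $(\nabla_Xs,t)^{\mathcal G}+(s,\nabla_Xt)^{\mathcal G}=X((s,t)^{\mathcal G})$ because $\nabla$ is metric on $(\mathcal G,(\cdot,\cdot)^{\mathcal G})$; and $([r,s]^{\mathcal G},t)^{\mathcal G}+(s,[r,t]^{\mathcal G})^{\mathcal G}=0$ by $\mathrm{ad}$-invariance of $(\cdot,\cdot)^{\mathcal G}$.

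For the torsion I would use that $T^{\nabla^E}$ is a totally skew-symmetric $3$-form and evaluate it through
\[ T^{\nabla^E}(u,v,w)=\langle\nabla^E_uv-\nabla^E_vu,w\rangle+\langle\nabla^E_wu,v\rangle-\langle[u,v],w\rangle, \]
which is (\ref{courant-br}) rewritten. Substituting both the explicit form of $\nabla^E$ and the full dissected Dorfman bracket from Lemma~2.1 of \cite{chen}, I would group the terms by type. The three connection copies of $-\tfrac16\mathcal H(X,Y,Z)$ combine with the $-\tfrac12\mathcal H(X,Y,Z)$ from the bracket to give $-\mathcal H(X,Y,Z)$; the three $\tfrac23([r,s]^{\mathcal G},t)^{\mathcal G}$-type terms, identified via $\mathrm{ad}$-invariance with a single Cartan $3$-form, combine with the $-([r,s]^{\mathcal G},t)^{\mathcal G}$ from the $\mathcal G$-part of the bracket to give $+([r,s]^{\mathcal G},t)^{\mathcal G}$; and the cyclic $R$-terms are produced by the $R$-coupling pieces of the bracket (using $R(X,Z)=-R(Z,X)$). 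Crucially, all derivative terms must cancel: the $\mathcal G$-valued derivatives $(\nabla_\bullet\cdot,\cdot)^{\mathcal G}$ cancel among the connection, the $\mathcal G$-part of the bracket, and the metric-coupling part of its $F^*$-component, while the $\nabla^F$- and Lie-derivative terms in the $F^*$--$F$ pairing cancel using the torsion-free identity $\nabla^F_XY-\nabla^F_YX=\mathcal L_XY$.

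The main obstacle is bookkeeping the $F^*$-component of the dissected Dorfman bracket. Beyond the expected $\mathcal H(X,Y,\cdot)$ and the standard $\mathcal L_X\eta-\iota_Yd\xi$ piece, axiom~C4) forces metric-coupling terms such as $Z\mapsto 2(\nabla_Zr,s)^{\mathcal G}$ and $Z\mapsto -2(R(X,Z),s)^{\mathcal G}$; these are exactly the terms that cancel the stray $(\nabla_Zr,s)^{\mathcal G}$ left over from the connection and that generate the cyclic $R$-contributions. The coefficients $-\tfrac13$ and $\tfrac23$ in $\nabla^E$ are tuned precisely so that, after total skew-symmetrisation, every non-tensorial derivative cancels and only the stated tensorial expression survives; getting these factors and the accompanying signs right (including the torsion sign convention that differs from \cite{chen}) is the delicate part of the verification.
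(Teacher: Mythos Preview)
Your proposal is correct and matches the paper's approach: the paper does not actually give a proof of this lemma but simply cites \cite{chen} and remarks that the statement ``can be checked directly'' (noting the sign difference in the torsion convention), and your outline carries out precisely that direct verification via the formula~(\ref{courant-br}) and the explicit dissected bracket. Your identification of the key ingredients---metricity of $\nabla$ on $(\mathcal G,(\cdot,\cdot)^{\mathcal G})$, ad-invariance, the metric-coupling terms in the $F^*$-component of the bracket forced by axiom~C4), and the role of the coefficients $-\tfrac13$, $\tfrac23$ in making the non-tensorial derivative terms cancel---is exactly what the direct check requires.
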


\begin{rem}{\rm   For any regular Courant algebroid $E$ with anchor  $\pi : E \rightarrow TM$,  
the quotient ${\mathcal A}:= E/ (\ker  \pi  )^{\perp}$  inherits a Lie algebroid structure from the Dorfman bracket of $E$.
This Lie algebroid  is called    in \cite{chen}  the {\cmssl  ample Lie algebroid} associated to $E$. A dissection induces a bundle  isomorphism 
$\mathcal A\cong   \mathcal G \oplus F$ and a Lie algebroid structure on $\mathcal G \oplus  F$
(inherited from the Lie algebroid structure of $\mathcal A$).
The restriction of the $3$-form $\Omega := -T^{\nabla^{E}}$  to $\mathcal G \oplus  F$ is closed 
with respect to the Lie algebroid differential of $\mathcal G \oplus  F$ and its cohomology class is independent on the chosen dissection.  
It is called  the {\cmssl Severa class } of $E$, as it coincides with the  Severa class when $E$ is exact \cite{chen}. } 
\end{rem}

Let $\Lambda F^{*}$ be the bundle of forms over $F$. It is a spinor  bundle over  the bundle of Clifford algebras 
$\mathrm{Cl}(F^{*} \oplus  F)$, where
$F^{*}\oplus  F$ has scalar product 
$\langle \xi + X, \eta + Y\rangle =\frac{1}{2} (\eta (X) +\xi (Y))$. The  Clifford  representation is  
\begin{equation}\label{spinor-part}
(X + \xi ) \cdot \alpha := \iota_{X}\alpha + \xi \wedge \alpha, 
\end{equation}
where $\iota_{X} \alpha := \alpha (X, \cdot )$ denotes the interior product. 
Let $S_{\mathcal G}$ be an irreducible  spinor  bundle over $\mathrm{Cl} (\mathcal G)$, where $\mathcal G$ is considered with the scalar product 
$ (\cdot  , \cdot )^{\mathcal G}$. 
We assume that $\mathcal G$ is oriented, so that $S_{\mathcal G}$ is $\mathbb{Z}_{2}$-graded.
The  $\mathbb{Z}_2$-graded tensor product $S:= \Lambda  F^{*} \hat{\otimes} S_{\mathcal G}$
is an irreducible spinor bundle over  $\mathrm{Cl}(E )= \mathrm{Cl} (F^{*} \oplus  F)\hat{\otimes} \mathrm{Cl}(\mathcal G)$. 
(Basic facts concerning the $\mathbb{Z}_2$-graded tensor product
are reviewed in more detail in Appendix \ref{complex-spinors}; in particular,
see relation  (\ref{gen-graded-spinor}) for the Clifford action of $E$ on $S$.) 

\begin{lem} The canonical weighted spinor bundle of $S$   is given by
\begin{equation}\label{graded-can-sp}
\mathbb{S}  =\left( \Lambda  F^{*} \otimes   | \mathrm{det}\, ( \mathrm{Ann}\, F) |^{1/2}\right) \hat{\otimes} \mathcal{S}_{\mathcal G}
\end{equation}
where  $\mathrm{Ann}\, F\subset T^{*}M$ is the annihilator of $F$  and 
$\mathcal{S}_{\mathcal G}= S_{\mathcal G} \otimes  |\mathrm{det}\, S^{*}_{\mathcal G}|^{1/g}$ is the 
canonical  spinor bundle of $S_{\mathcal G}$ (with  $g:= \mathrm{rank}\, S_{\mathcal G}$).
\end{lem}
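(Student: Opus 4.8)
The plan is to unfold the two definitions (\ref{canspinor:eq}) and (\ref{def-ss}) and to push the given factorization $S=\Lambda F^{*}\hat{\otimes}S_{\mathcal G}$ through the density computations, tracking weights with the two isomorphisms in (\ref{det-densities}). Write $f:=\mathrm{rank}\, F$, so that $\mathrm{rank}\,\mathcal G=2(n-f)$, $g=\mathrm{rank}\, S_{\mathcal G}=2^{n-f}$ and $r_{S}=\mathrm{rank}\, S=2^{f}\cdot g=2^{n}$. Since the $\mathbb{Z}_{2}$-grading is irrelevant for determinant line bundles, I would treat $\hat{\otimes}$ as an ordinary tensor product throughout the density bookkeeping; the grading survives only in the Clifford-module structure, which is unaffected by tensoring with (even) density line bundles.

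First I would compute $\mathcal S=S\otimes|\mathrm{det}\, S^{*}|^{1/r_{S}}$. Applying the second isomorphism in (\ref{det-densities}) to $S=\Lambda F^{*}\otimes S_{\mathcal G}$ with $n_{1}=2^{f}$, $n_{2}=g$ and $s=1/r_{S}$ gives
\[
|\mathrm{det}\, S^{*}|^{1/r_{S}}\cong |\mathrm{det}\,(\Lambda F^{*})^{*}|^{g/r_{S}}\otimes|\mathrm{det}\, S_{\mathcal G}^{*}|^{2^{f}/r_{S}}=|\mathrm{det}\,(\Lambda F^{*})^{*}|^{1/2^{f}}\otimes|\mathrm{det}\, S_{\mathcal G}^{*}|^{1/g},
\]
where I used $g/r_{S}=1/2^{f}$ and $2^{f}/r_{S}=1/g$. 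Regrouping the factors and recognizing $S_{\mathcal G}\otimes|\mathrm{det}\, S_{\mathcal G}^{*}|^{1/g}=\mathcal S_{\mathcal G}$, this yields
\[
\mathcal S\cong\bigl(\Lambda F^{*}\otimes|\mathrm{det}\,(\Lambda F^{*})^{*}|^{1/2^{f}}\bigr)\hat{\otimes}\,\mathcal S_{\mathcal G}.
\]

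It then remains to absorb the weight line bundle $L=|\mathrm{det}\, T^{*}M|^{1/2}$ into the $\Lambda F^{*}$-factor and to match the result with $|\mathrm{det}\,(\mathrm{Ann}\, F)|^{1/2}$. For this I would use the short exact sequence $0\to\mathrm{Ann}\, F\to T^{*}M\to F^{*}\to 0$ given by restriction of covectors to $F$, which yields $|\mathrm{det}\, T^{*}M|^{1/2}\cong|\mathrm{det}\,(\mathrm{Ann}\, F)|^{1/2}\otimes|\mathrm{det}\, F^{*}|^{1/2}$. Hence $\mathbb S=\mathcal S\otimes L$ acquires the desired form once I show that the residual density factor $|\mathrm{det}\,(\Lambda F^{*})^{*}|^{1/2^{f}}\otimes|\mathrm{det}\, F^{*}|^{1/2}$ is canonically trivial. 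Using the canonical duality $|\mathrm{det}\, W|^{s}\cong|\mathrm{det}\, W^{*}|^{-s}$ of density bundles (which follows directly from the defining homogeneity in Remark \ref{densities-general} i)) together with the first isomorphism in (\ref{det-densities}) applied to $V=F$, namely $|\mathrm{det}\,\Lambda F^{*}|\cong|\mathrm{det}\, F^{*}|^{2^{f}/2}=|\mathrm{det}\, F^{*}|^{2^{f-1}}$, I obtain $|\mathrm{det}\,(\Lambda F^{*})^{*}|^{1/2^{f}}\cong|\mathrm{det}\,\Lambda F^{*}|^{-1/2^{f}}\cong|\mathrm{det}\, F^{*}|^{-1/2}$, so the residual factor is indeed trivial. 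Substituting back and collecting the (even) line bundle factors with $\Lambda F^{*}$ gives exactly (\ref{graded-can-sp}).

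The argument is otherwise routine; the one point that requires care is the consistent handling of the density weights, in particular the duality $|\mathrm{det}\, W^{*}|^{s}\cong|\mathrm{det}\, W|^{-s}$ and the cross rank-factors $n_{1},n_{2}$ entering the tensor-product rule in (\ref{det-densities}). Keeping the exponents straight is the only real obstacle; once they are matched, every identification used is canonical, which is precisely what guarantees that $\mathbb S$ is well defined.
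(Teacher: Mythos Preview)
Your proof is correct and follows essentially the same route as the paper. The paper's argument is more compressed: it states the single key isomorphism $|\det F|^{1/2}\otimes|\det T^*M|^{1/2}\cong|\det(\mathrm{Ann}\,F)|^{1/2}$ (obtained from the contraction $(\det TM)\otimes\det F^*\cong\det((\mathrm{Ann}\,F)^*)$, which is equivalent to your short exact sequence) and then simply appeals to the definitions and the two identities in (\ref{det-densities}); you have carried out explicitly the weight bookkeeping that the paper leaves implicit.
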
  

\begin{proof}
The  isomorphism (given by contraction) between  $(\mathrm{det}\, TM) \otimes \mathrm{det}\, F^{*}$  and
 $\mathrm{det}\, \left(  (\mathrm{Ann}\, F)^{*}\right) $, induces a canonical isomorphism 
\begin{equation}\label{det-F}
| \mathrm{det}\, F|^{1/2} \otimes | \mathrm{det}\ T^{*}M |^{1/2} \cong | \mathrm{det}\,  (\mathrm{Ann}\, F ) |^{1/2}.
\end{equation}
The claim follows from the definition of the canonical weighted spinor bundle $\mathbb{S}$, recall (\ref{def-ss}) and (\ref{canspinor:eq}), 
combined with relations (\ref{det-densities}) 
and (\ref{det-F}). 
\end{proof}

Recall the definition of the $E$-connection $D^{\mathbb{S}}$ from Theorem  \ref{indep:prop} computed from a generalized connection $D$.

\begin{lem}\label{connection-as} The $E$-connection $\nabla^{\mathbb{S}}$ on $\mathbb{S}$ computed 
from the generalized connection $\nabla^{E}$ defined in Lemma \ref{lem-reg}
has the following expression: for any 
$\omega \in\Gamma ( \Lambda F^{*})$, $\tau \in\Gamma (  | \mathrm{det}\,  ( \mathrm{Ann}\,  F) |^{1/2})$ 
 and $s\in\Gamma  (\mathcal{S}_{\mathcal G})$ we have  
\begin{align}
\nonumber& \nabla^{\mathbb{S}}_{u} ( \omega \otimes \tau  \otimes s) =( \nabla^{F}_{X} \omega ) \otimes \tau  \otimes s +\omega
\otimes ( {\mathcal L}_{X}  \tau  ) \otimes s \\
\label{expr-c}& 
+\frac{1}{3} (\iota_{X}{\mathcal H})\wedge \omega \otimes \tau  \otimes s + \omega \otimes \tau  \otimes ( \nabla^{0, \mathcal S_{\mathcal G}}_{X} s -\frac{1}{3} 
(\mathrm{ad}_{r}) (s)) .
\end{align}
Above $u= \xi + r + X$,  $\mathcal L_{X}\tau $ denotes the Lie derivative of $\tau$ in the direction 
of $X\in \Gamma (F)$,   $\nabla^{0, \mathcal S_{\mathcal G}}$ 
is an $F$-connection 
on $\mathcal{S}_{\mathcal G}$, induced by an $F$-connection $\nabla^{0,S_{\mathcal G}}$ on $S_{\mathcal G}$ compatible with 
the $F$-connection  $\nabla$ of $\mathcal G$ and $\mathrm{ad}_{r} \in \mathfrak{so}(\mathcal{G})$ is  considered as a $2$-form on 
$\mathcal G $, which acts by Clifford multiplication on $\mathcal S_{\mathcal G}.$   
\end{lem}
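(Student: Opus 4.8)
The plan is to write the generalized connection $\nabla^{E}$ of Lemma \ref{lem-reg} as $\nabla^{E}=\nabla^{0}+A$ with $A\in\Gamma(E^{*}\otimes\mathfrak{so}(E))$, to compute the compatible spinor connection of the block part $\nabla^{0}$ directly, and to correct it by $A$ using Proposition \ref{trafo:prop}(ii). Reading off (\ref{nabla-e}), set $\nabla^{0}_{\xi+r+X}(\eta+s+Y):=(\nabla^{F}_{X}\eta,\nabla_{X}s,\nabla^{F}_{X}Y)$, the block-diagonal generalized connection built from $\nabla^{F}$ (and its dual) on $F\oplus F^{*}$ and from the metric partial connection $\nabla$ on $\mathcal{G}$; it preserves (\ref{metric-diss}). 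The difference $A_{u}(\eta+s+Y)=(-\frac13\mathcal{H}(X,Y,\cdot),\frac23[r,s]^{\mathcal{G}},0)$, where $u=\xi+r+X$, is $\mathfrak{so}(E)$-valued and splits as a sum of two block bivectors $A_{u}=\beta_{\mathcal{H}}+\rho_{r}$, with $\beta_{\mathcal{H}}\in\Lambda^{2}F^{*}$ and $\rho_{r}\in\Lambda^{2}\mathcal{G}$. Here one must be careful with normalizations: because the $F^{*}$--$F$ pairing in (\ref{metric-diss}) carries a factor $\frac12$, the skew endomorphism $Y\mapsto-\frac13\mathcal{H}(X,Y,\cdot)$ corresponds under $\Lambda^{2}E\cong\mathfrak{so}(E)$ to the bivector $\beta_{\mathcal{H}}=-\frac23\,\iota_{X}\mathcal{H}$, whereas on $\mathcal{G}$ (where $(\cdot,\cdot)^{\mathcal{G}}$ carries no such factor) the endomorphism $\frac23\,\mathrm{ad}_{r}$ corresponds to $\rho_{r}=\frac23\,\mathrm{ad}_{r}$.

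First I would produce the compatible spinor connection on $S=\Lambda F^{*}\hat{\otimes}S_{\mathcal{G}}$. For $\nabla^{0}$ this is the $\mathbb{Z}_{2}$-graded tensor product of the connection induced by $\nabla^{F}$ on $\Lambda F^{*}$ (via (\ref{spinor-part})) with a spinor connection $\nabla^{0,S_{\mathcal{G}}}$ on $S_{\mathcal{G}}$ compatible with $\nabla$. By Proposition \ref{trafo:prop}(ii) and (\ref{so_spin:Eq}), replacing $\nabla^{0}$ by $\nabla^{E}=\nabla^{0}+A$ changes this spinor connection by the Clifford action of $-\frac12A$. Since $\beta_{\mathcal{H}}\in\Lambda^{2}F^{*}$ acts on $\Lambda F^{*}$ by $\beta_{\mathcal{H}}\wedge(\cdot)$ (two elements of $F^{*}$ act by successive wedging in (\ref{spinor-part})), its contribution is $-\frac12\beta_{\mathcal{H}}\wedge=+\frac13(\iota_{X}\mathcal{H})\wedge$; likewise $\rho_{r}$ acts on $S_{\mathcal{G}}$ by the Clifford action of the $2$-form $\mathrm{ad}_{r}$, contributing $-\frac12\rho_{r}=-\frac13\,\mathrm{ad}_{r}$. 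This reproduces the two correction terms of (\ref{expr-c}) already at the level of $S$.

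Next I would pass to $\mathbb{S}$. The two correction terms are Clifford actions of $2$-forms, hence trace-free (Remark \ref{traceRemark}), so by Remark \ref{densities-general}(ii) they induce nothing on the density factors and survive unchanged on $\mathcal{S}$ and on $\mathbb{S}$. It then remains to identify the connection on the density and spinor factors coming from $\nabla^{0}$: the $|\mathrm{det}\,S^{*}_{\mathcal{G}}|^{1/g}$ factor combines with $\nabla^{0,S_{\mathcal{G}}}$ into the canonical spinor connection $\nabla^{0,\mathcal{S}_{\mathcal{G}}}$, and the $\Lambda F^{*}$ part contributes $\nabla^{F}_{X}\omega$. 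The delicate point is the $\tau$-factor: using (\ref{graded-can-sp}) and (\ref{det-F}) one has $|\mathrm{det}\,(\mathrm{Ann}\,F)|^{1/2}\cong|\mathrm{det}\,F|^{1/2}\otimes L$, and the connection induced by $\nabla^{F}$ on $|\mathrm{det}\,F|^{1/2}$ together with the connection $D^{L}$ of (\ref{d-l}) on $L=|\mathrm{det}\,T^{*}M|^{1/2}$ must be shown to combine into the Lie derivative $\mathcal{L}_{X}$. The key input is the divergence computation $\mathrm{div}_{\nabla^{E}}(v)=\mathrm{div}_{\nabla^{F}}(\pi(v))$, which follows from the block form of (\ref{nabla-e}) together with $\mathrm{tr}\,\mathrm{ad}_{r}=0$ and the skewness of $\beta_{\mathcal{H}}$. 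Combining this with (\ref{torsion-free-formula}), which relates the density connection to $\mathcal{L}_{X}$ minus a trace term, the two divergence contributions cancel and (\ref{det-F}) intertwines the surviving Lie derivatives, yielding the term $\omega\otimes\mathcal{L}_{X}\tau\otimes s$ and hence (\ref{expr-c}).

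The main obstacle is precisely this last density bookkeeping: carefully tracking the several half-densities $|\mathrm{det}\,F|^{1/2}$, $|\mathrm{det}\,T^{*}M|^{1/2}$ and $|\mathrm{det}\,S^{*}_{\mathcal{G}}|^{1/g}$ and verifying that the divergence and trace terms cancel exactly so that only $\mathcal{L}_{X}\tau$ survives. By contrast, the form part and the $\mathcal{G}$-spinor part are comparatively routine once $A$, and in particular the factor-$2$ normalization of $\beta_{\mathcal{H}}$ forced by the $\frac12$ in (\ref{metric-diss}), have been correctly identified.
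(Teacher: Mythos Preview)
Your proposal is correct and follows essentially the same route as the paper's proof. The only organizational difference is that the paper splits $\nabla^{E}$ according to the bundle decomposition $E=(F^{*}\oplus F)\oplus\mathcal{G}$ and verifies the compatible spinor connection on each factor separately (citing Proposition \ref{trafo:prop}(ii) only for the $\mathcal{G}$-part and appealing to a ``straightforward computation'' for the $F^{*}\oplus F$-part), whereas you uniformly write $\nabla^{E}=\nabla^{0}+A$ and apply Proposition \ref{trafo:prop}(ii) to both blocks at once; your careful identification of the bivector $\beta_{\mathcal H}=-\tfrac{2}{3}\iota_{X}\mathcal H$ (forced by the factor $\tfrac12$ in (\ref{metric-diss})) is exactly what underlies the paper's ``straightforward computation'', and your density bookkeeping --- computing $\mathrm{div}_{\nabla^{E}}(u)=\mathrm{tr}\,\nabla^{F}X$ and invoking (\ref{torsion-free-formula}) to see that the trace terms on $|\det F|^{1/2}$ and $L$ cancel to leave $\mathcal L_{X}\tau$ --- matches the paper's argument in (\ref{sase-0})--(\ref{ajutatoare}) verbatim.
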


\begin{proof} We remark that $\nabla^{E } = \nabla^{F^{*} + F} + \nabla^{\mathcal G}$, where $\nabla^{F^{*}+ F}$ and $\nabla^{\mathcal G}$ 
are  $E$-connections  on $F^{*} \oplus  F$ and $\mathcal G$ respectively, 
defined by
\begin{align*}
\nonumber& \nabla^{F^{*} + F}_{u} (\eta + Y) = \nabla^{F}_{X}\eta -\frac{1}{3}{\mathcal H}(X, Y,\cdot ) +\nabla^{F}_{X}Y,\\
\nonumber& \nabla^{\mathcal G}_{u} (s):=\nabla_{X}s +\frac{2}{3} \mathrm{ad}_{r}(s)
\end{align*}
where $\mathrm{ad}_{r} (s) = [r, s]^{\mathcal G}.$
The $F$-connection $\nabla^{F}$ induces  an $F$-connection (also denoted by $\nabla^{F}$) 
on  $\Lambda F^{*}$.  A straightforward
computation shows that   the $E$-connection 
\begin{equation}\label{rel-spin-c}
\nabla^{F, \,\mathrm{spin}}_{u} := \nabla^{F}_{X} +\frac{1}{3} (\iota_{X}{\mathcal H})\wedge
\end{equation}
on $\Lambda F^{*}$ is compatible with $\nabla^{F^{*} + F}.$  
On the other hand,  like in the proof of 
Proposition \ref{trafo:prop}  ii),   
the $E$-connection 
\begin{equation}\label{sase}
\nabla^{\mathcal G ,\,\mathrm{spin}}_{u} := \nabla^{0, S_{\mathcal G}}_{X} -\frac{1}{3} \mathrm{ad}_{r} 
\end{equation}
on ${S}_{\mathcal G}$ is compatible with $\nabla^{\mathcal G}$. 
We obtain that $\nabla^{F,\,\mathrm{spin}}\otimes \nabla^{\mathcal G  ,\,\mathrm{spin}}$ is an $E$-connection on 
$S = \Lambda F^{*} \hat{\otimes} S_{\mathcal G}$ compatible with 
$\nabla^{E}$. 
The definition of $\nabla^{E}$ implies that
$\mathrm{div}_{\nabla^{E}} (u) = \mathrm{trace}\, (\nabla^{F} X)$ and thus  
$$ 
(\nabla^{E})^{L}_{u}  \mu   = \mathcal L_{X}\mu   -\frac{1}{2}\mathrm{trace}( \nabla^{F}X)\mu,
$$
for any section $\mu$ of  $L =  | \mathrm{det}\, T^{*}M|^{1/2}$. We obtain that the $E$-connection
$\nabla^{\mathbb{S}}$ 
on $\mathbb{S} = (\Lambda F^{*} \otimes | \mathrm{det}\, F |^{1/2} )\hat{\otimes} \mathcal S_{\mathcal G}\otimes L$
is given by  
\begin{equation}\label{mmss}
\nabla^{\mathbb{S}} := \nabla^{F,\,\mathrm{spin}}\otimes \nabla^{{\mathcal G},\,\mathrm{spin}}\otimes (\nabla^{E})^{L} 
\end{equation}
where we preserve  the same symbols 
$\nabla^{F,\,\mathrm{spin}}$ and $\nabla^{\mathcal G ,\,\mathrm{spin}}$ for the $E$-connections induced 
by $\nabla^{F,\,\mathrm{spin}}$ and $\nabla^{\mathcal G ,\,\mathrm{spin}}$
on 
$\Lambda F^{*}\otimes |\mathrm{det}\,  F|^{1/2}$ and $\mathcal S_{\mathcal G}$ respectively. 
In order to compute $\nabla^{\mathbb{S}}$ we shall compute  $\nabla^{F,\,\mathrm{spin}}\otimes (\nabla^{E})^{L}$
and $\nabla^{{\mathcal G},\,\mathrm{spin}}$ separately.

We begin with $\nabla^{F,\,\mathrm{spin}}\otimes (\nabla^{E})^{L}$. This is 
 an $E$-connection on $\Lambda F^{*} \otimes  | \mathrm{det}\, F|^{1/2} \otimes L.$
Relation (\ref{rel-spin-c})
holds also on  $\Lambda F^{*}\otimes | \mathrm{det}\,  F|^{1/2}$, as the endomorphism 
$\omega \rightarrow \frac{1}{3} (\iota_{X}{\mathcal  H})\wedge \omega$
of  $\Lambda F^{*}$ is  trace-free. Let $\omega \in \Gamma (\Lambda F^{*})$, $l\in \Gamma ( |\mathrm{det}\, F|^{1/2})$ and 
$\mu \in \Gamma ( L)$.
Then
\begin{align}
\nonumber  & (\nabla^{F,\,\mathrm{spin}} \otimes (\nabla^{E})^{L})_{u} ( \omega \otimes l \otimes \mu ) = 
\nabla^{F,spin}_{X}(\omega\otimes l) \otimes \mu + \omega\otimes l \otimes (\nabla^{E})^{L}_{X} \mu   \\
\nonumber& = (\nabla^{F}_{X} (\omega \otimes l)  +\frac{1}{3}( i_{X}{\mathcal H}\wedge \omega )\otimes l ) \otimes \mu + 
\omega \otimes l \otimes ( {\mathcal L}_{X} \mu  -\frac{1}{2} \mathrm{trace} (\nabla^{F}X) \mu ) \\
\label{sase-0} & = (\nabla^{F}_{X} \omega ) \otimes l \otimes \mu + \omega \otimes {\mathcal L}_{X} (l\otimes \mu ) 
+\frac{1}{3} ( \iota_{X}{\mathcal H} \wedge \omega )\otimes l \otimes \mu 
\end{align} 
where in the last equality we used 
\begin{align}
\nonumber&  \nabla^{F}_{X} (\omega \otimes l)= (\nabla^{F}_{X}\omega ) \otimes l + \omega \otimes \nabla^{F}_{X}l\\
\label{ajutatoare} &\nabla^{F}_{X} l = {\mathcal L}_{X} l +\frac{1}{2} \mathrm{trace} (\nabla^{F} X) l.
\end{align}
(see (\ref{torsion-free-formula}) for the second relation (\ref{ajutatoare})).\

Next, we compute $\nabla^{\mathcal G,\,\mathrm{spin}}.$  Relation (\ref{sase})
holds also on $\mathcal S_{\mathcal G}$ (with $\nabla^{0, S_{\mathcal G}}$ replaced 
by $\nabla^{0, \mathcal S_{\mathcal G}}$, the $E$-connection on 
$\mathcal S_{\mathcal G}$ induced by $\nabla^{0, S_{\mathcal G}}$).   
This follows from the fact  that the  Clifford action by  $\mathrm{ad}_{r}\in \Gamma ( \Lambda^{2} \mathcal G )$ on $S_{\mathcal G}$ is 
trace-free.  The latter is a consequence of Remark \ref{traceRemark}.
So we have proven:
\begin{equation}\label{saseprime}
\nabla^{\mathcal G ,\,\mathrm{spin}}_{u} = \nabla^{0, \mathcal S_{\mathcal G}}_{X} -\frac{1}{3} \mathrm{ad}_{r}  \quad\mbox{on}\quad \mathcal S_{\mathcal G}.
\end{equation}
Combining (\ref{mmss}), (\ref{sase-0}) and (\ref{saseprime}), we obtain (\ref{expr-c}). 
\end{proof}

\begin{notation} {\rm For any $\tau\in \Gamma ( | \mathrm{det}\,  ( \mathrm{Ann}\, F)|^{1/2})$, 
${\mathcal L}_{X} \tau $ is $C^{\infty}(M)$-linear in $X$, when $X\in \Gamma (F)$
(as ${\mathcal L}_{fX}\omega = f {\mathcal L}_{X}\omega$, for any $\omega \in \Gamma (\mathrm{Ann}\, F)$).
The map $\Gamma (F)\ni X \rightarrow {\mathcal L}_{X} \tau $ is a $1$-form on $F$ with values in 
$| \mathrm{det}\, ( \mathrm{Ann}\, F)|^{1/2}$, which was denoted by ${\mathcal L} (\tau )$.}
\end{notation}

We arrive now at  what we call  the {\cmssl standard form} for the canonical Dirac generating operator in terms of the 
data encoding the regular Courant algebroid.
\begin{thm}\label{thm-alt} 
Let $E$ be a regular Courant algebroid  with anchor $\pi : E \rightarrow TM$ and   scalar product of neutral signature.
In terms of a dissection  $F^{*}\oplus \mathcal G\oplus F$  
of $E$, 
the canonical Dirac generating operator   is given by
\begin{align}
\nonumber \slashed{d} (\omega \otimes \tau  \otimes s) &= (d^{F}\omega )\otimes \tau  \otimes s + {\mathcal L} (\tau  ) \wedge \omega \otimes s
 + \nabla^{0, {\mathcal S}_{\mathcal G}} (s) \wedge \omega \otimes \tau \\ 
\label{alt-dirac}& - ({\mathcal H}\wedge \omega )\otimes \tau \otimes s + \frac{1}{4} (-1)^{|\omega |+1} \omega \otimes \tau  \otimes   Cs \nonumber\\ &  
+ (-1)^{|\omega| +1} \bar{R} ( \omega \otimes \tau \otimes s),
\end{align}
where $\omega \in\Gamma ( \Lambda F^{*})$, $\tau \in\Gamma (  | \mathrm{det}\,  ( \mathrm{Ann} F) |^{1/2})$ 
and $s\in\Gamma  (\mathcal{S}_{\mathcal G}).$ 
Above $d^{F} : \Gamma ( \Lambda^\bullet  F^{*} ) \rightarrow \Gamma  (  \Lambda^{\bullet+1}F^{*})$ is the 
exterior derivative along the integrable distribution  $F= \mathrm{im}\, \pi$,  
$C\in \Gamma (\Lambda^{3} \mathcal G^{*})$ is the Cartan form
$C(u, v, w) = ([u, v]^{\mathcal G}, w)^{\mathcal G}$ of $\mathcal G$
viewed as a  section of $\mathrm{Cl} ( \mathcal G )$,  $Cs$ denotes its Clifford action on $s$
and 
\begin{equation}
\bar{R}  (\omega\otimes \tau \otimes s) = \frac{1}{2} \sum_{i,j,k}( R(X_{i}, X_{j}), r_{k})^{\mathcal G} (\alpha_{i}\wedge \alpha_{j}\wedge \omega) \otimes \tau 
\otimes \tilde{r}_{k}s,
\end{equation}
where $ (X_{i})$ is a basis of $F$, $(\alpha_{i})$ is the dual basis, 
i.e.\ $\alpha_{i}(X_{j}) = \delta_{ij}$, $( r_{i} )$ is a basis of $\mathcal G$ and 
$(\tilde{r}_{i})$ is the dual basis with respect to $(\cdot , \cdot )^{\mathcal G}.$ 
\end{thm}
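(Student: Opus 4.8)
The plan is to exploit the independence of $\slashed{d}$ from the choice of generalized connection established in Theorem \ref{indep:prop}, and to compute $\slashed{d}$ using the specific connection $\nabla^{E}$ of Lemma \ref{lem-reg} (associated with a torsion-free connection $\nabla^{F}$ on $F$). Thus I write $\slashed{d} = \slashed{\nabla}^{\mathbb{S}} + \frac14 \g_{T}$, where $\slashed{\nabla}^{\mathbb{S}} = \frac12 \sum_{i} \g_{\tilde{e}_{i}} \nabla^{\mathbb{S}}_{e_{i}}$ is the Dirac operator of the connection $\nabla^{\mathbb{S}}$ computed in Lemma \ref{connection-as}, and $T = T^{\nabla^{E}}$ is the torsion given by (\ref{torsion-ne}). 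The two summands are analyzed separately and their contributions collected into the six terms of (\ref{alt-dirac}).

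First I would choose a frame of $E$ adapted to the dissection $F^{*}\oplus \mathcal{G}\oplus F$: a basis $(\alpha_{a})$ of $F^{*}$, a basis $(r_{k})$ of $\mathcal{G}$, and the basis $(X_{b})$ of $F$ dual to $(\alpha_{a})$. A direct check of (\ref{metric-diss}) shows that the metrically dual frame is $(2X_{a}, \tilde{r}_{k}, 2\alpha_{b})$, where $(\tilde{r}_{k})$ is the $(\cdot,\cdot)^{\mathcal{G}}$-dual of $(r_{k})$. Since the expression (\ref{expr-c}) for $\nabla^{\mathbb{S}}_{u}$ depends only on the $F$- and $\mathcal{G}$-components of $u$, the $F^{*}$-directions contribute nothing to $\slashed{\nabla}^{\mathbb{S}}$, and one is left with $\slashed{\nabla}^{\mathbb{S}} = \sum_{b} \g_{\alpha_{b}} \nabla^{\mathbb{S}}_{X_{b}} + \frac12 \sum_{k} \g_{\tilde{r}_{k}} \nabla^{\mathbb{S}}_{r_{k}}$. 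In the $F$-directions $\g_{\alpha_{b}}$ acts by $\alpha_{b}\wedge$ (by (\ref{spinor-part})); inserting (\ref{expr-c}), using $d^{F} = \sum_{b} \alpha_{b}\wedge \nabla^{F}_{X_{b}}$ for the torsion-free $\nabla^{F}$, and the number-operator identity $\sum_{b} \alpha_{b}\wedge \iota_{X_{b}}\mathcal{H} = 3\mathcal{H}$, I obtain precisely $(d^{F}\omega)\otimes \tau \otimes s + \mathcal{L}(\tau)\wedge \omega \otimes s + \nabla^{0,\mathcal{S}_{\mathcal{G}}}(s)\wedge \omega \otimes \tau + \mathcal{H}\wedge \omega \otimes \tau \otimes s$. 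In the $\mathcal{G}$-directions only the $-\frac13 \mathrm{ad}_{r_{k}}$ term of (\ref{expr-c}) survives, yielding $-\frac16(-1)^{|\omega|}\, \omega \otimes \tau \otimes (\sum_{k}\tilde{r}_{k}\cdot \mathrm{ad}_{r_{k}})s$, where the Koszul sign $(-1)^{|\omega|}$ records passing the odd element $\tilde{r}_{k}\in \mathrm{Cl}(\mathcal{G})$ through $\omega \in \Lambda F^{*}$ in the $\mathbb{Z}_{2}$-graded tensor product, cf.\ (\ref{gen-graded-spinor}).

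Next I would compute $\frac14 \g_{T}$ from (\ref{torsion-ne}). Decomposing $T$ into its $\mathcal{H}$-part $-\mathcal{H}(X,Y,Z)$, its three $R$-parts, and its Cartan part $([r,s]^{\mathcal{G}},t)^{\mathcal{G}} = C$, and representing each as an element of $\Lambda^{\bullet}E\subset \mathrm{Cl}(E)$ via the scalar product, the $\mathcal{H}$-part acts as $-2\mathcal{H}\wedge$ (the factor arising from $2X_{a}$ being the null-frame dual of $\alpha_{a}$), the Cartan part acts by $\frac14 \g_{C}$ with Koszul sign $(-1)^{|\omega|}$, and the $R$-parts assemble into $\bar{R}$. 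Adding the $\mathcal{H}$-contributions gives $(\mathcal{H}-2\mathcal{H})\wedge \omega = -\mathcal{H}\wedge \omega$, as required. For the Cartan contributions I would establish the Clifford-algebraic identity $\sum_{k}\tilde{r}_{k}\cdot \mathrm{ad}_{r_{k}} = 3C$ in $\mathrm{Cl}(\mathcal{G})$ (with $\mathrm{ad}_{r_{k}}$ acting as the bivector of the $2$-form $\mathrm{ad}_{r_{k}}$, exactly as in Lemma \ref{connection-as}); combined with the $-\frac16$ from the Dirac term and the $\frac14$ from the torsion term this produces $\left(-\frac12 + \frac14\right)(-1)^{|\omega|}Cs = \frac14(-1)^{|\omega|+1}Cs$, matching (\ref{alt-dirac}). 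The remaining $R$-terms give $(-1)^{|\omega|+1}\bar{R}$ by the analogous bookkeeping.

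The hard part will be the precise tracking of numerical factors and signs: the factors of $2$ introduced by the null-frame metric duals (which is where neutral signature and the dissection are essential), and the Koszul signs $(-1)^{|\omega|}$ arising whenever an odd Clifford element from $\mathrm{Cl}(\mathcal{G})$ is moved past a factor in $\Lambda F^{*}$ inside the $\mathbb{Z}_{2}$-graded tensor product. In particular, the cancellation producing the single coefficient $\frac14(-1)^{|\omega|+1}$ in front of the Cartan form rests on the interplay between the $-\frac13$ in the connection (\ref{expr-c}) and the $\frac14$ in front of $\g_{T}$, mediated by the identity $\sum_{k}\tilde{r}_{k}\cdot \mathrm{ad}_{r_{k}} = 3C$; getting this chain of constants right is the most delicate step and deserves the most care.
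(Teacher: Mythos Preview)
Your proposal is correct and follows essentially the same route as the paper's proof: both compute $\slashed{d}=\slashed{\nabla}^{\mathbb{S}}+\tfrac14\gamma_{T^{\nabla^E}}$ in the adapted frame $(X_i,r_j,\alpha_k)$ with metric dual $(2\alpha_i,\tilde{r}_j,2X_k)$, use the identities $\sum_b\alpha_b\wedge\iota_{X_b}\mathcal{H}=3\mathcal{H}$ and $\sum_k\tilde{r}_k\cdot\mathrm{ad}_{r_k}=3C$ in $\mathrm{Cl}(\mathcal{G})$, and then combine the Dirac-operator contributions with the Clifford action of $T^{\nabla^E}$ (whose expansion as a section of $\Lambda^3E$ is $-8\mathcal{H}-2\sum(R(X_i,X_j),r_k)^{\mathcal{G}}\alpha_i\wedge\alpha_j\wedge\tilde{r}_k+C$). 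Your bookkeeping of the numerical factors and Koszul signs matches the paper's exactly.
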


\begin{proof}  The bases of $E$ 
\begin{equation}\label{bases-dual}
 (e_{i} ) : = ( X_{i}, r_{j}, \alpha_{k}),\     (\tilde{e}_{i}) : = ( 2\alpha_{i}, \tilde{r}_{j},  2 X_{k} ) 
\end{equation}
are  dual with respect to the scalar product  (\ref{metric-diss}) of $E$.  Using Lemma \ref{connection-as} and the definition of the Clifford action, we obtain 
\begin{align*}
\sum_{i}\tilde{e}_{i}  \nabla^{\mathbb{S}}_{e_{i}} (\omega \otimes\tau \otimes s) & = \sum_{i} (
2 (\alpha_{i} \wedge \nabla^{F}_{X_{i}} \omega ) \otimes\tau \otimes s  + 
2(\alpha_{i} \wedge \omega )\otimes {\mathcal L}_{X_{i}} \tau \otimes s\\
& + \frac{2}{3} (\alpha_{i}\wedge \iota_{X_{i}} {\mathcal H}\wedge \omega ) \otimes\tau \otimes s + 
2(\alpha_{i}\wedge \omega )\otimes \tau \otimes \nabla_{X_{i}}^{0, {\mathcal S}_{\mathcal G}}s \\
&  + (-1)^{|\omega |+1} \frac{1}{3} \omega \otimes\tau \otimes  (\tilde{r}_{i} 
 \mathrm{ad}_{r_{i}}) (s) ). 
\end{align*}
But 
\begin{align}
\nonumber& \sum_{i}\alpha_{i} \wedge \nabla^{F}_{X_{i}} \omega = d^{F}\omega ,\  
\sum_{i}\alpha_{i}\wedge \iota_{X_{i}} {\mathcal H} = 3{\mathcal H},\  \sum_{i}\alpha_{i}
\otimes  {\mathcal L}_{X_{i}} \tau    = {\mathcal L} (\tau  ),\\
\label{elem}& \sum_{i}\tilde{r}_{i} \mathrm{ad}_{r_{i}}  =\frac{1}{2} \sum_{i,j,k}
(\mathrm{ad}_{r_{i}} (r_{j}), r_{k})^{\mathcal G} \tilde{r}_{i}\tilde{r}_{j} \tilde{r}_{k}
= 3C, 
\end{align}
where in the first equality (\ref{elem}) we used that $\nabla^{F}$ is torsion-free
and the last equality holds in the Clifford algebra $\mathrm{Cl} (\mathcal G).$  
We obtain that
\begin{align}
\nonumber&\frac{1}{2} \sum_{i} \tilde{e}_{i} \nabla^{\mathbb{S}}_{e_{i}} (\omega 
\otimes\tau \otimes s)  = 
(d^{F}\omega )\otimes \tau \otimes s  +  {\mathcal L} (\tau ) \wedge \omega \otimes s\\
\label{sum-i}& + ( \nabla^{0, {\mathcal S}_{\mathcal G}}s) \wedge \omega \otimes \tau  
 + ( {\mathcal H}\wedge \omega ) \otimes\tau \otimes s   +
\frac{1}{2}  (-1)^{|\omega | +1}  \omega \otimes\tau \otimes {C} s. 
\end{align}
From (\ref{sum-i})  and  the definition of $\slashed{d}$ we obtain 
\begin{align}
\nonumber& \slashed{d} ( \omega \otimes\tau \otimes s)  =
( d^{F}\omega ) \otimes \tau  \otimes s + {\mathcal L} (\tau ) \wedge \omega \otimes s
+(\nabla^{0, {\mathcal S}_{\mathcal G}}s )\wedge\omega \otimes \tau \\
\label{severa-rel}&  + ({\mathcal H}\wedge \omega ) \otimes\tau \otimes s  
+ \frac{1}{2} (-1)^{|\omega | +1} \omega \otimes\tau \otimes C  s
+\frac{1}{4}  T^{\nabla^{E}} (\omega \otimes\tau \otimes s). 
\end{align}
In order to conclude our proof we need to express $T^{\nabla^{E}}$ as a section of $\Lambda^{3}E$ 
and to compute $T^{\nabla^{E}}(\omega \otimes\tau \otimes s)$,
where $T^{\nabla^{E}}\in \Gamma (\Lambda^{3} E)\subset \Gamma (\mathrm{Cl}(E))$ acts 
by Clifford multiplication on $\omega \otimes\tau \otimes s$.  
Using the bases (\ref{bases-dual}), we  write 
\begin{eqnarray*}
 T^{\nabla^{E} } &=&  \frac{1}{6} \sum_{i,j,k}T^{\nabla^{E}} (e_{i}, e_{j},e_{k}) \tilde{e}_{i}\wedge \tilde{e}_{j}\wedge
\tilde{e}_{k}\\  
&=& \frac{4}{3} \sum_{i,j,k}T^{\nabla^{E}} 
 (X_{i}, X_{j}, X_{k}) \alpha_{i}\wedge \alpha_{j}\wedge \alpha_{k}\\ 
  &&+ 2 \sum_{i,j,k}T^{\nabla^{E}} (X_{i}, X_{j}, r_{k}) \alpha_{i}\wedge \alpha_{j}\wedge \tilde{r}_{k} 
+ C,
\end{eqnarray*}
where we used  that $T^{\nabla^{E}}(\alpha_{i}, \cdot , \cdot  )=0$ and $T^{\nabla^{E}}(r_i,r_j,X_k)=0$ from  relation (\ref{torsion-ne}).  
Again from relation (\ref{torsion-ne}), 
$$
 \sum_{i,j,k}T^{\nabla^{E}} (X_{i}, X_{j}, X_{k}) \alpha_{i}\wedge \alpha_{j}\wedge \alpha_{k} = -  \sum_{i,j,k}{\mathcal H}
(X_{i}, X_{j}, X_{k}) \alpha_{i}\wedge \alpha_{j}\wedge \alpha_{k} = - 6{\mathcal H}
$$
and
$$
T^{\nabla^{E}} (X_{i} , X_{j}, r_{k}) \alpha_{i}\wedge \alpha_{j}\wedge \tilde{r}_{k} = - 
(R(X_{i}, X_{j}), r_{k})^{\mathcal G} \alpha_{i}\wedge \alpha_{j}\wedge \tilde{r}_{k}.
$$
We have proven that $T^{\nabla^{E}}$, as a section of $\Lambda^{3}E$, is given by 
$$
T^{\nabla^{E}} = - 8 {\mathcal H} - 2 \sum_{i,j,k}( R(X_{i}, X_{j}), r_{k})^{\mathcal G} \alpha_{i}\wedge \alpha_{j}
\wedge \tilde{r}_{k} + C.
$$
This implies 
\begin{align}
\nonumber  &T^{\nabla^{E}} (  \omega \otimes \tau \otimes s)  = - 8 ( {\mathcal H}
\wedge \omega )\otimes\tau \otimes s \
 + (-1)^{|\omega |} \omega \otimes \tau  \otimes C  s\\
\label{torsion-clifford} & + 2 (-1)^{| \omega | +1}  \sum_{i,j,k}(R(X_{i}, X_{j}), r_{k})^{\mathcal G}
(\alpha_{i}\wedge\alpha_{j} \wedge \omega  )\otimes\tau \otimes {\tilde{r}_{k}}  s  .
\end{align}
We conclude by combining  (\ref{severa-rel}) 
with (\ref{torsion-clifford}). 
\end{proof}

\begin{exa}{\rm Consider a regular Courant algebroid $E$ 
with surjective anchor $\pi : E \rightarrow TM$ such that $\mathrm{ker}\, \pi  = (\mathrm{ker}\, \pi )^{\perp}.$  
A  dissection of $E$ defines an isomorphism between $E$ and the
Courant algebroid $TM\oplus TM^{*}$ from Example \ref{exa-fundam} (see Lemma
2.1 of \cite{AX}). 
From Theorem \ref{thm-alt}, 
the canonical Dirac generating operator acts on $\Gamma (\Lambda T^{*}M)$ 
 by  $\di  (\omega ) = d\omega -{\mathcal H} \wedge \omega$.    
We recover the  expression  of  the Dirac generating operator for  exact Courant algebroids, see e.g.\  
\cite{gualtieri-annals}.}
\end{exa}

\begin{center}
\part[Third Part: Applications]{}\label{lastpart}
\end{center}

\section{Generalized almost Hermitian structures: integrability and spinors}\label{hermitian-spinors}

In Theorem 6.4 of \cite{AX} an integrability criterion for a  generalized almost complex structure $\mathcal J$  on a regular Courant algebroid $E$ with scalar product of neutral signature $(n, n)$,  using the canonical Dirac generating operator $\di :\Gamma (\mathbb{S}) \rightarrow \Gamma (\mathbb{S})$ of $E$
and the pure spinor associated to $\mathcal J$,  was developed. For completeness of our exposition we recall it in the appendix.
As an application of the theory from the previous sections, we now 
characterize the integrability of  a generalized almost
Hermitian structure $(G, \mathcal J )$ on  $E$  in terms of suitably chosen Dirac operators
and the pure spinors associated to $\mathcal J\vert_{E_{\pm}}$, where 
$E = E_{+} \oplus  E_{-}$ is the decomposition  of $E$  determined  by $G$.  
Remark that  $\mathrm{rank}\, E_{+}$ and  $\mathrm{rank}\, E_{-}$ are even  (as $\mathcal J$ preserves $E_{+}$  and $E_{-}$). 
We  consider 
$E_{\pm}$ endowed with the (non-degenerate)  scalar products  $\langle\cdot , \cdot \rangle\vert_{E_{\pm}}$
and we 
denote by $\mathrm{Cl}(E_{\pm})$ the bundle of Clifford algebras over $(E_{\pm}, \langle \cdot , \cdot\rangle\vert_{E_{\pm}}).$ 
We assume 
that $E_{\pm}$ are oriented and 
that $\omega_{\pm}^{2} =1$,  where
$\omega_{\pm}$ are volume forms determined by positive oriented bases of $E_{\pm}$, viewed as elements of
$\mathrm{Cl}(E_{\pm}).$     
We assume that there are given irreducible  
$\mathrm{Cl}(E_{\pm})$-bundles  $S_{\pm}$ with Schur algebra $\mathbb{R}.$ 
The latter condition means that any vector  bundle
morphism $f_{\pm} : S_{\pm} \rightarrow S_{\pm}$ which commutes with the $\mathrm{Cl}(E_{\pm})$-action is a multiple
of the identity.   A quick inspection of Table 1 from \cite{LM}  (see page 29)  implies that either 
$\langle\cdot , \cdot \rangle\vert_{E_{\pm}}$ have  both neutral signature, or 
$\mathrm{dim}\, E_{+} = \mathrm{dim}\, E_{-}$ is a multiple of eight,  and 
one of $\langle \cdot , \cdot \rangle\vert_{E_{\pm}}$ is  
positive definite (while the other is negative definite).
Moreover,  $S_{\pm}$ are $\mathbb{Z}_{2}$-graded with gradation defined  by 
$S_{\pm}^{0}:= \frac{1}{2}\gamma_{ (1+ \omega_{\pm})}S_{\pm}$ and $S_{\pm}^{1} := \frac{1}{2} 
\gamma_{( 1-\omega_{\pm})}S_{\pm}$.

Since $S_{\pm}$ are irreducible  $\mathbb{Z}_{2}$-graded $\mathrm{Cl} (E_{\pm})$-bundles,  $S_{+}\hat{\otimes} S_{-}$ is an irreducible $\mathbb{Z}_{2}$-graded 
$\mathrm{Cl}(E)$-bundle, with Clifford action given by
\begin{equation}\label{clif-gr}
\gamma_{v} (s_{+} {\otimes} s_{-}) = \gamma_{v_{+}}(s_{+}) {\otimes} s_{-} +
(-1)^{|s_{+}|} s_{+}{\otimes}\gamma_{v_{-}}( s_{-}),
\end{equation}
for any $v = v_{+} + v_{-} \in E$ (see appendix for more details). 
Remark that  $\mathcal S = \mathcal S_{+} \hat{\otimes} \mathcal S_{-}$ is the canonical spinor bundle of $S_{+}\hat{\otimes} S_{-}$,
where  
$\mathcal S_{\pm}:= S_{\pm}\otimes  | \mathrm{det}\,  S_{\pm}^{*}|^{\frac{1}{r_{S_{\pm}}}}$
is the canonical spinor bundle of $S_{\pm}$ 
(and $r_{S_{\pm }}$  is the rank of  $S_{\pm }$).

\begin{lem} 
If $\eta_{\pm}\in \Gamma ((\mathcal S_{\pm})_{\mathbb{C}})$ are pure spinors associated to $\mathcal J\vert_{E_{\pm}}$,
then $\eta_{+}{\otimes} \eta_{-} \in \Gamma (\mathcal S_{\mathbb{C}} )$ is a pure spinor associated to $\mathcal J .$ 
\end{lem}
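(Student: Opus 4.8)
The plan is to characterize pure spinors through their Clifford annihilators and to exploit the maximality of the isotropic $(1,0)$-bundle, so that the bulk of the argument reduces to a single inclusion. Recall that a nonzero spinor $\eta$ in a complexified spinor bundle is pure and associated to a generalized almost complex structure $\mathcal{K}$ precisely when its Clifford annihilator $\mathrm{Ann}(\eta) := \{ v \in E_{\mathbb{C}} : \gamma_v\eta = 0\}$ equals the $(1,0)$-bundle of $\mathcal{K}$, i.e.\ the $(+i)$-eigenbundle of $\mathcal{K}$ in $E_{\mathbb{C}}$. Since $\mathcal J$ preserves the orthogonal decomposition $E = E_+ \oplus E_-$, its $(1,0)$-bundle $L$ splits as $L = L_+ \oplus L_-$, where $L_{\pm} = L \cap (E_{\pm})_{\mathbb{C}}$ is the $(1,0)$-bundle of $\mathcal J|_{E_{\pm}}$; by hypothesis $L_{\pm} = \mathrm{Ann}(\eta_{\pm})$ inside $(E_{\pm})_{\mathbb{C}}$. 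The statement is pointwise, so it suffices to argue in each fiber.

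First I would verify the inclusion $L \subseteq \mathrm{Ann}(\eta_+ \otimes \eta_-)$ together with the nonvanishing $\eta_+\otimes\eta_- \neq 0$ (which is clear, both factors being nonzero pure spinors). Using the Clifford action (\ref{clif-gr}) on the graded tensor product, for $v = v_+ \in L_+$ (so that $v_- = 0$) one finds $\gamma_v(\eta_+\otimes\eta_-) = (\gamma_{v_+}\eta_+)\otimes\eta_- = 0$ since $\gamma_{v_+}\eta_+ = 0$; and for $v = v_- \in L_-$ one finds $\gamma_v(\eta_+\otimes\eta_-) = (-1)^{|\eta_+|}\eta_+\otimes(\gamma_{v_-}\eta_-) = 0$. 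Note that the grading signs merely multiply vanishing terms, so homogeneity of the $\eta_{\pm}$ is not needed here. By linearity, $L = L_+ \oplus L_- \subseteq \mathrm{Ann}(\eta_+\otimes\eta_-)$.

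The key remaining point is to upgrade this inclusion to an equality, and here I would deliberately avoid computing the full annihilator directly. Instead I invoke the general fact that the annihilator of any nonzero spinor is isotropic: if $\gamma_v\eta = \gamma_w\eta = 0$, then polarizing the Clifford relation $e^2 = \langle e,e\rangle$ gives $2\langle v,w\rangle\eta = (\gamma_v\gamma_w + \gamma_w\gamma_v)\eta = 0$, whence $\langle v,w\rangle = 0$. Thus $\mathrm{Ann}(\eta_+\otimes\eta_-)$ is isotropic and contains $L$. Since $L$ is maximal isotropic (being the $(+i)$-eigenbundle of the orthogonal structure $\mathcal J$ with $\mathcal J^2 = -\mathrm{Id}$), maximality forces $\mathrm{Ann}(\eta_+\otimes\eta_-) = L$, so that $\eta_+\otimes\eta_-$ is a pure spinor associated to $\mathcal J$.

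I expect the only real (and minor) obstacle to be the bookkeeping in the $\mathbb{Z}_2$-graded tensor product: making sure the Clifford action formula (\ref{clif-gr}) and its grading signs are applied correctly, and confirming that $\mathcal S = \mathcal S_+ \hat{\otimes} \mathcal S_-$ inherits exactly the Clifford module structure governed by (\ref{clif-gr}), the density twists defining the canonical spinor bundles being Clifford-trivial and hence irrelevant to annihilators. A more computational alternative would establish the reverse inclusion $\mathrm{Ann}(\eta_+\otimes\eta_-) \subseteq L$ by hand, analyzing when $(\gamma_{v_+}\eta_+)\otimes\eta_- + (-1)^{|\eta_+|}\eta_+\otimes(\gamma_{v_-}\eta_-)$ vanishes via a tensor-rank argument and the homogeneity of $\eta_{\pm}$; this is precisely the computation I would rather bypass through the isotropy-plus-maximality argument above.
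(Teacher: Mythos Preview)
Your proof is correct and follows essentially the same approach as the paper: establish the inclusion $L_{\eta_+}\oplus L_{\eta_-}\subset L_{\eta_+\otimes\eta_-}$ via the Clifford action formula (\ref{clif-gr}), then conclude equality using that the annihilator of a nonzero spinor is isotropic while $L$ is maximal isotropic. The paper phrases the last step as a rank comparison (``rank at most $n$'') rather than invoking maximality, but this is the same argument.
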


\begin{proof} Let $L$ be the $(1,0)$-bundle of 
$\mathcal J$ and   
\[  (E_{\pm})_{\mathbb{C}} \cap L=  L_{\eta_\pm} := \{ v\in (E_\pm)_\mathbb{C} \mid \gamma_v \eta_\pm =0\}.\]
Both $L_{\eta_{+}} \oplus   L_{\eta_{-}}$ and  $ L_{\eta_{+}  {\otimes}\eta_{-}}$ are subbundles of
$(E_{+})_{\mathbb{C}} \oplus  (E_{-})_{\mathbb{C}} = E_{\mathbb{C}}$.
From (\ref{clif-gr}),  $L_{\eta_{+}} \oplus   L_{\eta_{-}} \subset  L_{\eta_{+}  {\otimes}\eta_{-}}$. 
Since $L_{\eta_{+} {\otimes}\eta_{-}}$  
is an isotropic subbundle of $E_{\mathbb{C}}$, its rank is at most $n$. By comparing ranks  we obtain 
$L_{\eta_{+}} \oplus   L_{\eta_{-}} =   L_{\eta_{+}  \otimes \eta_{-}}$. 
As $L= L_{\eta_{+}} \oplus  L_{\eta_{-}}$ we obtain $L = L_{\eta_{+} {\otimes}\eta_{-}}$ as needed.
\end{proof}

\begin{defn}
The pure spinors $\eta_{\pm}$ from the above lemma are called {\cmssl pure spinors associated to $(G, \mathcal J)$}.  
\end{defn}

Let $D$ be  a generalized Levi-Civita connection of $G$ and   $D^{\pm}$  the  $E$-connections on $E_{\pm}$ induced by $D$. Choose $E$-connections 
$D^{S_{\pm}}$ on $S_{\pm}$ compatible with $D^{\pm}$. 
Like in the proof of Theorem \ref{ex:thm} one can show that  $D^{S_{\pm}}$ exist and   
preserve the grading of $S_{\pm}$, i.e.\ 
$D^{S_{\pm}}_{e} \Gamma (S_{\pm}^{0})\subset \Gamma (S_{\pm}^{0})$ and 
$D^{S_{\pm}}_{e} \Gamma (S_{\pm}^{1})\subset \Gamma (S_{\pm}^{1})$, 
for any $e\in \Gamma (E).$    Since $S_{\pm}$ have Schur algebra $\mathbb{R}$,
any other $E$-connection compatible with $D^{\pm}$ 
is related to $D^{S_{\pm}}$ by  
$\tilde{D}^{S_{\pm}} = {D}^{S_{\pm}} + \lambda^{\pm} \otimes\mathrm{Id}_{S_{\pm}}$, for  
$\lambda^{\pm}\in \Gamma (E^{*})$.
We denote by $D^{\mathcal S_{\pm}}$ the  $E$-connections induced by $D^{S_{\pm}}$ on $\mathcal S_{\pm}$.
Let $D^{\mathcal S} : = D^{\mathcal S_{+}} \otimes D^{\mathcal S_{-}}$ be their tensor product
(independent of gradations),  which is an $E$-connection on 
$\mathcal S = \mathcal S_{+}\hat{\otimes}\mathcal S_{-}$ induced by the $E$-connection $D^{S_{+}}\otimes D^{S_{-}}$
on $S_{+}\hat{\otimes} S_{-}$.
Recall that we use the convention 
$v_{\pm}s_{\pm}$ for the Clifford action $\gamma_{v_{\pm}} s_{\pm}$. Similarly, to simplify notation,  we write 
$v (s_{+} {\otimes} s_{-})$ instead of 
$\gamma_{v} (s_{+} {\otimes} s_{-})$, for any $v\in E$.

\begin{lem} The  $E$-connection $D^{\mathcal S}$  
is compatible with the Clifford action of $\mathrm{Cl}(E)$ on $\mathcal S.$
\end{lem}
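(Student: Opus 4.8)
The plan is to reduce the assertion to a statement on the ungraded spinor bundle $S = S_+\hat{\otimes} S_-$ and then lift it to the canonical bundle $\mathcal S = \mathcal S_+\hat{\otimes}\mathcal S_-$. Concretely, compatibility of $D^{\mathcal S}$ with the $\mathrm{Cl}(E)$-action means
\[ D^{\mathcal S}_e(\gamma_a s) = \gamma_{D_e a}\, s + \gamma_a\, D^{\mathcal S}_e s, \qquad e\in\Gamma(E),\ a\in\Gamma(\mathrm{Cl}(E)),\ s\in\Gamma(\mathcal S). \]
Since this condition is multiplicative in $a$ (if it holds for $a$ and for $b$ it holds for $ab$, because the induced $E$-connection on $\mathrm{Cl}(E)$ is an algebra derivation) and $\mathrm{Cl}(E)$ is generated by $\Gamma(E)$, it suffices to verify it for $a=\gamma_v$ with $v\in\Gamma(E)$. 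Moreover, since the Clifford action touches only the $S_\pm$-factors of $\mathcal S_\pm = S_\pm\otimes|\mathrm{det}\,S_\pm^*|^{1/r_{S_\pm}}$, it is enough to establish compatibility of $D^S := D^{S_+}\otimes D^{S_-}$ with the Clifford action of $\mathrm{Cl}(E)$ on $S$; the passage to $\mathcal S$ is then exactly Lemma \ref{indep0:Prop} applied to the $\mathrm{Cl}(E)$-bundle $S$, whose canonical spinor bundle is $\mathcal S$ and whose induced connection on $\mathcal S$ is $D^{\mathcal S}$.

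First I would record the three ingredients that make the computation work. Because $D$ is a Levi-Civita connection of $G$ it commutes with $G^{\mathrm{end}}$ and hence preserves the eigenbundle decomposition $E = E_+\oplus E_-$; consequently, for $v = v_+ + v_-\in\Gamma(E)$ one has $(D_e v)_\pm = D^\pm_e v_\pm$. Next, each $D^{S_\pm}$ is compatible with $D^\pm$, that is $D^{S_\pm}_e(\gamma_{v_\pm}s_\pm) = \gamma_{D^\pm_e v_\pm}s_\pm + \gamma_{v_\pm}D^{S_\pm}_e s_\pm$. Finally, the $D^{S_\pm}$ preserve the $\mathbb{Z}_2$-grading of $S_\pm$, so $D^{S_\pm}_e s_\pm$ has the same parity as $s_\pm$.

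Then I would carry out the direct verification on a homogeneous simple tensor $s = s_+\otimes s_-$ using the graded Clifford action (\ref{clif-gr}) together with the tensor-product rule $D^S_e(s_+\otimes s_-) = D^{S_+}_e s_+\otimes s_- + s_+\otimes D^{S_-}_e s_-$. Expanding $D^S_e(\gamma_v(s_+\otimes s_-))$ by applying the Leibniz rule of $D^S$ to the two summands of (\ref{clif-gr}), and then rewriting $D^{S_\pm}_e(\gamma_{v_\pm}s_\pm)$ via the compatibility of $D^{S_\pm}$, produces six terms. Independently expanding $\gamma_{D_e v}(s_+\otimes s_-) + \gamma_v\, D^S_e(s_+\otimes s_-)$ --- here using $(D_e v)_\pm = D^\pm_e v_\pm$ for the first summand and the parity identity $|D^{S_+}_e s_+| = |s_+|$ to fix the Koszul sign $(-1)^{|s_+|}$ in (\ref{clif-gr}) for the second --- yields the same six terms, and a termwise comparison gives the claimed equality.

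The only delicate point, and hence the step I would treat most carefully, is the sign bookkeeping: the parities of $\gamma_{v_\pm}s_\pm$ and of $D^{S_\pm}_e s_\pm$ must be tracked through the sign $(-1)^{|s_+|}$ in (\ref{clif-gr}). It is precisely here that grading-preservation of $D^{S_\pm}$ is used, ensuring $(-1)^{|D^{S_+}_e s_+|} = (-1)^{|s_+|}$ so that the mixed terms match. Everything else is routine, and the conclusion for $\mathcal S$ then follows at once from Lemma \ref{indep0:Prop}.
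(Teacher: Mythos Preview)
Your proposal is correct and follows essentially the same approach as the paper: reduce to generators $v\in\Gamma(E)$, use the graded Clifford action formula (\ref{clif-gr}), the compatibility of $D^{S_\pm}$ (resp.\ $D^{\mathcal S_\pm}$) with $D^\pm$, the fact that $D$ preserves $E_\pm$, and grading-preservation to handle the Koszul sign. The only cosmetic difference is that you first verify compatibility on $S=S_+\hat\otimes S_-$ and then invoke Lemma~\ref{indep0:Prop} to pass to $\mathcal S$, whereas the paper works directly on $\mathcal S=\mathcal S_+\hat\otimes\mathcal S_-$ (implicitly using that $D^{\mathcal S_\pm}$ inherit compatibility with $D^\pm$ from $D^{S_\pm}$); both routes amount to the same computation.
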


\begin{proof}  
Using (\ref{clif-gr}) and that  $D^{\mathcal S_{+}}$ preserves the grading of $\mathcal S_{+}$, 
we obtain
$$
D_{e}^{\mathcal S} (v (s_{+} {\otimes} s_{-})) = 
D_{e}(v) (s_{+}\otimes  s_{-}) + v D^{\mathcal S} _{e} (s_{+} {\otimes}s_{-}),
$$
for any $e, v\in\Gamma ( E)$ and $s_{\pm}\in \Gamma (\mathcal S_{\pm})$, as needed.  
\end{proof}

In the above setting, there are three Dirac operators  to be considered:  two Dirac operators
$\slashed{D}^{\mathcal S_{\pm}}:\Gamma (\mathcal S_{\pm}) \rightarrow \Gamma (\mathcal S_{\pm})$ computed using the $E$-connections $D^{\mathcal S_{\pm}}$ of the $\mathrm{Cl}(E_{\pm})$-bundles $\mathcal S_{\pm}$,  defined by
$$
\slashed{D}^{\mathcal S_{\pm}} (s_{\pm})  := \frac{1}{2} \sum_{i}\tilde{e}_{i}^{\pm} D^{\mathcal S_{\pm}}_{e_{i}^{\pm}}s_{\pm},
$$
where $( e_{i}^{\pm})$ is a basis of $E_{\pm}$ and  
 $( \tilde{e}_{i}^{\pm})$ is the metric dual basis, i.e.\
$\tilde{e}_{i}^{\pm}$ belong to $E_{\pm}$ and 
$\langle e_{i}^{\pm}, \tilde{e}_{j}^{\pm}\rangle =\delta_{ij}.$ 
The third Dirac operator $\slashed{D}^{\mathcal S}$ is the one from Section \ref{sect-d}, computed using the  $E$-connection
$D^{\mathcal S}$ on the $\mathrm{Cl}(E)$-bundle $\mathcal S .$ 
Using that  $ ( e_{i}^{+}, e_{j}^{-})$  and 
$( \tilde{e}_{i}^{+}, \tilde{e}_{j}^{-})$
are  bases of $E$ dual with respect to $\langle\cdot , \cdot \rangle$,  we obtain that 
$$
\slashed{D}^{\mathcal S} (s_{+} {\otimes} s_{-})=  \frac{1}{2} \sum_{i}\tilde{e}_{i}^{+} D^{\mathcal S}_{e_{i}^{+}}(
s_{+} {\otimes} s_{-}) +  \frac{1}{2}  \sum_{i}\tilde{e}_{i}^{-} D^{\mathcal S}_{e_{i}^{-}}(s_{+} {\otimes} s_{-}),
$$ 
for any $s_{\pm }\in \Gamma (\mathcal S_{\pm})$.
The next lemma can be checked  from definitions.

\begin{lem}\label{rel-dirac} The operators $\slashed{D}^{\mathcal S}$, $\slashed{D}^{\mathcal S_{+}} $
and  $\slashed{D}^{\mathcal S_{-}} $ are related by
\begin{align}
\nonumber& \slashed{D}^{\mathcal S}  (s_{+}{\otimes} s_{-})  = ( \slashed{D}^{\mathcal S_{+}} s_{+}) {\otimes} s_{-} + (-1)^{|s_{+}|} s_{+} { \otimes} (\slashed{D}^{\mathcal S_{-}} s_{-})\\
 \label{comp-dirac} & +\frac{1}{2}  \sum_{i}\left( \tilde{e}_{i}^{+} s_{+}  {\otimes} (D^{\mathcal S_{-}}_{e_{i}^{+}} s_{-})
+  (-1)^{|s_{+}|} (D^{\mathcal S_{+}}_{e_{i}^{-}} s_{+}) {\otimes} \tilde{e}_{i}^{-}s_{-}\right) ,
 \end{align}
where $s_{\pm}\in \Gamma (\mathcal S_{\pm}).$ 
\end{lem}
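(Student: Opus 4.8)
The plan is to verify the stated formula (\ref{comp-dirac}) by a direct computation, starting from the definition of the three Dirac operators and the graded Clifford action (\ref{clif-gr}). The key structural fact to exploit is that the bases $(e_i^+, e_j^-)$ and $(\tilde{e}_i^+, \tilde{e}_j^-)$ of $E$ split into an $E_+$-part and an $E_-$-part, so that the single sum defining $\slashed{D}^{\mathcal S}$ decomposes into two sums, one running over a basis of $E_+$ and one over a basis of $E_-$. Concretely, I would write
\[
\slashed{D}^{\mathcal S}(s_+ \otimes s_-) = \frac{1}{2}\sum_i \tilde{e}_i^+\, D^{\mathcal S}_{e_i^+}(s_+\otimes s_-) + \frac{1}{2}\sum_i \tilde{e}_i^-\, D^{\mathcal S}_{e_i^-}(s_+\otimes s_-),
\]
and then expand each covariant derivative using the Leibniz rule for $D^{\mathcal S} = D^{\mathcal S_+}\otimes D^{\mathcal S_-}$, namely $D^{\mathcal S}_e(s_+\otimes s_-) = (D^{\mathcal S_+}_e s_+)\otimes s_- + s_+\otimes (D^{\mathcal S_-}_e s_-)$.

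First I would treat the $E_+$-sum. For $e = e_i^+$ the dual vector $\tilde{e}_i^+$ lies in $E_+$, so its Clifford action on $s_+\otimes s_-$ is governed by (\ref{clif-gr}) with $v_- = 0$, giving $\tilde{e}_i^+(s_+\otimes s_-) = (\tilde{e}_i^+ s_+)\otimes s_-$ with no sign factor. Applying this to the two Leibniz terms produces exactly $(\slashed{D}^{\mathcal S_+} s_+)\otimes s_-$ from the $D^{\mathcal S_+}$-piece, and the mixed term $\frac{1}{2}\sum_i (\tilde{e}_i^+ s_+)\otimes (D^{\mathcal S_-}_{e_i^+} s_-)$ from the $D^{\mathcal S_-}$-piece. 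Symmetrically, for the $E_-$-sum the dual vector $\tilde{e}_i^-$ lies in $E_-$, so by (\ref{clif-gr}) its action is $\tilde{e}_i^-(s_+\otimes s_-) = (-1)^{|s_+|} s_+\otimes(\tilde{e}_i^- s_-)$, now carrying the sign $(-1)^{|s_+|}$. This yields $(-1)^{|s_+|} s_+\otimes(\slashed{D}^{\mathcal S_-} s_-)$ from the $D^{\mathcal S_-}$-piece and the remaining mixed term $\frac{1}{2}(-1)^{|s_+|}\sum_i (D^{\mathcal S_+}_{e_i^-} s_+)\otimes(\tilde{e}_i^- s_-)$ from the $D^{\mathcal S_+}$-piece. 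Collecting the four contributions gives precisely (\ref{comp-dirac}).

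The only subtle points, which I would flag but not belabor, are the bookkeeping of the Koszul sign $(-1)^{|s_+|}$ coming from the $\mathbb{Z}_2$-graded tensor product, and the fact that the $E$-connections $D^{\mathcal S_\pm}$ preserve the gradings of $S_\pm$ (established earlier in the section), which ensures that $|s_+|$ is unambiguous inside the sums and that the sign rule (\ref{clif-gr}) may be applied consistently term by term. Since this is stated in the paper as a lemma that ``can be checked from definitions,'' I do not expect any genuine obstacle; the main thing to get right is the asymmetry between the two sign factors (none for the $E_+$-terms, $(-1)^{|s_+|}$ for the $E_-$-terms), which is dictated entirely by the placement of the Clifford-acting factor relative to $s_+$ in the graded action.
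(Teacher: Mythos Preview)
Your proposal is correct and is exactly what the paper intends: it states only that the lemma ``can be checked from definitions'' and gives no further argument. Your direct computation---splitting the Dirac sum over the $E_+$- and $E_-$-parts of the dual bases, applying the Leibniz rule for $D^{\mathcal S}=D^{\mathcal S_+}\otimes D^{\mathcal S_-}$, and tracking the Koszul sign via (\ref{clif-gr}) together with the fact that $D^{\mathcal S_\pm}$ preserves the grading---is precisely that check.
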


In the next theorem 
we use the notation $D^{\mathcal S_{\pm}}_{e_{\mp}}(\eta_{\pm}) \equiv \eta_{\pm}$ 
(for $e_{\mp}\in E_{\mp}$)  if 
$D^{\mathcal S_{\pm}}_{e_{\mp}}(\eta_{\pm})  = f \eta_{\pm}$
for a function $f = f(e_{\mp})$
which depends on $e_{\mp}$.   By $\slashed{D}^{\mathcal S_{\pm}} \eta_{\pm} \equiv  \eta_{\pm}$ we mean 
$\slashed{D}^{\mathcal S_{\pm}} \eta_{\pm} \in \gamma_{(E_{\pm})_{\mathbb{C}}} (\eta_{\pm})$.

\begin{thm}\label{spinors-gk} In the above setting,  the generalized almost Hermitian structure $(G, \mathcal J)$ on the regular Courant algebroid $E$ with scalar product of neutral signature is generalized 
K\"{a}hler if and only if there is a Levi-Civita connection $D$ of $G$ such that 
\begin{equation}\label{conditions}
\slashed{D}^{\mathcal S_{\pm}} \eta_{\pm}\equiv \eta_{\pm},\ D^{\mathcal S_{\pm}}_{e_{\mp}}\eta_{\pm} \equiv \eta_{\pm}
\end{equation} 
for any  $e_{\mp}\in \Gamma (E_{\mp})$.  Here $E = E_{+} \oplus  E_{-}$ is the decomposition determined by $G$ and  
$\eta_{\pm} \in \Gamma (\mathcal S_{\pm})$ are  pure spinors associated to $(G, \mathcal J).$ 
\end{thm}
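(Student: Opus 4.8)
The plan is to reduce the spinorial conditions \eqref{conditions} to the integrability of $\mathcal{J}_1 = \mathcal{J}$ and $\mathcal{J}_2 = G^{\mathrm{end}}\mathcal{J}$, using the characterization of generalized K\"ahler structures in terms of the Dorfman bracket (namely, that $(G,\mathcal{J})$ is generalized K\"ahler iff $L \cap (E_\pm)_\mathbb{C}$ are closed under the bracket, as recalled just before Theorem~\ref{integr-kahler}). The key observation is that a pure spinor $\eta_\pm$ annihilated by $L_{\eta_\pm} = L \cap (E_\pm)_\mathbb{C}$ encodes the $(1,0)$-bundle of $\mathcal{J}|_{E_\pm}$, and the two conditions in \eqref{conditions} should be equivalent, via the Dirac generating operator machinery, to the closedness of these bundles under the bracket.

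**Step by step.**
First I would fix a Levi-Civita connection $D$ of $G$ and the induced connections $D^{\pm}$, $D^{\mathcal{S}_\pm}$ as set up before the theorem. The forward direction starts from the assumption that $(G,\mathcal{J})$ is generalized K\"ahler. By Theorem~\ref{integr-kahler} there exists a Levi-Civita connection $\tilde D$ of $G$ with $\tilde D \mathcal{J} = 0$; I would work with this $\tilde D$, so that $\tilde D$ preserves both $E_\pm$ and the $\pm i$-eigenbundles of $\mathcal{J}$ restricted to them. The crucial computation is to relate $\slashed{D}^{\mathcal{S}_\pm}\eta_\pm$ and $D^{\mathcal{S}_\pm}_{e_\mp}\eta_\pm$ to Clifford multiplication by elements of $L_{\eta_\pm}$, using that $D^{\mathcal{S}_\pm}$ is compatible with the Clifford action and that $\tilde D$ preserves $L \cap (E_\pm)_\mathbb{C}$. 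Concretely, because $\tilde D_{e}\eta_\pm$ lies (up to scalar) in the span of $\eta_\pm$ when $\tilde D$ preserves the pure-spinor line, each term $\tilde e_i^\pm D^{\mathcal{S}_\pm}_{e_i^\pm}\eta_\pm$ in $\slashed{D}^{\mathcal{S}_\pm}\eta_\pm$ reduces to Clifford multiplication by a vector in $(E_\pm)_\mathbb{C}$, giving $\slashed{D}^{\mathcal{S}_\pm}\eta_\pm \equiv \eta_\pm$; similarly the mixed derivatives $D^{\mathcal{S}_\pm}_{e_\mp}\eta_\pm$ preserve the spinor line because $\tilde D$ preserves the relevant subbundles. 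This establishes \eqref{conditions}.

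**The converse.**
For the converse I would assume \eqref{conditions} holds for some Levi-Civita connection $D$ and recover integrability. The plan is to translate the condition $D^{\mathcal{S}_\pm}_{e_\mp}\eta_\pm \equiv \eta_\pm$ into the statement that $D_{e_\mp}$ preserves the isotropic bundle $L_{\eta_\pm} = L\cap(E_\pm)_\mathbb{C}$, using that $\eta_\pm$ determines $L_{\eta_\pm}$ as its annihilator and that compatibility of $D^{\mathcal{S}_\pm}$ with the Clifford action converts the spinorial preservation into preservation of the annihilator. The condition $\slashed{D}^{\mathcal{S}_\pm}\eta_\pm \equiv \eta_\pm$ should then, combined with the torsion-free property of $D$ (which lets one convert symmetrized covariant derivatives into Dorfman brackets, exactly as in the displayed computation in the proof of Theorem~\ref{integr-kahler} and in Lemma~\ref{add}), yield that $L_{\eta_\pm}$ is closed under $[\cdot,\cdot]$. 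Since $L = L_{\eta_+}\oplus L_{\eta_-}$ (as established in the lemma preceding the definition of pure spinors associated to $(G,\mathcal{J})$), closedness of both $L\cap(E_\pm)_\mathbb{C}$ is precisely the generalized K\"ahler condition.

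**Main obstacle.**
The delicate point will be the converse translation of the spinorial equation $\slashed{D}^{\mathcal{S}_\pm}\eta_\pm \equiv \eta_\pm$ into Dorfman-bracket closedness of $L_{\eta_\pm}$. The Dirac operator $\slashed{D}^{\mathcal{S}_\pm}$ symmetrizes the covariant derivative over a frame, so one only directly controls the \emph{symmetric} part of $D_{e}v$ for $e,v \in L_{\eta_\pm}$, whereas the bracket $[e,v]$ involves the full expression $D_e v - D_v e + (Dv)^*e$. I would handle this by pairing the relevant identities against a suitable test section and using the torsion-free identity \eqref{courant-br} together with the fact that $D$ preserves $E_\pm$, so that the bracket of two sections of $L_{\eta_\pm}$ can be expressed purely in terms of covariant derivatives that are then controlled by the Dirac-operator condition and the mixed condition simultaneously. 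Verifying that these two conditions together suffice—rather than just one of them—and that the decomposition into $E_+$ and $E_-$ pieces behaves correctly under the $\mathbb{Z}_2$-graded Clifford action \eqref{clif-gr}, is where the real work lies.
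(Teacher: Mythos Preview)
Your forward direction (generalized K\"ahler $\Rightarrow$ conditions~\eqref{conditions}) is essentially the paper's argument: take a Levi-Civita connection $D$ with $D\mathcal J=0$ via Theorem~\ref{integr-kahler}, observe that $D$ preserves $L_{\eta_\pm}$, and deduce from the compatibility relation $D^{\mathcal S_\pm}_e(v\eta_\pm)=(D^\pm_ev)\eta_\pm+vD^{\mathcal S_\pm}_e\eta_\pm$ that $D^{\mathcal S_\pm}_e\eta_\pm$ is a scalar multiple of $\eta_\pm$ for every $e$, which immediately gives both conditions.

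Your converse, however, takes a genuinely different route from the paper. The paper does \emph{not} attempt to verify directly that $L_{\eta_\pm}$ is bracket-closed. Instead it assembles $\eta=\eta_+\otimes\eta_-$, uses Lemma~\ref{rel-dirac} (the decomposition of $\slashed D^{\mathcal S}$ in terms of $\slashed D^{\mathcal S_\pm}$ and the mixed derivatives) to conclude from~\eqref{conditions} that $\slashed D^{\mathcal S}\eta\in\gamma_{E_{\mathbb C}}\eta$, and then invokes the Alekseev--Xu criterion (Corollary~\ref{proj-closed-thm}) to obtain integrability of $\mathcal J$; the same argument applied to $\eta_+\otimes\bar\eta_-$ gives integrability of $\mathcal J_2$. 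This reduces everything to an already-established black box.

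Your direct approach is viable but your description of the obstacle is off. The Dirac operator does not ``symmetrize'' anything; rather, the same computation as in Lemma~\ref{c1-2}, restricted to $E_+$ and using $T^D=0$, gives
\[
[[\slashed D^{\mathcal S_+},\gamma_v],\gamma_w]=\gamma_{[v,w]_+}\qquad (v,w\in\Gamma(E_+)).
\]
Running the Alekseev--Xu argument (Theorem~\ref{proj-closed-main}) with this identity shows that $\slashed D^{\mathcal S_+}\eta_+\equiv\eta_+$ forces $[v,w]_+\in L_{\eta_+}$ for $v,w\in L_{\eta_+}$. Separately, the mixed condition $D^{\mathcal S_+}_{e_-}\eta_+\equiv\eta_+$ implies $D_{e_-}L_{\eta_+}\subset L_{\eta_+}$, and since $\langle[v,w],e_-\rangle=\langle v,D_{e_-}w\rangle$ (from $T^D=0$ and $DG=0$) together with isotropy of $L_{\eta_+}$ this yields $[v,w]_-=0$. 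Combined, $[v,w]\in L_{\eta_+}$. So the two conditions split cleanly into controlling $[v,w]_+$ and $[v,w]_-$; there is no missing ``symmetric part''. What your route buys is a self-contained argument that never leaves $E_\pm$; what the paper's route buys is brevity, since it simply invokes the full-algebroid criterion already proved in the appendix.
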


\begin{proof} 
Let $D$ be a Levi-Civita connection of $G$.  From  (\ref{inv:eq}), 
$\di = \slashed{D}^{\mathbb{S}} +\frac{1}{4} \gamma_{T} = \slashed{D}^{\mathbb{S}}$ since $D$ is torsion-free
and, using   $D^{\mathbb{S}} = D^{\mathcal S}\otimes D^{L}$,  
$$\slashed{D}^{\mathbb{S}} (s\otimes l) = \slashed{D}^{\mathcal S} (s) \otimes l +\frac{1}{2}\sum_{i}\tilde{e}_{i} s \otimes D^{L}_{e_{i}} l,\
\forall s\in \Gamma (\mathcal S ),\ \forall  l\in \Gamma (L),
$$
where $(e_{i})$ is a basis of $E$ and $(\tilde{e}_{i})$ the dual
basis with respect to $\langle\cdot , \cdot \rangle .$   
We obtain that  a pure spinor $\eta \otimes l$
from $\mathbb{S}=\mathcal S \otimes L$ is projectively closed if and only if
$\slashed{D}^{\mathcal S}\eta \in \gamma_{E_{\mathbb{C}}} \eta$.\

Assume now that relations (\ref{conditions}) hold, 
with $\slashed{D}^{\mathcal S_{\pm}}$ and $D^{\mathcal S_{\pm}}$ computed starting with  $D$. From (\ref{comp-dirac}),
we deduce that  the pure spinor 
$\eta = \eta_{+}{\otimes}\eta_{-}$ associated to $\mathcal J$ satisfies  
$\slashed{D}^{\mathcal S} \eta \in \gamma_{E_{\mathbb{C}}}\eta $, i.e.\ 
$\mathcal J$ is integrable (see Corollary \ref{proj-closed-thm}). 
In a similar way, we show  that  
$\mathcal J_{2} =G^{\mathrm{end}}\mathcal J$   is integrable. For this,
we  use the fact   $\eta_{+}{\otimes}\bar{\eta}_{-}$ is a pure spinor associated to $\mathcal J_{2}$ 
and $\slashed{D}^{\mathcal S_{-}} \bar{\eta}_{-}  \in 
\gamma_{(E_{-})_{\mathbb{C}}}\bar{\eta}_{-}$
(because $\slashed{D}^{\mathcal S_{-}}: \Gamma ((\mathcal {S}_{-})_{\mathbb{C}}) \rightarrow \Gamma ((\mathcal{S}_{-})_{\mathbb{C}})$ 
 is the complex linear extension of its restriction to $\mathcal S_{-}$ and hence commutes with the natural conjugation of $(\mathcal S_{-})_{\mathbb{C}}$).
We obtain that $(G, \mathcal J)$ is generalized K\"{a}hler. 

Conversely, assume  now that $\mathcal J$ is integrable
and let $D$ be a Levi-Civita connection of $G$,  with  $D \mathcal J =0$ (which exists from
Theorem \ref{integr-kahler}). 
The relation 
\begin{equation}\label{a-r}
D^{\mathcal S_{+}}_{e} (v \eta_{+}) = (D^{+}_{e} v)\eta_{+} + v D^{\mathcal S_{+}}_{e} \eta_{+},\ e\in \Gamma (E), 
\end{equation}
together with the fact that $D$ preserves $L_{\eta_{+}} = L\cap (E_{+})_{\mathbb{C}}$ imply 
that 
$v D^{\mathcal S_{+}}_{e} \eta_{+} =0$, for any $v\in \Gamma (L_{\eta_{+}})$, i.e. 
\begin{equation}\label{any}
D^{\mathcal S_{+} }_{e} \eta_{+} = \lambda  (e)\eta_{+},\ \forall e\in \Gamma (E),  
\end{equation}
for $\lambda \in \Gamma (E^{*}).$ 
Relation (\ref{any}) with $e:= e_{-}\in \Gamma (E_{-})$  implies that   $D^{\mathcal S_{+} }_{e_{-}} \eta_{+} \equiv\eta_{+}$. 
On the other hand,  letting  $e:= e_{i}^{+}$ in (\ref{any}), applying the Clifford action of 
$\tilde{e}^{+}_{i}$  and summing over $i$ we obtain 
that  $\slashed{D}^{\mathcal S_{+}} \eta_{+} 
= \lambda_{+}\eta_{+}$ where $\lambda_{+}:=\lambda\vert_{E_{+}}$ is a section of $E_{+}^{*}\cong E_{+}$
(identified using $\langle\cdot , \cdot \rangle\vert_{E_{+}}$)  
and $\lambda_{+} \eta_{+}=\gamma_{\lambda_{+}}\eta_{+} $ is the Clifford action of $\lambda_{+}\in E_{+}$ on $\eta_{+}$. 
We proved  $\slashed{D}^{\mathcal S_{+}} \eta_{+}\equiv \eta_{+}$ as needed. 
The same argument with $\mathcal S_{+}$ and $\mathcal S_{-}$ interchanged 
shows that all relations (\ref{conditions}) are satisfied. 
\end{proof}

\begin{lem}\label{indep-cond} 
Relations (\ref{conditions}) are independent of the choice of Levi-Civita connection.
\end{lem}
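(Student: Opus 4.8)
The plan is to show that if $D$ and $D'$ are two Levi-Civita connections of $G$, then the operators entering the relations (\ref{conditions}) either do not change at all or change only by a term that lands inside the allowed slack, so that (\ref{conditions}) holds for $D$ if and only if it holds for $D'$. Write $\Sigma := D' - D$. Since both connections are generalized (hence preserve $\langle \cdot ,\cdot \rangle$) and satisfy $DG = D'G = 0$, each $\Sigma_e$ lies in $\mathfrak{so}(E)$ and commutes with $G^{\mathrm{end}}$; thus $\Sigma \in \Gamma(E^*\otimes (\mathfrak{so}(E_+)\oplus \mathfrak{so}(E_-)))$ and we write $\Sigma_e = \Sigma^+_e\oplus \Sigma^-_e$ with $\Sigma^{\pm}_e\in \mathfrak{so}(E_{\pm})$. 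Because $D$ and $D'$ are both torsion-free, Proposition \ref{connProp} (equivalently (\ref{formula-torsion})) gives $\partial_G\Sigma = 0$, i.e.\ the cyclic sum $\langle \Sigma_u v,w\rangle + \langle \Sigma_w u,v\rangle + \langle \Sigma_v w,u\rangle$ vanishes for all $u,v,w\in E$.

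From $\partial_G\Sigma = 0$ I would extract two structural facts by feeding in arguments adapted to the splitting $E = E_+\oplus E_-$ and using that $\Sigma_e$ preserves $E_{\pm}$. First, evaluating on $u = e_{\mp}\in E_{\mp}$ and $v,w\in E_{\pm}$, the last two cyclic terms vanish (each pairs a vector of $E_{\pm}$ with one of $E_{\mp}$) and one is left with $\langle \Sigma^{\pm}_{e_{\mp}}v,w\rangle = 0$ for all $v,w\in E_{\pm}$, hence $\Sigma^{\pm}_{e_{\mp}} = 0$ for $e_{\mp}\in E_{\mp}$. Second, evaluating on $u,v,w\in E_{\pm}$ shows that the $3$-form $\alpha^{\pm}(u,v,w) := \sum_{(u,v,w)\,\mathrm{cyclic}}\langle \Sigma^{\pm}_u v,w\rangle$ on $E_{\pm}$ vanishes identically.

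Finally I would transport these facts to the spinor level. Changing $D$ to $D'$ changes the induced $E$-connections on $E_{\pm}$ by $\Sigma^{\pm}$, so by Proposition \ref{trafo:prop}(ii), applied to $E_{\pm}$ and $\mathrm{Cl}(E_{\pm})$, the compatible connection on $S_{\pm}$ changes by $-\frac12\Sigma^{\pm}$ (Clifford action); since this action is trace-free (Remark \ref{traceRemark}), the induced connection on the canonical spinor bundle changes in the same way, $(D^{\mathcal S_{\pm}})' = D^{\mathcal S_{\pm}} - \frac12\Sigma^{\pm}$. The first relation in (\ref{conditions}) involves $D^{\mathcal S_{\pm}}_{e_{\mp}}$, which is unchanged because $\Sigma^{\pm}_{e_{\mp}} = 0$, so it is trivially independent of the choice. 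For the Dirac operators, the $E_{\pm}$-version of Proposition \ref{trafo:prop}(iii) gives $(\slashed{D}^{\mathcal S_{\pm}})' = \slashed{D}^{\mathcal S_{\pm}} - \frac14\g_{\alpha^{\pm}} - \frac14\g_{v_{\Sigma^{\pm}}}$, where $v_{\Sigma^{\pm}} = \mathrm{tr}_{\langle \cdot ,\cdot \rangle}\Sigma^{\pm}\in E_{\pm}$. Here $\alpha^{\pm} = 0$ by the second structural fact, while $\g_{v_{\Sigma^{\pm}}}\eta_{\pm}\in \g_{(E_{\pm})_{\mathbb C}}\eta_{\pm}$ automatically since $v_{\Sigma^{\pm}}\in E_{\pm}$. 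Hence $(\slashed{D}^{\mathcal S_{\pm}})'\eta_{\pm}\in \slashed{D}^{\mathcal S_{\pm}}\eta_{\pm} + \g_{(E_{\pm})_{\mathbb C}}\eta_{\pm}$, so the condition $\slashed{D}^{\mathcal S_{\pm}}\eta_{\pm}\equiv \eta_{\pm}$ is preserved; interchanging the roles of $D$ and $D'$ yields the asserted equivalence. The main obstacle is the structural analysis of $\Sigma$: the vanishings $\Sigma^{\pm}_{e_{\mp}} = 0$ and $\alpha^{\pm} = 0$ are precisely what is needed, and they hinge on combining $\partial_G\Sigma = 0$ with the fact that $\Sigma$ respects the decomposition $E = E_+\oplus E_-$; once these are in hand, the fact that the remaining change $\g_{v_{\Sigma^{\pm}}}$ is the Clifford action of a vector of $E_{\pm}$ makes it harmless for the conditions.
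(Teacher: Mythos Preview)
Your proof is correct and follows essentially the same route as the paper's: both reduce the question to the transformation law of Proposition~\ref{trafo:prop} applied to $E_{\pm}$, and both use torsion-freeness to kill the $3$-form term $\alpha^{\pm}$ so that only the harmless Clifford multiplication by $v_{\Sigma^{\pm}}\in E_{\pm}$ survives. The one presentational difference is in how the mixed vanishing $\Sigma^{\pm}_{e_{\mp}}=0$ is obtained: you extract it algebraically from $\partial_G\Sigma=0$ by plugging in $u\in E_{\mp}$, $v,w\in E_{\pm}$, whereas the paper computes the explicit formula $D_{e_-}v_+=[e_-,v_+]_+$ (using $T^D=0$ and $DG=0$), which shows directly that $D^{+}_{e_-}$ is determined by the Courant bracket and hence independent of $D$. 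These are two ways of packaging the same cancellation, and either suffices.
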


\begin{proof} Let $D$ be a Levi-Civita connection. Since $D$ is torsion-free and preserves $E_{\pm}$, 
for any $e_{-}\in\Gamma (E_{-})$ and $v_{+},w_{+}\in \Gamma (E_{+})$,  
\begin{align*}
\nonumber& 0 = T^{D}(e_{-},v_{+},w_{+}) = \langle D_{e_{-}}v_{+} - D_{v_{+}}e_{-} - [e_{-}, v_{+}], w_{+}
\rangle + \langle v_{+}, D_{w_{+}} e_{-}\rangle\\
\nonumber& =  \langle  D_{e_{-}} v_{+} - [e_{-}, v_{+}], w_{+}\rangle,
\end{align*}
which implies  $D_{e_{-}} v_{+}= [e_{-}, v_{+}]_{+}$ (see also Lemma 3.2 of \cite{garcia}).  In particular, $D^{+}_{e_{-}} = D_{e_-}|_{\Gamma (E_+)}$ is independent of the choice of
Levi-Civita connection $D$, for any $e_{-} \in \Gamma (E_{-}).$ 
We obtain that any two $E$-connections 
$D^{\mathcal S_{+}}$ and $\tilde{D}^{{\mathcal S}_{+}}$  
on $\mathcal S_{+}$, compatible with any two  Levi-Civita connections of $G$, 
satisfy   $\tilde{D}^{\mathcal S_{+}}_{e_{-}} = D^{\mathcal S_{+}}_{e_{-}} + \lambda_{-}(e_{-}) \mathrm{Id}_{\mathcal S_{+}}$, for $\lambda_{-}\in E_{-}^{*}.$    
This  implies that the condition 
$D^{\mathcal S_{+}}_{e_{-}} \eta_{+}\equiv \eta_{+}$  is independent of the choice of $D$.
In a similar way we prove the statement for $D^{\mathcal S_{-}}_{e_{+}} \eta_{-}\equiv \eta_{-}$.\

Next, consider two Levi-Civita connections $D$ and $\tilde{D} = D+A$ of $G$.
The arguments from Propositions  \ref{trafo:prop} and   \ref{indep0:Prop} 
 show that
$D^{\mathcal S_{+}}$ (hence, also $\slashed{D}^{\mathcal S_{+}}$) depends only
on $D^{+}$ and 
\begin{equation}\label{role-divergence}
\tilde{\slashed{D}}^{\mathcal S_{+}} = \slashed{D}^{\mathcal S_{+}}-\frac{1}{4} \gamma_{\alpha^{+}} -\frac{1}{4} \gamma_{v_{A^{+}}}
\end{equation}
where $\alpha^{+}\in \Gamma (\Lambda^{3} E_{+}^{*})$ is given by  
$\alpha^{+}(u, v, w) := \sum_{(u,v,w)\;  \mathrm{cyclic}} \langle A_{u}v, w\rangle ,\ u, v, w\in E_{+}$
and $v_{A^{+}} := \sum_{i=1}^{n} A_{e^{+}_{i}} \tilde{e}^{+}_{i} \in \Gamma (E_{+})$,
 where $( e_{i}^{+})$ and $ (\tilde{e}_{i}^{+})$ are $\langle \cdot , \cdot \rangle$-dual bases of
$E_{+}.$  As $D$ and $\tilde{D}$ are torsion-free, $\alpha^{+}=0$ and we obtain that 
$$
\tilde{\slashed{D}}^{\mathcal S_{+}} \eta_{+}= \slashed{D}^{\mathcal S_{+}}\eta_{+}-\frac{1}{4} v_{A^{+}}\eta_{+}.
$$ 
This  implies that  the condition $\slashed{D}^{\mathcal S_{+}}\eta_{+}\equiv \eta_{+}$ is independent of the choice of $D$.
In a similar way we prove the statement for $\slashed{D}^{\mathcal S_{-}}\eta_{-}\equiv \eta_{-}$.
\end{proof}

\begin{rem}{\rm  The Dirac  operators 
$\slashed{D}^{\mathcal S_{\pm}}$ are in fact  independent of the Levi-Civita connection of $G$, as long as we fix
the divergence of $D$. The  statement for $\slashed{D}^{\mathcal S_{+}}$
follows from relation  (\ref{role-divergence}), by noticing that if $\mathrm{div}_{D} = \mathrm{div}_{\tilde{D}}$ then
$v_{A^{+}} =0$ (a similar argument holds for 
 $\slashed{D}^{\mathcal S_{-}}$).}
\end{rem}

From Theorem \ref{spinors-gk} combined with Lemma 
\ref{indep-cond} we  obtain the following 
characterization for the integrability of generalized almost hyper-Hermitan structures.

\begin{cor} \label{hyperKCor}   Let $E$ be a regular Courant algebroid with scalar product $\langle \cdot , \cdot \rangle $ of neutral signature.
Let   $E = E_{+}\oplus E_{-}$ be the decomposition of $E$ determined
by a generalized metric $G$. Assume that   $\langle \cdot , \cdot \rangle\vert_{E_{\pm}}$ are either both of neutral signature,
or  $\mathrm{rank}\, E_{+} = \mathrm{rank}\, E_{-}$ is a multiple of eight
and one of $\langle \cdot , \cdot \rangle\vert_{E_{\pm}}$ is positive definite (and the other negative
definite). A generalized almost hyper-Hermitian structure $(G, \mathcal J_{1}, \mathcal J_{2}, \mathcal J_{3})$ 
is generalized hyper-K\"{a}hler if and only if conditions
(\ref{conditions}) from Theorem \ref{spinors-gk} hold 
for a Levi-Civita connection  $D$ of $G$ and 
each of the pure spinors $\eta^{i}_{\pm}$ associated to $(G, \mathcal J_{i})$, $i=1,2,3$. The conditions are independent of the choice of $D$. 
\end{cor}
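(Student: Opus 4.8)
The plan is to reduce the hyper-Hermitian statement to three separate applications of Theorem \ref{spinors-gk}, one for each generalized almost Hermitian structure $(G,\mathcal J_i)$, $i=1,2,3$, and then to invoke the connection-independence established in Lemma \ref{indep-cond} to replace the three a priori distinct Levi-Civita connections by one common connection $D$.

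First I would observe that, by definition, a generalized almost hyper-Hermitian structure $(G,\mathcal J_1,\mathcal J_2,\mathcal J_3)$ is generalized hyper-K\"ahler precisely when each pair $(G,\mathcal J_i)$ is a generalized K\"ahler structure. I would then check that the standing hypotheses of Theorem \ref{spinors-gk} are met in the present setting: the assumptions on the signatures of $\langle \cdot ,\cdot \rangle|_{E_\pm}$ are exactly those under which the irreducible $\mathbb{Z}_2$-graded bundles $S_\pm$ with Schur algebra $\mathbb{R}$ exist, and for each $i$ the pure spinors $\eta^i_\pm \in \Gamma((\mathcal S_\pm)_{\mathbb{C}})$ associated to $(G,\mathcal J_i)$ are defined. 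Since each $\mathcal J_i$ preserves $E_\pm$ and the triple restricts to an $\mathbb{H}$-structure there, $\mathrm{rank}\, E_\pm$ is a multiple of four, so the orientation and volume-form assumptions $\omega_\pm^2=1$ are unproblematic; in particular the entire apparatus of Theorem \ref{spinors-gk} applies verbatim to each index $i$.

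Next I would argue the equivalence in both directions. For the forward direction, assume $(G,\mathcal J_1,\mathcal J_2,\mathcal J_3)$ is generalized hyper-K\"ahler. Then each $(G,\mathcal J_i)$ is generalized K\"ahler, so by Theorem \ref{spinors-gk} there is a Levi-Civita connection $D_i$ of $G$ for which conditions (\ref{conditions}) hold for $\eta^i_\pm$. By Lemma \ref{indep-cond} these conditions do not depend on the chosen Levi-Civita connection, hence they hold for \emph{every} Levi-Civita connection of $G$; fixing any single such $D$, all three families of conditions hold simultaneously for $\eta^i_\pm$, $i=1,2,3$. Conversely, if there is a Levi-Civita connection $D$ of $G$ such that conditions (\ref{conditions}) hold for each $\eta^i_\pm$, then Theorem \ref{spinors-gk}, applied with $\mathcal J=\mathcal J_i$, yields that each $(G,\mathcal J_i)$ is generalized K\"ahler, whence $(G,\mathcal J_1,\mathcal J_2,\mathcal J_3)$ is generalized hyper-K\"ahler. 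The final assertion, that the conditions are independent of the choice of $D$, is then immediate from Lemma \ref{indep-cond}.

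The deduction is essentially formal once Theorem \ref{spinors-gk} and Lemma \ref{indep-cond} are in hand, and I do not anticipate any computational obstacle beyond confirming that the signature hypotheses are precisely the running assumptions of Theorem \ref{spinors-gk}. The one genuine subtlety — and exactly the point that Lemma \ref{indep-cond} is designed to remove — is that a naive application of Theorem \ref{spinors-gk} to the three structures $\mathcal J_1,\mathcal J_2,\mathcal J_3$ produces connections $D_1,D_2,D_3$ that need not coincide; it is the connection-independence of the spinorial conditions that licenses replacing them by a common $D$, so that the criterion can be phrased in terms of a single Levi-Civita connection.
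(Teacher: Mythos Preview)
Your proposal is correct and matches the paper's own argument, which simply states that the corollary follows from Theorem \ref{spinors-gk} combined with Lemma \ref{indep-cond}. You have correctly identified the one nontrivial point, namely that Lemma \ref{indep-cond} is what allows the three a priori different Levi-Civita connections $D_1,D_2,D_3$ produced by Theorem \ref{spinors-gk} to be replaced by a single $D$.
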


\section{Appendix }\label{complex-spinors}

\subsection{$\mathbb{Z}_{2}$-graded algebras and Clifford algebras}

Recall that if  $A= A^{0}\oplus  A^{1}$ and $B= B^{0}\oplus  B^{1}$
are $\mathbb{Z}_{2}$-graded vector spaces, then the tensor product $A\otimes B$ inherits a $\mathbb{Z}_{2}$-gradation
$$
(A{\otimes} B)^{0}:= A^{0}{\otimes} B^{0} + A^{1}{\otimes}
B^{1},\  (A{\otimes} B)^{1}:= A^{0}{\otimes} B^{1} + A^{1}{\otimes} B^{0}.
$$ 
We denote by $A\hat{\otimes} B$ the vector space $A\otimes B$ together with this gradation.
If, moreover, $A$ and $B$ are $\mathbb{Z}_{2}$-graded algebras, then $A\hat{\otimes} B$ inherits the structure of a
$\mathbb{Z}_{2}$-graded algebra with multiplication on homogeneous elements defined by 
$$
(a\otimes b) (\tilde{a}\otimes \tilde{b}) := (-1)^{|b| |\tilde{a}|} a\tilde{a}\otimes b\tilde{b},
$$
where $|a|, |\tilde{b}|\in\{ 0,1\}$ are the degrees  of $a$ and $\tilde{b}.$

We say that a $\mathbb{Z}_{2}$-graded vector space $S= S^{0} \oplus S^{1}$ is a $\mathbb{Z}_{2}$-graded $A$-module if it is a representation space for $A$ and the action of $A$ on $S$ is compatible with
gradations,  i.e.  $A^{i} \cdot S^{j} \subset S^{i+j}$ for any $i, j\in \mathbb{Z}_{2}.$

Finally, if $S$ and $S'$ are $\mathbb{Z}_{2}$-graded $A$- and $B$-modules respectively, then their graded tensor product
 $S\hat{\otimes} S'$ is a $\mathbb{Z}_{2}$-graded
$A\hat{\otimes} B$-module with action 
given by
$$
\gamma_{a {\otimes}b} (s{\otimes} s' ) := (-1)^{|b||s|} a(s) {\otimes} b(s'),
$$
where   $s\in S$, $s'\in S'$, $a\in A$, $b\in B$,   $|b|:= \mathrm{deg}(b)$,  
$|s|:= \mathrm{deg}(s)$  are  the degrees of the homogeneous elements  $b$ and $s$.\

We   apply these facts to Clifford algebras and their representations. 
Assume  that $(V_{+}, q_{+})$ and $(V_{-}, q_{-})$ are two vector spaces with scalar products and let
$(V:= V_{+} \oplus V_{-}, q:= q_{+} + q_{-})$ be  their direct sum.  As $\mathrm{Cl}(V_{\pm})$ are $\mathbb{Z}_{2}$-graded algebras 
we can consider 
$\mathrm{Cl}(V_{+})\hat{\otimes}\mathrm{Cl}(V_{-})$  which is   a $\mathbb{Z}_{2}$-graded algebra, and as such is  isomorphic 
to $\mathrm{Cl}(V)$ (for the latter statement see e.g.\ Chapter I of  \cite{LM}).
The isomorphism  between $\mathrm{Cl}(V)$ and $\mathrm{Cl}(V_{+})\hat{\otimes}\mathrm{Cl}(V_{-})$ 
is obtained by 
extending the map
$V\rightarrow \mathrm{Cl}(V_{+}) \hat{\otimes}\mathrm{Cl}( V_{-})$
which  assigns to  any $v= v_{+}+ v_{-}\in V_{+} \oplus V_{-}$ the vector   $v_{+}{\otimes}1  +1{\otimes}v_{2}$.\

Let $S_{\pm}$ be   $\mathbb{Z}_{2}$-graded $\mathrm{Cl}(V_{\pm})$-modules.
From above,  the graded tensor product 
$S_{+}\hat{\otimes} S_{-}$ is a  $\mathbb{Z}_{2}$-graded 
$\mathrm{Cl}(V_{+})\hat{\otimes} \mathrm{Cl}(V_{-})$-module, hence also a 
$\mathbb{Z}_{2}$-graded   $\mathrm{Cl}(V)$-bundle. 
Any $v= v_{+}+ v_{-}\in V_{+} \oplus  V_{-} \subset \mathrm{Cl}(V)$  acts on $S_{+} \hat{\otimes} S_{-}$ as
\begin{equation}\label{gen-graded-spinor}
\gamma_{v_{+} + v_{-}} (s_{+} {\otimes} s_{-}) = \gamma_{v_{+}}(s_{+}) {\otimes} s_{-} +
(-1)^{|s_{+}|} s_{+}{\otimes}\gamma_{v_{-}}( s_{-}).
\end{equation}

\subsection{Integrability of generalized almost complex structures and  spinors}

Let $E$ be a regular Courant algebroid  with scalar product of signature $(n, n)$,
anchor $\pi : E \rightarrow TM$ and canonical Dirac generating operator $\di : \Gamma (\mathbb{S}) \rightarrow
 \Gamma (\mathbb{S}).$ 
An  {\cmssl almost  Dirac structure} of $E_{\mathbb{C}}$ is an  isotropic 
complex subbundle of 
$E_{\mathbb{C}}$ of rank $n$. It  is  {\cmssl integrable} (or a {\cmssl Dirac structure})
if is is closed under the (complex linear extension of)  the Dorfman bracket of $E.$ For a  non-vanishing section  $\eta \in \Gamma (\mathbb{S}_{\mathbb{C}})$ we define
$$
L_{\eta} :=  \{ v\in E_{\mathbb{C}} \mid  \gamma_{v} \eta  =0\} .
$$
The spinor $\eta$ is called {\cmssl pure} if 
$L_{\eta}$ is a vector bundle of rank $n$.    
Assume that $\eta$ is a pure spinor.
A simple computation shows that $L_{\eta}$ is  isotropic, and,  being of rank $n$,  $L_{\eta}$  is  an almost Dirac structure. 
It is called the {\cmssl null bundle} of $\eta .$   The assignment $L_{\eta}\rightarrow [\eta ]$ is a one-to-one correspondence 
between almost Dirac structures of $E_{\mathbb{C}}$ and classes of projectively equivalent  pure spinors of $\mathbb{S}$
(two pure spinors $\eta_{1}$ and $\eta_{2}$ defined on an open set $U\subset M$ are projectively equivalent if $\eta_{2} = f\eta_{1}$ for a non-vanishing
function $f$ on $U$).  A  pure spinor
$\eta \in \Gamma (\mathbb{S}_{\mathbb{C}})$  is called {\cmssl projectively closed} if   
there is  $v\in \Gamma (E_{\mathbb{C}})$ such that 
$\di  (\eta) = \gamma_{v} \eta $.   (In order to simplify notation, we use the same symbols $\di$ and $\gamma$ for their complex  linear extensions).  
Note  that any pure spinor which is projectively equivalent to a projectively closed spinor is also projectively closed. 
This follows from $\di (f \eta ) =\gamma_{\pi^{*}(df)}(\eta ) + f\di (\eta )$, for any $\eta \in \Gamma (S_{\mathbb{C}})$ and $f\in C^{\infty}(M, \mathbb{C})$.

\begin{thm}(\cite{AX}) \label{proj-closed-main} 
An almost Dirac structure $L$  of $E_{\mathbb{C}}$  is a Dirac structure if and only if, locally, any  pure spinor $\eta$
associated to $L$ is projectively closed.  
\end{thm}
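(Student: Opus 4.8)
The statement is local, and as already recorded in the excerpt projective closedness is stable within a projective class of pure spinors (via $\di(f\eta)=\gamma_{\pi^*(df)}\eta+f\di\eta$). Hence the plan is to fix, on a sufficiently small open set, one pure spinor $\eta$ with null bundle $L=L_\eta$, and to show that $L$ is closed under the Dorfman bracket if and only if $\di\eta\in\gamma_{E_{\mathbb{C}}}\eta$. The strategy is to convert the bracket-closure condition into a Clifford-algebraic condition on $\di\eta$ using property ii) of Definition \ref{Dgo:def}, and then to read off the conclusion from the first two levels of the filtration of the spinor module determined by $L$.

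First I would establish the key identity: for all $u,v\in\Gamma(L_\eta)$,
\[ \gamma_{[u,v]}\eta=-\gamma_v\gamma_u(\di\eta). \]
This follows by expanding the iterated super-commutator $\gamma_{[u,v]}=[[\di,\gamma_u],\gamma_v]$, in which $\di,\gamma_u,\gamma_v$ are all odd, as
\[ \gamma_{[u,v]}=\di\gamma_u\gamma_v+\gamma_u\di\gamma_v-\gamma_v\di\gamma_u-\gamma_v\gamma_u\di, \]
and applying it to $\eta$: since $\gamma_u\eta=\gamma_v\eta=0$ by definition of $L_\eta$, the first three terms vanish and only $-\gamma_v\gamma_u\di\eta$ survives. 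Because $L$ is isotropic, $\gamma_u,\gamma_v$ anticommute and square to zero for $u,v\in L$, so $\gamma_v\gamma_u$ is precisely the Clifford action of $v\wedge u\in\Lambda^2L\subset\mathrm{Cl}(E_{\mathbb{C}})$. Thus $L$ is a Dirac structure, i.e. $\gamma_{[u,v]}\eta=0$ for all $u,v\in\Gamma(L)$, if and only if $\gamma_a(\di\eta)=0$ for every $a\in\Lambda^2L$.

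It then remains to identify the spinors killed by $\Lambda^2L$. Choosing locally a complementary maximal isotropic $L'$ with $E_{\mathbb{C}}=L\oplus L'$ identifies $\mathbb{S}_{\mathbb{C}}$ with $\Lambda^\bullet L'$ so that $L$ acts by contraction, $L'$ by exterior multiplication, and $\eta$ corresponds to $1\in\Lambda^0L'$. In this picture $\{\psi:\gamma_a\psi=0\ \forall a\in\Lambda^2L\}=\Lambda^0L'\oplus\Lambda^1L'=\mathbb{C}\eta\oplus\gamma_{E_{\mathbb{C}}}\eta$, since a form of degree $\ge 2$ all of whose double contractions vanish is zero, while $\gamma_{E_{\mathbb{C}}}\eta=\gamma_{L'}\eta=\Lambda^1L'$. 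Now $\di$ is odd and $\eta$ is even (degree $0$), so $\di\eta$ has no $\Lambda^0$ component; hence $\gamma_a(\di\eta)=0$ for all $a\in\Lambda^2L$ forces $\di\eta\in\gamma_{E_{\mathbb{C}}}\eta$, that is, $\eta$ is projectively closed. The converse is immediate from the same identity, since $\Lambda^2L$ annihilates $\gamma_{E_{\mathbb{C}}}\eta=\Lambda^1L'$, so $\di\eta=\gamma_w\eta$ gives $\gamma_{[u,v]}\eta=-\gamma_v\gamma_u\gamma_w\eta=0$ for $u,v\in\Gamma(L)$.

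The main obstacle I anticipate is the spinor-algebraic step identifying the second filtration level, $\{\psi:\Lambda^2L\cdot\psi=0\}=\mathbb{C}\eta\oplus\gamma_{E_{\mathbb{C}}}\eta$, together with the careful use of the parity of $\di\eta$ to discard the scalar summand $\mathbb{C}\eta$ (and the verification that a local complementary maximal isotropic $L'$ exists, which is harmless since the claim is local). By contrast, the translation from the Dorfman bracket to the Clifford condition is a short computation, requiring only attention to the super-commutator conventions.
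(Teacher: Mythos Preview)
Your proof is correct and follows essentially the same approach as the paper: both derive the identity $\gamma_{[u,v]}\eta=-\gamma_v\gamma_u\,\di\eta$ from property~ii) of the Dirac generating operator, then use pure-spinor theory together with the parity of $\di\eta$ to conclude. The only cosmetic difference is that where you pass to the model $\mathbb{S}_{\mathbb{C}}\cong\Lambda^\bullet L'$ and identify $\ker(\Lambda^2L)=\Lambda^{\le 1}L'$ in one step, the paper argues intrinsically in two steps, first obtaining $\gamma_v\di\eta=\lambda(v)\eta$ from the fact that the annihilator of $L$ is the pure spinor line, then choosing $v_0$ dual to $\lambda$ and applying the same fact to $\di\eta-\gamma_{v_0}\eta$.
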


\begin{proof}   
Assume that $\eta$ is projectively closed and let $e\in \Gamma (E_{\mathbb{C}})$ such that
$\di  (\eta ) = \gamma_{e} \eta .$ Let $v, w\in \Gamma (L) $.    Using condition ii) from 
Definition  \ref{Dgo:def} and $\gamma_{v}\eta  =\gamma_{w} \eta  =0$, we obtain 
\begin{equation}\label{e1}
\gamma_{[v,w]} \eta = [ [ \di, \gamma_{v}],\gamma_{w} ]]\eta 
= -\gamma_{w}\gamma_{v} \di  (\eta )  
= - \gamma_{w}\gamma_{v} \gamma_{e}\eta .
\end{equation}
On the other hand,
\begin{equation}\label{e2}
\gamma_{w}\gamma_{v} \gamma_{e} = -
\gamma_{w}\gamma_{e}\gamma_{v} + 2\langle v,e\rangle \gamma_{w}.
\end{equation}
Combining (\ref{e1}) with (\ref{e2})  and using  $\gamma_{v} \eta =\gamma_{w} \eta  =0$, we obtain
$\gamma_{[v,w]} \eta = 0$. This proves that $L$ is a Dirac structure.

Conversely, assume that $L$ is a Dirac structure. Then, for any $v, w\in \Gamma (L)$, $[v, w]\in \Gamma (L)$ 
and $\gamma_{[v,w]} \eta =0$. This implies, using  condition ii) from 
Definition \ref{Dgo:def},
$[[ \di, \gamma_{v}], \gamma_{w} ]\eta =0$, or
$\gamma_{w}[\di, \gamma_{v} ]\eta = 0,\ \forall w\in \Gamma (L).$ 
We  obtain that  $[\di, \gamma_{v} ]\eta$, which is equal to  $\gamma_{v}
\di  (\eta ) $,  is  a multiple of  
$\eta$, i.e. $\gamma_{v} \di   (\eta ) = \lambda (v)\eta$
for  $\lambda (v)\in C^{\infty}(M, \mathbb{C})$. 
Remark that $\lambda \in \Gamma (L^{*}).$ 
Extend $\lambda$ to a (complex linear) $1$-form on $E_{\mathbb{C}}$ and let $v_{0}\in \Gamma (E_{\mathbb{C}})$, such
that $2v_{0}$ is dual to this $1$-form with respect to the complex linear extension of $\langle\cdot , \cdot \rangle .$  Then
$$
\lambda (v)\eta = 2\langle v, v_{0}\rangle \eta = \gamma_{v}\gamma_{v_{0}} \eta + \gamma_{v_{0}}\gamma_{v}\eta
= \gamma_{v} \gamma_{v_{0}}\eta .
$$
The above computations show that $\gamma_{v}( \di (\eta ) - \gamma_{v_{0}}\eta ) =0$, for any $v\in \Gamma (L)$, which 
implies $ \di (\eta ) - \gamma_{v_{0}} \eta  = g\eta$ for $g\in C^{\infty}(M, \mathbb{C}).$ 
As $\di$ and $\gamma_{v_{0}}$ are odd operators and pure spinors are chiral, i.e.\ either even or odd, we conclude $g=0$.  This shows that 
$\di (\eta )  =  \gamma_{v_{0}} \eta $, i.e.\ $\eta$ is projectively closed. 
\end{proof}

Let $\mathcal J$ be a generalized almost complex structure on $E$. 
The $(1,0)$-bundle  $L\subset E_{\mathbb{C}}$ of  $\mathcal J$  is isotropic with respect to $\langle\cdot , \cdot \rangle$
and satisfies $L \oplus   \bar{L} = E_{\mathbb{C}}.$  In particular, $\mathrm{rank}\, L  = n$ 
and $L$ is an almost Dirac structure.  A pure spinor $\eta\in\Gamma (\mathbb{S})$ is called {\cmssl associated to $\mathcal J$}  if $L = L_{\eta}.$ From Theorem \ref{proj-closed-main} we obtain:

\begin{cor}\label{proj-closed-thm} A generalized almost complex structure $\mathcal J$ on a regular Courant algebroid
is integrable if and only if,  locally, one (equivalently, any)  pure spinor 
 associated to $\mathcal J$  is projectively closed. 
\end{cor}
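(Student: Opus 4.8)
The plan is to deduce the corollary directly from Theorem~\ref{proj-closed-main} applied to the almost Dirac structure attached to $\mathcal J$. Recall from the discussion preceding the corollary that the $(1,0)$-bundle $L\subset E_{\mathbb C}$ of $\mathcal J$ is isotropic with respect to $\langle\cdot,\cdot\rangle$, satisfies $L\oplus\bar L=E_{\mathbb C}$ and has rank $n$, so that $L$ is an almost Dirac structure; moreover a pure spinor $\eta$ is associated to $\mathcal J$ precisely when $L=L_{\eta}$. It therefore suffices to prove that $\mathcal J$ is integrable, in the sense that its Nijenhuis tensor $N_{\mathcal J}$ vanishes, if and only if $L$ is a Dirac structure, that is, closed under the complex linear extension of the Dorfman bracket. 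Granting this equivalence, Theorem~\ref{proj-closed-main} yields the corollary at once, the clause ``one (equivalently, any)'' being covered by the fact, noted in the appendix, that projective closedness is preserved under projective equivalence and that two local pure spinors with the same null bundle are projectively equivalent.

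To establish the equivalence I would first restrict the defining formula (\ref{def-nj}) of $N_{\mathcal J}$ to sections $u,v\in\Gamma(L)$. Since $\mathcal J u=iu$ and $\mathcal J v=iv$, the complex bilinearity of the bracket gives $[\mathcal J u,\mathcal J v]=-[u,v]$ and $[\mathcal J u,v]+[u,\mathcal J v]=2i[u,v]$, whence $N_{\mathcal J}(u,v)=-2\bigl([u,v]+i\mathcal J[u,v]\bigr)$. Decomposing $[u,v]$ according to $E_{\mathbb C}=L\oplus\bar L$ and using that $\mathcal J$ acts as $-i$ on $\bar L$, one finds that $N_{\mathcal J}(u,v)$ equals a nonzero constant multiple of the $\bar L$-component of $[u,v]$. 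Hence $N_{\mathcal J}$ vanishes on $L\times L$ exactly when $[u,v]\in L$ for all $u,v\in\Gamma(L)$, i.e.\ when $L$ is involutive.

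It remains to see that the vanishing of $N_{\mathcal J}$ on $L\times L$ forces $N_{\mathcal J}$ to vanish identically. For this I would check the mixed terms: taking $u\in\Gamma(L)$, $v\in\Gamma(\bar L)$ so that $\mathcal J u=iu$, $\mathcal J v=-iv$, the same substitution into (\ref{def-nj}) gives $[\mathcal J u,\mathcal J v]=[u,v]$ and $[\mathcal J u,v]+[u,\mathcal J v]=0$, so that $N_{\mathcal J}(u,v)=0$ automatically. Since $\mathcal J$ is a real endomorphism and $N_{\mathcal J}$ is tensorial, the $\bar L\times\bar L$ component of $N_{\mathcal J}$ is the complex conjugate of its $L\times L$ component; thus all components vanish together, and $N_{\mathcal J}=0$ is equivalent to the involutivity of $L$.

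The computations above are elementary, relying only on the bilinearity of the Dorfman bracket and the eigenbundle decomposition $E_{\mathbb C}=L\oplus\bar L$. The only genuinely delicate point is the bookkeeping in the first step, where one must track the complex scalars produced by $\mathcal J u=iu$ and correctly identify the surviving projection as the $\bar L$-component, so as to read off that $N_{\mathcal J}|_{L\times L}$ measures exactly the failure of $L$ to be involutive. Once this is in place the corollary follows with no further work from Theorem~\ref{proj-closed-main}.
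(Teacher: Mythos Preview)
Your proposal is correct and follows the same route as the paper: deduce the statement from Theorem~\ref{proj-closed-main} applied to the $(1,0)$-bundle $L$ of $\mathcal J$, using that integrability of $\mathcal J$ is equivalent to $L$ being a Dirac structure. The paper states the corollary without proof, treating this equivalence as standard; your explicit computation of $N_{\mathcal J}$ on $L\times L$, $L\times\bar L$ and $\bar L\times\bar L$ simply spells out what the paper leaves implicit, and the calculations are accurate.
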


{\bf Acknowledgements.}  We are grateful to 
Mario Garc\'ia-Fern\'andez,  for explaining to us his characterization of generalized K\"{a}hler
structures  on  exact Courant algebroids \cite{mario-priv}, which we have generalized to regular Courant algebroids in Theorem \ref{spinors-gk}. 
We are also grateful to Thomas Mohaupt for  discussions about Born geometry and for pointing out the reference \cite{FRS}.
We thank Paul Gauduchon for sending us a copy of his paper  \cite{gauduchon},  Carlos Shahbazi for drawing our attention to reference \cite{b-h}, 
Roberto Rubio for drawing our attention to references \cite{rubio,u} and Thomas Leistner for useful comments. 
Research of V.C.\  was partially funded by the Deutsche Forschungsgemeinschaft (DFG, German Research Foundation) under Germany's Excellence Strategy -- EXC 2121 Quantum Universe  
-- 390833306.
L.D.\ was supported by a grant of the Ministry of Research and Innovation, project no PN-III-ID-P4-PCE-2016-0019 within PNCDI. \\

V.\  Cort\'es: vicente.cortes@math.uni-hamburg.de\

Department of Mathematics and Center for Mathematical Physics, University of Hamburg,  Bundesstrasse 55, D-20146, Hamburg, Germany.\\

L.\  David: liana.david@imar.ro\

Institute of Mathematics  Simion Stoilow of the Romanian Academy,   Calea Grivitei no. 21,  Sector 1, 010702, Bucharest, Romania.

\end{document}